\numberwithin{equation}{section}
\definecolor{color0}{rgb}{0.12156862745098,0.466666666666667,0.705882352941177}
\definecolor{color1}{rgb}{1,0.498039215686275,0.0549019607843137}
\definecolor{color2}{rgb}{0.172549019607843,0.627450980392157,0.172549019607843}
\definecolor{color3}{rgb}{0.83921568627451,0.152941176470588,0.156862745098039}
\definecolor{color4}{rgb}{0.580392156862745,0.403921568627451,0.741176470588235}
\definecolor{color5}{rgb}{0.549019607843137,0.337254901960784,0.294117647058824}
\definecolor{color6}{rgb}{0.890196078431372,0.466666666666667,0.76078431372549}
\newtheorem{theorem}{Theorem}[section]
\newtheorem{lemma}[theorem]{Lemma}
\newtheorem{corollary}[theorem]{Corollary}
\newtheorem{proposition}[theorem]{Proposition}
\newtheorem{definition}[theorem]{Definition}
\newtheorem{remark}[theorem]{Remark}
\newtheorem{assumption}[theorem]{Assumption}
\newtheorem{conclusion}[theorem]{Conclusion}
\newcommand{\Frechet}{Fréchet\xspace}
 \newcommand{\quotes}[1]{``#1''}
\newcommand{\dint}[1]{\,\textup{d} #1}
\DeclareMathOperator{\Real}{Re}
\DeclareMathOperator{\Imag}{Im}
\DeclareMathOperator{\vol}{vol}
\newcommand{\ci}{\mathrm{i}}
\newcommand{\ext}{\mbox{\tiny ext}}
\newcommand{\deltah}{h} 
\newcommand{\deltaH}{H} 
\newcommand{\kommentare}[1]{}
\newcommand{\N}{\mathbb{N}}
\newcommand{\ip}[2]{ ( #1,#2 ) }
\newcommand{\dualp}[2]{ \langle #1,#2 \rangle}
\newcommand{\abil}[2]{b(#1,#2)}
\newcommand{\LOD}{{\mbox{\normalfont\tiny LOD}}}
\newcommand{\abilmag}[2]{a_{\MagPot}(#1,#2)}
\newcommand{\abilmagLOD}[2]{a_{\bfAapp}^{\LOD}(#1,#2)}
\newcommand{\abilthree}{r} 
\newcommand{\abilusimple}[2]{b(#1,#2)}
\newcommand{\ipsymLtwo}[2]{ m ( #1,#2 ) }
\newcommand{\bfH}{\mathbf{H}} 
\newcommand{\bfA}{\mathbf{A}} 
\newcommand{\bfAapp}{\mathbf{A}_{\star}}  
\newcommand{\kappainv}{\frac{1}{\kappa}}
\newcommand{\kappainvci}{\frac{\ci}{\kappa}}
\newcommand{\Hcurl}{\mathbf{H}(\curl)}
\newcommand{\HcurlOm}{\mathbf{H}(\curl,\Omega)}
\newcommand{\Hdiv}{\mathbf{H}(\div)}
\newcommand{\HdivOm}{\mathbf{H}(\div,\Omega)}
\newcommand{\fext}{f_{\ext}}
\newcommand{\uext}{u_{\ext}}
\newcommand{\MagPotext}{\MagPotgen_{\ext}}
\newcommand{\Omegaext}{\widehat{\Omega}}
\newcommand{\Omegaextk}[1]{\Omegaext_{#1}}
\newcommand{\HonekappaSpace}{H^1_\kappa}
\newcommand{\HtwokappaSpace}{H^2_\kappa}
\newcommand{\Honekappa}[1]{\norm{\HonekappaSpace}{#1}}
\newcommand{\HonenSpace}{\mathbf{H}^1_\mathrm{n}}
\newcommand{\HonenSpaceDiv}{\mathbf{H}^1_{\mathrm{n},\div}}
\newcommand{\Htwokappa}[1]{\norm{\HtwokappaSpace}{#1}}
\newcommand{\HonecombiSpace}{\HonekappaSpace \times \mathbf{H}^1}
\newcommand{\Honecombi}[1]{\norm{\HonecombiSpace}{#1}}
\newcommand{\h}{h}
\newcommand{\testfun}{\varphi}
\newcommand{\Ltwoproj}{\pi_{\deltaH}}
\newcommand{\LtwoprojFEM}{\pi_{\deltaH}^{\mbox{\tiny\normalfont FEM}}}
\newcommand{\LtwoprojLOD}{\pi_{\deltaH}^{\LOD}}
\newcommand{\boldL}{L}
\newcommand{\errh}{e_{\deltaH}}
\newcommand{\Errh}{\mathbf{E}_{\deltah}}
\newcommand{\Csol}{\rho_{\sol,\MagPot}(\kappa)} 
\newcommand{\Csolinv}{\rho_{\sol,\MagPot}(\kappa)^{-1}}
\newcommand{\Honeperp}{(\ci \sol)^{\perp}}
\newcommand{\energy}{E_{\mbox{\tiny GL}}} 
\newcommand{\energystab}{E} 
\newcommand{\duenergy}{\partial_{\sol} \energystab}
\newcommand{\dutwoenergy}{\partial^2_{\sol} \energystab}
\newcommand{\dAenergy}{\partial_{\MagPot} \energystab}
\newcommand{\dAtwoenergy}{\partial^2_{\MagPot} \energystab}
\newcommand{\dudAtwoenergy}{\partial_{\sol,\MagPot}  \energystab}
\newcommand{\dAdutwoenergy}{\partial_{\MagPot,\sol}  \energystab}
\renewcommand{\div}{\operatorname{div}}
\newcommand{\curl}{\operatorname{curl}}
\newcommand{\diag}{\operatorname{diag}}
\newcommand{\intervalop}[1]{(#1)}
\newcommand{\norm}[2]{\lvert| #2 \rvert|_{#1}}
\newcommand{\sol}{u}
\newcommand{\MagPot}{\bfA} 
\newcommand{\MagPotgen}{\mathbf{U}} 
\newcommand{\MagPotgentwo}{\mathbf{V}} 
\newcommand{\MagF}{\mathbf{H}}
\newcommand{\rhs}{f}
\newcommand{\rhsord}{f^1}
\newcommand{\rhsA}{\mathbf{F}^2}
\newcommand{\dx}{\,\,\mbox{d}x}
\newcommand{\R}{\mathbb{R}}
\newcommand{\C}{\mathbb{C}}
\newcommand{\VSh}{V_{H}}
\newcommand{\VShLOD}{V_{H}^{\LOD}}
\newcommand{\VSAhzero}[1]{\mathbf{V}_{\h,0}^{#1}}
\begin{document}

\title[A multiscale approach to the stationary Ginzburg--Landau equations]
{A multiscale approach to the stationary Ginzburg--Landau equations of superconductivity}

\date{\today}

\author[C.~D{\"o}ding]{Christian D{\"o}ding}
\address{Institute for Numerical Simulation, University of Bonn, 53115 Bonn, Germany}
\email{doeding@ins.uni-bonn.de}

\author[B. D{\"o}rich]{Benjamin D{\"o}rich}
\address{Institute for Applied and Numerical Mathematics, 
	Karlsruhe Institute of Technology, 76149 Karlsruhe, Germany}
\curraddr{}
\email{benjamin.doerich@kit.edu}
\thanks{The first author is funded by the Deutsche Forschungsgemeinschaft (DFG, German Research Foundation) under Germany's Excellence Strategy – EXC-2047/1 – 390685813. The second author is funded by the Deutsche Forschungsgemeinschaft (DFG, German Research Foundation) -- Project-ID 258734477 -- SFB 1173. 
	The third authors also acknowledges the support by the Deutsche Forschungsgemeinschaft  through the grant HE 2464/7-1.
}

\author[P. Henning]{Patrick Henning}
\address{Department of Mathematics, Ruhr-University Bochum, 44801 Bochum, Germany}
\curraddr{}
\email{patrick.henning@rub.de}

\subjclass[2020]{Primary:
	65N12,   
	65N15. 
	Secondary:
	65N30, 
	35Q56}

\keywords{Ginzburg--Landau equation,
	superconductivity,
	error analysis, 
	finite element method} 

\date{\today} 

\dedicatory{}

\begin{abstract}
In this work, we study the numerical approximation of minimizers of the Ginzburg--Landau free energy, a common model to describe the behavior of superconductors under magnetic fields. The unknowns are the order parameter, which characterizes the density of superconducting charge carriers, and the magnetic vector potential, which allows to deduce the magnetic field that penetrates the superconductor. Physically important and numerically challenging are especially settings which involve lattices of quantized vortices which can be formed in materials with a large Ginzburg--Landau parameter $\kappa$. In particular, $\kappa$ introduces a severe mesh resolution condition for numerical approximations. In order to reduce these computational restrictions, we investigate a particular discretization which is based on mixed meshes where we apply a Lagrange finite element approach for the vector potential and a localized orthogonal decomposition (LOD) approach for the order parameter. We justify the proposed method by a rigorous a-priori error analysis (in $L^2$ and $H^1$) in which we keep track of the influence of $\kappa$ in the main error contributions. This allows us to conclude $\kappa$-dependent resolution conditions for the various meshes and which only impose moderate practical constraints compared to a conventional finite element discretization. 
While our results only provide information on the approximability of the minimizers, we conclude by further proposing a minimization procedure to illustrate
our theoretical findings by numerical experiments.
\end{abstract}

\maketitle

\section{Introduction}
\label{sec:intro}

In most materials the flow of an electric current is countered with an electric resistance which leads to a loss in energy. Materials with no electrical resistance, usually referred to as superconductors, are rare in nature, but open up a large variety of possible applications.  
To consider a mathematical model for superconductivity, we let $\Omega \subset \R^3$ denote a cuboid which is occupied by the superconducting material. The superconductivity itself is described by a complex-valued wave function 
$\sol \colon \Omega \rightarrow \mathbb{C}$ which is called the order parameter. Though not a physical observable on its own, we can extract from $\sol$ the density of the superconducting electron pairs $|\sol|^2$. This density is real-valued and can be observed in physical experiments. In fact, the scaling in the corresponding models enforces $0 \leq |\sol|^2 \leq 1$, where $|\sol(x)|^2  = 0$ implies that the material is not superconducting (in normal state) in $x\in\Omega$ and $|\sol(x)|^2  = 1$ implies a perfect superconductor, locally in $x$. In between, the percentage of superconducting charge carriers might drop to a value between $0$ and $1$. In these mixed normal-superconducting states, both phases can coexist in a so-called Abrikosov vortex lattice \cite{Abr04} with $|\sol(x)|^2=0$ in the vortex centers. These kinds of configurations can only occur for so-called type-II superconductors when a sufficiently strong (but not too strong) external magnetic field $\mathbf{H}$ is applied. In fact, in mixed normal-superconducting states, the magnetic field partially penetrates the superconducting material. In this paper, we focus on exactly these kind of settings.

Considering such a situation, the relevant order parameter and the unknown internal magnetic field can be characterized as minimzers of the so-called Ginzburg--Landau (GL) free energy (cf. \cite[Sec.~3]{DuGP92}), which is given by
\begin{eqnarray} \label{eq:energy_functional}
	\energy(\sol,\MagPot) 
	&\coloneqq&
	\frac{1}{2} \int_\Omega 
	|\kappainvci \nabla \sol +   \MagPot \sol |^2 
	+ 
	\frac{1}{2} \bigl( 1- |\sol|^2 \bigr)^2
	+
	|\curl \MagPot - \MagF |^2
	\dx.
\end{eqnarray}
Here, $\MagF$ is a given external magnetic field and $\kappa \in \mathbb{R}^+$ is a material parameter, often called the Ginzburg--Landau parameter. The unknown $\MagPot$ denotes the magnetic vector potential, from which we can obtain the internal magnetic field given by $\curl \MagPot$. In fact, besides the density $|\sol|^2$, the magnetic field $\curl \MagPot$ is the second physical quantity of interest. Recalling that we are interested in vortex states, it is important to note that the size of the parameter $\kappa$ determines the structure of the vortex lattice \cite{SaS07,SaS12,Ser99,SeS10}. In particular, for small values of $\kappa$, no vortices will appear, whereas with increasing $\kappa$, the number of vortices grows  and they become more localized \cite{Aftalion99,SaS07}. Thus, the regime of large values of $\kappa$ is the physically most interesting regime accompanied by many challenges for its numerical approximation. The  phenomenon is also closely related to the appearance of vortices in superfluids \cite{Du2003}.

First results on the numerical approximation of minimizers to \eqref{eq:energy_functional} were obtained in the pioneering works by Du, Gunzburger and Peterson \cite{DuGP92,DuGP93} who derived $H^1$-error estimates in finite element (FE) spaces for both the order parameter $u$ and the magnetic vector potential $\MagPot$. Even though estimates of optimal convergence order could be provided in a (joint) mesh parameter $H$, the proof techniques could not take into account the precise role of $\kappa$ and how it affects potential constraints on the mesh size for $\sol$ and $\MagPot$ respectively. An error analysis for alternative discretizations based on a covolume method \cite{DuNicolaidesWu98} or a finite volume method \cite{QuJu05} can be also found in the literature. However, both works only establish convergence, but no rates in $H$ and $\kappa$.

First error estimates that are indeed explicit with respect to $\kappa$ and the mesh size $H$ were recently obtained in \cite{DoeH24} for a finite element discretization of \eqref{eq:energy_functional}, however, in a simplified setting where the vector potential $\MagPot$ was assumed to be given and the minimization of the energy only involved $\sol$. In this case, the last term in \eqref{eq:energy_functional} can be dropped. In the aforementioned work, it was found that the mesh size $H$ needs to fulfill a resolution condition of at least $H \lesssim \kappa^{-1}$ to obtain reliable approximations.
Even more, the error estimates indicated a pre-asymptotic convergence regime, subject to a second resolution condition that depends on the strength of local convexity of $\energy$ in a neighborhood of a minimizer, i.e., on the smallest eigenvalues of $\energy^{\prime\prime}(\sol)$. In fact, the numerical experiments indicated that this resolution condition is not an artifact of the analysis, but FE spaces with too coarse meshes are not able to capture the correct vortex patterns, which leads to significant practical constraints.

To overcome these constraints it was suggested in \cite{DoeH24} to use a discretization based on Localized Orthogonal Decomposition (LOD). This idea was later realized in \cite{BDH24} (still in the setting of given $\MagPot$). The LOD is a numerical homogenization technique designed by M{\aa}lqvist and Peterseim \cite{MaP14} to tackle elliptic multiscale problems. In the last decade it was generalized multiple times and applied to a large variety of different problems, where we exemplary refer to \cite{DoHaMa23,DHW24,GHV17,EHMP19,HaP23,HeP13,LaLjMa24,LjMaMa22,MaPer18,Mai21,MaVer22,Pet17,WuZh22} and the reference therein, as well as to the reviews given in \cite{AHP21acta} and \cite{MaP21}. Applying the LOD to approximate minimizers of the Ginzburg--Landau energy can be motivated with its fast convergence under comparably weak regularity assumptions. This is achieved by constructing an approximation space (the LOD space) that contains problem-specific information, in particular it is based on $\kappa$ and $\MagPot$. A comprehensive error analysis of the resulting method was given in \cite{BDH24}, revealing that the $\kappa$-dependent resolution conditions can be indeed relaxed with this strategy and correct vortex patters could be computed on rather coarse meshes. Furthermore, the locality for the LOD shape functions was quantified, where it was found that approximate shape functions with a diameter of order $\mathcal{O}(\log(H \kappa) H)$ are sufficient to preserve the overall approximation properties of the ideal LOD method. However, in the aforementioned work the LOD spaces were computed in an offline phase and the error analysis was only carried out for the simplified energy under the assumption that the vector potential $\MagPot$ is a-priori known and not part of the minimization process.

Turning to the full problem \eqref{eq:energy_functional}, one would naively try and use an LOD approach on both $\sol$ and $\MagPot$. However, this is computationally extremely expensive and, as one can see from our error analysis and numerical experiments, there is typically no need for a fine resolution of the vector potential $\MagPot$. This motivates one of the main questions of this work: What are suitable, possibly different discretizations of the pair $(\sol,\MagPot)$ such that we can achieve high convergence rates under possibly low regularity assumptions and weak resolution conditions and can we quantify the corresponding error if different ansatz spaces are used for $\sol$ and $\MagPot$?
The key in the error analysis of \cite{DoeH24} was a detailed a-priori analysis of the continuous and discrete minimizers in order to obtain bounds which are sharp in their $\kappa$-scaling. 
Following this approach, we generalize these techniques and derive bounds for continuous and discrete minimizers which are explicit in $\kappa$. Most notably, we show that the bounds on $\sol$ in Sobolev norms depend on $\kappa$ whereas the bounds on $\MagPot$ up to its second derivative are independent of $\kappa$, which resembles the simpler structure of $\MagPot$ even in large $\kappa$-regimes. Denoting by $H$ and $h$ the spatial mesh size of the discrete spaces for $\sol$ and $\MagPot$ respectively, our error bounds allow us to extract optimal coupling conditions between $H$ and $h$ depending on the polynomial degree of the FE space and the parameter $\kappa$.
The main challenge is the intrinsic coupling of $\sol$ and $\MagPot$ within the energy and the \Frechet derivatives. Here, again it turned out to be crucial to work with appropriately scaled norms for both quantities
in order to derive sharp estimates in $\kappa$.
Also in the error analysis we need to carefully balance the distribution
of regularity and integrability in our estimates 
such that no superfluous powers of $\kappa$ enter the final error bounds. 
Our experiments then indeed confirm that these bounds are optimal
with one minor exception. In our theory, the $H^3$-norm of $\MagPot$ is expected to grow linearly in $\kappa$ which is not visible in the experiments. To us it remains open whether this is an artifact of the analysis or other examples could support our theory. We note that we chose $\Omega$ as a cuboid to obtain all necessary regularity estimates in a rigorous way even without assuming a smooth boundary.
Furthermore, let us emphasize that our results are only concerned with the approximability of the exact minimizers by their discrete counterparts, but not with the analysis of iterative methods for finding such minimizers. 
To perform our numerical experiments, we propose a novel minimization algorithm, however proving its convergence is far beyond the scope of this work.

Finally, let us mention that there has also been a lot of work on the time-dependent Ginzburg--Landau equation which typically has a gradient flow structure and which is used to describe the dynamics within a superconductor. Corresponding numerical methods, convergence results and error estimates can be for example found in \cite{Bartels2005,Bar05,BarMO11,Chen97,CheD01,Du94b,Du94,Du97,DuGray96,DuanZhang22,GJX19,GaoS18,Li17,LiZ15,LiZ17,MaQ23} and the references therein. To the best of our knowledge, questions regarding vortex-resolution conditions depending on $\kappa$ and $\MagPot$ have not yet been studied in the time-dependent case. Due to the different nature of the time-dependent problem, we will not discuss the equation any further here.

\smallskip

The rest of the paper is organized as follows:
In Section~\ref{sec:ana_frame}, we introduce the analytical framework and present several results on a-priori bounds and the regularity of the continuous minimizers. Furthermore, we discuss the gauge conditions and study the resulting properties of the \Frechet derivatives. 
The core findings of our paper are stated 
in Section~\ref{sec:space_main}. Here we present the construction of LOD spaces in our problem setting together with our corresponding main results. The main results are proved step by step in the sections after.
First, in Section~\ref{sec:ana_prep}, we provide further analytical findings which are crucial for the later error estimates. An abstract error analysis is then established in Section~\ref{sec:abstract_error_analysis}.
Finally, the abstract results are applied in Section~\ref{sec:error_analysis} to the considered LOD discretization and we give the corresponding proofs to our main results.
Numerical experiments which illustrate our theoretical findings are shown in Section~\ref{sec:num_exp}.
The regularity theory and technical computations are postponed to the Appendices~\ref{sec:H2_reg_A} and \ref{sec:app_computations}.

\subsection*{Notation}
For a complex number $z \in \mathbb{C}$, we use $z^*$ for the complex conjugate of $z$. In the whole paper we further denote by $L^2(\Omega):=L^2(\Omega,\mathbb{C})$ the Hilbert space of $L^2$-integrable complex functions, but equipped with the {\it real} scalar product $( \sol , v)_{L^2} :=\Real \int_{\Omega} v \, w^* \,dx$ for $v,w\in L^2(\Omega)$. Hence, we interpret the space as a {\it real} Hilbert space. Analogously, we equip the space $H^1(\Omega):=H^1(\Omega,\mathbb{C})$,
which will be the solution space for the order parameter,
with the scalar product $( v , w)_{L^2} +( \nabla v , \nabla w)_{L^2}$. This interpretation is crucial so that the \Frechet derivatives of $E$ are meaningful and exist on $H^1(\Omega)$. For any space $X$, we denote its dual space by $X'$. Note that this implies, that the elements of the dual space of $H^1$ consist of real-linear functionals, which are not necessarily complex-linear. For example, if $F(v):=(f,v)_{L^2}$ for some $f \in L^2(\Omega)$, then it holds $F(\alpha \, v)=\alpha\, F(v)$ if $\alpha \in \mathbb{R}$, but in general {\it not} if $\alpha \in \mathbb{C}$. \\

Throughout the paper, we let $\varepsilon>0$ denote an arbitrarily small, but fixed, constant which is independent of $\kappa$ or mesh parameters.

For the {\it real-valued} vector potentials, we use boldface letters and denote
$\mathbf{L}^2(\Omega) := L^2(\Omega;\mathbb{R}^3)$
and
$\mathbf{H}^1(\Omega)\coloneqq H^1(\Omega;\mathbb{R}^3)$. Note that functions in $H^1(\Omega)$ are complex-valued, whereas functions in $\mathbf{H}^1(\Omega)$ are real-valued. Analogously, we transfer the notation to higher order Sobolev spaces, i.e., $H^k(\Omega)\coloneqq H^k(\Omega;\mathbb{C})$ and $\mathbf{H}^k(\Omega)\coloneqq H^k(\Omega;\mathbb{R}^3)$ for  $k \in \mathbb{N}_0$.
Further, we use the standard spaces for the weak rotation and divergence, i.e., $\Hcurl = \HcurlOm$ and
$\Hdiv = \HdivOm$, both for real-valued functions.

Throughout the paper $C$ denotes a generic constant which is independent of $\kappa$ and the spatial mesh parameters $H$ and $h$, but might depend on numerical constants as well as $\Omega$, $\MagF$ and $\varepsilon$. 
In particular, we write $\alpha \lesssim \beta$ if there is a constant $C$ independent of $\kappa$, $H$ and $h$ such that $\alpha \leq C \,\beta $.

\section{Analytical framework}
\label{sec:ana_frame}
	
In the following, recall that $\Omega \subset \R^3$ denotes the computational domain which we assume to be a rectangular cuboid. 
For the naturally appearing boundary conditions of minimizers as well as the gauging process, we additionally introduce the subspace of $\mathbf{H}^1(\Omega)$ of functions with vanishing normal trace as
\begin{eqnarray} \label{eq:def_H1n}
	\HonenSpace(\Omega) &\coloneqq& \{ \, \mathbf{B} \in \mathbf{H}^1(\Omega) \, \mid \, \mathbf{B}  \cdot \nu |_{\partial \Omega} = 0 \,\}.
\end{eqnarray}
Among all order parameters $\sol \in H^1(\Omega)$ and vector potentials $\MagPot \in \mathbf{H}^1(\Omega)$, we are interested in finding a pair that minimizes the Ginzburg--Landau free energy in \eqref{eq:energy_functional}
with a given external magnetic field
$\MagF \in \Hcurl$
and a material parameter $\kappa \in \mathbb{R}^+$. 
In this setting, we seek $(\sol ,\MagPot) \in H^1(\Omega) \times \mathbf{H}^1(\Omega)$ such that
\begin{eqnarray*}
\energy(\sol,\MagPot) &=& \inf_{(v,\mathbf{B})  \in H^1(\Omega) \times\MagF^1(\Omega)}  \energy(v,\mathbf{B}).
\end{eqnarray*}
It is well known that minimizers cannot be unique since the GL energy functional $\energy$ is invariant under certain gauge transformations \cite{DuGP92}. To be precise, for any (real-valued) $\phi \in H^2(\Omega;\mathbb{R})$ we define the corresponding gauge transformation $G_{\phi} : H^1(\Omega) \times \MagF^1(\Omega) \rightarrow H^1(\Omega) \times \MagF^1(\Omega)$ by
\begin{eqnarray}
\label{def-gauge-transform}
G_{\phi}(\sol,\MagPot) \coloneqq (	\sol \, e^{\ci\kappa \phi} , \MagPot + \nabla \phi ).
\end{eqnarray}
It is easily checked that $\energy$ is gauge invariant in the sense that
\begin{eqnarray*}
\energy(\sol,\MagPot) = \energy( \,G_{\phi}(\sol,\MagPot) \,) \qquad \mbox{for all } \enspace (\sol,\phi,\MagPot) \in H^1(\Omega) \times H^2(\Omega) \times \MagF^1(\Omega).
\end{eqnarray*}
Hence, if $(\sol,\MagPot)$ is a minimizer, then $G_{\phi}(\sol,\MagPot)$ is a minimizer, too. For smooth domains, it can be shown that any pair $(\sol,\MagPot) \in H^1(\Omega) \times \MagF^1(\Omega)$ is gauge equivalent to a pair $(v,\mathbf{B}) \in H^1(\Omega) \times \MagF^1(\Omega)$ where the corresponding vector potential $\mathbf{B}$ is divergence-free and has a vanishing normal trace, cf. \cite[Lemma 3.1]{DuGP92}. To be precise, let $\phi \in H^1(\Omega;\R)$ denote the zero-average solution to the Poisson problem $- \Delta \phi = - \div \MagPot$ with inhomogeneous Neumann boundary condition $\nabla \phi \cdot \nu \vert_{\partial \Omega} = \MagPot \cdot \nu \vert_{\partial \Omega}$, then it holds $\phi \in H^2(\Omega;\R)$ (this follows by combining the results of \cite[Theorem~3.2.1.3]{Grisvard85} and \cite[Lemma~3.7 and Theorem~3.9]{GirR86} and by decomposing $\phi$ accordingly into an affine contribution, a solution to a homogeneous Poisson problem with inhomogeneous Neumann boundary condition and a solution to an inhomogeneous Poisson problem with homogeneous Neumann boundary condition). 
With this, we have $G_{\phi}(\sol,\MagPot)  \in H^1(\Omega) \times \HonenSpaceDiv(\Omega)$ where
\begin{eqnarray*}
\HonenSpaceDiv(\Omega) \coloneqq \{ \, \mathbf{B} \in \mathbf{H}^1(\Omega)\, \mid \,  \div \mathbf{B}=0  \, \mbox{ and } \, \mathbf{B}  \cdot \nu |_{\partial \Omega} = 0 \, \}.
\end{eqnarray*}
As a direct conclusion, we can, without loss of generality, restrict the minimization of $\energy$ to functions in $H^1(\Omega) \times \HonenSpaceDiv(\Omega)$. This corresponds to the Coulomb gauge of the vector potentials. Furthermore, if we restrict the vector potentials to divergence-free functions then they are only gauge equivalent to itself. In particular, if $(\sol,\MagPot), (v,\mathbf{B})  \in H^1(\Omega) \times \HonenSpaceDiv(\Omega)$, then
\begin{eqnarray*}
G_{\phi}(\sol,\MagPot) = (v,\mathbf{B}) \qquad \Longleftrightarrow \qquad \MagPot=\mathbf{B} \,\,\mbox{ and }\,\, v= \sol \, e^{\ci\kappa \phi} \mbox{ for } \phi \in \mathbb{R}. 
\end{eqnarray*}
This equivalence is easily seen by the observation that if $ \div \MagPot=0 $ and $\div(  \MagPot + \nabla \phi  )= \div \mathbf{B} =0$, then $\Delta \phi =0$ with $\nabla \phi  \cdot \nu= \MagPot \cdot \nu =0$ on $\partial \Omega$, hence, $\phi$ must be a constant and the gauge transform reduces to $G_{\phi}(\sol,\MagPot) = ( \sol \, e^{\ci\kappa \phi} , \MagPot )$ on $H^1(\Omega) \times \HonenSpaceDiv(\Omega)$. Since minimization over divergence-free functions can be cumbersome in practice, the energy can be stabilized by a penalty term that depends on the divergence of a vector field. We therefore define
\begin{equation} \label{eq:energy_functional_stab}
\energystab(\sol,\MagPot) \, := \, \energy(\sol,\MagPot)
+ \frac{1}{2} \int_\Omega 
|\div \MagPot|^2
\dint{x}
\end{equation}
and observe that any minimizer of $\energystab$ must be also a minimizer of $\energy$ and, vice versa, any minimizer of $\energy$ is gauge-equivalent to a minimizer of $\energystab$ for which $\div \MagPot=0$ holds. In the following, we can therefore restrict our analysis to the minimization of the stabilized energy $\energystab$. The existence of minimizers was proved by Du et al. \cite{DuGP92} and we have the following result.
\begin{theorem}[{\cite[Thm.~3.8]{DuGP92}}] \label{thm:exsistence_of_miminizers_cont}
There exists at least one minimizer of the energy \eqref{eq:energy_functional_stab}, i.e., there is $(\sol,\MagPot) \in H^1(\Omega) \times \HonenSpace(\Omega) $ such that
\begin{align}
\label{minimization-problem}
(\sol,\MagPot) = \underset{(v,\mathbf{B}) \in H^1(\Omega) \times \HonenSpace(\Omega) }{\mbox{\normalfont arg\,min}} E(v,\mathbf{B}).
\end{align}
In particular, for any minimizer $(\sol,\MagPot)$ the vector potential $\MagPot$ satisfies $ \div \MagPot = 0 $,
and thus also minimizes \eqref{eq:energy_functional}. 

The result remains valid for external fields $\MagF \in \mathbf{L}^2(\Omega)$. 
\end{theorem}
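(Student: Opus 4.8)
The plan is to establish existence via the direct method in the calculus of variations, working with the stabilized energy $\energystab$ on the space $H^1(\Omega) \times \HonenSpace(\Omega)$. The core of the argument rests on three ingredients: coercivity of $\energystab$ with respect to a suitable norm, weak sequential lower semicontinuity of $\energystab$, and compactness of minimizing sequences. First I would observe that $\energystab \geq 0$, so the infimum $m := \inf \energystab$ is a finite nonnegative real number, and we may select a minimizing sequence $(\sol_n, \MagPot_n)$ with $\energystab(\sol_n, \MagPot_n) \to m$.

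The key technical step is to extract uniform a-priori bounds on the minimizing sequence. The term $\frac{1}{2}(1 - |\sol|^2)^2$ controls $\|\sol_n\|_{L^4}$, and hence $\|\sol_n\|_{L^2}$, while the kinetic term $\frac{1}{2}\int_\Omega |\kappainvci \nabla \sol_n + \MagPot_n \sol_n|^2$ together with these yields a bound on $\|\nabla \sol_n\|_{L^2}$ once $\MagPot_n$ is controlled. The magnetic contributions are the delicate part: the Maxwell term $\int_\Omega |\curl \MagPot_n - \MagF|^2$ bounds $\|\curl \MagPot_n\|_{L^2}$, and the added stabilization $\frac{1}{2}\int_\Omega |\div \MagPot_n|^2$ bounds $\|\div \MagPot_n\|_{L^2}$. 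I would then invoke a Gaffney-type inequality or the equivalence of norms on $\HonenSpace(\Omega)$ — valid precisely because of the vanishing normal trace encoded in \eqref{eq:def_H1n} — to conclude that $\curl$, $\div$, and the $L^2$-norm together control the full $\mathbf{H}^1$-norm of $\MagPot_n$. This gives uniform boundedness of $(\sol_n, \MagPot_n)$ in $H^1(\Omega) \times \HonenSpace(\Omega)$.

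By reflexivity we pass to a weakly convergent subsequence $(\sol_n, \MagPot_n) \rightharpoonup (\sol, \MagPot)$, and by the Rellich--Kondrachov theorem the embedding $H^1(\Omega) \hookrightarrow L^4(\Omega)$ is compact, so $\sol_n \to \sol$ strongly in $L^4$. Strong $L^4$ convergence is exactly what is needed to pass to the limit in the quartic term $\int_\Omega (1-|\sol_n|^2)^2$ and in the coupling term $\MagPot_n \sol_n$; the convex quadratic terms are weakly lower semicontinuous by standard arguments. Assembling these yields $\energystab(\sol, \MagPot) \leq \liminf \energystab(\sol_n, \MagPot_n) = m$, so $(\sol, \MagPot)$ is a minimizer. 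Finally, to obtain $\div \MagPot = 0$ for every minimizer, I would use the gauge-equivalence discussion preceding the theorem: any minimizer of $\energystab$ minimizes $\energy$, and the penalty structure forces the divergence-free condition, so a minimizer of $\energystab$ also minimizes \eqref{eq:energy_functional}. The main obstacle I anticipate is the magnetic coercivity step — ensuring that control of $\curl \MagPot_n$ and $\div \MagPot_n$ genuinely recovers the full gradient bound on the boundary-constrained space — since without the normal-trace condition the kernel of $(\curl, \div)$ is nontrivial and the estimate fails.
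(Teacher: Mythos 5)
The paper does not prove this theorem itself: it is imported from \cite[Thm.~3.8]{DuGP92}, and your direct-method argument is essentially the proof of that reference; moreover, the ingredients you use (the $L^4$ bound extracted from the quartic term, the $\curl$/$\div$ bounds from the Maxwell and penalty terms, the gauge argument for $\div \MagPot = 0$) are exactly the ones the paper recycles in its own stability results, Lemma~\ref{lem:sol_bounds_ex_1} and Lemma~\ref{lem:sol_bounds_approx}.

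One step in your write-up should be tightened, namely the magnetic coercivity. You invoke a Gaffney-type inequality in the form ``$\curl$, $\div$ and the $L^2$-norm together control the $\mathbf{H}^1$-norm'' and from it conclude boundedness of $\MagPot_n$ in $\mathbf{H}^1$; but the energy provides no direct control of $\norm{L^2}{\MagPot_n}$, so as literally written the $L^2$ input of that inequality is missing. What closes this hole --- and what the paper itself uses in the proof of Lemma~\ref{lem:sol_bounds_ex_1} --- is the stronger fact that on a cuboid (simply connected, with connected boundary) the pair $(\curl,\div)$ has trivial kernel on $\HonenSpace(\Omega)$, so that $\norm{L^2}{\mathbf{B}} \lesssim \norm{L^2}{\curl \mathbf{B}} + \norm{L^2}{\div \mathbf{B}}$ by \cite[Lemma~3.6]{GirR86}, and then $\norm{H^1}{\mathbf{B}} \lesssim \norm{L^2}{\curl \mathbf{B}} + \norm{L^2}{\div \mathbf{B}}$ by \cite[Theorem~3.9]{GirR86}, with no $L^2$ term needed on the right-hand side. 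Your closing remark about the kernel of $(\curl,\div)$ under the normal-trace condition shows you have the correct mechanism in mind; it just needs to be promoted from an anticipated obstacle to the actual estimate. With that repair, the remaining steps (Rellich for the quartic and coupling terms, weak lower semicontinuity of the quadratic terms, weak closedness of $\HonenSpace(\Omega)$, and the gauge/penalty argument forcing $\div \MagPot = 0$ and minimality of $\energy$) are correct and also cover the case $\MagF \in \mathbf{L}^2(\Omega)$, since only the $L^2$-norm of $\MagF$ enters.
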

As discussed above, the modification of the energy functional from $\energy$ to $\energystab$ restricts the gauge transforms to divergence-free vector fields, which in turn implies that the gauge transforms $G_{\phi}(\sol,\MagPot)$ in \eqref{def-gauge-transform} are only admissible if $\phi$ is a constant real number. Consequently, we call $(\sol,\MagPot) \in H^1(\Omega) \times \HonenSpace(\Omega)$ {\it gauge equivalent} to $(v,\mathbf{B}) \in H^1(\Omega) \times \HonenSpace(\Omega)$ for $\energystab$, if and only if $\MagPot=\mathbf{B}$ and 
\begin{eqnarray}
\label{gauge-phase-shift}
v = u \, e^{\ci \omega}\,\,\, \mbox{ for some } \omega \in [-\pi , \pi ).
\end{eqnarray}
Note that $\omega$ corresponds to $\kappa \phi$ in \eqref{def-gauge-transform}.

\subsection{\Frechet derivatives and stability bounds}
Crucial components of our error analysis are the derivatives of the energy and corresponding first- and second-order conditions for minimizers. In the following, we start with summarizing the arising \Frechet derivatives of $\energystab$, where we refer to \cite[Section~3.3]{DuGP92}.
\begin{lemma}\label{lem:Frechet_der_1}
Let $\energystab$ denote the energy functional given by \eqref{eq:energy_functional_stab}, then $\energystab$ is (infinitely) \Frechet differentiable where, for any $(\sol,\MagPot) \in H^1(\Omega) \times \HonenSpace(\Omega)$, the first partial derivatives 
\begin{eqnarray*}
\duenergy(\sol,\MagPot): H^1(\Omega) \rightarrow \R \quad \mbox{and}  \quad
\dAenergy(\sol,\MagPot) : \HonenSpace(\Omega) \rightarrow \R
\end{eqnarray*}
are respectively given by 
\begin{align}  	\label{eq:ginzburg-landau-eqn}
\duenergy(\sol,\MagPot) \varphi &= 
	\Real  \int_\Omega \bigl( \kappainvci  \nabla \sol +   \MagPot \sol \bigr)  \cdot \bigl(\kappainvci  \nabla \testfun +  \MagPot \testfun \bigr)^*  
	+
	 \bigl( |\sol|^2 -1 \bigr)  \sol \testfun^* 
	\dint{x}, 
\\
	\dAenergy(\sol,\MagPot) \mathbf{B} &= 	\int_\Omega 
	 |\sol|^2 \MagPot \cdot \mathbf{B}  
	+
	\kappainv  \Real \bigl( \ci  \sol^* \nabla \sol \cdot \mathbf{B}  \bigr) 
	+
	\curl \MagPot \cdot \curl \mathbf{B}
	+
	\div \MagPot \cdot \div \mathbf{B} 
	-
	\mathbf{H} \cdot \curl \mathbf{B}
		\dint{x} 
\end{align}
for $\testfun \in H^1(\Omega)$ and $\mathbf{B} \in \HonenSpace(\Omega)$.
\end{lemma}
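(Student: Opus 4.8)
The plan is to exploit the real-Hilbert-space structure set up in the notation: since $\energystab$ is real-valued but depends on $\sol$ non-holomorphically (through $|\sol|^2$ and through the real inner product $m$), it must be differentiated as a functional between \emph{real} Banach spaces, so that its derivatives are real-linear functionals. I would first settle the infinite differentiability by a soft argument. Inspecting \eqref{eq:energy_functional} and \eqref{eq:energy_functional_stab}, the integrand of $\energystab$ is a polynomial of degree at most four in the components of $(\sol, \nabla\sol, \MagPot, \curl\MagPot, \div\MagPot)$. Each monomial defines a bounded multilinear form on the appropriate product of $L^2$-spaces, except for the quartic term $\int_\Omega |\sol|^4 \dx$, whose continuity on $H^1(\Omega)$ follows from the Sobolev embedding $H^1(\Omega)\hookrightarrow L^4(\Omega)$ in three dimensions. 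Since a continuous polynomial (a finite sum of bounded symmetric multilinear forms evaluated on the diagonal) on a Banach space is automatically $C^\infty$ in the \Frechet sense, with derivatives obtained by polarization, this already yields that $\energystab$ is infinitely \Frechet differentiable and reduces the identification of the first derivatives to a computation of G\^ateaux derivatives.

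Next I would compute the two partial G\^ateaux derivatives by expanding in a real parameter $t$ and collecting the coefficient linear in $t$. For $\duenergy$, fix $\MagPot$ and expand $\energystab(\sol + t\testfun, \MagPot)$: the identity $|a + t b|^2 = |a|^2 + 2t\,\Real(a \cdot b^*) + t^2 |b|^2$ with $a = \kappainvci \nabla\sol + \MagPot\sol$ and $b = \kappainvci \nabla\testfun + \MagPot\testfun$ handles the covariant kinetic term, while for the potential term one uses $|\sol + t\testfun|^2 = |\sol|^2 + 2t\,\Real(\sol\testfun^*) + t^2|\testfun|^2$ together with the chain rule; collecting the $t^1$-coefficients and keeping track of the global factor $\tfrac{1}{2}$ in \eqref{eq:energy_functional} reproduces \eqref{eq:ginzburg-landau-eqn}. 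For $\dAenergy$, fix $\sol$ and expand $\energystab(\sol, \MagPot + t\mathbf{B})$; here the only subtlety is the coupled term, where I would again set $a = \kappainvci\nabla\sol + \MagPot\sol$ and perturb it by $t\,\mathbf{B}\sol$, so that the linear coefficient is $\Real \int_\Omega a \cdot (\mathbf{B}\sol)^* \dx$. Since $\mathbf{B}$ is real-valued, this splits as $\int_\Omega |\sol|^2 \MagPot \cdot \mathbf{B} \dx + \kappainv \Real \int_\Omega \ci\, \sol^* \nabla\sol \cdot \mathbf{B}\dx$, while the field term and the divergence penalty in \eqref{eq:energy_functional_stab} contribute $\int_\Omega (\curl\MagPot - \MagF)\cdot\curl\mathbf{B} \dx$ and $\int_\Omega \div\MagPot\cdot\div\mathbf{B}\dx$ respectively, matching the stated formula.

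Finally, I would verify that both resulting expressions are bounded real-linear functionals, so that the computed G\^ateaux derivatives are genuinely the \Frechet derivatives (consistent with the first step): real-linearity in $\testfun$ and in $\mathbf{B}$ is immediate from the structure, and boundedness follows from Cauchy--Schwarz together with $H^1(\Omega)\hookrightarrow L^4(\Omega)$ to control the terms involving $|\sol|^2\sol$ and $|\sol|^2\MagPot$. I expect the main obstacle to be conceptual rather than a single hard estimate, namely the careful bookkeeping forced by the real structure: one must consistently take real parts and complex conjugates and verify that the derivatives are only \emph{real}-linear (they fail to be complex-linear precisely because of the non-holomorphic dependence through $|\sol|^2$ and the real pairing $m$), which is exactly the feature that makes these derivatives well defined on $H^1(\Omega)$ in the first place.
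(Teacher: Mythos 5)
Your proposal is correct and takes essentially the approach the paper intends: the paper states this lemma without proof, referring to \cite{DuGP92}*{Section~3.3} and treating such derivations as straightforward calculations, and your real-parameter expansion combined with the observation that a continuous polynomial on a real Banach space is automatically $C^\infty$ is exactly that standard computation. The only (harmless) imprecision is your claim that $\int_\Omega |\sol|^4 \dx$ is the sole monomial needing the Sobolev embedding: the coupling terms $\int_\Omega \Real\bigl(\tfrac{\ci}{\kappa}\nabla \sol \cdot \MagPot\, \sol^*\bigr)\dx$ and $\int_\Omega |\MagPot|^2|\sol|^2 \dx$ are likewise not bounded on products of $L^2$-spaces and also require $H^1(\Omega)\hookrightarrow L^4(\Omega)$, which your final boundedness check in any case covers.
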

The first order conditions for minimizers (cf. \cite{Casas-trroeltsch-2015}) imply $\energystab^{\prime}(\sol,\MagPot)=0$ if $(\sol,\MagPot)$ fulfils \eqref{minimization-problem}. By splitting $\energystab^{\prime}(\sol,\MagPot)$ into $\duenergy(\sol,\MagPot) $ and $\dAenergy(\sol,\MagPot)$ we obtain the Ginzburg--Landau equations. For readability, we highlight this observation in the following lemma.
\begin{lemma}[Ginzburg--Landau equations]
\label{lemma-ginzburg-landau-eqns}
Let $(\sol,\MagPot) \in H^1(\Omega) \times \HonenSpace(\Omega)$ be a minimizer of problem \eqref{minimization-problem}. Then, it holds $\duenergy(\sol,\MagPot) =0$ and $\dAenergy(\sol,\MagPot)=0$. By expressing these identities in variational form, we obtain that $(\sol,\MagPot) \in H^1(\Omega) \times \HonenSpace(\Omega)$ solves the Ginzburg--Landau equations
\begin{eqnarray*}
	\Real  \int_\Omega \bigl( \kappainvci  \nabla \sol +   \MagPot \sol \bigr)  \cdot \bigl(\kappainvci  \nabla \testfun +  \MagPot \testfun \bigr)^*  
	+
	 \bigl( |\sol|^2 -1 \bigr)  \sol \testfun^* 
	\dint{x} &=& 0, 
\\
		\int_\Omega 
	 |\sol|^2 \MagPot \cdot \mathbf{B}  
	+
	\kappainv  \Real \bigl( \ci  \sol^* \nabla \sol \cdot \mathbf{B}  \bigr) 
	+
	\curl \MagPot \cdot \curl \mathbf{B}
	+
	\div \MagPot \cdot \div \mathbf{B} 
	-
	\mathbf{H} \cdot \curl \mathbf{B}
		\dint{x} &=& 0,
\end{eqnarray*}
for all $\testfun \in H^1(\Omega)$ and $\mathbf{B} \in \HonenSpace(\Omega)$.
\end{lemma}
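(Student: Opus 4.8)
The plan is to obtain the two equations as the first-order necessary (stationarity) conditions for the minimization problem \eqref{minimization-problem}. The structural fact that makes this clean is the convention, emphasized in the Notation section, that both $H^1(\Omega)$ and $\HonenSpace(\Omega)$ are treated as \emph{real} Hilbert spaces: thus $\energystab$ is a real-valued functional on a linear space, and the classical Fermat rule for an unconstrained minimum over a linear space applies directly. All the analytic work is already packaged in Lemma~\ref{lem:Frechet_der_1}, which guarantees that $\energystab$ is \Frechet differentiable and supplies the explicit formulas for the partial derivatives.

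First I would fix an arbitrary admissible test direction $(\testfun,\mathbf{B}) \in H^1(\Omega) \times \HonenSpace(\Omega)$ and consider the scalar map $t \mapsto \energystab(\sol + t\testfun,\, \MagPot + t\mathbf{B})$ on $\R$. Because $(\sol,\MagPot)$ minimizes $\energystab$ over the full product space and the perturbed pair remains admissible for every $t \in \R$ — here it is essential that $\HonenSpace(\Omega)$ is a linear subspace, so the vanishing-normal-trace constraint is preserved under $\MagPot + t\mathbf{B}$ — this scalar map attains a global minimum at $t = 0$. Its derivative at $t=0$ is exactly $\energystab'(\sol,\MagPot)$ evaluated in the direction $(\testfun,\mathbf{B})$, which exists by Lemma~\ref{lem:Frechet_der_1}; hence $\energystab'(\sol,\MagPot)(\testfun,\mathbf{B}) = 0$ for every admissible direction.

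Next I would use that, on a product space, the vanishing of the full derivative decouples into the vanishing of the two partial derivatives: taking $\mathbf{B} = 0$ isolates $\duenergy(\sol,\MagPot)\testfun = 0$ for all $\testfun \in H^1(\Omega)$, and taking $\testfun = 0$ isolates $\dAenergy(\sol,\MagPot)\mathbf{B} = 0$ for all $\mathbf{B} \in \HonenSpace(\Omega)$. Substituting the explicit expressions for $\duenergy(\sol,\MagPot)$ and $\dAenergy(\sol,\MagPot)$ from Lemma~\ref{lem:Frechet_der_1} then reproduces verbatim the two claimed variational identities.

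There is no genuine obstacle in the argument once Lemma~\ref{lem:Frechet_der_1} is available; the single point that needs care is the real-linear interpretation of the function spaces. Worked with the complex inner product, $\energystab$ would fail to be complex-differentiable — it depends on $|\sol|^2$ and on $\sol^*$ — and the naive stationarity condition would be ill-posed. Reading $H^1(\Omega)$ as a real Hilbert space is precisely what legitimizes the directional-derivative computation above and guarantees that the resulting functionals $\duenergy(\sol,\MagPot)$ and $\dAenergy(\sol,\MagPot)$ are real-linear, as required for the equations to be meaningful.
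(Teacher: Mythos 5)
Your proof is correct and takes essentially the same route as the paper: the paper simply invokes the first-order optimality condition $\energystab'(\sol,\MagPot)=0$ (citing Casas--Tr\"oltzsch) and splits it into the two partial derivatives from Lemma~\ref{lem:Frechet_der_1}, which is exactly the Fermat-rule argument you spell out, including the decoupling via the directions $(\testfun,\mathbf{0})$ and $(0,\mathbf{B})$. Your remark on the real-Hilbert-space interpretation being what makes the directional derivative meaningful matches the point the paper makes in its Notation section.
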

Next, we turn to the second partial \Frechet derivatives of $\energystab$ which we later require for second order minimality conditions. The second derivatives of $\energystab$ are given as follows.
\begin{lemma} \label{lem:Frechet_der_2}
Let $\energystab$ be given by \eqref{eq:energy_functional_stab}. For $(\sol,\MagPot) \in H^1(\Omega) \times \HonenSpace(\Omega)$ we denote the second order partial \Frechet derivatives by
\begin{eqnarray*}
\langle \dutwoenergy(\sol,\MagPot) \,\,\cdot\,\,,  \varphi \rangle &\coloneqq&  \frac{\partial}{\partial \sol} ( \duenergy(\sol,\MagPot) \varphi ) :  H^1(\Omega) \rightarrow \R,\\
\langle \dAdutwoenergy(\sol,\MagPot) \,\,\cdot\,\,,    \mathbf{B} \rangle &\coloneqq&  \frac{\partial}{\partial\sol} ( \dAenergy(\sol,\MagPot)  \mathbf{B} )  :  H^1(\Omega) \rightarrow \R, \\
\langle \dudAtwoenergy(\sol,\MagPot) \,\,\cdot\,\,,   \varphi \rangle &\coloneqq&  \frac{\partial}{\partial\MagPot} ( \duenergy(\sol,\MagPot) \varphi )  :  \HonenSpace(\Omega) \rightarrow \R, \\
\langle \dAtwoenergy(\sol,\MagPot) \,\,\cdot\,\,,   \mathbf{B} \rangle &\coloneqq&  \frac{\partial}{\partial\MagPot} ( \dAenergy(\sol,\MagPot) \mathbf{B} )  :  \HonenSpace(\Omega) \rightarrow \R,
\end{eqnarray*}
where $\testfun \in H^1(\Omega)$ and $\mathbf{B} \in \HonenSpace(\Omega)$. For $\psi \in H^1(\Omega)$ and $\mathbf{C} \in \HonenSpace(\Omega)$ the derivatives are given by
	\begin{align}  
		\dualp{\dutwoenergy(\sol,\MagPot) \psi}{\varphi}  
		&= 	\Real  \int_\Omega 
		\bigl(	\kappainvci  \nabla \testfun +   \MagPot \testfun \bigr)  \cdot \bigl( \kappainvci  \nabla \psi +   \MagPot \psi \bigr)^*  
		+
		\bigl( |\sol|^2 -1 \bigr)  \testfun \psi^* + \sol^2 \testfun^* \psi^* + |\sol|^2 \testfun \psi^* 
		\dint{x},\\
		\dualp{\dAtwoenergy(\sol,\MagPot) \mathbf{B}}{ \mathbf{C} } &= \int_\Omega 
		|\sol|^2 \mathbf{C} \cdot \mathbf{B}  
		+
		\curl \mathbf{C} \cdot \curl \mathbf{B} 
		+
		\div \mathbf{C} \cdot \div \mathbf{B} 
		\dint{x},\\ 
		\dualp{\dudAtwoenergy(\sol,\MagPot) \mathbf{B}}{\testfun} &= \int_\Omega 
		2     \Real (\sol \testfun^* ) \MagPot \cdot \mathbf{B}  
		+
		\kappainv  \Real \bigl( \ci  
		\sol^* \nabla \testfun
		+
		\ci \testfun^* \nabla \sol \bigr) 
		\cdot \mathbf{B} 
		\dint{x}
	\end{align}
and $\dualp{\dudAtwoenergy(\sol,\MagPot) \mathbf{B}}{\testfun}=\dualp{\dAdutwoenergy(\sol,\MagPot) \testfun}{\mathbf{B}}$.
\end{lemma}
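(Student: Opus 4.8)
The plan is to obtain all four second derivatives by differentiating once more the first derivatives already recorded in Lemma~\ref{lem:Frechet_der_1}. Since $\energystab$ is (infinitely) \Frechet differentiable, it suffices to compute, in each case, the Gateaux derivative of the appropriate first derivative in the prescribed direction; these agree with the \Frechet derivatives by smoothness, and every resulting functional is real-linear because we regard the spaces as real Hilbert spaces.

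For $\dutwoenergy$ I would differentiate $\duenergy(\sol,\MagPot)\testfun$ with respect to $\sol$ in a direction $\psi$, i.e.\ evaluate $\frac{d}{dt}\big|_{t=0}\duenergy(\sol+t\psi,\MagPot)\testfun$. The quadratic first term $\Real\int_\Omega(\kappainvci\nabla\sol+\MagPot\sol)\cdot(\kappainvci\nabla\testfun+\MagPot\testfun)^*\dint{x}$ depends linearly on $\sol$ through its first factor only, and therefore contributes $\Real\int_\Omega(\kappainvci\nabla\psi+\MagPot\psi)\cdot(\kappainvci\nabla\testfun+\MagPot\testfun)^*\dint{x}$, which coincides with the stated gradient term after using $\Real(a\cdot b^*)=\Real(b\cdot a^*)$. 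The nonlinear term requires the product rule applied to $|\sol|^2\sol=\sol\,\sol^*\,\sol$, whose directional derivative equals $2|\sol|^2\psi+\sol^2\psi^*$; together with the $-\sol$ contribution and the identity $2|\sol|^2-1=(|\sol|^2-1)+|\sol|^2$, this reorganizes into the three nonlinear terms $(|\sol|^2-1)\testfun\psi^*+\sol^2\testfun^*\psi^*+|\sol|^2\testfun\psi^*$ appearing in the claim.

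For $\dAtwoenergy$ the computation is the most direct: differentiating $\dAenergy(\sol,\MagPot)\mathbf{B}$ with respect to $\MagPot$ in a direction $\mathbf{C}$, the three $\MagPot$-linear terms $|\sol|^2\MagPot\cdot\mathbf{B}$, $\curl\MagPot\cdot\curl\mathbf{B}$, and $\div\MagPot\cdot\div\mathbf{B}$ simply contribute their integrands with $\MagPot$ replaced by $\mathbf{C}$, while the $\sol$- and $\MagF$-dependent terms are independent of $\MagPot$ and drop out, yielding exactly the stated expression. For the mixed derivative $\dudAtwoenergy$ I would differentiate $\duenergy(\sol,\MagPot)\testfun$ with respect to $\MagPot$ in a direction $\mathbf{B}$. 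Here the first term is \emph{quadratic} in $\MagPot$, so the product rule produces two contributions; after expanding the conjugates, the two $\MagPot$-containing pieces combine into $2\Real(\sol\testfun^*)\MagPot\cdot\mathbf{B}$, while the $\kappainvci$-cross terms produce the gradient part.

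The step I expect to require the most care is this last reorganization of the mixed derivative: one must track the factor $\ci/\kappa$ and the complex conjugation precisely, using $\Real(\ci z)=-\Imag z$ together with $\Imag(\sol^*w)=-\Imag(\sol w^*)$, in order to verify that the cross terms indeed collapse to $\kappainv\Real(\ci\sol^*\nabla\testfun+\ci\testfun^*\nabla\sol)\cdot\mathbf{B}$. Finally, the symmetry $\dualp{\dudAtwoenergy(\sol,\MagPot)\mathbf{B}}{\testfun}=\dualp{\dAdutwoenergy(\sol,\MagPot)\testfun}{\mathbf{B}}$ follows from the symmetry of second \Frechet derivatives of the $C^2$ functional $\energystab$; alternatively it can be checked directly by differentiating $\dAenergy(\sol,\MagPot)\mathbf{B}$ with respect to $\sol$ in direction $\testfun$, where $|\sol|^2\MagPot\cdot\mathbf{B}$ yields $2\Real(\sol\testfun^*)\MagPot\cdot\mathbf{B}$ and $\kappainv\Real(\ci\sol^*\nabla\sol\cdot\mathbf{B})$ yields the same gradient part, confirming that both expressions agree.
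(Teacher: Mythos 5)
Your proposal is correct and follows the same route as the paper, which simply states that the result ``follows with straightforward calculations'': you differentiate the first partial derivatives from Lemma~\ref{lem:Frechet_der_1} once more, exploiting real-linearity, the product rule for $|\sol|^2\sol$ and for the $\MagPot$-quadratic term, and the identities $\Real(z)=\Real(z^*)$ and $\Real(\ci z)=-\Imag z$ to reorganize the conjugates. Your computations (including the verification of the symmetry $\dualp{\dudAtwoenergy(\sol,\MagPot)\mathbf{B}}{\testfun}=\dualp{\dAdutwoenergy(\sol,\MagPot)\testfun}{\mathbf{B}}$ both via smoothness of $\energystab$ and by direct differentiation of $\dAenergy$) check out and in fact supply the details the paper leaves implicit.
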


The proof follows with straightforward calculations.

In order to quantify the $\kappa$-dependencies in our error estimates, we also require suitable stability estimates for the minimzers $(\sol,\MagPot)$ of the energy \eqref{eq:energy_functional_stab}. For this, we use the following $\kappa$-weighted norms throughout the paper:
\begin{subequations}\label{eq:def_norms}
\begin{eqnarray} 
	\Honekappa{\varphi}^2 &=& \kappa^{-2} \norm{L^2}{\nabla \varphi }^2 + \norm{L^2}{\varphi}^2,
	\hspace{30pt}
	 \Htwokappa{\varphi}^2\,\,\,=\,\,\, \kappa^{-2} \Honekappa{ \nabla \varphi }^2 +  \norm{L^2}{\varphi}^2 ,
	\\
	\norm{H^1}{ \mathbf{B} }^2 &=& \norm{ \boldL^2}{ \mathbf{B} }^2 + \norm{\boldL^2}{\nabla \mathbf{B} }^2,
	\hspace{43pt}
	 \norm{H^2}{ \mathbf{B} }^2 \,\,\,=\,\,\, \norm{\boldL^2}{D^2 \mathbf{B} }^2 + \norm{H^1}{\mathbf{B}}^2.
\end{eqnarray}
\end{subequations}
Here we formally define the norms that involve derivatives of the (vector-valued) functions $\mathbf{B}$ as $\norm{\boldL^2}{\nabla \mathbf{B} }^2=\sum\limits_{i,k=1}^3 \norm{L^2}{ \partial_{x_i} \mathbf{B}_j }^2$ and $ \norm{\boldL^2}{D^2 \mathbf{B} }^2=\sum\limits_{i,j,k=1}^3 \norm{L^2}{ \partial_{x_ix_j} \mathbf{B}_k }^2$.

\begin{lemma}[Stability bounds] \label{lem:sol_bounds_ex_1}
Let $(\sol,\MagPot) \in H^1(\Omega) \times \HonenSpace(\Omega)$ be a minimizer of problem \eqref{minimization-problem} in Theorem~\ref{thm:exsistence_of_miminizers_cont}.
 Then, the following stability bounds hold
	\begin{align}
		|\sol| \leq 1 \, \text{a.e.},\quad
		\norm{L^2}{\tfrac{1}{\kappa} \nabla \sol} \lesssim \norm{L^2}{\sol} + \norm{L^2}{\MagPot \sol}, \quad
		\Honekappa{\sol} \lesssim 1 +  \norm{\boldL^2}{\MagF}, \quad
		%
		%
	\norm{H^1}{\MagPot}
		 &\lesssim 1 + \norm{\boldL^2}{\MagF}.
	\end{align}
\end{lemma}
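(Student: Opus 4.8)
The plan is to prove the four bounds as a cascade: first $(i)$ $|\sol|\le 1$ a.e., then $(ii)$ the gradient estimate for $\sol$, then $(iii)$ the $\mathbf H^1$-bound for $\MagPot$, and finally $(iv)$ the $\Honekappa{\cdot}$-bound, each step feeding into the next. The only genuinely nonlinear ingredient is the pointwise bound $(i)$, and I expect it together with the div--curl estimate in $(iii)$ to be the two delicate points; everything else is elementary testing and the triangle inequality.

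For $(i)$ I would argue by truncation. Set $\tilde\sol := \sol\,\min\{1,|\sol|^{-1}\}$, the pointwise radial projection of $\sol$ onto the closed unit disc; then $\tilde\sol\in H^1(\Omega)$ and $(\tilde\sol,\MagPot)$ is an admissible competitor. Decomposing $\sol$ into modulus and phase shows that the kinetic density splits as $|\kappainvci\nabla\sol+\MagPot\sol|^2 = \tfrac{1}{\kappa^2}|\nabla|\sol||^2 + |\sol|^2\,|\MagPot-\tfrac1\kappa\nabla\theta|^2$, and radial truncation does not increase either summand pointwise a.e.\ (a diamagnetic-type inequality $|\kappainvci\nabla\tilde\sol+\MagPot\tilde\sol|\le|\kappainvci\nabla\sol+\MagPot\sol|$), while it strictly decreases the potential $(1-|\sol|^2)^2$ on $\{|\sol|>1\}$. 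Hence $\energystab(\tilde\sol,\MagPot)\le\energystab(\sol,\MagPot)$, with strict inequality unless $\{|\sol|>1\}$ is a null set, so minimality forces $|\sol|\le1$ a.e. Establishing this diamagnetic inequality cleanly on all of $H^1$ (rather than only where the phase is smooth) is the technical heart of this step.

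For $(ii)$ I would test the first Ginzburg--Landau equation of Lemma~\ref{lemma-ginzburg-landau-eqns} with $\testfun=\sol$. Both resulting integrands are real, giving the identity $\norm{L^2}{\kappainvci\nabla\sol+\MagPot\sol}^2 = \int_\Omega(1-|\sol|^2)|\sol|^2\dint{x}$, whose right-hand side is $\le\norm{L^2}{\sol}^2$ by $(i)$. Writing $\kappainvci\nabla\sol = (\kappainvci\nabla\sol+\MagPot\sol)-\MagPot\sol$ and using $|\kappainvci\nabla\sol|=\tfrac1\kappa|\nabla\sol|$, the triangle inequality yields the second bound. For $(iii)$ I would compare the minimizer with the trivial competitor $(0,\mathbf 0)$: since $\energystab$ is a sum of nonnegative terms and $\energystab(0,\mathbf 0)\lesssim 1+\norm{\boldL^2}{\MagF}^2$, each term of $\energystab(\sol,\MagPot)$ is bounded by $1+\norm{\boldL^2}{\MagF}^2$; in particular $\norm{\boldL^2}{\curl\MagPot-\MagF}\lesssim 1+\norm{\boldL^2}{\MagF}$, hence $\norm{\boldL^2}{\curl\MagPot}\lesssim 1+\norm{\boldL^2}{\MagF}$. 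Because $\div\MagPot=0$ (Theorem~\ref{thm:exsistence_of_miminizers_cont}) and $\MagPot\cdot\nu=0$ on $\partial\Omega$, I would invoke the Friedrichs/Gaffney inequality on the convex cuboid, $\norm{\boldL^2}{\nabla\MagPot}\lesssim\norm{\boldL^2}{\curl\MagPot}+\norm{\boldL^2}{\div\MagPot}$, together with the Poincaré inequality for the curl $\norm{\boldL^2}{\MagPot}\lesssim\norm{\boldL^2}{\curl\MagPot}$ (valid since the cuboid is simply connected, so the only curl- and divergence-free field with vanishing normal trace is $\mathbf 0$), to obtain $\norm{H^1}{\MagPot}\lesssim 1+\norm{\boldL^2}{\MagF}$. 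Securing these div--curl estimates with a $\kappa$-independent constant on the non-smooth but convex domain is the second delicate point.

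Finally, for $(iv)$ I combine the previous steps: by $(i)$ we have $\norm{L^2}{\sol}^2\le|\Omega|\lesssim1$ and $\norm{L^2}{\MagPot\sol}\le\norm{\boldL^2}{\MagPot}$, so $(ii)$ and $(iii)$ give $\norm{L^2}{\tfrac1\kappa\nabla\sol}\lesssim 1+\norm{\boldL^2}{\MagF}$; hence $\Honekappa{\sol}^2 = \norm{L^2}{\tfrac1\kappa\nabla\sol}^2+\norm{L^2}{\sol}^2 \lesssim (1+\norm{\boldL^2}{\MagF})^2$, which is the remaining claim. Thus only the truncation/diamagnetic argument in $(i)$ and the Gaffney-type estimate in $(iii)$ require care, both being classical but needing to be invoked attentively on the convex cuboid.
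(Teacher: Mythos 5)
Your proposal is correct and follows essentially the same route as the paper: comparison of the energy with the trivial competitor $(0,\mathbf{0})$, the Girault--Raviart/Gaffney div--curl estimates for $\MagPot$ on the cuboid, testing the first Ginzburg--Landau equation with $\testfun=\sol$, and triangle inequalities. The only differences are that you re-derive the pointwise bound $|\sol|\le 1$ via the standard truncation/diamagnetic argument where the paper simply cites \cite[Prop.~3.11]{DuGP92} (your truncation is precisely the proof of that cited result), and that you obtain the $\Honekappa{\sol}$-bound by routing through the Euler--Lagrange identity rather than directly through the kinetic-energy bound --- an immaterial reordering.
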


\begin{proof}
	The pointwise bound
	$\norm{L^\infty}{\sol} \leq 1$ 
	is shown in \cite[Prop.~3.11]{DuGP92}.
	Next, note that
	\begin{equation}
		\energystab(0,\mathbf{0}) = \tfrac{1}{2} \vol(\Omega) + \tfrac{1}{2} \norm{\boldL^2}{\MagF}^2,
	\end{equation}
	and thus for any minimizer it holds
	\begin{equation}
	\norm{\boldL^2}{\curl \MagPot - \MagF}^2 + \norm{\boldL^2}{\div \MagPot}^2 \lesssim 1 + \norm{\boldL^2}{\MagF}^2.
	\end{equation}
	At the same time, basic manipulations give us 
	\begin{equation}
	\norm{\boldL^2}{\curl \MagPot}^2 \lesssim \norm{\boldL^2}{\curl \MagPot - \MagF}^2 + \norm{\boldL^2}{ \MagF}^2.
	\end{equation}
	Combining the two estimates yields $\norm{\boldL^2}{\curl \MagPot}+\norm{L^2}{\div \MagPot} \lesssim 1 + \norm{\boldL^2}{\MagF}$.
	Using \cite[Lemma~3.6]{GirR86} gives the $L^2$ bound,
	and \cite[Theorem~3.9]{GirR86} the $H^1$ bound on $\MagPot$.
	
	Further, we obtain with $|\sol| \leq 1$ the estimate
	\begin{equation}
		\norm{L^2}{\tfrac{1}{\kappa} \nabla u} \leq \norm{L^2}{\tfrac{1}{\kappa} \nabla \sol + \ci   \MagPot \sol }  +  \norm{L^2}{\MagPot \sol}
		\lesssim 1 +  \norm{L^2}{\MagF} 
		+  \norm{L^2}{\MagPot}
		\lesssim 1 +  \norm{L^2}{\MagF}.
	\end{equation}
It remains to prove the second estimate, which we obtain with Lemma \ref{lemma-ginzburg-landau-eqns} as
\begin{eqnarray*}
\int_{\Omega} | \tfrac{\ci}{\kappa}  \nabla \sol +   \MagPot \sol |^2 \dx 
= \int_{\Omega} ( 1- |\sol|^2) \, |\sol|^2  \dx  \le \| \sol \|_{L^2}^2.
\end{eqnarray*}
The estimate now follows with $\norm{L^2}{\tfrac{1}{\kappa} \nabla u} \leq \norm{L^2}{\tfrac{1}{\kappa} \nabla \sol + \ci   \MagPot \sol }  +  \norm{L^2}{\MagPot \sol} \le \| \sol \|_{L^2} +  \norm{L^2}{\MagPot \sol}$.
\end{proof}

\subsection{Higher order regularity of the minimizers}
Next, we will investigate the higher order regularity of minimizers together with corresponding $\kappa$-explicit regularity bounds. 

Concerning the vector potential $\MagPot$, we can characterize it as the solution $\MagPotgen \in \HonenSpaceDiv(\Omega)$ to a problem of the following form
\begin{subequations} \label{eq:curlcurl_prob}
\begin{equation}\label{eq:curlcurl_var_form}
	\int_\Omega \curl \MagPotgen \cdot \curl \mathbf{B} \,
	\dint{x} 
	=
	\int_\Omega \mathbf{F} \cdot  \mathbf{B}  +  \MagF \cdot \curl \mathbf{B}  \dint{x}  ,
	\quad \text{for all } \mathbf{B} \in \HonenSpace(\Omega) \,,
\end{equation}
where $\mathbf{F} \in L^2(\Omega;\R^3)$ can be read of Lemma~\ref{lemma-ginzburg-landau-eqns}.
Problem \eqref{eq:curlcurl_var_form} corresponds to the weak form of
\begin{equation}\label{eq:curlcurl_strong_form}
\Delta \MagPotgen = \mathbf{F} + \curl \MagF,
\qquad
\MagPotgen \cdot \nu |_{\partial \Omega} = 0,
\quad
\curl \MagPotgen \times  \nu |_{\partial \Omega} = \MagF \times  \nu |_{\partial \Omega}.
\end{equation}
\end{subequations}
For simplicity, we restrict ourselves from now on to homogeneous boundary conditions for the external magnetic field, i.e., following \cite{DuGP92}, we assume $\MagF \in \Hcurl$ with the (well-defined) traces
\begin{subequations} \label{eq:prop_H_for_Regul}
\begin{align+} 
\MagF \times  \nu |_{\partial \Omega} &= 0 \,,
\label{eq:prop_H_for_Regul_a}
 \\
\curl \MagF \cdot  \nu |_{\partial \Omega} &= 0 \,.
\label{eq:prop_H_for_Regul_b}
\end{align+}
\end{subequations}
The regularity of solutions to problem \eqref{eq:curlcurl_var_form} is presented in the following theorem. The first part is a direct consequence of \cite[Lemma~3.7]{HocJS15}. The proof of the second part is given in Appendix~\ref{sec:H2_reg_A}.

\begin{theorem} \label{thm:reg_MagPot_H2_H3_gen}
Let $\MagPotgen \in \HonenSpaceDiv(\Omega)$ be the solution of
\eqref{eq:curlcurl_var_form}
with  $\mathbf{F} \in L^2(\Omega;\R^3)$.

(a) If $\MagF$ satisfies \eqref{eq:prop_H_for_Regul_a}, then $\MagPotgen \in \mathbf{H}^2(\Omega)$ with
\begin{equation}
	\norm{H^2}{\MagPotgen} \lesssim \norm{L^2}{ \mathbf{F} } + \norm{L^2}{\curl \MagF} .
\end{equation}

(b) If in addition $\curl \MagF \in \mathbf{H}^1(\Omega)$ satisfies 
 \eqref{eq:prop_H_for_Regul_b}
 and $\mathbf{F} \cdot  \nu |_{\partial \Omega} = 0$,
then  $\MagPotgen \in \mathbf{H}^3(\Omega)$ with
\begin{equation}
	\norm{H^3}{\MagPotgen} \lesssim \norm{H^1}{ \mathbf{F} } + \norm{H^1}{\curl \MagF} .
\end{equation}
The hidden constants only depend on the domain $\Omega$.
\end{theorem}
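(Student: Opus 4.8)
The statement (a) is a direct consequence of \cite[Lemma~3.7]{HocJS15}, so the only genuine work is the proof of (b); I first record the structure that both parts share. Since $\div \MagPotgen = 0$, the vector identity $\curl\curl = -\Delta + \nabla\div$ turns \eqref{eq:curlcurl_var_form} into the componentwise Poisson system $\Delta (\MagPotgen)_i = -(\mathbf{F}+\curl\MagF)_i$, and I would read off the boundary conditions face by face. On a face of the cuboid with unit normal $\nu = \pm e_k$, the condition $\MagPotgen\cdot\nu = 0$ forces the normal component $(\MagPotgen)_k$ to vanish (a homogeneous Dirichlet condition), while $\curl\MagPotgen\times\nu = \MagF\times\nu = 0$ (using \eqref{eq:prop_H_for_Regul_a}) together with $(\MagPotgen)_k = 0$ reduces, after a short cross-product computation, to $\partial_k(\MagPotgen)_j = 0$ for the two tangential indices $j\neq k$ (a homogeneous Neumann condition). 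Thus each scalar component $(\MagPotgen)_i$ solves a Poisson problem that is Dirichlet on the two faces orthogonal to $e_i$ and Neumann on the remaining four faces.

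For (b) my plan is to exploit this mixed boundary structure on the box by even/odd reflection, which avoids any appeal to boundary regularity at the edges and corners where $\partial\Omega$ fails to be smooth. I would extend each component $(\MagPotgen)_i$ across the faces by the reflection that is odd in the coordinate $x_i$ (matching its Dirichlet faces) and even in the two coordinates $x_j$, $j\neq i$ (matching its Neumann faces); since reflections in distinct coordinate directions commute, this yields a well-defined function $\widetilde{(\MagPotgen)_i}$ that is periodic on the torus obtained from the doubled box. Extending the right-hand side $(\mathbf{F}+\curl\MagF)_i$ by the same parities produces a periodic datum $\tilde g_i$, and a direct check — using that part (a) already provides $\MagPotgen\in\mathbf{H}^2(\Omega)$, so the first-order traces exist and the matching conditions hold — shows that $\widetilde{(\MagPotgen)_i}$ is a weak solution of $\Delta\,\widetilde{(\MagPotgen)_i} = -\tilde g_i$ on the whole torus, with no spurious surface contributions on the reflection planes.

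It then remains to apply standard elliptic regularity on the boundaryless torus: if $\tilde g_i\in H^1$, the periodic solution lies in $H^3$ with $\norm{H^3}{\widetilde{(\MagPotgen)_i}}\lesssim \norm{H^1}{\tilde g_i}$, and restricting to $\Omega$ and summing over $i$ gives $\norm{H^3}{\MagPotgen}\lesssim \norm{H^1}{\mathbf{F}} + \norm{H^1}{\curl\MagF}$. This is exactly the point where the additional hypotheses of (b) enter: the even reflections keep an $H^1$ function in $H^1$ automatically, but the odd reflection of $(\mathbf{F}+\curl\MagF)_i$ across the faces orthogonal to $e_i$ remains in $H^1$ only if that component has vanishing trace there, which is guaranteed precisely by $\mathbf{F}\cdot\nu|_{\partial\Omega}=0$ and $\curl\MagF\cdot\nu|_{\partial\Omega}=0$ from \eqref{eq:prop_H_for_Regul_b}.

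I expect the main obstacle to be the bookkeeping of the reflection step: verifying that the componentwise parities are mutually consistent across all three pairs of faces and, in particular, at the edges and corners of the cuboid, and confirming that $\tilde g_i$ genuinely lies in $H^1$ of the torus rather than merely being continuous. The $\mathbf{H}^2$-regularity from (a) is essential here, since it is what makes the traces of the first derivatives well defined and lets the Dirichlet/Neumann matching conditions be read off rigorously; without it the odd/even extensions could only be asserted in $H^1$ and the gain to $H^3$ would not close.
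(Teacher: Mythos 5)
Your proposal is correct and follows essentially the same route as the paper: the paper likewise reduces to componentwise Poisson problems with Dirichlet conditions on the faces orthogonal to $e_i$ and Neumann conditions on the remaining faces, extends by odd/even parity reflections (using the $\mathbf{H}^2$-regularity from part (a) to justify the trace matching, and the conditions $\mathbf{F}\cdot\nu|_{\partial\Omega}=\curl\MagF\cdot\nu|_{\partial\Omega}=0$ to keep the reflected right-hand side in $H^1$), and then invokes elliptic regularity away from the boundary. The only cosmetic difference is that you phrase the final step as regularity on the torus, while the paper extends periodically to a larger cube and applies interior elliptic regularity there; these are equivalent.
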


\begin{remark} \label{exa:H2_reg_A}
(a) If $\Omega$ is a general polyhedral domain instead of a cube, similar results are available in Section $4.4$ of \cite{CosD00} if the boundary conditions in \eqref{eq:curlcurl_strong_form} are replaced by $\MagPotgen \times \nu = \div \MagPotgen = 0$ on $\partial \Omega$. 

(b) Relaxing the condition \eqref{eq:prop_H_for_Regul}	is a delicate issue. To do so, we assume $\div \MagF = 0 $ and need to find a smooth vector potential $\MagPotgentwo$ such that 
\begin{equation}
	\curl \MagPotgentwo = \MagF,
	\quad 
	 \div \MagPotgentwo = 0, 
	\quad
	\MagPotgentwo \cdot   \nu |_{\partial \Omega} = 0 
\end{equation}
holds. Then, one can replace $\MagPot$ by $\MagPot - \MagPotgentwo$ and one is in the situation of \eqref{eq:curlcurl_prob} with $\MagF = 0$. However, 
the results
for convex polyhedral domains \cite[Thm.~3.5]{GirR86}
 only yield some $\MagPotgentwo \in \mathbf{H}^1(\Omega)$. To derive the same regularity as in Theorem~$\ref{thm:reg_MagPot_H2_H3_gen}$, we would require higher regularity of $\MagPotgentwo$ in $\mathbf{H}^2(\Omega)$ or $\mathbf{H}^3(\Omega)$, respectively, but this is beyond the scope of the present paper.
\end{remark}

For our proofs, we also require higher order regularity for the order parameter. In order to obtain it, we will make use of the following auxiliary result which we prove in Appendix~\ref{sec:H2_reg_A}.
\begin{lemma}
	\label{lemma-H3-regularity-u}
	Let $\Omega \subset \mathbb{R}^3$ denote a cuboid and let $f \in H^1(\Omega)$. If $u \in H^1(\Omega)$ solves the Neumann problem
	\begin{eqnarray*}
		- \Delta u = f \mbox{ in } \Omega\qquad \mbox{and} \qquad 
		\nabla u \cdot \nu \vert_{\partial \Omega} = 0,
	\end{eqnarray*} 
	then it holds $u \in H^3(\Omega)$ with $\| u\|_{H^3(\Omega)} \lesssim \| u\|_{L^2(\Omega)}  + \| f\|_{H^1(\Omega)}$. Furthermore, if $f \in L^p(\Omega)$ for some $1<p<\infty$, then there exists a constant $C_p>0$ (depending on $\Omega$ and $p$), such that
	\begin{eqnarray*}
		\| u \|_{W^{2,p}(\Omega)} \le C_p \, ( \| u\|_{L^p(\Omega)} + \| f\|_{L^p(\Omega)}  ).
	\end{eqnarray*}
	Here, $\| \cdot \|_{W^{2,p}}$ denotes the usual $W^{2,p}$-norm on $\Omega$ with $\| u \|_{W^{2,p}}\coloneqq \sum\limits_{|\boldsymbol{\alpha}|\le 2} \| D^{\boldsymbol{\alpha}} u \|_{L^p(\Omega)}$.
\end{lemma}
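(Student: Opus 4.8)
The plan is to prove the two claimed estimates separately, starting from the standard $H^2$-regularity theory on a cuboid (with homogeneous Neumann data) and then bootstrapping to $H^3$. Since $\Omega$ is a rectangular cuboid, the key geometric input is that it is a convex Lipschitz domain, so that $H^2$-regularity for the Neumann problem is available: given $f \in L^2(\Omega)$, the solution satisfies $u \in H^2(\Omega)$ with $\|u\|_{H^2(\Omega)} \lesssim \|u\|_{L^2(\Omega)} + \|f\|_{L^2(\Omega)}$. For the $W^{2,p}$-bound I would invoke elliptic $L^p$-regularity for the Neumann problem on convex domains (Calder\'on--Zygmund theory, e.g. via \cite{Grisvard85}), which directly yields the constant $C_p$ and the estimate $\|u\|_{W^{2,p}(\Omega)} \le C_p(\|u\|_{L^p(\Omega)} + \|f\|_{L^p(\Omega)})$ for $1 < p < \infty$. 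This second statement I would dispatch first, as it is essentially a citation to standard theory and requires no bootstrapping.

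For the $H^3$-estimate, the natural strategy is a reflection/difference-quotient argument exploiting the product structure of the cuboid. First I would establish the base $H^2$-bound as above. The idea then is to differentiate the equation tangentially: formally, each partial derivative $v \coloneqq \partial_{x_k} u$ should solve $-\Delta v = \partial_{x_k} f$ with a Neumann-type boundary condition, and since $f \in H^1(\Omega)$ we have $\partial_{x_k} f \in L^2(\Omega)$, so another application of $H^2$-regularity would give $v \in H^2(\Omega)$, i.e. $u \in H^3(\Omega)$. To make this rigorous on a cuboid I would use even/odd reflection across the faces: reflecting $u$ evenly across a face preserves the homogeneous Neumann condition and extends the problem to a slightly larger cuboid (or, iterating, to a periodic cell), converting the Neumann problem into one without boundary so that interior difference-quotient estimates in all coordinate directions apply uniformly. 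Because the faces of a cuboid meet orthogonally, the reflected data retain $H^1$-regularity across the interfaces, which is precisely what the cuboid geometry buys us and what a general polyhedron would not.

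The main obstacle I anticipate is controlling regularity near the edges and corners of the cuboid, where the boundary is non-smooth. The reflection argument has to be carried out consistently across all faces meeting at an edge: one must check that successive even reflections are compatible (they are, since the Neumann condition is the natural one for even reflection) so that no spurious singular behavior is introduced at the edges. The orthogonality of the faces is essential here — it guarantees that the composition of reflections is well-defined and that the extended right-hand side lies in $H^1$ of the enlarged domain rather than merely $L^2$. Once the reflected solution is seen to be an $H^3$-function on an open set containing $\overline{\Omega}$, restriction gives $u \in H^3(\Omega)$ and the norm bound follows by tracking the reflection constants (which depend only on $\Omega$) through the $H^2$-estimate applied to the tangential derivatives. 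I would close by absorbing the lower-order term $\|u\|_{L^2}$ on the right-hand side, yielding the stated bound $\|u\|_{H^3(\Omega)} \lesssim \|u\|_{L^2(\Omega)} + \|f\|_{H^1(\Omega)}$.
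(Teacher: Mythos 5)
Your $H^3$ argument is essentially the paper's own proof: even reflection of $u$ and $f$ across all faces of the cuboid (even reflection is exactly compatible with the homogeneous Neumann condition, and the orthogonal faces make the successive reflections consistent), periodic extension, and then interior elliptic regularity on the enlarged domain, followed by restriction to $\Omega$. One imprecision is harmless but worth flagging: the partial derivatives $\partial_{x_k}u$ do \emph{not} all satisfy "Neumann-type" conditions --- on the two faces with normal $e_k$ the function $\partial_{x_k}u$ satisfies a \emph{Dirichlet} condition (it equals $\nabla u\cdot\nu=0$ there), so the derivatives solve mixed problems, not Neumann problems. A literal "differentiate and reapply Neumann $H^2$ theory" would therefore not go through; your reflection mechanism bypasses this, which is why the plan still works.

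The genuine gap is the $W^{2,p}$ estimate, which you dispatch as "essentially a citation to standard theory" for Neumann problems on convex domains. No such theorem exists for all $1<p<\infty$, and the statement is in fact \emph{false} on general convex (even convex polygonal) domains: near a corner of opening $\omega$ the Neumann solution contains the singular function $r^{\pi/\omega}\cos(\pi\theta/\omega)$, which belongs to $W^{2,p}$ only when $\pi/\omega>2-2/p$; for any fixed $p>2$ this fails once $\omega$ is close enough to $\pi$. So convexity plus Lipschitz regularity cannot produce the constant $C_p$ --- the right angles of the cuboid are essential, and they enter only through the reflection structure. This is precisely why the paper does \emph{not} cite an $L^p$ boundary-regularity theorem but instead proves the bound: it reflects $u$ and $f$, multiplies the extension by a cutoff $\eta$ supported in a smooth ball $B$ with $\Omega\subset\subset\{\eta\equiv1\}\subset\subset B$, observes that $\eta\,u_{\mathrm{ext}}$ solves a Dirichlet problem on $B$ with right-hand side in $L^p$ (this step needs $\nabla u_{\mathrm{ext}}\in L^\infty$, which comes from the already-established $H^3$/interior regularity, so the $W^{2,p}$ part actually relies on the first part), applies $L^p$ theory on the \emph{smooth} domain $B$, and finally uses the interior Calder\'on--Zygmund estimate to obtain $\|u\|_{W^{2,p}(\Omega)}\le C_p(\|u\|_{L^p(\Omega)}+\|f\|_{L^p(\Omega)})$. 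To repair your proposal you would either have to reproduce this reflection--cutoff--interior-estimate argument for the $L^p$ scale as well, or cite a polyhedron-specific regularity result and verify the angle conditions for the cuboid; as written, the second half of the lemma is unproved.
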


With Theorem~\ref{thm:reg_MagPot_H2_H3_gen} and  Lemma~\ref{lemma-H3-regularity-u}, we can conclude the $H^2$-bounds on the vector potential $\MagPot$ and the order parameter $\sol$.

\begin{corollary} \label{cor:sol_bounds_ex_A_H2}
Let $(\sol,\MagPot) \in H^1(\Omega) \times \HonenSpace(\Omega)$ be a minimizer of problem \eqref{minimization-problem}.

(a) 
Then $\MagPot \in \mathbf{H}^2(\Omega)$ with 
\begin{align}
	\norm{H^2}{\MagPot} &\lesssim 1 + \norm{L^2}{\MagF} + \norm{L^2}{\curl \MagF}
\end{align}
and constants independent of $\kappa$. Note that the estimate implies a $L^{\infty}$-bound for $\MagPot$. Furthermore, together with the second inequality in Lemma \ref{lem:sol_bounds_ex_1} this yields $\tfrac{\| \sol \|_{\HonekappaSpace}}{\| \sol \|_{L^2}} \lesssim 1 + \norm{L^2}{\MagF} + \norm{L^2}{\curl \MagF}$.

(b)
 Then
 $\sol \in H^2(\Omega)$ and $\nabla u \cdot \mathbf{n}\vert_{\partial \Omega} =0$ with
 \begin{align}  \label{eq:W1q_bound_sol}
 \Htwokappa{\sol} &\lesssim \bigl( 1 + \norm{L^2}{\MagF} + \norm{L^2}{\curl \MagF} \bigr)^2
 		\quad \text{ and } \quad
	 	\norm{L^4}{ \tfrac{1}{\kappa} \nabla \sol} \lesssim 1
 \end{align}
and constants independent of $\kappa$.
\end{corollary}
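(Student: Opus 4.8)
The strategy is to bootstrap regularity from the Ginzburg--Landau equations of Lemma~\ref{lemma-ginzburg-landau-eqns}, using the elliptic regularity results of Theorem~\ref{thm:reg_MagPot_H2_H3_gen} and Lemma~\ref{lemma-H3-regularity-u}, while carefully tracking the $\kappa$-scaling via the stability bounds of Lemma~\ref{lem:sol_bounds_ex_1}.

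\textbf{Part (a).} The plan is to read off the right-hand side $\mathbf{F} \in \mathbf{L}^2(\Omega)$ of the curl-curl problem \eqref{eq:curlcurl_var_form} from the second Ginzburg--Landau equation in Lemma~\ref{lemma-ginzburg-landau-eqns}, namely $\mathbf{F} = -|\sol|^2 \MagPot - \kappainv \Real(\ci \sol^* \nabla \sol)$. I would then estimate $\norm{\boldL^2}{\mathbf{F}}$ in a $\kappa$-independent way: the term $|\sol|^2 \MagPot$ is controlled by $\norm{L^\infty}{\sol}^2 \norm{\boldL^2}{\MagPot} \lesssim \norm{H^1}{\MagPot}$ using $|\sol|\le 1$ and the $H^1$-bound on $\MagPot$ from Lemma~\ref{lem:sol_bounds_ex_1}; the crucial observation is that the remaining term $\kappainv \Real(\ci \sol^* \nabla \sol)$ is controlled by $\norm{L^2}{\sol}\,\norm{L^2}{\tfrac{1}{\kappa}\nabla \sol}$, and the factor $\tfrac{1}{\kappa}\nabla\sol$ carries exactly the $\kappa^{-1}$ that makes this bound $\kappa$-independent via the third estimate in Lemma~\ref{lem:sol_bounds_ex_1}. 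Invoking Theorem~\ref{thm:reg_MagPot_H2_H3_gen}(a) (whose hypothesis \eqref{eq:prop_H_for_Regul_a} we have assumed) then yields $\MagPot \in \mathbf{H}^2(\Omega)$ with the stated $\kappa$-independent bound. The $L^\infty$-bound on $\MagPot$ follows from the Sobolev embedding $\mathbf{H}^2(\Omega) \hookrightarrow \mathbf{L}^\infty(\Omega)$ in three dimensions, and the stated quotient bound is immediate by combining with the third inequality of Lemma~\ref{lem:sol_bounds_ex_1}.

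\textbf{Part (b).} Here I would rewrite the first Ginzburg--Landau equation as a Neumann problem for $\sol$ so that Lemma~\ref{lemma-H3-regularity-u} applies. Expanding $|\kappainvci \nabla\sol + \MagPot\sol|^2$ and integrating by parts reveals that $-\kappa^{-2}\Delta\sol$ equals a right-hand side built from $\MagPot$, $\nabla\sol$, $\div\MagPot=0$, and the cubic nonlinearity $(|\sol|^2-1)\sol$; the natural boundary condition is homogeneous Neumann for the scaled operator. The delicate point is to distribute regularity and integrability so that no spurious powers of $\kappa$ appear: the linear-in-$\nabla\sol$ term $\MagPot\cdot\nabla\sol$ should be paired so that it reads $\kappa \cdot \MagPot\cdot(\tfrac{1}{\kappa}\nabla\sol)$, and using the $W^{2,p}$-estimate of Lemma~\ref{lemma-H3-regularity-u} with the $L^4$-gradient bound $\norm{L^4}{\tfrac{1}{\kappa}\nabla\sol}\lesssim 1$ one closes the argument for $\Htwokappa{\sol}$. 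I expect the $\norm{L^4}{\tfrac{1}{\kappa}\nabla\sol}\lesssim 1$ estimate itself to be the main obstacle and it should be established first: it likely requires applying the $W^{2,p}$-part of Lemma~\ref{lemma-H3-regularity-u} with $p$ chosen so that the Sobolev embedding $W^{1,p}\hookrightarrow L^4$ (in $\R^3$, $p=12/5$) gives control of $\tfrac{1}{\kappa}\nabla\sol$ in $L^4$, feeding back the already-established $\kappa$-independent bounds on $\MagPot$ in $L^\infty$ and $\Honekappa{\sol}$ from part (a) and Lemma~\ref{lem:sol_bounds_ex_1}. The squared factor $(1+\norm{L^2}{\MagF}+\norm{L^2}{\curl\MagF})^2$ in the $\Htwokappa{\sol}$-bound signals that $\MagPot$ enters quadratically (through $|\MagPot|^2\sol$ in the expanded equation), consistent with this bootstrapping.
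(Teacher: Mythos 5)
Your part (a) is essentially the paper's proof: read $\mathbf{F}$ off the second Ginzburg--Landau equation, bound it $\kappa$-independently, and invoke Theorem~\ref{thm:reg_MagPot_H2_H3_gen}(a). (One small slip: the term $\kappainv \Real(\ci\sol^*\nabla\sol)$ is bounded by $\norm{L^\infty}{\sol}\,\norm{L^2}{\tfrac{1}{\kappa}\nabla\sol}$, not by a product of two $L^2$-norms; with $|\sol|\le 1$ this is harmless.)

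Part (b), however, contains a genuine gap, and the logical order you propose is reversed. First, the $\Htwokappa{\sol}$-bound does \emph{not} require the $L^4$-gradient bound: writing the first Ginzburg--Landau equation as the Neumann problem $-\Delta\sol=f$ with $f=-\kappa^2(|\sol|^2-1)\sol-2\ci\kappa\,\MagPot\cdot\nabla\sol-\kappa^2|\MagPot|^2\sol$, the mixed term is controlled by $\kappa\,\norm{L^\infty}{\MagPot}\norm{L^2}{\nabla\sol}=\kappa^2\norm{L^\infty}{\MagPot}\norm{L^2}{\tfrac{1}{\kappa}\nabla\sol}$, so that $\norm{L^2}{f}\lesssim\kappa^2\bigl(1+\norm{L^2}{\MagF}+\norm{L^2}{\curl\MagF}\bigr)^2$ follows from part (a) and Lemma~\ref{lem:sol_bounds_ex_1} alone, and elliptic regularity gives the $H^2$-bound directly. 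Second, and this is the fatal flaw, your route to $\norm{L^4}{\tfrac{1}{\kappa}\nabla\sol}\lesssim 1$ via the $W^{2,p}$-estimate of Lemma~\ref{lemma-H3-regularity-u} combined with the Sobolev embedding $W^{2,p}\hookrightarrow W^{1,4}$ loses a power of $\kappa$: the dominant term $\kappa^2(|\sol|^2-1)\sol$ in $f$ satisfies only $\norm{L^p}{f}\lesssim\kappa^2$ for every $p$ (and no better in general), so Lemma~\ref{lemma-H3-regularity-u} yields $\norm{W^{2,p}}{\sol}\lesssim\kappa^2$, hence $\norm{L^4}{\nabla\sol}\lesssim\kappa^2$ and $\norm{L^4}{\tfrac{1}{\kappa}\nabla\sol}\lesssim\kappa$, not $\lesssim 1$.

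The correct argument runs in the opposite direction: establish $\norm{L^2}{D^2\sol}\lesssim\kappa^2$ first, and then obtain the $L^4$-bound by Gagliardo--Nirenberg interpolation against the $L^\infty$-bound,
\begin{equation}
\norm{L^4}{\nabla\sol}\,\lesssim\,\norm{L^2}{D^2\sol}^{1/2}\,\norm{L^\infty}{\sol}^{1/2}\,\lesssim\,\kappa,
\end{equation}
or equivalently by the integration-by-parts trick
$\norm{L^4}{\nabla\sol}^4=-\Real\int_\Omega\sol\,\div\bigl(\nabla\sol^*\,|\nabla\sol|^2\bigr)\dx\lesssim\norm{L^\infty}{\sol}\norm{L^4}{\nabla\sol}^2\norm{L^2}{D^2\sol}$,
which is exactly the device the paper uses for the $L^8$- and $L^{2q}$-bounds in Lemma~\ref{cor:sol_bounds_ex_A_H3}. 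The exponent $1/2$ on the second derivative is what converts $\kappa^2$ into $\kappa$; a plain Sobolev embedding carries exponent $1$ and cannot achieve this. This is precisely the balancing of regularity against integrability (via $|\sol|\le 1$) that the paper flags as necessary to avoid superfluous powers of $\kappa$.
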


\begin{proof}
	(a) Since $\MagPot$ is a minimizer, it holds $\dAenergy(\sol,\MagPot) \mathbf{B} = 0$ for all $\mathbf{B} \in \HonenSpace(\Omega)$,
	and hence by Lemma~\ref{lem:Frechet_der_1}
	\begin{align}
		\int_\Omega 
		\curl \MagPot \cdot \curl \mathbf{B}
		+
		\div \MagPot \cdot \div \mathbf{B}
		\dint{x} 
		=
		- \int_\Omega 
		|\sol|^2 \MagPot \cdot \mathbf{B}
		+
		\kappainv  \Real \bigl( \ci  \sol^* \nabla \sol \cdot \mathbf{B} \bigr) 
		+
		\MagF \cdot \curl \mathbf{B}
		\dint{x} \,.
	\end{align}
	Since $\div \MagPot = 0$, Theorem~\ref{thm:reg_MagPot_H2_H3_gen} (a) together with the estimates in Lemma~\ref{lem:sol_bounds_ex_1} yield the desired
	regularity and a-priori estimate for $\MagPot$.
	
(b) We follow the lines of the proof in \cite[Theorem~2.2]{DoeH24}, and only have to establish the $H^2$-bound.
We exploit $\duenergy(\sol,\MagPot) \varphi = 0$ for all $\varphi \in H^1(\Omega)$ to obtain 
that the minimizer $\sol$ satisfies
\begin{equation}
	 ( \nabla \sol , \nabla \varphi)_{L^2} = ( \rhs , \varphi)_{L^2} ,
	\quad
	\text{with}
	\quad
		f = - \kappa^2  (|\sol|^2 -1 ) \sol - 2 \ci \kappa \MagPot \nabla \sol - \kappa^2 |\MagPot|^2 \sol \,.
\end{equation}
The Sobolev embedding $H^2(\Omega) \hookrightarrow L^\infty(\Omega)$ for $\MagPot$, the estimates in part (a), and Lemma~\ref{lem:sol_bounds_ex_1} imply
\begin{equation}
\norm{L^2}{\rhs} \lesssim \kappa^2 ( 1 + \norm{L^2}{\MagF} + \norm{L^2}{\curl \MagF} )^2,
\end{equation}
which yields $\kappa^{-2} |u|_{H^2} 
\lesssim 
\kappa^{-2}  (\norm{L^2} {f} + \norm{L^2}{\sol})
 \lesssim ( 1 + \norm{L^2}{\MagF} + \norm{L^2}{\curl \MagF} )^2$ by Lemma \ref{lemma-H3-regularity-u}.
Combining this with the already established $\HonekappaSpace$- and $L^2$-bound gives the claimed $\HtwokappaSpace$-bound.
\end{proof}
In order to prove optimal order convergence rates for our numerical approximations of $\sol$ and $\MagPot$, we need to establish $H^3$-regularity for both unknowns including corresponding regularity estimates that are explicit with respect to $\kappa$. This is done in the following lemma.
\begin{lemma} \label{cor:sol_bounds_ex_A_H3}
We consider a minimizer $(\sol,\MagPot) \in H^1(\Omega) \times \HonenSpace(\Omega)$ of problem \eqref{minimization-problem}
and assume that $\curl \MagF \in \mathbf{H}^1(\Omega)$.

	(a) It holds $\MagPot \in \mathbf{H}^{3}(\Omega)$ and the estimate  
	\begin{align}
		\norm{H^3}{\MagPot} &\lesssim
		\kappa 
	\end{align}
	(with a hidden constant independent of $\kappa$).
	
	(b)
	It holds
	$\sol \in H^{3}(\Omega)$ and $\sol \in W^{2,p}(\Omega)$ for any $1<p<\infty$ with
	\begin{eqnarray}  
	\label{Wkp-estimates-u}
	\norm{H^3}{\sol} &\lesssim&  \kappa^{3}, \qquad \norm{W^{2,p}}{\sol} \,\,\,\lesssim\,\,\, C_p\, \kappa^{2}
	\qquad \mbox{and} \qquad \norm{W^{1,p}}{\sol} \,\,\,\lesssim\,\,\, C_p\, \kappa,
	\end{eqnarray}
	where $C_p>0$ depends on $p$ and can be different for $W^{1,p}$ and $W^{2,p}$.
\end{lemma}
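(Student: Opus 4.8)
The plan is to reduce each unknown to a linear problem---a $\curl$-$\curl$ problem for $\MagPot$ and a Neumann problem for $\sol$---and to bound the right-hand sides with the $\kappa$-explicit estimates from Lemma~\ref{lem:sol_bounds_ex_1} and Corollary~\ref{cor:sol_bounds_ex_A_H2}, keeping careful track of how many powers of $\kappa$ each derivative costs.

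For part~(a) I would start from the second Ginzburg--Landau equation in Lemma~\ref{lemma-ginzburg-landau-eqns}. Since $\div\MagPot=0$, it shows that $\MagPot$ solves \eqref{eq:curlcurl_var_form} with $\mathbf F=-|\sol|^2\MagPot+\kappainv\,\Imag(\sol^*\nabla\sol)$, so I only need to verify the hypotheses of Theorem~\ref{thm:reg_MagPot_H2_H3_gen}(b). The boundary condition $\mathbf F\cdot\nu|_{\partial\Omega}=0$ follows from $\MagPot\cdot\nu|_{\partial\Omega}=0$ together with the natural boundary condition $\nabla\sol\cdot\nu|_{\partial\Omega}=0$, which one reads off from $\duenergy(\sol,\MagPot)=0$ and $\MagPot\cdot\nu|_{\partial\Omega}=0$. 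For the bound $\norm{H^1}{\mathbf F}\lesssim\kappa$ I would differentiate $\mathbf F$ and estimate termwise using $|\sol|\le1$, the embedding $\mathbf H^2(\Omega)\hookrightarrow L^\infty(\Omega)$ with the $\kappa$-independent bound $\norm{H^2}{\MagPot}\lesssim1$ from Corollary~\ref{cor:sol_bounds_ex_A_H2}(a), and the scaled bounds $\norm{L^4}{\nabla\sol}\lesssim\kappa$, $\norm{L^2}{D^2\sol}\lesssim\kappa^2$ from Corollary~\ref{cor:sol_bounds_ex_A_H2}(b); the critical contributions $\kappainv|\nabla\sol|^2$ and $\kappainv\,\sol^*D^2\sol$ are both $\mathcal O(\kappa)$. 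Theorem~\ref{thm:reg_MagPot_H2_H3_gen}(b) then gives $\norm{H^3}{\MagPot}\lesssim\norm{H^1}{\mathbf F}+\norm{H^1}{\curl\MagF}\lesssim\kappa$.

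For the $H^3$-bound on $\sol$ I would use that $\sol$ solves $-\Delta\sol=f$ with $f=-\kappa^2(|\sol|^2-1)\sol-2\ci\kappa\,\MagPot\cdot\nabla\sol-\kappa^2|\MagPot|^2\sol$, exactly as in the proof of Corollary~\ref{cor:sol_bounds_ex_A_H2}(b), and invoke Lemma~\ref{lemma-H3-regularity-u}; it then suffices to show $\norm{H^1}{f}\lesssim\kappa^3$. Differentiating $f$ and using the same bounds (now with $\norm{L^2}{D^2\sol}\lesssim\kappa^2$ for the term $\kappa\,\MagPot\cdot D^2\sol$, and $\norm{L^4}{\nabla\MagPot}\lesssim1$, $\norm{L^4}{\nabla\sol}\lesssim\kappa$ for $\kappa\,\nabla\MagPot\cdot\nabla\sol$) gives $\norm{L^2}{\nabla f}\lesssim\kappa^3$, whence $\norm{H^3}{\sol}\lesssim\norm{L^2}{\sol}+\norm{H^1}{f}\lesssim\kappa^3$.

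The genuinely delicate point is the family $\norm{W^{1,p}}{\sol}\lesssim C_p\kappa$ for all $1<p<\infty$, which is what then feeds the $W^{2,p}$-bound. A naive elliptic bootstrap, or Gagliardo--Nirenberg interpolation from $\norm{L^2}{D^2\sol}\lesssim\kappa^2$ and $\norm{L^\infty}{\sol}\le1$, is lossy: it only yields $\norm{L^p}{\nabla\sol}\lesssim\kappa$ up to $p=4$ and overshoots for larger $p$, because the global $L^2$-based norms do not resolve the intrinsic length scale $\kappa^{-1}$. My plan is therefore to rescale: with $v(y):=\sol(y/\kappa)$ on the dilated cuboid $\kappa\Omega$, the function $v$ solves $-\Delta_y v=\tilde f$ with $\tilde f=-(|v|^2-1)v-2\ci\,\MagPot(\cdot/\kappa)\cdot\nabla_y v-|\MagPot(\cdot/\kappa)|^2 v$, where $\norm{L^\infty}{v}\le1$ and $\norm{L^\infty}{\MagPot(\cdot/\kappa)}=\norm{L^\infty}{\MagPot}\lesssim1$ hold uniformly in $\kappa$. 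On unit balls in the interior and on half-balls and neighbourhoods of the right-angled edges and corners of $\kappa\Omega$ (which are benign for $W^{2,p}$-regularity for every finite $p$), the coefficients and the zeroth-order data are uniformly bounded, so a short local bootstrap---a Caccioppoli estimate for $\norm{L^2}{\nabla_y v}$, then $W^{2,2}\hookrightarrow W^{1,6}$, then $W^{2,6}\hookrightarrow W^{1,\infty}$---gives $\norm{L^\infty}{\nabla_y v}\lesssim1$ with a constant independent of the ball and of $\kappa$. Undoing the scaling yields $\norm{L^\infty(\Omega)}{\nabla\sol}\lesssim\kappa$, hence $\norm{W^{1,p}}{\sol}\lesssim\kappa$ for all $p$; consequently $\norm{L^p}{f}\lesssim\kappa^2+\kappa\norm{L^p}{\nabla\sol}\lesssim\kappa^2$, and the $L^p$-part of Lemma~\ref{lemma-H3-regularity-u} gives $\norm{W^{2,p}}{\sol}\lesssim C_p\kappa^2$. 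The main obstacle is making this rescaling rigorous with local $W^{2,p}$-constants that are uniform over the covering of the growing domain $\kappa\Omega$, for which the right angles (convexity) of the cuboid are exactly what is needed.
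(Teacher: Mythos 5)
Your treatment of part (a) and of the $H^3$-bound on $\sol$ is essentially the paper's own proof: identify the right-hand sides from the Ginzburg--Landau equations, check $\mathbf{F}\cdot\nu|_{\partial\Omega}=0$ from $\MagPot\cdot\nu|_{\partial\Omega}=0$ and the natural condition $\nabla\sol\cdot\nu|_{\partial\Omega}=0$, bound $\norm{H^1}{\mathbf{F}}\lesssim\kappa$ respectively $\norm{H^1}{f}\lesssim\kappa^3$ with the estimates of Lemma~\ref{lem:sol_bounds_ex_1} and Corollary~\ref{cor:sol_bounds_ex_A_H2}, and invoke Theorem~\ref{thm:reg_MagPot_H2_H3_gen}(b) and Lemma~\ref{lemma-H3-regularity-u}.

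Where you genuinely depart from the paper is the $W^{1,p}$/$W^{2,p}$ family. The paper stays global and elementary: from $\norm{L^4}{f}\lesssim\kappa^2$ it gets $\norm{W^{2,4}}{\sol}\lesssim\kappa^2$, and then runs a recursion that doubles the integrability exponent, using the identity $\norm{L^{2q}}{\nabla\sol}^{2q}=-\int_\Omega \sol\,\div\bigl(\nabla\sol^*|\nabla\sol|^{2q-2}\bigr)\dint{x}$ (integration by parts with the Neumann condition) to pass from $\norm{W^{2,q}}{\sol}\lesssim\kappa^2$ to $\norm{L^{2q}}{\nabla\sol}\lesssim\kappa$ and back through Lemma~\ref{lemma-H3-regularity-u}; the constants degenerate as $p\to\infty$, and the paper explicitly cannot conclude the case $p=\infty$. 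You instead use the classical Ginzburg--Landau blow-up: rescale to the unit length scale, $v(y)=\sol(y/\kappa)$ on $\kappa\Omega$, note that the coefficients and zeroth-order data of the rescaled equation are bounded uniformly in $\kappa$ (since $|\sol|\le1$ and $\norm{L^\infty}{\MagPot}\lesssim1$), and run a local bootstrap (Caccioppoli, $W^{2,2}\hookrightarrow W^{1,6}$, $W^{2,6}\hookrightarrow W^{1,\infty}$ in 3d) with constants uniform over a bounded-overlap covering by unit balls. If carried out, this buys a strictly stronger conclusion than the paper's, namely $\norm{L^\infty}{\nabla\sol}\lesssim\kappa$ with a $p$-independent constant, from which $\norm{W^{1,p}}{\sol}\lesssim\kappa$, $\norm{L^p}{f}\lesssim\kappa^2$ and $\norm{W^{2,p}}{\sol}\lesssim C_p\,\kappa^2$ all follow at once; the price is the machinery of uniform local boundary estimates on the dilated cuboid. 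One correction to your justification at the boundary: convexity is not what makes the edges and corners benign --- for general convex domains the Neumann problem gives $W^{2,2}$ but not $W^{2,p}$ for $p>2$. What saves you is specifically the right angles combined with even reflection, which turns faces, edges and corners of $\kappa\Omega$ into interior points of the extended problem; this is exactly the mechanism the paper uses in Appendix~\ref{sec:H2_reg_A} (proof of Lemma~\ref{lemma-H3-regularity-u}), and you should appeal to that rather than to convexity.
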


\begin{proof}
(a) We aim to employ Theorem~\ref{thm:reg_MagPot_H2_H3_gen} and proceed as in Corollary~\ref{cor:sol_bounds_ex_A_H2} by estimating
\begin{eqnarray*}
\lefteqn{ \norm{H^1}{|\sol|^2 \MagPot 
	+
	\kappainv  \Real \bigl( \ci  \sol^* \nabla \sol  \bigr) 
	+\curl
	\MagF } }
\\
&\lesssim& 
\norm{H^1}{\MagPot}
+
\norm{L^2}{ \nabla \sol}
\norm{L^\infty}{\MagPot}
+
\kappainv\norm{L^4}{ \nabla \sol}^2
+
\kappainv \norm{H^2}{ \sol    }
+\norm{H^1}{ \curl \MagF }
\,\, \lesssim \,\, \kappa \,.
\end{eqnarray*}
The boundary conditions for $\MagPot$ and $\sol$ yield that in the notation of the theorem $\mathbf{F} \cdot  \nu |_{\partial \Omega} = 0$ holds and with \eqref{eq:prop_H_for_Regul_b} the statement follows form Theorem~\ref{thm:reg_MagPot_H2_H3_gen}.

(b) Using Lemma \ref{lem:sol_bounds_ex_1}, 
we only need to bound $f$ from the proof of Corollary~\ref{cor:sol_bounds_ex_A_H2} in the $H^1$-norm. Here we obtain with the results from Lemma \ref{lem:sol_bounds_ex_1} and Corollary \ref{cor:sol_bounds_ex_A_H2} that
\begin{eqnarray*}
\lefteqn{\norm{H^1}{\rhs}
\,\,\,\lesssim\,\,\,  \norm{H^1}{ \kappa^2 (|\sol|^2 -1 ) \sol - 2 \ci \kappa \MagPot \nabla \sol - \kappa^2 |\MagPot|^2 \sol } }
\\
&\lesssim& \kappa^2 \norm{H^1}{ \sol}
+
\kappa \norm{L^4}{ \MagPot } \norm{L^4}{\nabla \sol} 
+
\kappa \norm{L^\infty}{\MagPot}  \norm{H^2}{ \sol} 
 +
\kappa^2 \norm{L^\infty}{\MagPot}^2  \norm{H^1}{ \sol }
+
\kappa^2 \norm{H^2}{ \MagPot}^2
\,\,\,\lesssim\,\,\,  \kappa^3.
\end{eqnarray*}
Next, we use the second part of Lemma \ref{lem:sol_bounds_ex_1} with
\begin{eqnarray*}
\norm{L^4}{\rhs}
\,\,\,\lesssim\,\,\,   
\kappa^2 
+
\kappa \norm{L^{\infty}}{ \MagPot } \norm{L^4}{\nabla \sol} 
+
\kappa^2 \norm{L^\infty}{\MagPot}^2  \norm{L^{\infty}}{ \sol }
\,\,\,\lesssim\,\,\,  \kappa^2
\end{eqnarray*}
to conclude that for any $1< p \le 4$ we have
\begin{eqnarray*}
\| \sol \|_{W^{2,p}} \lesssim \norm{L^4}{\rhs} + \norm{L^4}{\sol} \lesssim \kappa^2.
\end{eqnarray*}
Next, we consider $\| \sol \|_{W^{1,8}}$ for which we obtain
\begin{eqnarray*}
\| \nabla \sol \|_{L^8}^8 &=& \int_{\Omega} \nabla \sol \cdot \nabla \sol^* |\nabla \sol|^{6} \dx
\overset{\nabla \sol \cdot \nu\vert_{\partial \Omega}=0 }{=} - \int_{\Omega} \sol  \,\, \mbox{div}( \nabla \sol^* |\nabla \sol|^{6}) \dx \\
&\overset{\mbox{\tiny H\"older}}{\lesssim}& \| \sol \|_{L^{\infty}} \| \nabla \sol \|_{L^8}^6 \, \| \sol \|_{W^{2,4}} \,\,\, \lesssim \,\,\, \| \nabla \sol \|_{L^8}^6 \, \kappa^2.
\end{eqnarray*}
We conclude $\| \nabla \sol \|_{L^8}\lesssim \kappa$. With this regularity estimate at hand, we can return to $f$ to see that $\| f\|_{L^8} \lesssim \kappa^2$ and hence, with Lemma \ref{lem:sol_bounds_ex_1},  $\| \sol \|_{W^{2,8}}\lesssim \kappa^2$. It becomes apparent that the argument can be repeated recursively to obtain $\| \nabla \sol \|_{L^p} \lesssim \kappa$ and $\| \sol \|_{W^{2,p}} \lesssim \kappa^2$ for any $1<p<\infty$. For example, assume that $\| \nabla \sol \|_{L^q} \lesssim \kappa$ holds for some $q \ge 2$, then $\| f \|_{L^q} \lesssim \kappa^2$ and consequently $\| \sol \|_{W^{2,q}} \lesssim \kappa^2$. With this, we have in turn
\begin{eqnarray*}
\| \nabla \sol \|_{L^{2q}}^{2q} &=&
- \int_{\Omega} \sol  \,\, \mbox{div}( \nabla \sol^* |\nabla \sol|^{2q-2}) \dx 
\,\,\lesssim\,\, \| \sol \|_{L^{\infty}} \|\, |\nabla \sol|^{2q-2} \|_{L^{2q/(2q-2)}} \, \| \sol \|_{W^{2,q}} \\
&=&  \| \sol \|_{L^{\infty}} \| \nabla \sol \|_{L^{2q}}^{2q-2} \, \| \sol \|_{W^{2,q}} \,\, \lesssim \,\, \| \nabla \sol \|_{L^{2q}}^{2q-2} \, \kappa^2
\quad \Rightarrow \quad \| \nabla \sol \|_{L^{2q}} \lesssim \kappa.
\end{eqnarray*}
We can repeat with $2q$. Note however that the hidden constants in the above estimates can potentially explode for $p \rightarrow \infty$ and we cannot conclude that the estimates hold for $p=\infty$.
\end{proof}

\subsection{Kernel in the second \Frechet derivative $\energystab''$}

In this section, we want to specify second order conditions for our minimizers. 

Recalling the results of Lemma \ref{lem:Frechet_der_2}, the second \Frechet derivative of $\energystab$ in $(\sol,\MagPot) \in H^1(\Omega) \times \HonenSpace(\Omega)$ is given by
\begin{eqnarray}
\label{secE-calculated} 
\nonumber\lefteqn{ \dualp{\energystab''(\sol,\MagPot) (\varphi,\mathbf{B}) }{(\psi, \mathbf{C} )} } \\
\nonumber&=& 
\dualp{\dutwoenergy(\sol,\MagPot) \varphi}{\psi} 
+
\dualp{\dudAtwoenergy(\sol,\MagPot) \mathbf{C}}{\testfun} 
+
\dualp{\dudAtwoenergy(\sol,\MagPot) \mathbf{B}}{\psi} 
+
\dualp{\dAtwoenergy(\sol,\MagPot) \mathbf{B}}{\mathbf{C}}
\\
\nonumber&=&
\Real  \int_\Omega \bigl( \kappainvci  \nabla \psi +  \MagPot \psi \bigr)  \cdot \bigl( \kappainvci  \nabla \testfun +  \MagPot \testfun \bigr)^*  
+
\bigl( |\sol|^2 -1 \bigr)  \psi \testfun^* + \sol^2 \psi^* \testfun^* + |\sol|^2 \psi \testfun^* 
\dint{x} 
\\
\nonumber&\enspace&+
\int_\Omega 
2     \Real (\sol \testfun^* ) \MagPot \cdot \mathbf{C}  
+
\kappainv  \Real \bigl( \ci  
\sol^* \nabla \testfun
+
\ci \testfun^* \nabla \sol \bigr) 
\cdot \mathbf{C}  
\dint{x}
\\
\nonumber&\enspace&+
 \int_\Omega 
2     \Real (\sol \psi^* ) \MagPot \cdot \mathbf{B}  
+
\kappainv  \Real \bigl( \ci  
\sol^* \nabla \psi
+
\ci \psi^* \nabla \sol \bigr) 
\cdot \mathbf{B}  
\dint{x}
\\
&\enspace&+
\int_\Omega 
|\sol|^2 \mathbf{C} \cdot \mathbf{B}  
+
\curl \mathbf{C} \cdot \curl \mathbf{B} 
+
\div \mathbf{C} \cdot \div \mathbf{B} 
\dint{x} 
\end{eqnarray}
for $(\testfun,\mathbf{B}),(\psi,\mathbf{C}) \in H^1(\Omega) \times \HonenSpace(\Omega)$.

\begin{lemma}\label{lemma:degenerate-direction}
Let $(\sol,\MagPot) \in H^1(\Omega) \times \HonenSpace(\Omega)$ be a minimizer of \eqref{minimization-problem}. Then, it holds
\begin{align}
\dualp{\energystab''(\sol,\MagPot) (\ci \sol, \mathbf{0} ) }{(\psi,\mathbf{C})} &= 0
\end{align}
for all $(\psi,\mathbf{C}) \in H^1(\Omega) \times \HonenSpace(\Omega)$. Thus, $\energystab''(\sol,\MagPot)$ is singular and cannot be
coercive. 
\end{lemma}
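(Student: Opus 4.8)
The plan is to verify the claimed identity by direct substitution of the test direction $(\varphi,\mathbf{B}) = (\ci\sol, \mathbf{0})$ into the explicit formula \eqref{secE-calculated} for $\energystab''$, and then to recognize that the surviving terms collapse using the first-order optimality condition $\duenergy(\sol,\MagPot)=0$ from Lemma~\ref{lemma-ginzburg-landau-eqns}. Since $\mathbf{B}=\mathbf{0}$, the entire fourth line of \eqref{secE-calculated} (the pure $\dAtwoenergy$ block) and the third line (the $\dudAtwoenergy(\sol,\MagPot)\mathbf{B}$ block) vanish immediately. What remains is the contribution of $\dutwoenergy(\sol,\MagPot)(\ci\sol)$ tested against $\psi$, plus the mixed term $\dudAtwoenergy(\sol,\MagPot)\mathbf{C}$ tested against $\varphi = \ci\sol$.

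First I would substitute $\varphi = \ci\sol$ into the first line. The key algebraic observation is that differentiation commutes with multiplication by $\ci$ in the combined operator, i.e.\ $\kappainvci\nabla(\ci\sol)+\MagPot(\ci\sol) = \ci\bigl(\kappainvci\nabla\sol+\MagPot\sol\bigr)$. Hence the leading quadratic term becomes $\Real\int_\Omega \ci\bigl(\kappainvci\nabla\sol+\MagPot\sol\bigr)\cdot(\kappainvci\nabla\psi+\MagPot\psi)^*\dint{x}$, which is exactly $\ci$ times the first part of $\duenergy(\sol,\MagPot)\psi$ under the real-part pairing; I would show that each of the remaining algebraic terms $(|\sol|^2-1)\ci\sol\psi^* + \sol^2(\ci\sol)^*\psi^* + |\sol|^2 \ci\sol\psi^*$ likewise reorganizes. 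The critical cancellation is in the cubic pieces: since $(\ci\sol)^* = -\ci\sol^*$, the term $\sol^2(\ci\sol)^*\psi^* = -\ci|\sol|^2\sol\psi^*$ exactly cancels $|\sol|^2(\ci\sol)\psi^* = \ci|\sol|^2\sol\psi^*$, leaving only $(|\sol|^2-1)\ci\sol\psi^*$. Collecting, the first line reduces to $\ci$ times the integrand of $\duenergy(\sol,\MagPot)\psi$ inside the real part.

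For the mixed block, substituting $\varphi=\ci\sol$ into $\dualp{\dudAtwoenergy(\sol,\MagPot)\mathbf{C}}{\varphi}$ I would use $(\ci\sol)^*=-\ci\sol^*$ and $\nabla(\ci\sol)=\ci\nabla\sol$ to see that $2\Real(\sol(\ci\sol)^*)=2\Real(-\ci|\sol|^2)=0$ kills the first summand, while $\kappainv\Real\bigl(\ci\sol^*\nabla(\ci\sol)+\ci(\ci\sol)^*\nabla\sol\bigr) = \kappainv\Real\bigl(-\sol^*\nabla\sol + \sol^*\nabla\sol\bigr)=0$ kills the second; thus the mixed block vanishes identically, independent of optimality. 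The only remaining piece is $\Real\int_\Omega \ci\bigl(\kappainvci\nabla\sol+\MagPot\sol\bigr)\cdot(\kappainvci\nabla\psi+\MagPot\psi)^* + (|\sol|^2-1)\ci\sol\psi^*\dint{x}$. The final step is to recognize this as $\duenergy(\sol,\MagPot)(\ci\psi)$ rewritten, or more directly to test the Ginzburg--Landau equation from Lemma~\ref{lemma-ginzburg-landau-eqns} with $\testfun = \ci\psi$ (a valid test function in $H^1(\Omega)$), which gives zero.

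I expect the only subtle point — and the one place worth writing carefully — to be the bookkeeping of the real part under multiplication by $\ci$, since the pairing $\Real\int \cdots$ is merely $\R$-bilinear and not $\C$-bilinear (as emphasized in the Notation section). Concretely, one must be careful that replacing $\psi$ by $\ci\psi$ is legitimate precisely because the Ginzburg--Landau equation holds for \emph{all} $\testfun\in H^1(\Omega)$, and $\ci\psi\in H^1(\Omega)$; this is what converts the surviving $\Real\int\ci(\ldots)$ expression into $\duenergy(\sol,\MagPot)(\ci\psi)=0$. The coercivity failure then follows at once: the nonzero direction $(\ci\sol,\mathbf{0})$ lies in the kernel of $\energystab''(\sol,\MagPot)$ (note $\ci\sol\ne 0$ whenever $\sol\ne 0$), so $\energystab''(\sol,\MagPot)$ cannot be bounded below by a positive multiple of the norm. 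This degenerate direction is of course the infinitesimal generator of the gauge phase shift \eqref{gauge-phase-shift}, which makes the result expected on structural grounds.
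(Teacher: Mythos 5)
Your proof is correct and follows essentially the same route as the paper's: both reduce by linearity to the $\dutwoenergy$-block plus the mixed block, both dispose of the cubic terms through the pointwise cancellation $\sol^2(\ci\sol)^* + |\sol|^2(\ci\sol) = -\ci|\sol|^2\sol + \ci|\sol|^2\sol = 0$, and both show the mixed block vanishes identically by the same computation $\Real(-\ci|\sol|^2)=0$ and $-\sol^*\nabla\sol+\sol^*\nabla\sol=0$. The only difference is cosmetic: you attach the factor $\ci$ to the test function and invoke Lemma~\ref{lemma-ginzburg-landau-eqns} with test function $\pm\ci\psi$ (the surviving term is in fact $\duenergy(\sol,\MagPot)(-\ci\psi)$ rather than $\duenergy(\sol,\MagPot)(\ci\psi)$, but the sign is immaterial since both vanish), whereas the paper attaches the $\ci$ to the solution and uses gauge invariance to conclude $\duenergy(\ci\sol,\MagPot)\psi=0$; the two are interchangeable by the symmetry $\Real(ab^*)=\Real(a^*b)$ of the real pairing, and your variant has the slight advantage of needing only the Euler--Lagrange equations at $(\sol,\MagPot)$ rather than the gauge-equivalence of $(\ci\sol,\MagPot)$.
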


\begin{proof}
Using \eqref{secE-calculated}, we have by linearity that the terms with $\mathbf{B}={\mathbf{0}}$ vanish and thus obtain
\begin{align}
	& \dualp{\energystab''(\sol,\MagPot) (\ci \sol ,\mathbf{0} ) }{(\psi, \mathbf{C})} \\
	&= 
	\dualp{\dutwoenergy(\sol,\MagPot) \ci \sol}{\psi} 
	+
	\dualp{\dudAtwoenergy(\sol,\MagPot) \mathbf{C} }{\ci \sol} 
	+
	\dualp{\dudAtwoenergy(\sol,\MagPot) \mathbf{0}}{\psi} 
	+
	\dualp{\dAtwoenergy(\sol,\MagPot) \mathbf{0}}{\mathbf{C}}
	\\
	&= 
	\dualp{\dutwoenergy(\sol,\MagPot) \ci \sol}{\psi} 
	+
	\dualp{\dudAtwoenergy(\sol,\MagPot) \mathbf{C}}{\ci \sol} .
\end{align}	
Further, we conclude from  Lemma \ref{lemma-ginzburg-landau-eqns} and the fact that $\ci \sol$ is still a minimizer that we have $\langle \duenergy(\ci \sol,\MagPot) , \psi \rangle = 0$. Using Lemmas \ref{lem:Frechet_der_1} and \ref{lem:Frechet_der_2}, this implies
\begin{eqnarray*}
\dualp{\dutwoenergy(\sol,\MagPot) \ci \sol }{\psi} = \langle \duenergy(\ci \sol,\MagPot) , \psi \rangle +
\Real  \int_\Omega  \sol^2 (\ci u)^* \psi^* + |\sol|^2 \ci \sol \psi^* \dint{x} = 0.
\end{eqnarray*}
Since we also have 
\begin{align}
\dualp{\dudAtwoenergy(\sol,\MagPot) \mathbf{C}}{\ci \sol } = \int_\Omega 
2     \Real ( -\ci |\sol|^2 ) \MagPot \cdot \mathbf{C}  
-
\kappainv  \Real \bigl( -  
\sol^* \nabla \sol
+
 \sol^* \nabla \sol \bigr) 
\cdot \mathbf{C}  
\dint{x} = 0,
\end{align}	
 the claim follows.
\end{proof}
Lemma \ref{lemma:degenerate-direction} can be interpreted through smooth curves $\gamma(t)$ in $H^1(\Omega) \times \HonenSpace(\Omega)$. If the curve is locally (in a neighborhood of $t=0$) of the form $\gamma(t) \coloneqq (\sol e^{\ci \omega t} , \MagPot )$ for fixed a minimizer $(\sol,\MagPot) \in H^1(\Omega) \times \HonenSpace(\Omega)$ and for some $\omega \in \mathbb{R} \setminus \{ 0 \}$, then, due to the gauge invariance of $\energystab$ under complex phase shifts of $\sol$ (cf. \eqref{gauge-phase-shift}), we have $E(\,\gamma(t)\,) \equiv \mbox{const}$\, in a neighborhood of $t=0$. Together with $\gamma^{\prime}(0) = \omega (\ci \sol , \mathbf{0})$ and Lemma \ref{lemma-ginzburg-landau-eqns}, we conclude 
$$
0 = \tfrac{\mbox{\tiny d}^2}{\mbox{\tiny d}^2 t}E(\,\gamma(t)\,)_{\vert t=0} = \langle E^{\prime\prime}(\gamma(0)) \, \gamma^{\prime}(0) ,  \gamma^{\prime}(0)  \rangle
+   E^{\prime}(\gamma(0)) \, \gamma^{\prime\prime}(0) = \omega^2 \langle E^{\prime\prime}(\sol,\MagPot)  \, (\ci \sol , \mathbf{0}) , (\ci \sol , \mathbf{0}) \rangle.
$$
In other words, $(\ci \sol , \mathbf{0})$ is an eigenfunction of $E^{\prime\prime}(\sol,\MagPot)$ with eigenvalue $0$, which immediately implies the statement of Lemma \ref{lemma:degenerate-direction}. Furthermore, if all other eigenvalues of $E^{\prime\prime}(\sol,\MagPot)$ are positive, then this implies that $(\ci \sol , \mathbf{0})$ is the only direction for a curve $\gamma(t)$ with $\gamma(0) = (\sol , \MagPot )$ such that $\tfrac{\mbox{\tiny d}^2}{\mbox{\tiny d}^2 t}E(\,\gamma(t)\,)_{\vert t=0} = 0$. If this is fulfilled, then $ (\sol , \MagPot )$ is an isolated minimizer of $\energystab$ up to the gauge transformations \eqref{gauge-phase-shift}, i.e., it is locally quasi-unique.

With these thoughts, we consider the orthogonal complement of $(\ci \sol , \mathbf{0})$ which is given by the space
\begin{equation} \label{eq:def_Vfullperp}
\Honeperp \times \HonenSpace(\Omega),
\quad
\mbox{with } \Honeperp \coloneqq \{  \varphi \in H^1(\Omega) \mid 
\Real  \int_\Omega \ci \sol \, \psi^* \dint{x} = 0 \}
\end{equation}
and define \quotes{local quasi-uniqueness} of minimizers by assuming that the spectrum of $E^{\prime\prime}(\sol,\MagPot)$ is positive on $\Honeperp \times \HonenSpace(\Omega)$. This is fixed in Definition \ref{definition-local-quasi-uniquness} below. Note that $E^{\prime\prime}(\sol,\MagPot)$ cannot have negative eigenvalues since this implies the existence of a direction in which the energy $\energystab$ is further reduced, which would contradict the assumption that $(\sol,\MagPot)$ is a minimizer of $\energystab$. The definition below summarizes the above discussion and follows \cite[Definition 2.4]{DoeH24}.

\begin{definition}[Local quasi-uniqueness]
\label{definition-local-quasi-uniquness}
Let
\begin{eqnarray*}
 ((\varphi,\mathbf{B}), (\psi,\mathbf{C} ))_{L^2 \times \boldsymbol{L^2}} \coloneqq \Real  \int_\Omega \varphi \, \psi^* \dint{x} +   \int_\Omega \mathbf{B}  \cdot  \mathbf{C} \dint{x}.
\end{eqnarray*} 
We call a minimizer $(\sol,\MagPot) \in H^1(\Omega) \times \HonenSpace(\Omega)$ of \eqref{minimization-problem} \emph{locally quasi-unique} if $E^{\prime\prime}(\sol,\MagPot)$ has positive spectrum on $\Honeperp \times \HonenSpace(\Omega)$, i.e., if $(\varphi_j,\mathbf{B}_j) \in H^1(\Omega) \times \HonenSpace(\Omega)$ is an eigenfunction with eigenvalue $\lambda_j \in \mathbb{R}$ such that
\begin{eqnarray*}
\langle E^{\prime\prime}(\sol,\MagPot) (\varphi_j,\mathbf{B}_j), (\psi,\mathbf{C} ) \rangle = \lambda_j \, ((\varphi_j,\mathbf{B}_j), (\psi,\mathbf{C} ))_{L^2 \times \boldsymbol{L^2}}
\end{eqnarray*} 
for all $(\psi,\mathbf{C} ) \in H^1(\Omega) \times \HonenSpace(\Omega)$, then $\lambda_j \ge 0$ for all $j \in \mathbb{N}$ and $\lambda_j=0$ if and only if $\varphi_j \in \mbox{\normalfont span}\{ \ci \sol\}$ and $\mathbf{B}_j=0$.
\end{definition}
For the final error estimates we assume that the minimizers are locally quasi-unique in the sense of the above definition. Whenever we need the assumption it will be explicitly mentioned in the corresponding result.

\begin{assumption} \label{ass:local_uniq}
The minimizers $(\sol,\MagPot)$ of the Ginzburg--Landau energy \eqref{eq:energy_functional_stab} are
locally quasi-unique in the sense of Definition \ref{definition-local-quasi-uniquness}.
\end{assumption}

For locally quasi-unique minimizers we have coercivity of $\energystab''(\sol,\MagPot)$ on $\Honeperp \times \HonenSpace(\Omega)$.

\begin{proposition} \label{prop:coecivity_Epp}
Let $(\sol,\MagPot)$ be a minimizer of \eqref{eq:energy_functional_stab} that is locally quasi-unique in the sense of Definition \ref{definition-local-quasi-uniquness}. 
Then the second \Frechet derivative $\energystab''(\sol,\MagPot)$ is coercive on 
$\Honeperp \times \HonenSpace(\Omega)$,
i.e., there exists a constant $\Csol>0$ such that
\begin{equation}
\dualp{\energystab''(\sol,\MagPot) (\varphi, \mathbf{B} )}{(\varphi, \mathbf{B} )}
\geq \Csol^{-1}
\Honecombi{(\varphi, \mathbf{B} )}^2,
\quad \text{for all } (\varphi, \mathbf{B} ) \in \Honeperp \times \HonenSpace(\Omega)\, 
\end{equation}
where $\Honecombi{(\varphi, \mathbf{B} )}^2\coloneqq \norm{\HonekappaSpace}{\varphi}^2 + \norm{ H^1}{ \mathbf{B}}^2$.
Furthermore, it holds
\begin{eqnarray*}
	|\dualp{\energystab''(\sol,\MagPot) (\varphi, \mathbf{B} ) }{(\psi,  \mathbf{C} )}| &\lesssim&
	\Honecombi{(\varphi, \mathbf{B})} \,\,
	\Honecombi{(\psi, \mathbf{C})}
\end{eqnarray*}
for all $(\varphi, \mathbf{B} ), (\psi,  \mathbf{C} ) \in H^1(\Omega) \times \HonenSpace(\Omega)$ and with a constant independent of $\kappa$.
\end{proposition}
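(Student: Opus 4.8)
My plan is to treat the two claims separately. The continuity (upper) bound is unconditional and follows from direct estimates, whereas the coercivity needs Assumption~\ref{ass:local_uniq} and will be obtained by combining a Gårding inequality with a compactness argument that converts the assumed positivity of the $L^2\times\boldsymbol{L^2}$-spectrum on $\Honeperp \times \HonenSpace(\Omega)$ into a genuine $\HonekappaSpace \times \mathbf{H}^1$-coercivity estimate. Throughout I would use the pointwise bounds $|\sol|\le 1$ and $\norm{L^\infty}{\MagPot}\lesssim 1$ (Corollary~\ref{cor:sol_bounds_ex_A_H2}(a)), the $\kappa$-uniform bound $\norm{L^4}{\tfrac1\kappa\nabla\sol}\lesssim 1$ (Corollary~\ref{cor:sol_bounds_ex_A_H2}(b)), and the fact, already recorded in the text following Lemma~\ref{lemma:degenerate-direction}, that $\energystab''(\sol,\MagPot)$ is positive semidefinite.

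For the continuity bound I would expand $\dualp{\energystab''(\sol,\MagPot)(\varphi,\mathbf{B})}{(\psi,\mathbf{C})}$ via \eqref{secE-calculated} and bound its four blocks. The diagonal parts $\dutwoenergy$ and $\dAtwoenergy$ are controlled by $\norm{\HonekappaSpace}{\varphi}\,\norm{\HonekappaSpace}{\psi}$ and $\norm{H^1}{\mathbf{B}}\,\norm{H^1}{\mathbf{C}}$, using $|\sol|\le1$, the $L^\infty$-bound on $\MagPot$, and $\norm{L^2}{\kappainvci\nabla\varphi+\MagPot\varphi}\lesssim\norm{\HonekappaSpace}{\varphi}$. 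The delicate term is the coupling block $\dudAtwoenergy$, in particular $\kappainv\Real\int \ci\varphi^*\nabla\sol\cdot\mathbf{C}$. Pairing $\varphi$ in $L^4$ would cost a spurious power of $\kappa$; instead I would split by Hölder as $L^2\times L^4\times L^4$, placing $\varphi\in L^2$, absorbing $\tfrac1\kappa\nabla\sol$ by $\norm{L^4}{\tfrac1\kappa\nabla\sol}\lesssim1$, and using $\mathbf{H}^1(\Omega)\hookrightarrow\mathbf{L}^4(\Omega)$ for $\mathbf{C}$. This integrability balancing is what keeps the constant independent of $\kappa$.

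For the coercivity I would first establish a Gårding inequality
\[
  \dualp{\energystab''(\sol,\MagPot)(\varphi,\mathbf{B})}{(\varphi,\mathbf{B})}
  \;\ge\; c\,\Honecombi{(\varphi,\mathbf{B})}^2 - C\,\norm{L^2\times\boldsymbol{L^2}}{(\varphi,\mathbf{B})}^2
\]
on all of $H^1(\Omega)\times\HonenSpace(\Omega)$. The good term for $\varphi$ comes from $\norm{L^2}{\kappainvci\nabla\varphi+\MagPot\varphi}^2\ge\tfrac12\kappa^{-2}\norm{L^2}{\nabla\varphi}^2-\norm{L^\infty}{\MagPot}^2\norm{L^2}{\varphi}^2$, and the good term for $\mathbf{B}$ from $\norm{L^2}{\curl\mathbf{B}}^2+\norm{L^2}{\div\mathbf{B}}^2$ together with the Friedrichs/Gaffney-type estimate $\norm{H^1}{\mathbf{B}}^2\lesssim\norm{L^2}{\curl\mathbf{B}}^2+\norm{L^2}{\div\mathbf{B}}^2+\norm{L^2}{\mathbf{B}}^2$ for fields with vanishing normal trace (cf.\ the estimates of \cite{GirR86} already used in Lemma~\ref{lem:sol_bounds_ex_1}). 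The zeroth-order terms carrying $|\sol|^2$ and the three coupling terms are absorbed either into these two good terms with a small Young parameter or into the $L^2$-remainder, again using the $L^2$--$L^4$--$L^4$ split; the resulting $c,C$ can be kept independent of $\kappa$.

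With the Gårding inequality in hand I would argue by contradiction: if coercivity failed, there would exist $(\varphi_n,\mathbf{B}_n)\in\Honeperp\times\HonenSpace(\Omega)$ with $\Honecombi{(\varphi_n,\mathbf{B}_n)}=1$ and $\dualp{\energystab''(\sol,\MagPot)(\varphi_n,\mathbf{B}_n)}{(\varphi_n,\mathbf{B}_n)}\to\ell\le 0$. Since $\kappa$ is fixed, $\HonekappaSpace\times\mathbf{H}^1$ embeds compactly into $L^2\times\boldsymbol{L^2}$, so a subsequence has a weak limit $(\varphi,\mathbf{B})$ that is strong in $L^2\times\boldsymbol{L^2}$; the Gårding inequality forces $\norm{L^2\times\boldsymbol{L^2}}{(\varphi,\mathbf{B})}^2\ge c/C>0$, so the limit is nonzero, and it stays in $\Honeperp\times\HonenSpace(\Omega)$ because the constraint in \eqref{eq:def_Vfullperp} is a bounded linear functional and $\HonenSpace(\Omega)$ is weakly closed. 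Weak lower semicontinuity of the convex principal part and weak--strong convergence of the remaining terms yield $\dualp{\energystab''(\sol,\MagPot)(\varphi,\mathbf{B})}{(\varphi,\mathbf{B})}\le\ell\le0$; combined with positive semidefiniteness this makes the quadratic form vanish at $(\varphi,\mathbf{B})$, hence (Cauchy--Schwarz for semidefinite forms) $(\varphi,\mathbf{B})$ lies in the kernel, and Definition~\ref{definition-local-quasi-uniquness} forces $\varphi\in\linhull\{\ci\sol\}$, $\mathbf{B}=\mathbf{0}$. Being also in $\Honeperp$ makes $\varphi$ orthogonal to $\ci\sol$, so $\varphi=0$, contradicting nonzeroness; this produces $\Csol$, which may depend on $\sol,\MagPot,\kappa$ through the spectral gap. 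The main obstacle is precisely this last transition: the assumption only gives positivity of the $L^2\times\boldsymbol{L^2}$-spectrum with a possibly vanishing infimum of eigenvalues, and upgrading it to a uniform lower bound in the stronger, $\kappa$-weighted energy norm is what necessitates the compactness argument, while the secondary recurring difficulty is keeping track of $\kappa$-powers in the coupling terms.
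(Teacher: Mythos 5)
Your proof is correct, and its analytic core coincides with the paper's: the G{\aa}rding inequality obtained from $|\kappainvci\nabla\varphi+\MagPot\varphi|^2\ge\tfrac12\kappa^{-2}|\nabla\varphi|^2-|\MagPot|^2|\varphi|^2$, the Friedrichs-type bound $\norm{H^1}{\mathbf{B}}^2\lesssim\norm{L^2}{\curl\mathbf{B}}^2+\norm{L^2}{\div\mathbf{B}}^2+\norm{L^2}{\mathbf{B}}^2$ for fields with vanishing normal trace, and the $L^2\times L^4\times L^4$ H\"older splitting with $\norm{L^4}{\tfrac1\kappa\nabla\sol}\lesssim1$ for the dangerous coupling term are exactly the ingredients used there (the continuity bound is likewise obtained in the paper by deferring to Lemma~\ref{lem:energy2prim_perturbation_v2}\,(c), whose proof rests on the same integrability balancing you describe). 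Where you genuinely diverge is the final step that converts quasi-uniqueness into coercivity. The paper reads Definition~\ref{definition-local-quasi-uniquness} as furnishing a second-smallest eigenvalue $\lambda_2>0$ together with the bound $\dualp{\energystab''(\sol,\MagPot)(\varphi,\mathbf{B})}{(\varphi,\mathbf{B})}\ge\lambda_2\,\norm{L^2 \times \mathbf{L}^2}{(\varphi,\mathbf{B})}^2$ on $\Honeperp\times\HonenSpace(\Omega)$, and then combines this algebraically with the G{\aa}rding inequality to get the explicit constant $\Csol=(1+C_2/\lambda_2)\,C_1^{-1}$. You instead argue by contradiction: a normalized sequence with vanishing quadratic form has, by Rellich, a subsequence converging strongly in $L^2\times\mathbf{L}^2$; the G{\aa}rding inequality keeps the limit away from zero; weak lower semicontinuity of the convex principal part plus weak--strong convergence of the lower-order terms (here one also needs $\mathbf{B}_n\to\mathbf{B}$ in $\mathbf{L}^4$, which follows from the compact embedding $\mathbf{H}^1\hookrightarrow\mathbf{L}^4$ in three dimensions --- a point you leave implicit) shows the form vanishes at the limit; semidefiniteness and Cauchy--Schwarz for semidefinite forms put the limit in the kernel, which the definition identifies with $\linhull\{(\ci\sol,\mathbf{0})\}$, and the $\Honeperp$-constraint then forces the limit to be zero, a contradiction. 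The paper's route buys a quantitative formula for $\Csol$ in terms of the spectral gap $\lambda_2$ (the quantity monitored numerically in the experiments), but it tacitly relies on discreteness of the spectrum and attainment of the gap, which rests on the same compact-embedding machinery you make explicit; your route is self-contained and needs no spectral theory, at the price of a purely non-constructive $\Csol$. Both yield a constant that may degenerate as $\kappa$ grows, consistent with the discussion following the proposition.
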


\begin{remark}
To the best of our knowledge the precise dependence of $\Csol$ on $\kappa$ could so far not be resolved in the literature. However, there is quite some effort in the literature to estimate the smallest eigenvalue $\lambda(\kappa)$  of the magnetic Neumann Laplacian  
(later defined as $a_{\MagPot}$ in \eqref{eq:def_abilmag}), which basically corresponds to the very special case of $\sol = 0$. For example in 
\cite[Thm.~8.1.1 \& 9.1.1]{ForH10},
it is shown that in a bounded and smooth domain  asymptotically $\lambda(\kappa) \sim \kappa^{-1}$ holds if $\curl \MagPot$ does not vanish on $\Omega$. 
By the proof below this would lead to $\Csol \sim \kappa$.
In alignment with these theoretical results, 
our numerical experiments similarly indicate that $\Csol \sim \kappa^{\alpha}$ with $\alpha \ge 1$ on rectangular domains, cf. \cite{DoeH24,BDH24}.

\end{remark}

\begin{proof}[Proof of Proposition~\ref{prop:coecivity_Epp}]
The local quasi-uniqueness (Definition \ref{definition-local-quasi-uniquness})
guarantees 
the existence of the second-smallest eigenvalue $\lambda_2>0$ of $\energystab''(\sol,\MagPot)$ such that
\begin{eqnarray}
\label{estimate-eigenvalue}
\dualp{\energystab''(\sol,\MagPot) (\varphi, \mathbf{B} )}{(\varphi, \mathbf{B} )}
&\geq& \lambda_2
\norm{L^2 \times \mathbf{L}^2}{(\varphi, \mathbf{B}) }^2 
\end{eqnarray}
for all $(\varphi, \mathbf{B} ) \in \Honeperp \times \HonenSpace(\Omega)$. 
On the other hand, we can use \eqref{secE-calculated} together with the identity $\Real((\sol \testfun^*)^2  + |\sol|^2 |\varphi|^2)=2 (\Real(\sol \testfun^*))^2$ to obtain
	\begin{eqnarray*}
\lefteqn{ \dualp{\energystab''(\sol,\MagPot) (\varphi,\mathbf{B}) }{(\varphi, \mathbf{B} )} \,\,
=\,\, \int_\Omega | \tfrac{\ci}{\kappa}  \nabla \testfun +  \MagPot \testfun |^2 + \bigl( |\sol|^2 -1 \bigr)  |\testfun|^2 +2 \Real(\sol \testfun^*)^2 \dint{x}  } \\
\nonumber&\enspace&\hspace{-6pt}+
\int_\Omega 
4     \Real (\sol \testfun^* ) \MagPot \cdot \mathbf{B}  
+
\tfrac{2}{\kappa}  \Real \bigl( \ci  
\sol^* \nabla \testfun
+
\ci \testfun^* \nabla \sol \bigr) 
\cdot \mathbf{B}  
\dint{x} +
\int_\Omega 
|\sol|^2 |\mathbf{B}|^2 
+
|\curl \mathbf{B}|^2
+
|\div \mathbf{B}|^2
\dint{x} .
\end{eqnarray*}
Since $ | \tfrac{\ci}{\kappa}  \nabla \testfun +  \MagPot \testfun |^2 \ge \tfrac{1}{2 \kappa^2} |\nabla \testfun|^2 - |\MagPot|^2 |\testfun|^2 $ and $\|  \mathbf{B} \|_{H^1} \lesssim  \| \div \mathbf{B}\|_{L^2} + \| \curl \mathbf{B}\|_{L^2}$ (cf. \cite[Lem.~3.6]{GirR86} and \cite[Thm.~3.9]{GirR86}), we conclude together with the $L^{\infty}$-bounds for $\sol$ and $\MagPot$ from Lemma \ref{lem:sol_bounds_ex_1} and Corollary \ref{cor:sol_bounds_ex_A_H2} that
	\begin{eqnarray*}
\lefteqn{ \dualp{\energystab''(\sol,\MagPot) (\varphi,\mathbf{B}) }{(\varphi, \mathbf{B} )} } \\
&\ge&
 \int_\Omega  \tfrac{1}{2} | \tfrac{1}{\kappa} \nabla \testfun|^2  + \bigl( |\sol|^2 -1 - |\MagPot|^2 \bigr)  |\testfun|^2 - 4  |\sol| \, |\testfun| \, |\MagPot| \,|\mathbf{B}|   
 - \tfrac{2}{\kappa}  (|\sol| \, |\nabla \testfun| + |\testfun| \, |\nabla \sol| )  |\mathbf{B} |
 \dint{x}   \\
\nonumber&\enspace&\hspace{-6pt}+ 
\int_\Omega 
|\curl \mathbf{B}|^2
+
|\div \mathbf{B}|^2
\dint{x} \\
&\gtrsim& \Honekappa{\testfun}^2 + \| \mathbf{B} \|_{H^1}^2 - c_1 \| \testfun \|_{L^2}^2 - c_2 \| \mathbf{B} \|_{L^2}^2
\end{eqnarray*}
for constants $c_1,c_2 \ge 0$.
In the last step of the estimate, we also used the Young's inequality $\| \tfrac{2}{\kappa} |\sol| \, |\nabla \testfun| |\mathbf{B} |\,\|_{L^1} \le \| \tfrac{1}{4} |\tfrac{1}{\kappa} \nabla \testfun|^2 \|_{L^1} + 4  \,\| |\sol|^2 |\mathbf{B} |^2 \|_{L^1}$, as well as $\| \nabla \sol \|_{L^4}\lesssim \kappa$ which yields $\| \tfrac{2}{\kappa}  |\testfun| \, |\nabla \sol| \,  |\mathbf{B} | \,\|_{L^1} \le 2 \| \tfrac{1}{\kappa} \nabla \sol \|_{L^4} \| \testfun \|_{L^2} \| \mathbf{B} \|_{L^4} \lesssim \tfrac{1}{\varepsilon} \| \tfrac{1}{\kappa} \nabla \sol \|_{L^4}^2  \| \testfun \|_{L^2}^2 + \varepsilon  \| \mathbf{B} \|_{H^1}^2$. We conclude that the following G{\aa}rding inequality holds:
\begin{equation}
	\dualp{\energystab''(\sol,\MagPot) (\varphi, \mathbf{B} )}{(\varphi, \mathbf{B} )}
	\geq C_1
	\Honecombi{(\varphi, \mathbf{B})}^2
	-
	C_2
	\norm{L^2 \times \mathbf{L}^2}{(\varphi, \mathbf{B})}^2.
\end{equation}
Together with \eqref{estimate-eigenvalue} we obtain for $(\varphi, \mathbf{B} ) \in \Honeperp \times \HonenSpace(\Omega)$ 
the coercivity estimate
\begin{equation}
	(1+\tfrac{C_2}{\lambda_2}) \dualp{\energystab''(\sol,\MagPot) (\varphi, \mathbf{B} )}{(\varphi, \mathbf{B} )}
	\geq C_1
	\Honecombi{(\varphi, \mathbf{B})}^2,
\end{equation}
and $\Csol = (1+\tfrac{C_2}{\lambda_2}) \,C_1^{-1}$.
In addition, the continuity estimate for $|\dualp{\energystab''(\sol,\MagPot) (\varphi, \mathbf{B} ) }{(\psi,  \mathbf{C} )}|$ follows from Lemma~\ref{lem:energy2prim_perturbation_v2} (c) below.
\end{proof}

We have the following direct consequence of Proposition~\ref{prop:coecivity_Epp}.
\begin{lemma} \label{lem:inf_sup_cond_exact_bounds}
Let $(\sol,\MagPot)$ be a locally quasi-unique minimizer of \eqref{eq:energy_functional_stab} in the sense of Definition \ref{definition-local-quasi-uniquness}. 
Then, for all 
$\mathbf{\rhs} \in L^2(\Omega) \times \mathbf{L}^2(\Omega) \subset (\Honeperp \times \HonenSpace(\Omega))^{\ast}$, 
there exists a unique $(z, \mathbf{Z} ) \in \Honeperp \times \HonenSpace(\Omega)$ 
which solves
	\begin{equation} \label{eq:var_prob_energe_pp}
		\dualp{\energystab''(\sol,\MagPot) (z, \mathbf{Z} ) }{(\psi,\mathbf{C})} = 
		(\mathbf{\rhs} , (\psi,\mathbf{C}) )_{L^2 \times \mathbf{L}^2}, 
		\quad \text{ for all } (\psi,\mathbf{C}) \in \Honeperp \times \HonenSpace(\Omega). 
	\end{equation}
	The solution further satisfies
	\begin{equation}
		\Honecombi{(z, \mathbf{Z} )}\,\,\lesssim\,\, \Csol \, \norm{L^2 \times {\mathbf{L}}^2}{ \mathbf{\rhs} }.
	\end{equation}
\end{lemma}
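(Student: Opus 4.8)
The plan is to recognize \eqref{eq:var_prob_energe_pp} as a variational problem posed on the fixed real Hilbert space $\mathcal{V} \coloneqq \Honeperp \times \HonenSpace(\Omega)$ and to solve it by the Lax--Milgram theorem. First I would record that $\mathcal{V}$ is genuinely a Hilbert space: $\Honeperp$ is the kernel of the bounded real-linear functional $\varphi \mapsto \Real\int_\Omega \ci\sol\,\varphi^* \dint{x}$ on $H^1(\Omega)$, hence a closed subspace, and $\HonenSpace(\Omega)$ is closed in $\mathbf{H}^1(\Omega)$. Equipping $\mathcal{V}$ with the inner product inducing $\Honecombi{\cdot}$, which for fixed $\kappa$ is equivalent to the standard $H^1\times\mathbf{H}^1$ norm, makes it complete.

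Next I would verify the hypotheses of Lax--Milgram for the bilinear form $a\bigl((\varphi,\mathbf{B}),(\psi,\mathbf{C})\bigr) \coloneqq \dualp{\energystab''(\sol,\MagPot)(\varphi,\mathbf{B})}{(\psi,\mathbf{C})}$. Coercivity on $\mathcal{V}$ with constant $\Csol^{-1}$ and boundedness with a constant independent of $\kappa$ are exactly the two statements of Proposition~\ref{prop:coecivity_Epp}; the form is moreover symmetric by Lemma~\ref{lem:Frechet_der_2}, so the symmetric version of the theorem applies equally well. The only point requiring a short computation is that the right-hand side $\ell(\psi,\mathbf{C}) \coloneqq (\mathbf{\rhs},(\psi,\mathbf{C}))_{L^2\times\mathbf{L}^2}$ defines a bounded functional on $\mathcal{V}$. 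Here I would use that the weighted norms dominate the plain $L^2$ norms with constant one, namely $\norm{L^2}{\psi} \le \Honekappa{\psi}$ and $\norm{\mathbf{L}^2}{\mathbf{C}} \le \norm{H^1}{\mathbf{C}}$, so that Cauchy--Schwarz in $L^2\times\mathbf{L}^2$ gives $|\ell(\psi,\mathbf{C})| \le \norm{L^2 \times {\mathbf{L}}^2}{\mathbf{\rhs}}\,\Honecombi{(\psi,\mathbf{C})}$. This is the step I expect to matter most for the $\kappa$-tracking: routing the duality through these particular embeddings ensures that no spurious power of $\kappa$ enters, so that the entire $\kappa$-dependence is carried by $\Csol$ alone. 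Lax--Milgram then yields a unique $(\varphi,\mathbf{B}) \in \mathcal{V}$ solving \eqref{eq:var_prob_energe_pp}.

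Finally, for the a~priori bound I would test \eqref{eq:var_prob_energe_pp} with $(\psi,\mathbf{C}) = (\varphi,\mathbf{B})$ and combine coercivity with the bound on $\ell$:
\begin{equation*}
\Csol^{-1} \Honecombi{(\varphi,\mathbf{B})}^2 \le a\bigl((\varphi,\mathbf{B}),(\varphi,\mathbf{B})\bigr) = \ell(\varphi,\mathbf{B}) \le \norm{L^2 \times {\mathbf{L}}^2}{\mathbf{\rhs}}\,\Honecombi{(\varphi,\mathbf{B})},
\end{equation*}
and dividing by $\Honecombi{(\varphi,\mathbf{B})}$ gives $\Honecombi{(\varphi,\mathbf{B})} \lesssim \Csol\,\norm{L^2 \times {\mathbf{L}}^2}{\mathbf{\rhs}}$, as claimed. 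I expect no genuine obstacle beyond this bookkeeping: the analytic content is already packaged in Proposition~\ref{prop:coecivity_Epp}, and the only care needed is the clean choice of embeddings above so that the constant in the final estimate is exactly $\Csol$.
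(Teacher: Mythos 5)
Your proposal is correct and is exactly the argument the paper intends: the paper states this lemma without proof as a \emph{direct consequence} of Proposition~\ref{prop:coecivity_Epp}, and your Lax--Milgram argument (coercivity with constant $\Csol^{-1}$, continuity, the embedding bounds $\norm{L^2}{\psi}\le\Honekappa{\psi}$ and $\norm{\boldL^2}{\mathbf{C}}\le\norm{H^1}{\mathbf{C}}$ for the right-hand side, then testing with the solution itself) is precisely the fleshing-out of that remark. No gaps.
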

Note that the $\kappa$--weighted topology in Lemma~\ref{lem:inf_sup_cond_exact_bounds} reflects the principal part 
$-\tfrac{1}{\kappa^{2}}\Delta z + z$ of the $z$--component of $E^{\prime\prime}(\sol)$ and is therefore the natural coercivity scale for stability estimates.

\section{LOD discretization and main results}
\label{sec:space_main}
In this section we introduce the LOD discretization for minimizers of the GL free energy by adapting the constructions proposed in \cite{BDH24} and \cite{DoeH24}. For that, let $\mathcal{T}_H$ and $\mathcal{T}_h$ be two shape-regular and quasi-uniform triangulations of $\Omega$ with mesh sizes $H$ and $h$ respectively. The mesh $\mathcal{T}_H$ will be used to approximate the order parameter $\sol$ and $\mathcal{T}_h$ to approximate the vector potential $\MagPot$. In the first step, let us define the $\mathcal{P}_1$-Lagrange finite element space on $\mathcal{T}_H$ by 
\begin{equation}
	\VSh = \{ \, \varphi_H \in C^0(\overline{\Omega};\mathbb{C}) 
	\mid
	\varphi_H |_{K}  \in \mathcal{P}_1(K) 
	\text{ for all } K \in \mathcal{T}_H \, \}  
\end{equation}
and the $\mathcal{P}_k$-Lagrange finite element space of degree $k=1,2$ on $\mathcal{T}_h$ by 
\begin{equation} \label{eq:FEM_Space_A}
	\VSAhzero{k} = \{ \, \mathbf{B}_h \in C^0(\overline{\Omega};\mathbb{R}^3) 
	\mid
	\mathbf{B}_h |_{K}  \in \mathcal{P}_k(K)^3 
	\text{ for all } K \in \mathcal{T}_h 
	\text{ and }
	\mathbf{B}_h \cdot \nu = 0 \text{ on } \partial \Omega \,\} .
\end{equation}
The usual approximation properties of these Lagrange FE spaces (cf. \cite{BrennerScott}) together with an Aubin--Nitsche argument yield the estimates 
\begin{subequations} \label{eq:approx_FEM_Lag}
\begin{eqnarray}
\inf_{\varphi_H \in \VSh } \left(  \| \varphi - \varphi_H \|_{L^2} + \kappa H \| \varphi - \varphi_H \|_{H^1_{\kappa}} \right)
&\lesssim& (\kappa H)^2 \, \| \varphi \|_{H^2_{\kappa}} 
 \qquad
 \mbox{and}\\
 \inf_{\mathbf{B}_h \in \VSAhzero{k} } \left( \| \mathbf{B} - \mathbf{B}_h \|_{\mathbf{L}^2} + h \| \mathbf{B} - \mathbf{B}_h \|_{\mathbf{H}^1} \right)
&\lesssim& h^{k+1} \| \mathbf{B} \|_{\mathbf{H}^{k+1}} 
\end{eqnarray}
\end{subequations}
for all $(\varphi,\mathbf{B}) \in H^2(\Omega) \times \mathbf{H}^{k+1}(\Omega)$ where $k=1,2$.

In order to improve the approximation properties of $\VSh$ w.r.t. $\sol$ we enrich the nodal basis functions using the so-called {\it magnetic Laplacian}, which is represented by the bilinear form
\begin{eqnarray}
\label{eq:def_abilmag}
a_{\bfAapp} ( \varphi , \psi) &\coloneqq&  \Real  \int_\Omega \bigl( \kappainvci  \nabla \testfun +  \bfAapp \testfun \bigr)  \cdot \bigl( \kappainvci  \nabla \psi +  \bfAapp \psi \bigr)^* \dint{x} 	
\end{eqnarray}
for $\testfun,\psi \in H^1(\Omega)$ 
and where 
$\bfAapp \in \HonenSpaceDiv(\Omega) \cap \mathbf{L}^{\infty}(\Omega)$ 
denotes a selected vector potential that is seen as an arbitrary approximation of the exact (unknown) potential $\MagPot$ of some arbitrary minimizer $(\sol,\MagPot)$ of \eqref{eq:energy_functional_stab}. A reasonable choice is for example obtained by selecting $\bfAapp$ such that $\div \bfAapp=0$ and $\curl \bfAapp = \mathbf{H}$ for the given external magnetic field $\mathbf{H}$. 
Loosely speaking, we want to  
define the LOD space as the image of the inverse magnetic Laplacian under $\VSh$. However, the bilinear form $\abilmagLOD{\cdot}{\cdot}$ is typically not coercive and possibly singular so that the inverse does not necessarily exist. However, as proved in \cite[Lemma 4.1]{BDH24}, we have coercivity on the kernel $W=\mbox{kern} \,\LtwoprojFEM \vert_{H^1(\Omega)}$ of the $L^2$-projection $\LtwoprojFEM : H^1(\Omega) \rightarrow \VSh$ which is given by 
\begin{eqnarray}
\label{definition-L2-projection}
(\LtwoprojFEM \varphi , \psi_H )_{L^2(\Omega)} &=& ( \varphi , \psi_H )_{L^2(\Omega)} \qquad \mbox{for all } \psi_H \in \VSh
\end{eqnarray}
with the standard approximation property
\begin{eqnarray}
\label{L2-proj-est}
\| \varphi -  \LtwoprojFEM \varphi \|_{L^2} &\lesssim&  H \| \nabla \varphi \|_{L^2} \qquad \mbox{for all } \varphi \in H^1(\Omega).
\end{eqnarray}
Note that due to our assumption $\mathcal{T}_H$ is quasi-uniform, the $L^2$-projection $\LtwoprojFEM$ is $H^1$-stable \cite{BanY14}, and consequently, its kernel is a closed subspace of $H^1(\Omega)$. The following lemma summarizes the statement.
\begin{lemma}
\label{lemma:coercivity-abil-on-W}
For any $\bfAapp \in \HonenSpaceDiv(\Omega) \cap \mathbf{L}^{\infty}(\Omega)$, there is a constant $C_{\mbox{\normalfont\tiny res}}>0$ that depends on $\Omega$, $\| \bfAapp \|_{L^{\infty}}$ and the shape-regularity and uniformity constants of $\mathcal{T}_{H}$ such that if $H \le C_{\mbox{\normalfont\tiny res}} \kappa^{-1}$, then it holds
\begin{eqnarray*}
 a_{\bfAapp}( w , w)  &\ge& \tfrac{1}{2} \| w \|_{\HonekappaSpace}^2 \qquad \mbox{for all } w \in W, 
\end{eqnarray*}
where, for  $\LtwoprojFEM : H^1(\Omega) \rightarrow \VSh$ given by \eqref{definition-L2-projection}, 
\begin{eqnarray*}
W &\coloneqq&\{ w \in H^1(\Omega) \, | \, \LtwoprojFEM w =0 \}.
\end{eqnarray*}
\end{lemma}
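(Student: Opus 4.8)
The key observation is that on the kernel $W$ of the $L^2$-projection $\LtwoprojFEM$, a function is $L^2$-orthogonal to the finite element space $\VSh$ and therefore obeys a Poincaré-type estimate in which the small mesh size $H$ appears as a multiplicative factor. This will allow the nonnegative gradient part of $\abilmagLOD{\cdot}{\cdot}$ to absorb the indefinite cross term once $H\lesssim\kappa^{-1}$. First I would observe that, since $\MagPot$ is real-valued,
\[
\abilmagLOD{w}{w} = \norm{L^2}{\kappainvci\nabla w + \MagPot w}^2 = \kappa^{-2}\norm{L^2}{\nabla w}^2 + \tfrac{2}{\kappa}\int_\Omega \MagPot\cdot\Real(\ci\,w^*\nabla w)\dint{x} + \norm{L^2}{\MagPot w}^2 .
\]
The last term is nonnegative and may be discarded for a lower bound, so only the indefinite middle term has to be controlled.

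The crux of the argument is the approximation estimate
\[
\norm{L^2}{w} \;\le\; \Cappro\, H \,\norm{L^2}{\nabla w}, \qquad w\in W,
\]
with $\Cappro$ depending only on $\Omega$ and the shape-regularity and quasi-uniformity of $\mathcal{T}_H$. I would derive it from the fact that $\LtwoprojFEM w = 0$ for $w\in W$, hence $\norm{L^2}{w} = \norm{L^2}{w - \LtwoprojFEM w}$, combined with the standard $L^2$-projection error bound $\norm{L^2}{v - \LtwoprojFEM v}\lesssim H\norm{L^2}{\nabla v}$. The latter relies precisely on the $H^1$-stability of $\LtwoprojFEM$ on quasi-uniform meshes \cite{BanY14}, which is where the stated dependence of the constants on the mesh regularity enters; this is the step I expect to be the most delicate to make quantitative.

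Finally I would estimate the cross term by Cauchy--Schwarz and $\norm{L^\infty}{\MagPot}$, namely
\[
\Bigl|\tfrac{2}{\kappa}\int_\Omega \MagPot\cdot\Real(\ci\,w^*\nabla w)\dint{x}\Bigr|
\le \tfrac{2}{\kappa}\norm{L^\infty}{\MagPot}\,\norm{L^2}{w}\,\norm{L^2}{\nabla w}
\le 2\,\norm{L^\infty}{\MagPot}\,\Cappro\, C_{\mbox{\normalfont\tiny res}}\,\kappa^{-2}\norm{L^2}{\nabla w}^2,
\]
where the Poincaré bound and the resolution condition $H\le C_{\mbox{\normalfont\tiny res}}\kappa^{-1}$ were used. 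This yields $\abilmagLOD{w}{w}\ge (1 - 2\norm{L^\infty}{\MagPot}\Cappro C_{\mbox{\normalfont\tiny res}})\,\kappa^{-2}\norm{L^2}{\nabla w}^2$. On the other hand, the same Poincaré bound gives $\Honekappa{w}^2 = \kappa^{-2}\norm{L^2}{\nabla w}^2 + \norm{L^2}{w}^2 \le (1 + \Cappro^2 C_{\mbox{\normalfont\tiny res}}^2)\,\kappa^{-2}\norm{L^2}{\nabla w}^2$. Combining the two and letting $C_{\mbox{\normalfont\tiny res}}\to 0$, the prefactor $(1 - 2\norm{L^\infty}{\MagPot}\Cappro C_{\mbox{\normalfont\tiny res}})/(1+\Cappro^2 C_{\mbox{\normalfont\tiny res}}^2)$ tends to $1$; hence there is a threshold $C_{\mbox{\normalfont\tiny res}}>0$, depending only on $\Omega$, $\norm{L^\infty}{\MagPot}$ and the mesh regularity, for which it exceeds $\tfrac12$, giving the claim. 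As a remark, one could alternatively use the pointwise bound $|\kappainvci\nabla w + \MagPot w|^2 \ge \tfrac{1}{2}\kappa^{-2}|\nabla w|^2 - |\MagPot|^2|w|^2$ as in the proof of Proposition~\ref{prop:coecivity_Epp}, which sidesteps the cross term entirely at the cost of a slightly smaller admissible $C_{\mbox{\normalfont\tiny res}}$.
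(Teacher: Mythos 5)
Your proof is correct and follows essentially the same route as the argument the paper relies on: the paper does not prove this lemma itself but defers to \cite{BDH24}*{Lemma 4.1}, whose proof rests on exactly your two ingredients, namely the Poincar\'e-type bound $\norm{L^2}{w}\lesssim H\norm{L^2}{\nabla w}$ on the kernel $W$ of $\LtwoprojFEM$ and absorption of the indefinite cross term under the resolution condition $H\lesssim \kappa^{-1}$. One minor inaccuracy: the estimate $\norm{L^2}{v-\LtwoprojFEM v}\lesssim H\norm{L^2}{\nabla v}$ does not rely on the $H^1$-stability of $\LtwoprojFEM$; it follows from the best-approximation property of the $L^2$-projection combined with a standard (Cl\'ement/Scott--Zhang) interpolation estimate, while the $H^1$-stability from \cite{BanY14} is what guarantees that $W$ is a closed subspace of $H^1(\Omega)$, as remarked before the lemma.
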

For the proof, we refer to \cite[Lemma 4.1]{BDH24}.

Exploiting the coercivity on the so-called {\it detail space} $W$, we can introduce the (well-defined) correction operator $\mathcal{C}: H^1(\Omega) \rightarrow W$ by
\begin{eqnarray}
\label{eqn:def-ideal-correctors}
 a_{\bfAapp} ( \mathcal{C} \varphi , w )  &=&  a_{\bfAapp} ( \varphi ,  w )  \qquad \mbox{for all } w\in W.
\end{eqnarray}
The operator allows us to correct the elements of $\VSh$ to obtain the LOD space as
\begin{eqnarray} \label{eq:FEM_Space_u} 
\VShLOD &\coloneqq& (1-\mathcal{C}) \VSh \,\,\, = \,\,\, \{  \varphi_{H} - \mathcal{C}\varphi_{H} \,|\,\varphi_{H} \in \VSh \}.
\end{eqnarray}
For practical aspects on the construction of $\VShLOD$ and additional errors arising from its discrete approximation we refer to \cite{BDH24} where this is described and analyzed in detail for the Ginzburg--Landau equation for given $\bfAapp$. Also note that the hidden real parts in \eqref{eqn:def-ideal-correctors} can be formally dropped. This is seen by testing with $\ci w$ in \eqref{eqn:def-ideal-correctors} to obtain that also the imagery parts of the integrals are necessarily the same.

The main goal of our paper is to study the approximation properties of the spaces $\VShLOD$ and $\VSAhzero{k}$ defined in \eqref{eq:FEM_Space_u}
and \eqref{eq:FEM_Space_A} with respect to minimizers of the Ginzburg--Landau energy. To be precise, we consider discrete minimizers $(\sol_{H}^{\LOD},\mathbf{A}_{h,k}^{\mbox{\tiny FEM}})  \in \VShLOD \times \VSAhzero{k}$ with
\begin{equation}  \label{eq:disc_approx_LOD}
	\energystab(\sol_{H}^{\LOD},\mathbf{A}_{h,k}^{\mbox{\tiny FEM}})  
	= 
	\underset{\in \VShLOD \times \VSAhzero{k}}{ \min_{  (\varphi_{H}^{\LOD} , \mathbf{B}_{h} ) } }
	\energystab( \varphi_{H}^{\LOD} , \mathbf{B}_{h} )
\end{equation}
and are concerned with quantifying their distance to some exact minimizer. In particular we want to show that, besides achieving super convergence with respect to the mesh size $H$, the $\kappa$-dependent smallness condition for $H$ in $\VShLOD$ is significantly relaxed compared to the analogous condition in the standard space $\VSh$. Furthermore, we show that for smooth external magnetic fields $\bfH$, this is already achieved if $\bfAapp$ is a crude generic approximation of $\bfA$ that can be a priori selected.

We present the corresponding error estimates for the approximations $\sol_{H}^{\LOD}$ and $\mathbf{A}_{h,k}^{\mbox{\tiny FEM}}$ in the next subsection.
We shall now present our main result, which includes error estimates in the $\HonekappaSpace \times \mathbf{H}^1$- and the $L^2 \times \mathbf{L}^2$-norm, as well as an estimate for the energy error.
In order to obtain optimal order error estimates, both in the LOD space and the quadratic elements for $\MagPot$ in $\VSAhzero{2}$, we require the following regularity assumption. 

\begin{assumption}[Regularity of external magnetic field and vector potential]
\label{assumption:reg-mag-vec-pot}
The external magnetic field is assumed to fulfil $\curl \MagF \in \mathbf{H}^1(\Omega)$. With this, Lemma \ref{cor:sol_bounds_ex_A_H3} guarantees, for any minimizing pair $(\sol,\MagPot) \in H^1(\Omega) \times \HonenSpace(\Omega)$ of \eqref{minimization-problem}, that $\MagPot \in \mathbf{H}^{3}(\Omega)$ with  $\norm{\mathbf{H}^2}{\MagPot} \lesssim 1$ and $\norm{\mathbf{H}^3}{\MagPot} \lesssim \kappa$. In the construction of  $\VShLOD$ we further assume that $\bfAapp \in \HonenSpaceDiv(\Omega)$ has a consistent regularity and stability, i.e., $\bfAapp \in \mathbf{H}^3(\Omega)$ with $\| \bfAapp \|_{\mathbf{H}^2(\Omega)} \lesssim 1$ and $\| \bfAapp \|_{\mathbf{H}^3(\Omega)} \lesssim \kappa$.
\end{assumption}
If the full regularity in Assumption \ref{assumption:reg-mag-vec-pot} is available, the next theorem shows that there is a large class of admissible ad-hoc choices for $\bfAapp$ such that optimal convergence in $\VShLOD$ is achieved (i.e. the same order as for the ideal choice $\bfAapp=\bfA$). However, if there is reduced regularity, the approximation properties of the LOD space can be reduced (at most by one order). Details are given in Lemma \ref{lemma:bestapprox-LOD} in Section \ref{sec:error_analysis} where the precise effect of $\bfAapp$ on the error estimates is traced.
\begin{theorem}[Error estimates for LOD approximations]
\label{thrm:main-results}
Let Assumptions \ref{ass:local_uniq} and \ref{assumption:reg-mag-vec-pot} hold. 
We consider an arbitrary discrete minimizer $(\sol_{H}^{\LOD},\mathbf{A}_{h,k}^{\mbox{\tiny FEM}})  \in \VShLOD \times \VSAhzero{k}$ of problem \eqref{eq:disc_approx_LOD} for either $k=1$ or $k=2$. 
If $\kappa H \lesssim 1$, then the error in energy is bounded by
\begin{eqnarray*}
0\,\,\,\, \le \,\,\,\, \energystab(\sol_{H}^{\LOD},\mathbf{A}_{h,k}^{\mbox{\tiny FEM}}) 
\,\,\,\,\, -\,\, 
 \underset{H^1(\Omega) \times \HonenSpace(\Omega)}{\underset{(v,\mathbf{B}) \in}{\mbox{\normalfont min}}} \hspace{-5pt}E(v,\mathbf{B})
		&\lesssim& \kappa^6 \deltaH^6  + \kappa^{2k-2} \deltah^{2k} 
\end{eqnarray*} 
Furthermore, there exists a minimizer $(\sol,\MagPot) \in  H^1(\Omega) \times \HonenSpace(\Omega)$ of \eqref{minimization-problem} with $\sol_{H}^{\LOD} \in \Honeperp$ such that if $(\deltaH,\deltah)$ is sufficiently small with at least 
\begin{eqnarray}
\label{resolution-condition}
( \kappa^2 H^2 + h) \,\, \kappa^{\varepsilon} \, \Csol \,\,\, \lesssim \,\,\, 1
\end{eqnarray}
then it holds 
\begin{eqnarray}
\label{H1-est-LOD}
 \| (\sol - \sol_{H}^{\LOD} , \MagPot - \mathbf{A}_{h,k}^{\mbox{\tiny FEM}}) \|_{\HonekappaSpace \times \mathbf{H}^1} 
&\lesssim& 
\kappa^3 \deltaH^3  + \kappa^{k-1} \deltah^k. 
\end{eqnarray}
and
\begin{eqnarray}
\nonumber \lefteqn{ \| (\sol - \sol_{H}^{\LOD} , \MagPot - \mathbf{A}_{h,k}^{\mbox{\tiny FEM}}) \|_{L^2 \times \mathbf{L}^2} \,\,\, \lesssim \,\,\, \kappa^4\,H^4 + \kappa^k H \deltah^k  }\\
\label{L2-est-LOD}
&\enspace&
\quad+ 
\, \kappa^{\varepsilon} \, \Csol \left(\kappa^3 \deltaH^3  + \kappa^{k-1} \deltah^k\right) (\kappa^2\,H^2 + h) 
\,\,+\,\, \Csol ( \kappa^8 H^6  + \kappa^2 \, h^{2k} ) 
\end{eqnarray}
All hidden constants in the above estimates are independent of $\kappa$ and $(\deltaH,\deltah)$.
\end{theorem}
The result is a summary of Conclusion \ref{conclusion:energy-error}, Proposition \ref{proposition-H1-est} and Proposition \ref{prop:L2-error-estimates} which we prove in Section \ref{sec:error_analysis}.

The error estimates in Theorem \ref{thrm:main-results} demonstrate convergence of order $\mathcal{O}(\kappa^3 \deltaH^3 + \kappa^{k-1} \deltah^k)$ for the $H^1_{\kappa}$-error. The necessary resolution $\deltaH$ for the order parameter $\sol$ is constrained by $\kappa$ with at least $H \lesssim \kappa^{-1-\varepsilon/2} \, \Csol^{-1/2}$. In fact, a careful inspection of the arguments in the proof of Proposition \ref{proposition-H1-est} shows that $ \| (\sol - \sol_{H}^{\LOD} , \MagPot - \mathbf{A}_{h,k}^{\mbox{\tiny FEM}}) \|_{\HonekappaSpace \times \mathbf{H}^1} $ behaves as the best-approximation error in this regime.
 The necessary resolution for $\MagPot$ in terms of $\deltah$ is only weakly constrained by $\kappa$. Though the resolution condition for attaining the best-approximation requires $h \lesssim \kappa^{-\varepsilon} \, \Csolinv$, the asymptotic convergence rate in $h$ is only mildly (if at all) affected by $\kappa$ (see Remark \ref{rem:A_in_H3} below).

As for the resolution condition \eqref{resolution-condition}, recall that we expect $\Csol$ to behave as $\kappa^{\alpha}$ for some positive $\alpha$, which would result in the constraints $\deltaH \lesssim \kappa^{-1-(\varepsilon + \alpha)/2}$ and $\deltah \lesssim  \kappa^{-(\varepsilon + \alpha)}$. However, the constraint \eqref{resolution-condition} can be dropped on the expense on the additional term 
\begin{eqnarray}
\label{dropped-term-H1-est}
(\kappa^3 \deltaH^3  + \kappa^{k-1} \deltah^k)\,( \kappa^2 H^2 + h) \,\, \kappa^{\varepsilon} \, \Csol
\end{eqnarray}
on the right hand side of \eqref{H1-est-LOD}. This shows that reasonable approximations can be already obtained on coarse meshes, e.g. requiring $H \lesssim \kappa^{-1 - \varepsilon/5} \Csol^{-1/5}$ instead of  $H \lesssim \kappa^{-1 - \varepsilon/2} \Csol^{-1/2}$ (which is needed for a quasi-best-approximation).
Hence, we expect that there is only short pre-asymptotic convergence regime caused by this additional resolution condition. 

For the error in energy, we observe that the convergence order of the $H^1$-error is squared. Furthermore, no additional resolution depending on $\Csol$ is needed but only the natural minimal resolution condition $H \lesssim \kappa^{-1}$.

Finally, note that the $L^2$-error estimate indicates a stronger resolution condition for the $L^2$-error w.r.t. $H$, as far as the optimal rate $(\kappa H)^4$ is concerned. Here we require  $H \lesssim \kappa^{-1 - \varepsilon} \Csol^{-1}$ such that the middle term on the right hand side of \eqref{L2-est-LOD} behaves like $(\kappa H)^4$. However, the term is identical to \eqref{dropped-term-H1-est}, which is exactly the dropped term in the $H^1$-error which originally lead to the resolution condition \eqref{resolution-condition}. Hence, we can still expect that $L^2$- and $H^1$-error become small at the same time, though the optimal rate for the $L^2$-error might not be visible instantly. In fact, our experiments as well as previous experiments \cite{DoeH24,BDH24} could not find any indications that there is a stronger influence of $\Csol$ on the $L^2$-error than on the $H^1$-error.

\begin{remark}[$\kappa$ constraint for $\deltah$] \label{rem:A_in_H3}
The error estimates in Theorem \ref{thrm:main-results} show, for $k=2$, a convergence rate of $\kappa \deltah^2$ for the $\HonekappaSpace$-error and a rate of  $\kappa \deltah^3$ for the $L^2$-error. The additional $\kappa$ entered through the regularity estimate $ \| \MagPot \|_{H^3} \lesssim \kappa$. Again, we could not find numerical evidence that this estimate is sharp, and we rather observe constants which indicate $\| \MagPot \|_{H^3} \lesssim 1$. If this is true, then we could remove the $\kappa$-dependence in front of $\deltah$ in all our error estimates. However, due to our computational limitations for studying very large $\kappa$-values, it is not yet possible to draw any definite conclusions from our numerical experiments.
\end{remark}
We conclude with a comparison to a standard finite element discretization with $\VSh$ instead of $\VShLOD$, i.e. both spaces of the same dimension but different approximation properties. The proof of the following result is analogous to the LOD case by exploiting the abstract convergence theory from Section \ref{sec:abstract_error_analysis}. Recall that for the case $k=2$ we again assume $\curl \MagF \in \mathbf{H}^1(\Omega)$.
\begin{theorem}[Error estimates for FEM approximations]
\label{thrm:main-results-FEM}
Let Assumption \ref{ass:local_uniq} hold and let $(\sol_{H},\mathbf{A}_{h,k})  \in \VSh \times \VSAhzero{k}$ fulfill for $k=1,2$:
\begin{eqnarray*}
	\energystab(\sol_{H},\mathbf{A}_{h,k})  
	= 
	\underset{\in \VSh \times \VSAhzero{k}}{ \min_{  (\varphi_{H} , \mathbf{B}_{h} ) } }
	\energystab( \varphi_{H} , \mathbf{B}_{h} ). 
\end{eqnarray*}
If $\kappa H \lesssim 1$, then the error in energy is bounded by
\begin{eqnarray*}
0\,\,\,\, \le \,\,\,\, \energystab( \sol_{H} ,  \mathbf{A}_{h,k})
\,\,\,\,\, -\,\, 
 \underset{H^1(\Omega) \times \HonenSpace(\Omega)}{\underset{(v,\mathbf{B}) \in}{\mbox{\normalfont min}}} \hspace{-5pt}E(v,\mathbf{B})
		&\lesssim& \kappa^2 \deltaH^2  + \kappa^{2k-2} \deltah^{2k}.
\end{eqnarray*} 
Furthermore, there exists a minimizer $(\sol,\MagPot) \in  H^1(\Omega) \times \HonenSpace(\Omega)$ of \eqref{minimization-problem} with $\sol_{H} \in \Honeperp$ such that if $(\deltaH,\deltah)$ is sufficiently small with at least 
\begin{eqnarray}
\label{resolution-condition-FEM}
( \kappa H + h) \,\, \kappa^{\varepsilon} \, \Csol \,\,\, \lesssim \,\,\, 1
\end{eqnarray}
then it holds 
\begin{eqnarray*}
 \| (\sol - \sol_{H} , \MagPot - \mathbf{A}_{h,k}) \|_{\HonekappaSpace \times \mathbf{H}^1} 
&\lesssim& 
\kappa \deltaH  + \kappa^{k-1} \deltah^k
\end{eqnarray*}
and
\begin{eqnarray*}
 \lefteqn{ \| (\sol - \sol_{H} , \MagPot - \mathbf{A}_{h,k}) \|_{L^2 \times \mathbf{L}^2}  }\\
&\lesssim&
\, \kappa^{\varepsilon} \, \Csol \left(\kappa^2 \deltaH^2 + \kappa^k \deltaH  \deltah^k + \kappa^{k-1} \deltah^{k+1} \right)  
\,\,+\,\, \Csol ( \kappa^3 H^2  + \kappa^2 \, h^{2k} ).
\end{eqnarray*}
\end{theorem}
Comparing the results of Theorem~\ref{thrm:main-results} and Theorem~\ref{thrm:main-results-FEM} we do not only observe that the LOD approximations converge much faster in $\HonekappaSpace$ (third order in $\deltaH$ vs. first order in $\deltaH$), but also that the necessary resolution condition for small errors is significantly reduced with $\deltaH \lesssim \kappa^{-1 - \varepsilon/5} \Csol^{-1/5} $ for LOD approximations vs. $\deltaH \lesssim \kappa^{-1 - \varepsilon/2} \Csol^{-1/2} $ for standard $\mathcal{P}_1$-Langrange FE approximations. The difference becomes even more pronounced by noting that in order to obtain a quasi-best approximation, the LOD approximation requires $\deltaH \lesssim \kappa^{-1 - \varepsilon/2} \Csol^{-1/2} $ whereas the standard $\mathcal{P}_1$-FEM requires the stronger condition $\deltaH \lesssim \kappa^{-1 - \varepsilon} \Csol^{-1} $. The resolution conditions in terms of $h$ for the vector potential are in essence the same for both types of approximations. 
Finally, we also note that the $L^2$-error estimate for the FEM approximation has an explicit scaling with $\Csol$ in the leading order term, which is absent in the $L^2$-error estimate for the LOD-approximation. 

\section{Analytical preparations}
\label{sec:ana_prep}

Before we can start with the error analysis, we require a few more analytical preparations regarding the regularity of certain auxiliary functions which later appear as solutions to a dual problem in an Aubin--Nitsche argument. Furthermore, we state an alternative representation of $\energystab''(\sol,\MagPot)$ that will be useful for the aforementioned duality arguments. In the following, we assume that $(\sol,\MagPot)$ always denotes a (fixed) minimizer of \eqref{eq:energy_functional_stab}.  To keep the notation short, we introduce 
another bilinear form that is
used for the rest of the paper.
 $\mathbf{B},\mathbf{C}  \in \HonenSpace(\Omega)$ we define
\begin{eqnarray} 
\abilusimple{ \mathbf{B} }{\mathbf{C}} &\coloneqq& \int_\Omega 
	\curl \mathbf{B} \cdot \curl  \mathbf{C}  	+	\div \mathbf{B} \, \div  \mathbf{C}  \dint{x} \,.
		\label{eq:def_abilusimple}
\end{eqnarray}
Furthermore, we recall from \eqref{eq:def_abilmag} the magnetic Laplacian $a_{\bfAapp} ( \varphi , \psi)$ for the vector potential $\bfAapp \in \HonenSpaceDiv(\Omega)$. On it is own, it is not necessarily an elliptic operator, however, if a sufficiently large $L^2$-contribution is added, e.g. $ ( \,( |\bfAapp|^2 + 1) \, \varphi , \psi )_{L^2}$, then it becomes coercive and bounded with respect to the $\HonekappaSpace$-norm with constants uniformly bounded in $\kappa$, see \cite[Lemma~2.1]{DoeH24}. On the contrary,
the bilinear form in \eqref{eq:def_abilusimple} is always coercive and bounded with respect to the $\HonenSpace$-norm, see \cite[Theorem~3.9]{GirR86}, where the respective norms were defined in \eqref{eq:def_norms}.
We start with a lemma that characterizes the regularity of solutions to problems that involve either the bilinear form $a_{\bfAapp}(\cdot,\cdot)$ or the bilinear form $\abilusimple{\cdot}{\cdot}$.

\begin{lemma} \label{lem:regularity_abilu_abilmag} 
(a)	Let 
         $\mathbf{f} \in \mathbf{L}^2(\Omega)$,
         then there exists a unique $\mathbf{B} \in \HonenSpace(\Omega) \cap  \mathbf{H}^2(\Omega)$ such that
	\begin{equation}
		\abilusimple{ \mathbf{B} }{ \mathbf{C} } = ( \mathbf{f} , \mathbf{C} )_{L^2} 
		\qquad 
		\mbox{for all } \mathbf{C} \in \HonenSpace(\Omega)
	\end{equation}
	and it holds
	\begin{equation}
		\norm{H^1}{ \mathbf{B} } \lesssim \norm{(\HonenSpace)^{\ast}}{ \mathbf{f} } \qquad \mbox{and} \qquad \norm{H^2}{\mathbf{B} } \lesssim \norm{L^2}{ \mathbf{f} } .
	\end{equation}
(b)	Let $\bfAapp \in \HonenSpaceDiv(\Omega) \cap \mathbf{L}^{\infty}(\Omega)$ be such that $\| \bfAapp \|_{\mathbf{L}^{\infty}} \lesssim 1$, then $a_{\bfAapp}(\cdot,\cdot)+( (1+|\bfAapp|^2) \cdot , \cdot )_{L^2}$ is coercive and continuous w.r.t. the $H^1_{\kappa}$-norm with constants independent of $\kappa$.
Furthermore, for each $f\in L^2(\Omega)$
there is a unique $z \in \Honeperp \cap H^2(\Omega)$ with
\begin{equation}
	a_{\bfAapp} \hspace{-2pt}( z , \psi) + ( (1\hspace{-1pt}+\hspace{-1pt}|\bfAapp|^2) \,z , \psi )_{L^2} =  ( f , \psi )_{L^2}  
	\qquad \mbox{for all } \psi \in \Honeperp
\end{equation}
and 
\begin{equation}
\Honekappa{z} \lesssim \norm{L^2}{f}
\qquad \mbox{and}
\qquad
\Htwokappa{z} \lesssim \norm{L^2}{f}.
\end{equation}
\end{lemma}

\begin{proof}
(a)	Since $\Omega$ is a cuboid, the results in \cite[Lemma~3.6, Theorem~3.9]{GirR86} are applicable and give the coercivity of $\abilusimple{\cdot}{\cdot}$ on $\HonenSpace(\Omega)$. This yields the unique solvability for any $\mathbf{f} \in \mathbf{L}^2(\Omega)$ and the corresponding stability estimate. The $H^2$-estimate follows from Theorem~\ref{thm:reg_MagPot_H2_H3_gen} for the case $\mathbf{H}=\mathbf{0}$.
	
	(b) The result follows by similar argument as in \cite[Lemma~2.8]{DoeH24}, where we make use of the estimate
		\begin{equation}
	\Big| \frac{( \varphi , \ci \sol )_{L^2} }{\norm{L^2}{\sol}}
	a_{\bfAapp} \hspace{-2pt}( z , \ci \sol ) \Big|
	\lesssim 
	\norm{L^2}{\varphi} \Honekappa{z} \Honekappa{\sol}
\lesssim
\norm{L^2}{\varphi} \norm{L^2}{f} ,
	\end{equation}
	to conclude the $H^2$-regularity.
\end{proof}

The next lemma gives an alternative representation of $\energystab''(\sol,\MagPot)$ for minimizers $(\sol,\MagPot)$.
\begin{lemma} \label{lem:energy2prim_perturbation_v2}
	Let $(\sol,\MagPot) \in H^1(\Omega) \times \HonenSpace(\Omega)$ be a minimizer of \eqref{minimization-problem}	with the regularity established in Corollary~\ref{cor:sol_bounds_ex_A_H2}. Then, it holds for all $(\varphi, \mathbf{B}), (\psi,\mathbf{C}) \in H^1(\Omega) \times \HonenSpace(\Omega)$
	\begin{eqnarray}
		\label{identity-sec-derivative-energy}
		\lefteqn{ \dualp{\energystab''(\sol,\MagPot) (\varphi,\mathbf{B}) }{(\psi,\mathbf{C})} } \\
		\nonumber&=& 
		\abilmag{\varphi}{\psi}
		\,\,+\,\,
		( (1+|\bfA|^2) \varphi , \psi )_{L^2}
		\,\,+\,\,
		\abilusimple{\mathbf{B}}{\mathbf{C}} 
		\,\,+\,\, 
		\abilthree \bigl( (\varphi,\mathbf{B}) , (\psi,\mathbf{C}) \bigr)
	\end{eqnarray}
	for some continuous remainder bilinear form $\abilthree(\cdot,\cdot)$ with the following properties:
	\begin{enumerate}[label=\normalfont(\roman*)]
	\item For $\psi = 0 :$
	\begin{align}
		|\abilthree \bigl( (\varphi,\mathbf{B}) , (0,\mathbf{C}) \bigr)| 
		\,\,\lesssim\,\,
		 (\kappa^{\varepsilon}  \Honekappa{\varphi} + \norm{L^2}{\mathbf{B}} ) \,\norm{L^2}{\mathbf{C}} 
		\,\,\lesssim\,\,
		\kappa^{\varepsilon}\,
		\norm{\HonekappaSpace \times \HonenSpace }{(\varphi,\mathbf{B})}
				 \, \norm{L^2}{\mathbf{C}}.
	\end{align}
	\item For $\mathbf{C}= \mathbf{0} :$
	\begin{align}
		|\abilthree \bigl( (\varphi,\mathbf{B}) , (\psi, \mathbf{0}) \bigr)| 
			&\,\,\lesssim\,\,
			\bigl( 
			\norm{L^2}{\varphi} 
			+
			\norm{H^1}{\mathbf{B}} 
			\bigr)
			\norm{L^2}{\psi}
					\lesssim
			\norm{\HonekappaSpace \times \HonenSpace }{ (\varphi,\mathbf{B}) }
			\, \norm{L^2}{\psi} .
		\end{align}
\item For arbitrary $(\psi,\mathbf{C}) \in H^1(\Omega) \times \HonenSpace(\Omega):$
\begin{align}
	|\abilthree \bigl( (\varphi,\mathbf{B}) , (\psi,\mathbf{C}) \bigr)| 
	&\lesssim 
	\bigl( 
	\norm{L^2}{\varphi} 
	+
	\norm{H^1}{\mathbf{B}} 
	\bigr)
	\bigl(
	\norm{L^2}{\psi}
	+
	\norm{H^1}{\mathbf{C}} 
	\bigr) .
\end{align}
\end{enumerate}
\end{lemma}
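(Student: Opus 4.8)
The plan is to read off \eqref{identity-sec-derivative-energy} by subtracting the two principal forms \eqref{eq:def_abilmag} and \eqref{eq:def_abilusimple} directly from the explicit expression \eqref{secE-calculated} and collecting the remainder as $\abilthree$. Using $\Real(z w^{\ast})=\Real(w z^{\ast})$, the covariant Dirichlet integral in \eqref{secE-calculated} equals the covariant term of $\abilmag{\varphi}{\psi}$, and the $\curl$--$\curl$ and $\div$--$\div$ integrals equal $\abilusimple{\mathbf{B}}{\mathbf{C}}$. Hence $\abilthree$ splits into four blocks: a purely zeroth-order order-parameter block $\Real\int_\Omega (2|\sol|^2-|\MagPot|^2-2)\,\psi\varphi^{\ast} + \sol^2\psi^{\ast}\varphi^{\ast}\dint{x}$; the coupling $\int_\Omega 2\Real(\sol\varphi^{\ast})\MagPot\cdot\mathbf{C} + \kappainv\Real(\ci\sol^{\ast}\nabla\varphi+\ci\varphi^{\ast}\nabla\sol)\cdot\mathbf{C}\dint{x}$; the symmetric coupling with $(\psi,\mathbf{B})$ in place of $(\varphi,\mathbf{C})$; and the mass block $\int_\Omega|\sol|^2\mathbf{B}\cdot\mathbf{C}\dint{x}$. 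By $|\sol|\le 1$ (Lemma~\ref{lem:sol_bounds_ex_1}) and $\norm{L^\infty}{\MagPot}\lesssim 1$ (Corollary~\ref{cor:sol_bounds_ex_A_H2}) the first and last blocks are at once bounded by $\norm{L^2}{\varphi}\norm{L^2}{\psi}$ and $\norm{L^2}{\mathbf{B}}\norm{L^2}{\mathbf{C}}$, so the whole proof reduces to the two covariant cross blocks.

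For (ii) and (iii) the decisive step is integration by parts in the terms carrying a gradient of a \emph{test} function. In the $(\psi,\mathbf{B})$-block I would write $\int_\Omega\kappainv\ci\sol^{\ast}\nabla\psi\cdot\mathbf{B}\dint{x}=-\int_\Omega\kappainv\ci(\nabla\sol^{\ast}\cdot\mathbf{B}+\sol^{\ast}\div\mathbf{B})\psi\dint{x}$, the boundary term vanishing since $\mathbf{B}\cdot\nu=0$ on $\partial\Omega$ for $\mathbf{B}\in\HonenSpace(\Omega)$. Now $\psi$ is undifferentiated, and a Hölder split with exponents $(2,4,4)$, the bound $\norm{L^4}{\kappainv\nabla\sol}\lesssim 1$ (Corollary~\ref{cor:sol_bounds_ex_A_H2}), and the embedding $\HonenSpace(\Omega)\hookrightarrow\mathbf{L}^4(\Omega)$ give this block $\lesssim\norm{H^1}{\mathbf{B}}\norm{L^2}{\psi}$, which together with the zeroth-order block proves (ii). For (iii) I perform the same integration by parts also in the $(\varphi,\mathbf{C})$-block (using $\mathbf{C}\cdot\nu=0$), keeping both $\varphi$ and $\psi$ in $L^2$ while the vector potentials are absorbed into $\mathbf{L}^4\subset\HonenSpace(\Omega)$; summing the four blocks yields the product estimate with no power of $\kappa$.

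The delicate case is (i): with $\psi=0$ the zeroth-order and $(\psi,\mathbf{B})$-blocks disappear, leaving the $(\varphi,\mathbf{C})$-block and the mass block. The only term resisting a clean bound is $\int_\Omega\kappainv\ci\varphi^{\ast}\nabla\sol\cdot\mathbf{C}\dint{x}$, since $\mathbf{C}$ must stay in $L^2$ and cannot be lifted into $\HonenSpace(\Omega)$. I would split it by Hölder as $\norm{L^p}{\kappainv\nabla\sol}\,\norm{L^q}{\varphi}\,\norm{L^2}{\mathbf{C}}$ with $\tfrac1p+\tfrac1q=\tfrac12$, invoke the higher integrability $\norm{L^p}{\kappainv\nabla\sol}\lesssim C_p$ from Lemma~\ref{cor:sol_bounds_ex_A_H3}, and use Gagliardo--Nirenberg $\norm{L^q}{\varphi}\lesssim\norm{L^2}{\nabla\varphi}^{\theta}\norm{L^2}{\varphi}^{1-\theta}+\norm{L^2}{\varphi}$ with $\theta=\tfrac32-\tfrac3q$; inserting $\norm{L^2}{\nabla\varphi}\le\kappa\Honekappa{\varphi}$ and $\norm{L^2}{\varphi}\le\Honekappa{\varphi}$ gives $\norm{L^q}{\varphi}\lesssim\kappa^{\theta}\Honekappa{\varphi}$. \textbf{This is the crux of the lemma:} a naive $L^4$--$L^4$--$L^2$ split would cost $\kappa^{3/4}$, whereas letting $q\downarrow 2$ drives $\theta\downarrow 0$ at the sole expense of $p\uparrow\infty$, i.e.\ of a $\kappa$-independent constant $C_p=C_\varepsilon$. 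Choosing $q$ so that $\theta=\varepsilon$ then produces exactly $C_\varepsilon\kappa^{\varepsilon}\Honekappa{\varphi}\norm{L^2}{\mathbf{C}}$, while the remaining terms of the block are controlled by $\Honekappa{\varphi}\norm{L^2}{\mathbf{C}}$ using $|\sol|\le 1$; adding the mass block $\lesssim\norm{L^2}{\mathbf{B}}\norm{L^2}{\mathbf{C}}$ gives (i).

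The main obstacle throughout is the bookkeeping of regularity versus integrability so that no spurious power of $\kappa$ survives: the integration by parts removing the test-function gradient is what yields the sharp $L^2$-dependence of $\psi$ (resp.\ $\varphi$) in (ii)/(iii), and the integrability--Sobolev trade-off above is what replaces the crude $\kappa^{3/4}$ by the optimal $\kappa^{\varepsilon}$ in (i). Continuity of $\abilthree$ is then immediate, since each of the four blocks is a bounded bilinear form.
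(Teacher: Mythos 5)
Your proposal is correct and follows essentially the same route as the paper's proof: the same identification of $\abilthree$ by subtracting $\abilmag{\cdot}{\cdot}$ and $\abilusimple{\cdot}{\cdot}$ from \eqref{secE-calculated}, the same $L^\infty$-bounds for the zeroth-order and mass blocks, the same integration by parts (using the vanishing normal traces) to shift the derivative off the test function in (ii)/(iii), and for (i) the same H\"older/Gagliardo--Nirenberg trade-off powered by the $\kappa$-explicit $W^{1,p}$-bound \eqref{Wkp-estimates-u}. Your parametrization via $q\downarrow 2$, $\theta=\tfrac32-\tfrac3q\downarrow 0$ is exactly the paper's $q=2+\delta$, $\varepsilon_\delta=\tfrac32\tfrac{\delta}{2+\delta}$, so the arguments coincide.
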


\begin{proof}
	First, note that by using \eqref{secE-calculated}, we can identify $\abilthree(\cdot,\cdot)$ as
	\begin{eqnarray*}
	\lefteqn{ \abilthree \bigl( (\varphi,\mathbf{B}) , (\psi,\mathbf{C}) \bigr)
		\,\,\,=\,\,\, -
		\Real \int_\Omega   \bigl( 1+|\MagPot|^2 \bigr)  \varphi \psi^* \dint{x}  
		+
		\Real \int_\Omega \bigl( |\sol|^2 -1 \bigr)  \testfun \psi^* + \sol^2 \testfun^* \psi^* + |\sol|^2 \testfun \psi^* 
		\dint{x} } 
		\\
		&\enspace& +
		\Real \int_\Omega 
		2      (\sol \testfun^* ) \MagPot \cdot \mathbf{C}  
		+
		\kappainv   \bigl( \ci  
		\sol^* \nabla \testfun
		+
		\ci \testfun^* \nabla \sol \bigr) 
		\cdot \mathbf{C}  
		\dint{x} \\
		&\enspace& +
		\Real \int_\Omega 
		2      (\sol \psi^* ) \MagPot \cdot \mathbf{B}  
		+
		\kappainv   \bigl( \ci  
		\sol^* \nabla \psi
		+
		\ci \psi^* \nabla \sol \bigr) 
		\cdot \mathbf{B}  
		\dint{x}
		\,\,\,+\,\,\,
		 \int_\Omega 
		|\sol|^2
		\mathbf{C} \cdot \mathbf{B}  
		\dint{x}.\hspace{80pt}
	\end{eqnarray*}
	We estimate the individual terms one after another.
	\begin{enumerate}
		\item 
		With the embedding $H^2(\Omega) \hookrightarrow L^{\infty}(\Omega)$ (in 3d) and the $H^{2}$-bound for $\MagPot$ we have
		\begin{align}
			|\Real \int_\Omega   \bigl( |\MagPot|^2  + 1 \bigr)  \varphi \psi^* \dint{x} |
			&\,\,\lesssim\,\, 
			   \norm{L^2}{\varphi } \norm{L^2}{\psi}.
		\end{align}

		\item For the second term, we have readily with $|\sol| \le 1$ that
		\begin{align}
			| \Real \int_\Omega  		\bigl( |\sol|^2 -1 \bigr)  \testfun \psi^* + \sol^2 \testfun^* \psi^* + |\sol|^2 \testfun \psi^* 
			\dint{x} |
			\,\,\lesssim\,\, \norm{L^2}{\testfun} \norm{L^2}{\psi}.
		\end{align}	
		
		\item	Using again the $L^{\infty}$-bounds for $\sol$ and $\MagPot$, we have
		\begin{align}
			|\Real \int_\Omega 
			2      (\sol \testfun^* ) \MagPot \cdot \mathbf{C}  
			\dint{x}|
			&\,\,\lesssim \,\,
			 \norm{L^2}{\varphi } \norm{L^2}{\mathbf{C}}.
		\end{align}
		
		\item The fourth term can be estimated in two different ways. On the one hand, we have
				\begin{align}
			&|\kappainv \Real \int_\Omega 
			\ci  
			\sol^* \nabla \testfun
			\cdot \mathbf{C}  
			\dint{x}|
			\,\,\lesssim\,\, \Honekappa{\varphi} \,\norm{L^2}{\mathbf{C}}.
		\end{align}
		On the other hand, we can apply integration by parts to obtain with Lemma \ref{lem:sol_bounds_ex_1}
			\begin{align}
			&|\kappainv \Real \int_\Omega 
			\ci  
			\sol^* \nabla \testfun
			\cdot \mathbf{C}  
			\dint{x}|
			\,\,=\,\,
			|\kappainv \Real \int_\Omega 
			\ci  
			\bigl(
			\nabla \sol^*  \testfun 	\cdot \mathbf{C} 
			+
			\sol^*  \testfun 	\div \mathbf{C}       
			\bigr) 
			\dint{x}| \,\,\lesssim\,\, \norm{L^2}{\testfun} \, \norm{H^1}{\mathbf{C}}.
		\end{align}
		
		\item	 The next term is also estimated in two ways. First, by using \eqref{eq:W1q_bound_sol} 
		we have
		\begin{align}
			|\kappainv \Real \int_\Omega 
			\ci \testfun^* \nabla \sol  
			\cdot \mathbf{C}  
			\dint{x}|
			&\,\,\,\lesssim\,\,\, \kappainv \norm{L^2}{\varphi} \, \norm{L^4}{\nabla \sol} \, \norm{L^4}{\mathbf{C}}	
	                \,\,\overset{\eqref{eq:W1q_bound_sol}}{\lesssim}\,\, \norm{L^2}{\varphi}  \, \norm{H^1}{\mathbf{C}}.
		\end{align}
         For the alternative estimate, we apply the H\"older inequality for a sufficiently small $\delta>0$ with the coefficients $p_1=2+\delta$, $p_2=\tfrac{4+2\delta}{\delta}$ and $p_3=2$ 
		to obtain with \eqref{Wkp-estimates-u}
		\begin{align}
		|\kappainv \Real \int_\Omega \ci \testfun^* \nabla \sol \cdot \mathbf{C} \dint{x}|
			&\,\,\,\lesssim\,\,\,
	\kappainv \norm{L^{2+\delta}}{\varphi} \norm{L^{(4+2\delta)/\delta}}{\nabla \sol}  \norm{L^2}{\mathbf{C}}
	\,\,\overset{\eqref{Wkp-estimates-u}}{\lesssim}\,\, C_{\delta} \,\norm{L^{2+\delta}}{\varphi} \,\norm{L^2}{\mathbf{C}},
		\end{align}
		where $C_{\delta}$ depends on $\delta$ through $\norm{L^{(4+2\delta)/\delta}}{\nabla \sol} \le C_{(4+2\delta)/\delta} \, \kappa$. To estimate $\norm{L^{2+\delta}}{\varphi}$, we use the Gagliardo--Nirenberg interpolation estimate \cite{BrezisMironescu18} which states in our case (by log-convexity of $L^p$-norms) that
		\begin{eqnarray*}
		\norm{L^{2+\delta}}{\varphi} \le \norm{L^{2}}{\varphi}^{1-\varepsilon_{\delta}} \norm{L^{6}}{\varphi}^{\varepsilon_{\delta}} 
		\qquad \mbox{for } \varepsilon_{\delta} \coloneqq  \tfrac{3}{2} \tfrac{\delta}{2+\delta} .
		\end{eqnarray*}
         Since $\norm{L^{2}}{\varphi}\le \Honekappa{\varphi}$ and $\norm{L^{6}}{\varphi} \lesssim \kappa \Honekappa{\varphi}$, we conclude by combining the previous estimates that
		\begin{align}
		|\kappainv \Real \int_\Omega \ci \testfun^* \nabla \sol \cdot \mathbf{C} \dint{x}|
			&\,\,\,\lesssim\,\,\,C_{\delta} \,\Honekappa{\varphi}^{1-\varepsilon_{\delta}} (\kappa \Honekappa{\varphi} )^{\varepsilon_{\delta}}  \,\norm{L^2}{\mathbf{C}} \,\,= \,\, C_{\delta} \, \kappa^{\varepsilon_{\delta}} \, \Honekappa{\varphi}  \,\norm{L^2}{\mathbf{C}}.
		\end{align}
		Note that $\varepsilon_{\delta} \rightarrow 0$ for $\delta \rightarrow 0$. We select $\delta$ such that $\varepsilon_{\delta}=\varepsilon$ for some fixed $\varepsilon$ according to our general notation.
		\item The sixth term is readily estimated with the $L^{\infty}$ bounds for $\sol$ and $\MagPot$ as
		\begin{align}
			| \Real \int_\Omega 
			2      (\sol \psi^* ) \MagPot \cdot \mathbf{B}  
			\dint{x} |
			&\,\,\lesssim\,\,
				\norm{L^2}{\psi} \norm{L^\infty}{\MagPot}  \norm{L^2}{\mathbf{B}}	
				\,\,\lesssim\,\, \norm{L^2}{\psi}  \norm{L^2}{\mathbf{B}} .
		\end{align}

		\item Analogous to (d) we obtain the two estimates
		\begin{align}
			|\kappainv	\Real \int_\Omega 
			\ci  
			\sol^* \nabla \psi
			\cdot \mathbf{B}  
			\dint{x}|
			\,\,\lesssim\,\, \Honekappa{\psi }   \norm{L^2}{\mathbf{B}}
			\quad \mbox{and}
			\quad
						|\kappainv	\Real \int_\Omega 
			\ci  
			\sol^* \nabla \psi
			\cdot \mathbf{B}  
			\dint{x}|
			\,\,\lesssim\,\, \norm{L^2}{\psi} \norm{H^1}{\mathbf{B}}.
		\end{align}
		
		\item We can proceed as for the first estimate in (e). Using \eqref{eq:W1q_bound_sol} 
		we have $\norm{L^4}{\nabla \sol}  \lesssim \kappa$ and hence
		\begin{align}
			|\kappainv	\Real \int_\Omega 
			\ci \psi^* \nabla \sol  
			\cdot \mathbf{B}  
			\dint{x}|
			&\,\,\lesssim\,\,
			\kappainv \norm{L^2}{\psi} \norm{L^4}{\nabla \sol} \norm{L^4}{\mathbf{B}}
			\,\,\lesssim \,\,
			\norm{L^2}{\psi}  \norm{H^1}{\mathbf{B}}.
		\end{align}
		\item Finally, we also have
		\begin{align}
			| \int_\Omega |\sol|^2 \mathbf{C} \cdot \mathbf{B}  \dint{x} |
			\,\,\leq\,\, 
			\norm{L^2}{\mathbf{B}} \norm{L^2}{\mathbf{C}}.
		\end{align}
	\end{enumerate}
	By combining the previous estimates we obtain two alternative estimates for  $\abilthree \bigl( (\varphi,\mathbf{B}) , (\psi,\mathbf{C}) \bigr)$:
	\begin{eqnarray*}
	 |\abilthree \bigl( (\varphi,\mathbf{B}) , (\psi,\mathbf{C}) \bigr)| 
	&\lesssim&  \norm{L^2}{\varphi } ( \norm{L^2}{\psi} +  \norm{H^1}{\mathbf{C}} ) + \norm{H^1}{\mathbf{B}} \norm{L^2}{\psi} + \norm{L^2}{\mathbf{B}} \norm{L^2}{\mathbf{C}}
	\end{eqnarray*}
	and
	\begin{eqnarray*}
	|\abilthree \bigl( (\varphi,\mathbf{B}) , (\psi,\mathbf{C}) \bigr)| 
	&\lesssim& ( \norm{L^2}{\varphi } + \norm{H^1}{\mathbf{B}}) \norm{L^2}{\psi} +  (
	\kappa^{\varepsilon}  \Honekappa{\varphi} + \norm{L^2}{\mathbf{B}} ) \,\norm{L^2}{\mathbf{C}}.
	\end{eqnarray*}
	Both estimates together prove (i)-(iii). 
\end{proof}
Later we will consider auxiliary problems based on the operator $\energystab''(\sol,\MagPot)$. The following proposition yields $H^2$-regularity estimates for the corresponding solutions.
\begin{proposition} \label{prop:H2_reg_E_primeprime}
	Let $(z,\mathbf{Z}) \in 
	\Honeperp \times \HonenSpace(\Omega)$ be the solution of \eqref{eq:var_prob_energe_pp} with $\mathbf{\rhs}=(\rhsord,\rhsA)\in L^2(\Omega) \times \mathbf{L}^2(\Omega)$. 
	Then $(z,\mathbf{Z}) \in H^2(\Omega) \times \mathbf{H}^2(\Omega)$ and
	\begin{eqnarray*}
		  \norm{\HtwokappaSpace}{z} &\lesssim&
		 \norm{L^2}{\rhsord}  +  \Csol \, \| \mathbf{\rhs} \|_{L^2 \times \mathbf{L}^2} ,
		\\
		\norm{H^2}{\mathbf{Z}} &\lesssim&
		\norm{L^2}{\rhsA} + 
		\kappa^{\varepsilon} \, \Csol \,
		\| \mathbf{\rhs} \|_{L^2 \times \mathbf{L}^2}.
	\end{eqnarray*}
\end{proposition}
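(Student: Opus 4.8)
The plan is to decouple the two components of problem~\eqref{eq:var_prob_energe_pp} via the splitting~\eqref{identity-sec-derivative-energy} of $\energystab''(\sol,\MagPot)$ from Lemma~\ref{lem:energy2prim_perturbation_v2}, and then to promote the $H^1$-bound of Lemma~\ref{lem:inf_sup_cond_exact_bounds} to $H^2$ by invoking the elliptic regularity of the principal forms $\abilmag{\cdot}{\cdot}$ and $\abilusimple{\cdot}{\cdot}$ provided in Lemma~\ref{lem:regularity_abilu_abilmag}. As a starting point I record the a-priori estimate $\Honecombi{(\varphi,\mathbf{B})}\lesssim\Csol\,\norm{L^2 \times \mathbf{L}^2}{\mathbf{\rhs}}$, which in particular controls $\Honekappa{\varphi}$, $\norm{L^2}{\varphi}$ and $\norm{H^1}{\mathbf{B}}$ by the data.

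First I would treat $\varphi$ by testing~\eqref{eq:var_prob_energe_pp} with $(\psi,\mathbf{0})$ for $\psi\in\Honeperp$. Since $\abilusimple{\mathbf{B}}{\mathbf{0}}=0$, the splitting reduces to $\abilmag{\varphi}{\psi}=(\rhsord,\psi)_{L^2}-\abilthree\bigl((\varphi,\mathbf{B}),(\psi,\mathbf{0})\bigr)$. The key observation is that the functional $\psi\mapsto\abilthree\bigl((\varphi,\mathbf{B}),(\psi,\mathbf{0})\bigr)$ is $L^2$-representable: its only term containing $\nabla\psi$ is $\kappainv\Real\int_\Omega\ci\sol^*\nabla\psi\cdot\mathbf{B}\dint{x}$, which I would integrate by parts (the boundary contribution vanishes because $\mathbf{B}\cdot\nu=0$ on $\partial\Omega$) to obtain an integral against $\psi$ with an $L^2$-density. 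Hence the right-hand side equals $(g,\cdot)_{L^2}$ for some $g\in L^2(\Omega)$ with $\norm{L^2}{g}\lesssim\norm{L^2}{\rhsord}+\norm{L^2}{\varphi}+\norm{H^1}{\mathbf{B}}$ by part~(ii) of Lemma~\ref{lem:energy2prim_perturbation_v2}. Lemma~\ref{lem:regularity_abilu_abilmag}(b) then gives $\varphi\in H^2(\Omega)\cap\Honeperp$ with $\Htwokappa{\varphi}\lesssim\norm{L^2}{g}$, and inserting the $H^1$-bound yields the claimed estimate for $\varphi$.

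Next I would treat $\mathbf{B}$ by testing with $(0,\mathbf{C})$ for $\mathbf{C}\in\HonenSpace(\Omega)$. Now $\abilmag{\varphi}{0}=0$ and the splitting reduces to $\abilusimple{\mathbf{B}}{\mathbf{C}}=(\rhsA,\mathbf{C})_{L^2}-\abilthree\bigl((\varphi,\mathbf{B}),(0,\mathbf{C})\bigr)$. Here no derivative of $\mathbf{C}$ appears, so the right-hand side is directly of the form $(\mathbf{g},\cdot)_{L^2}$ with $\mathbf{g}\in\mathbf{L}^2(\Omega)$; membership uses $\MagPot\in L^\infty(\Omega)$ and $\nabla\sol\in L^4(\Omega)$ from Corollary~\ref{cor:sol_bounds_ex_A_H2}, and the sharp bound $\norm{L^2}{\mathbf{g}}\lesssim\norm{L^2}{\rhsA}+C_\varepsilon\kappa^\varepsilon\Honekappa{\varphi}+\norm{L^2}{\mathbf{B}}$ is exactly part~(i) of Lemma~\ref{lem:energy2prim_perturbation_v2}, whose factor $\kappa^\varepsilon$ stems from the Gagliardo--Nirenberg step in its proof. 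Lemma~\ref{lem:regularity_abilu_abilmag}(a) then gives $\mathbf{B}\in\mathbf{H}^2(\Omega)$ with $\norm{H^2}{\mathbf{B}}\lesssim\norm{L^2}{\mathbf{g}}$, and once more the $H^1$-bound converts the lower-order terms into $C_\varepsilon\kappa^\varepsilon\Csol\,\norm{L^2 \times \mathbf{L}^2}{\mathbf{\rhs}}$. Since both estimates only use the $H^1$-bound for $(\varphi,\mathbf{B})$, there is no circularity and they may be carried out independently.

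The step I expect to be the main obstacle is precisely the $L^2$-representability of the two $\abilthree$-functionals, since the regularity results of Lemma~\ref{lem:regularity_abilu_abilmag} require genuine $L^2$ right-hand sides and not merely elements of the dual of $H^1$. For the $\varphi$-equation this is what forces the integration by parts on the $\nabla\psi$-term (and the use of the vanishing normal trace of $\mathbf{B}$), while for the $\mathbf{B}$-equation the delicate term is $\kappainv\varphi^*\nabla\sol\cdot\mathbf{C}$, which must be estimated in $L^2$ against $\mathbf{C}$ while keeping the power of $\kappa$ down to $\kappa^\varepsilon$; this is exactly the content of part~(i) of Lemma~\ref{lem:energy2prim_perturbation_v2}.
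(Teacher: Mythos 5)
Your proposal is correct and follows essentially the same route as the paper's proof: decouple via the splitting \eqref{identity-sec-derivative-energy}, test with $(\psi,\mathbf{0})$ and $(0,\mathbf{C})$ separately, use parts (ii) and (i) of Lemma~\ref{lem:energy2prim_perturbation_v2} to show the respective right-hand sides are $L^2$-representable, and conclude with Lemma~\ref{lem:regularity_abilu_abilmag} together with the $H^1$-bound of Lemma~\ref{lem:inf_sup_cond_exact_bounds}. The integration-by-parts step and the $\kappa^{\varepsilon}$-Gagliardo--Nirenberg argument you highlight are exactly the content of the cited lemma's proof, so nothing is missing.
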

\begin{proof}
	Using \eqref{identity-sec-derivative-energy}, we first note that we can rewrite problem \eqref{eq:var_prob_energe_pp} as
	\begin{equation}
		\abilmag{z}{\psi} + ((1+|\bfA|^2)z,\psi)_{L^2} + \abilusimple{\mathbf{Z}}{\mathbf{C}} \,\,=\,\,  (\mathbf{\rhs} ,  (\psi,\mathbf{C}))_{L^2 \times \mathbf{L^2}} - 	\abilthree \bigl( (z,\mathbf{Z}) , (\psi,\mathbf{C}) \bigr).
	\end{equation}
	With this, we divide the proof in two parts and study the regularity separately.
	
	(a)	Setting $\mathbf{C}=\mathbf{0}$, yields that $z \in \Honeperp$ solves for all $\psi \in \Honeperp$
	\begin{equation}
		\abilmag{z}{\psi} + ((1+|\bfA|^2)z,\psi)_{L^2} \,\,=\,\,  ( \mathbf{\rhs} , (\psi,\mathbf{0}) )_{L^2 \times \mathbf{L^2}} - 	\abilthree \bigl( (z,\mathbf{Z}) , (\psi,\mathbf{0}) \bigr).
	\end{equation}
	We estimate the right-hand side with Lemma~\ref{lem:inf_sup_cond_exact_bounds} and Lemma~\ref{lem:energy2prim_perturbation_v2} by
	\begin{eqnarray*}
		| ( \mathbf{\rhs} , (\psi, \mathbf{0} ) )_{L^2 \times \mathbf{L^2}} - 	\abilthree \bigl( (z,\mathbf{Z}) , (\psi,\mathbf{0}) \bigr)| 
		&\lesssim& \norm{L^2}{\rhsord} \norm{L^2}{\psi} + \norm{\HonekappaSpace \times \HonenSpace }{ (z,\mathbf{Z}) } \norm{L^2}{\psi} \\ 
		&\lesssim&
		\bigl( \norm{L^2}{\rhsord} + \Csol  
		\| \mathbf{\rhs} \|_{L^2 \times \mathbf{L}^2}
		\bigr) 
		\norm{L^2}{\psi},
	\end{eqnarray*}
	i.e., $( \mathbf{\rhs}  , (\cdot, \mathbf{0} ) )_{L^2 \times \mathbf{L^2}} - 	\abilthree \bigl( (z,\mathbf{Z}) , (\cdot,\mathbf{0}) \bigr) \in L^2(\Omega)^{\ast}$, and hence by Lemma~\ref{lem:regularity_abilu_abilmag} (b) we have $z\in H^2(\Omega)$ with
	\begin{eqnarray*}
		\Htwokappa{z} &\lesssim& \norm{L^2}{\rhsord} + \Csol \, 
		\| \mathbf{\rhs} \|_{L^2 \times \mathbf{L}^2}.
	\end{eqnarray*}

	(b) Setting $\psi = 0$, then $\mathbf{Z} \in \HonenSpace(\Omega)$ solves 
	\begin{equation}
		\abilusimple{\mathbf{Z}}{\mathbf{C}} \,\,=\,\,   (\mathbf{\rhs} , (0,\mathbf{C}) )_{L^2 \times \mathbf{L}^2} - 	\abilthree \bigl( (z,\mathbf{Z}) , (0,\mathbf{C}) \bigr) 
	\end{equation}
	for all $\mathbf{C} \in \HonenSpace(\Omega)$.
	We estimate the right-hand side with
	Lemma~\ref{lem:inf_sup_cond_exact_bounds} and Lemma~\ref{lem:energy2prim_perturbation_v2} (i) by
	\begin{align}
		|  (\mathbf{\rhs} , (0,\mathbf{C}) )_{L^2 \times \mathbf{L}^2} - 	\abilthree \bigl( (z,\mathbf{Z}) , (0,\mathbf{C}) \bigr) | 
		&\lesssim
		\bigl( \norm{L^2}{\rhsA} + 
		\kappa^{\varepsilon}\, \Csol 
		\| \mathbf{\rhs} \|_{L^2 \times \mathbf{L}^2}
		\bigr) 
		\norm{L^2}{\mathbf{C}}
	\end{align}
	and hence by Lemma~\ref{lem:regularity_abilu_abilmag} (a) we have $\mathbf{Z}\in\mathbf{H}^2(\Omega)$ and
	\begin{equation}
		\norm{H^2}{\mathbf{Z}} \lesssim \norm{L^2}{\rhsA} + 
		\kappa^{\varepsilon}\, \Csol \, 
		\| \mathbf{\rhs} \|_{L^2 \times \mathbf{L}^2}.
	\end{equation}
	This proves the proposition.
\end{proof}

\section{Abstract error analysis}
\label{sec:abstract_error_analysis}
As a basis for our error analysis in FEM and LOD spaces, we will start with some abstract convergence results in this section. For this, we consider an arbitrary family of (non-empty) finite-dimensional spaces 
\begin{eqnarray*}
X_{H(\delta)} \times \mathbf{X}_{h(\delta)} \subset H^1(\Omega) \times \HonenSpace(\Omega)
\end{eqnarray*}
which are parametrized by a small parameter $\delta>0$. The notation $H(\delta)$ and $h(\delta)$ is used to indicate that different mesh sizes could be used for the approximations of order parameter and magnetic potential. We further assume that functions in $H^1(\Omega) \times \HonenSpace(\Omega)$ can be approximated by arbitrary accuracy in the sense that for each $(\varphi,\mathbf{B})\in H^1(\Omega) \times \HonenSpace(\Omega)$ it holds
\begin{eqnarray*}
\lim_{\delta \rightarrow 0} \, \underset{\in X_{H(\delta)} \times \mathbf{X}_{h(\delta)}}{\inf_{(\varphi_{H(\delta)},\mathbf{B}_{h(\delta)}) }} \| (\varphi - \varphi_{H(\delta)},\mathbf{B} -  \mathbf{B}_{h(\delta)}) \|_{H^1_{\kappa} \times{\mathbf{H}^1}} = 0.
\end{eqnarray*}
This assumption is fulfilled for all reasonable families of standard approximation spaces, such as finite element spaces. For brevity, we skip from now on $\delta$ in the notation and just write $H=H(\delta)$ and $h=h(\delta)$, unless the role of $\delta$ is explicitly required. With this, we are looking for discrete minimizers of
\begin{equation}  \label{eq:energy_functional_approx}
	\energystab(\sol_{\deltaH} ,\MagPot_{\deltah})  
	= 
	\min_{  (\varphi_{\deltaH} , \mathbf{B}_{\deltah} ) \in X_{\deltaH} \times \mathbf{X}_{\deltah} }
	\energystab(\varphi_{\deltaH},\mathbf{B}_{\deltah}) 
\end{equation}
which thus satisfy
\begin{align}  
	\duenergy(\sol_{\deltaH},\MagPot_{\deltah}) \, \varphi_{\deltaH} &= 0, \qquad \text{ for all } \varphi_{\deltaH} \in X_{\deltaH},
	\\
	\dAenergy(\sol_{\deltaH},\MagPot_{\deltah}) \, \mathbf{B}_{\deltah} &= 	0 \qquad\hspace{5pt} \text{ for all } \mathbf{B}_{\deltah} \in \mathbf{X}_{\deltah} .
\end{align}

The following lemma provides uniform bounds on the discrete minimizers only using the minimizing properties. The bounds are in line with the respective bounds obtained for the continuous minimizers.

\begin{lemma} \label{lem:sol_bounds_approx}
	Let $(\sol_{\deltaH},\MagPot_{\deltah})$ be a minimizer of \eqref{eq:energy_functional_approx} in $X_{\deltaH} \times \mathbf{X}_{\deltah}$. Then it holds
	\begin{align}
		\energystab(\sol_{\deltaH},\MagPot_{\deltah}) &\lesssim 1,
		\qquad
		\Honekappa{\sol_{\deltaH}}
		+
		\norm{L^4}{\sol_{\deltaH}}  
		\lesssim 1,
		\qquad
		\norm{H^1}{\MagPot_{\deltah}} \lesssim 1 
	\end{align}
with hidden constants independent of $\kappa$ and $\delta$, but depending on the external field $\mathbf{H}$.
\end{lemma}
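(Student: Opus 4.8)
The plan is to follow the proof of Lemma~\ref{lem:sol_bounds_ex_1} as closely as possible, the decisive difference being that a discrete minimizer need not obey the pointwise constraint $|\sol_{\deltaH}| \le 1$. Since $X_{\deltaH} \times \mathbf{X}_{\deltah}$ is a linear space it contains $(0,\mathbf{0})$, so the minimizing property gives at once
\begin{equation*}
\energystab(\sol_{\deltaH},\MagPot_{\deltah}) \,\le\, \energystab(0,\mathbf{0}) \,\lesssim\, 1 + \norm{\boldL^2}{\MagF}^2 \,\lesssim\, 1,
\end{equation*}
with the value of $\energystab(0,\mathbf{0})$ computed exactly as in the proof of Lemma~\ref{lem:sol_bounds_ex_1}. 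This is the first claimed bound, and since every summand of $\energystab$ is nonnegative it also delivers the term-wise estimates $\norm{\boldL^2}{\curl\MagPot_{\deltah}-\MagF}^2 + \norm{\boldL^2}{\div\MagPot_{\deltah}}^2 \lesssim 1$, $\norm{L^2}{\kappainvci\nabla\sol_{\deltaH} + \MagPot_{\deltah}\sol_{\deltaH}}^2 \lesssim 1$ and $\norm{L^2}{1-|\sol_{\deltaH}|^2}^2 \lesssim 1$, which drive all remaining arguments.

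For the vector potential I would repeat the continuous reasoning verbatim: combining the curl estimate with $\norm{\boldL^2}{\curl\MagPot_{\deltah}} \lesssim \norm{\boldL^2}{\curl\MagPot_{\deltah}-\MagF} + \norm{\boldL^2}{\MagF}$ yields $\norm{\boldL^2}{\curl\MagPot_{\deltah}} + \norm{L^2}{\div\MagPot_{\deltah}} \lesssim 1$. Because the abstract hypothesis guarantees $\MagPot_{\deltah} \in \HonenSpace(\Omega)$, the inequality \cite[Theorem~3.9]{GirR86} applies on the continuous level and gives $\norm{H^1}{\MagPot_{\deltah}} \lesssim 1$, uniformly in $\kappa$ and $\delta$.

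The one genuinely new step concerns the order parameter, where the pointwise bound is unavailable. Instead I would extract $L^4$-control directly from the quartic term: expanding $\norm{L^2}{1-|\sol_{\deltaH}|^2}^2 = \vol(\Omega) - 2\norm{L^2}{\sol_{\deltaH}}^2 + \norm{L^4}{\sol_{\deltaH}}^4 \lesssim 1$ and inserting $\norm{L^2}{\sol_{\deltaH}}^2 \le \vol(\Omega)^{1/2}\,\norm{L^4}{\sol_{\deltaH}}^2$ turns the estimate into a self-improving quadratic inequality $s^2 \lesssim 1 + s$ for $s := \norm{L^4}{\sol_{\deltaH}}^2$, forcing $s \lesssim 1$ and hence $\norm{L^4}{\sol_{\deltaH}} \lesssim 1$ together with $\norm{L^2}{\sol_{\deltaH}} \lesssim 1$. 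The covariant-gradient term then yields
\begin{equation*}
\norm{L^2}{\kappainv\nabla\sol_{\deltaH}} \,\le\, \norm{L^2}{\kappainvci\nabla\sol_{\deltaH} + \MagPot_{\deltah}\sol_{\deltaH}} + \norm{L^4}{\MagPot_{\deltah}}\,\norm{L^4}{\sol_{\deltaH}} \,\lesssim\, 1,
\end{equation*}
where the Sobolev embedding $\mathbf{H}^1(\Omega) \hookrightarrow \mathbf{L}^4(\Omega)$ and the $H^1$-bound already established control $\norm{L^4}{\MagPot_{\deltah}}$; combined with the $L^2$-bound this is precisely $\Honekappa{\sol_{\deltaH}} \lesssim 1$.

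The main obstacle is therefore concentrated in this last block: the loss of the pointwise constraint $|\sol| \le 1$ means the $L^2$- and $L^4$-bounds on $\sol_{\deltaH}$ can no longer be read off for free, and the quadratic-inequality bootstrap from the quartic energy term is what replaces it and ultimately closes the $\HonekappaSpace$-estimate.
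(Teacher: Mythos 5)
Your proof is correct, and its overall skeleton matches the paper's: compare with the competitor $(0,\mathbf{0})$ to bound the energy, read off term-wise bounds from the nonnegative summands, recover $\norm{H^1}{\MagPot_{\deltah}}$ from the curl/div control via \cite{GirR86} exactly as in the continuous case, and extract the gradient bound from the covariant-derivative term. Two steps differ in execution. First, for the $L^2$- and $L^4$-bounds on $\sol_{\deltaH}$ the paper simply imports $\norm{L^2}{\sol_{\deltaH}} \lesssim 1$ from \cite{DoeH24} and then reads off the $L^4$-bound by expanding the quartic term; your self-improving inequality $s^2 \lesssim 1 + s$ for $s = \norm{L^4}{\sol_{\deltaH}}^2$, obtained from the same expansion plus H\"older, makes this step self-contained and is a genuine (if small) improvement in that it avoids the external reference. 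Second, for $\norm{L^2}{\kappainv \nabla \sol_{\deltaH}}$ the paper expands $\norm{L^2}{\kappainvci \nabla \sol_{\deltaH} + \MagPot_{\deltah}\sol_{\deltaH}}^2$ into three terms and absorbs the cross term with Young's inequality, while you apply the triangle inequality directly and estimate $\norm{L^2}{\MagPot_{\deltah}\sol_{\deltaH}} \le \norm{L^4}{\MagPot_{\deltah}}\norm{L^4}{\sol_{\deltaH}}$ with the Sobolev embedding; the two arguments are essentially equivalent, and yours mirrors the continuous proof of Lemma~\ref{lem:sol_bounds_ex_1} more closely, with the $L^4$-bound correctly standing in for the pointwise bound $|\sol|\le 1$ that is unavailable at the discrete level.
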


\begin{proof}
Since $(0,\mathbf{0}) \in X_{\deltaH} \times \mathbf{X}_{\deltah}$, we obtain the bound on the energy. Further, as in \cite{DoeH24}, we obtain $\norm{L^2}{\sol_{\deltaH}} \lesssim 1$, and conclude with this
\begin{align}
\norm{L^4}{\sol_{\deltaH}}^4 &\,\,\leq\,\, \int_\Omega 
\bigl( 1 - |\sol_{\deltaH}|^2\bigr)^2
\dint{x} 
+ 2 \norm{L^2}{\sol_{\deltaH}}^2 + \norm{L^2}{1 }^2 
\,\,\leq\,\, 2 \energystab(\sol_{\deltaH},\MagPot_{\deltah})   + 2 \norm{L^2}{\sol_{\deltaH}}^2 + \norm{L^2}{1 }^2  
\,\,\lesssim\,\, 1.
 \end{align}
The $H^1$-bound $\| \MagPot_{\deltah} \|_{H^1} \lesssim 1 + \| \mathbf{H} \|_{L^2} \lesssim 1$ follows as in Lemma~\ref{lem:sol_bounds_ex_1}. 
Next, we use the identity
\begin{equation}
2 \energystab(\sol_{\deltaH},\MagPot_{\deltah})  \,\,\geq\,\, 	\norm{L^2}{\tfrac{1}{\kappa}  \nabla \sol_{\deltaH} + \ci  \MagPot_{\deltah} \sol_{\deltaH} }^2 
\,\,=\,\, \norm{L^2}{ \tfrac{1}{\kappa}  \nabla \sol_{\deltaH} }^2 
+
\norm{L^2}{\MagPot_{\deltah} \sol_{\deltaH} }^2 
+2 \Real \ip{\tfrac{1}{\kappa}  \nabla \sol_{\deltaH}}{\ci  \MagPot_{\deltah} \sol_{\deltaH} }_{L^2},
\end{equation}
and bound the inner product via
\begin{align}
2 \Real \ip{\tfrac{1}{\kappa}  \nabla \sol_{\deltaH}}{\ci  \MagPot_{\deltah} \sol_{\deltaH} }_{L^2}
\leq
 2 \norm{L^2}{\tfrac{1}{\kappa}  \nabla \sol_{\deltaH} }
\norm{L^4}{\MagPot_{\deltah}} \norm{L^4}{\sol_{\deltaH}} \leq \tfrac{1}{2} \norm{L^2}{\tfrac{1}{\kappa}  \nabla \sol_{\deltaH} }^2 + 2 \norm{L^4}{\MagPot_{\deltah}}^2 \norm{L^4}{\sol_{\deltaH}}^2,
\end{align}
to conclude $\norm{L^2}{\tfrac{1}{\kappa}  \nabla \sol_{\deltaH} } \lesssim 1$.
\end{proof}

This allows us to conclude the following abstract convergence result, which is fully analogous to \cite[Prop.~5.1]{DoeH24}.
\begin{proposition} \label{prop:abstract_convergence}
	Denote by $(\sol_{H(\delta)},\MagPot_{h(\delta)})_{\delta>0}$ a family of minimizers of \eqref{eq:energy_functional_approx}. Then, there exists an exact minimizer $(\sol_0,\MagPot_0) \in  H^1(\Omega) \times \HonenSpace(\Omega) $ of problem \eqref{minimization-problem} such that there is a monotonically decreasing sequence 
	$(\delta_n)_{n\in\N}$
	with
	\begin{equation}
		\lim\limits_{n\to\infty}	\Honecombi{\sol_0 - \sol_{H(\delta_n)}, \MagPot_0- \MagPot_{h(\delta_n)}} = 0.
	\end{equation}
In particular, we can define the twisted approximations to $\sol_0$ by
\begin{equation}
\widetilde{\sol}_{H(\delta_n)} \coloneqq e^{\ci \omega_n}	\sol_{H(\delta_n)},
\quad
\text{ where } \, \omega_n \in [- \tfrac{\pi}{2} , \tfrac{\pi}{2}] 
\text{ is chosen such that } 
\, 
\ipsymLtwo{\widetilde{\sol}_{H(\delta_n)}}{\sol_0} = 0
\end{equation}
which also converge in $H^1$, i.e.,
\begin{equation}
	\lim\limits_{n\to\infty}	\Honecombi{ (\sol_0 - \widetilde{\sol}_{H(\delta_n)} , \MagPot_0- \MagPot_{h(\delta_n)}) } = 0.
\end{equation}
Conversely, for any $n$, the minimizer $\sol_{H(\delta_n)}$ is an approximation to $e^{\ci \omega_n} \sol_0$.
\end{proposition}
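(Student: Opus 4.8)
The plan is to run the direct method of the calculus of variations along a subsequence and to finish with a gauge-fixing step. First I would invoke the uniform a-priori bounds of Lemma~\ref{lem:sol_bounds_approx}, which give $\Honekappa{\sol_{H(\delta)}}+\norm{L^4}{\sol_{H(\delta)}}+\norm{H^1}{\MagPot_{h(\delta)}}\lesssim 1$ independently of $\delta$ and $\kappa$. Since $\kappa$ is fixed here, this is a uniform bound in $H^1(\Omega)\times\mathbf{H}^1(\Omega)$, so by reflexivity and the Rellich--Kondrachov theorem I can extract a monotonically decreasing null sequence $(\delta_n)_{n\in\N}$ along which $(\sol_{H(\delta_n)},\MagPot_{h(\delta_n)})\rightharpoonup(\sol_0,\MagPot_0)$ weakly in $H^1\times\mathbf{H}^1$ and strongly in $L^2\times\mathbf{L}^2$. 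Interpolating the strong $L^2$-convergence against the uniform $L^4$- and $H^1$-bounds (with $H^1\hookrightarrow L^6$ in 3d) upgrades this to strong convergence of $\sol_{H(\delta_n)}$ in $L^p$ and of $\MagPot_{h(\delta_n)}$ in $\mathbf{L}^q$ for every $p,q<6$; in particular the covariant coupling $\MagPot_{h(\delta_n)}\sol_{H(\delta_n)}\to\MagPot_0\sol_0$ strongly in $\mathbf{L}^2$.

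Next I would identify $(\sol_0,\MagPot_0)$ as a minimizer. Every term of $\energystab$ is convex and continuous in its leading variable, so $\energystab$ is weakly lower semicontinuous once the mixed term is handled: writing $\norm{L^2}{\tfrac{\ci}{\kappa}\nabla\sol+\MagPot\sol}^2$ as the sum of $\norm{L^2}{\tfrac1\kappa\nabla\sol}^2$, a cross term, and $\norm{L^2}{\MagPot\sol}^2$, the first is weakly lower semicontinuous and the other two converge thanks to the strong $\mathbf{L}^2$-convergence of $\MagPot_{h(\delta_n)}\sol_{H(\delta_n)}$ (weak times strong for the cross term), while the potential term $(1-|\sol|^2)^2$ converges in $L^1$ by the strong $L^4$-convergence of $\sol_{H(\delta_n)}$. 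This yields $\energystab(\sol_0,\MagPot_0)\le\liminf_n\energystab(\sol_{H(\delta_n)},\MagPot_{h(\delta_n)})$. For the opposite inequality I use the approximability assumption on the discrete spaces: given any $(v,\mathbf{B})\in H^1(\Omega)\times\HonenSpace(\Omega)$, pick $(v_{\delta_n},\mathbf{B}_{\delta_n})\in X_{\deltaH}\times\mathbf{X}_{\deltah}$ converging strongly to it and combine the discrete minimality $\energystab(\sol_{H(\delta_n)},\MagPot_{h(\delta_n)})\le\energystab(v_{\delta_n},\mathbf{B}_{\delta_n})$ with the continuity of $\energystab$ to obtain $\limsup_n\energystab(\sol_{H(\delta_n)},\MagPot_{h(\delta_n)})\le\energystab(v,\mathbf{B})$. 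Taking the infimum over $(v,\mathbf{B})$ shows $(\sol_0,\MagPot_0)$ is a global minimizer, and the choice $(v,\mathbf{B})=(\sol_0,\MagPot_0)$ together with the lower bound forces convergence of the energies $\energystab(\sol_{H(\delta_n)},\MagPot_{h(\delta_n)})\to\energystab(\sol_0,\MagPot_0)$.

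The step I expect to be the main obstacle is the passage from weak to strong convergence, because the covariant gradient couples $\sol$ and $\MagPot$ and one must check that energy convergence controls $\nabla\sol$ and $\nabla\MagPot$ separately rather than only a combination. Here I would argue that convergence of the energies, together with the already established strong convergence of all lower-order terms, forces $\norm{L^2}{\tfrac{\ci}{\kappa}\nabla\sol_{H(\delta_n)}+\MagPot_{h(\delta_n)}\sol_{H(\delta_n)}}$, $\norm{L^2}{\curl\MagPot_{h(\delta_n)}-\MagF}$ and $\norm{L^2}{\div\MagPot_{h(\delta_n)}}$ to converge to the corresponding norms of the limit. Since in a Hilbert space weak convergence together with convergence of norms yields strong convergence, and since $\MagPot_{h(\delta_n)}\sol_{H(\delta_n)}\to\MagPot_0\sol_0$ strongly, this gives strong $L^2$-convergence of $\tfrac1\kappa\nabla\sol_{H(\delta_n)}$ as well as of $\curl\MagPot_{h(\delta_n)}$ and $\div\MagPot_{h(\delta_n)}$. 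The div--curl norm equivalence $\norm{H^1}{\mathbf{B}}\lesssim\norm{L^2}{\curl\mathbf{B}}+\norm{L^2}{\div\mathbf{B}}$ from \cite[Thm.~3.9]{GirR86} then converts the last two into strong $\mathbf{H}^1$-convergence of $\MagPot_{h(\delta_n)}$, so that $\Honecombi{(\sol_0-\sol_{H(\delta_n)},\MagPot_0-\MagPot_{h(\delta_n)})}\to0$, which is the first assertion.

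Finally, for the twisted approximations I would exploit that $X_{\deltaH}$ is invariant under multiplication by a unimodular constant and that $\energystab$ is invariant under the constant phase shift \eqref{gauge-phase-shift}, so that $e^{\ci\omega}\sol_{H(\delta_n)}$ is again a discrete minimizer for every $\omega$. Choosing $\omega_n\in[-\tfrac{\pi}{2},\tfrac{\pi}{2}]$ by a continuity and intermediate-value argument so that the degenerate direction is annihilated, i.e. so that $\widetilde\sol_{H(\delta_n)}=e^{\ci\omega_n}\sol_{H(\delta_n)}\in\Honeperp$ (this is the normalization recorded in the proposition), fixes the gauge. Since $\sol_{H(\delta_n)}\to\sol_0$ strongly, the inner product pinning $\omega_n$ tends to a nonzero real whenever $\norm{L^2}{\sol_0}>0$, whence $\omega_n\to0$ and the twisted sequence converges to $\sol_0$ as well, the case $\sol_0=0$ being trivial. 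The whole reasoning is the transcription to the present coupled $(\sol,\MagPot)$-setting of the single-unknown argument in \cite[Prop.~5.1]{DoeH24}.
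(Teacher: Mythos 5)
Your proposal is correct and takes essentially the same route as the paper: the paper's own proof consists of invoking the uniform bounds of Lemma~\ref{lem:sol_bounds_approx}, the weak lower semicontinuity of $\energystab$ (via \cite[Theorem~1.6]{Struwe08}), and the adaptation of \cite[Prop.~5.1]{DoeH24} under constant phase rotations, which is exactly the direct-method-plus-gauge-fixing argument you spell out (weak compactness, recovery sequences from the approximability assumption, energy convergence upgraded to strong $H^1$ convergence via the div--curl estimate, then phase alignment). Your reading of the normalization as $\widetilde{\sol}_{H(\delta_n)}\in\Honeperp$, i.e.\ orthogonality to $\ci\sol_0$ rather than to $\sol_0$, is the interpretation consistent with how the twisted minimizers are used in the rest of the paper.
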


\begin{proof}
The proof is along the lines of
\cite{CheGZ10} and \cite[Prop.~5.1]{DoeH24}.
We employ the bounds in Lemma~\ref{lem:sol_bounds_approx} and the semi-lower continuity of $\energystab$, see e.g. \cite[Theorem~1.6]{Struwe08}. Since we only apply constant rotations $\omega_n$, the magnetic potential is unaffected, and the proof in \cite[Prop.~5.1]{DoeH24} is applicable.
\end{proof}
In the light of Proposition \ref{prop:abstract_convergence} we can assume without loss of generality that the considered discrete minimizers $( \sol_{\deltaH},  \MagPot_{\deltah})$ are such that $\sol_{\deltaH} \in X_{\deltaH} \cap \Honeperp$ for a suitable exact minimizer $(\sol , \MagPot )$ to which we compare it. In such a setting, we derive error bounds for $(\sol - \sol_{\deltaH}, \MagPot - \MagPot_{\deltah})$ depending on the best-approximation properties of the space $X_{\deltaH} \times \mathbf{X}_{\deltah}$. 

As a first step towards this result we need to verify that for any $v \in \Honeperp$ the $H^1_{\kappa}$-best-approximation in $X_{\deltaH} \cap \Honeperp$ behaves as the $H^1_{\kappa}$-best-approximation in $X_{\deltaH}$, i.e., the discrete space without the orthogonality constraint.
\begin{lemma}\label{lemma:ritz-projections:abstract-new}
Let $(\sol,\MagPot) \in  H^1(\Omega) \times \HonenSpace(\Omega) $ be a minimizer of \eqref{minimization-problem} and let $\Ltwoproj : H^1(\Omega) \rightarrow X_{\deltaH}$ denote the $L^2$-projection on $ X_{\deltaH}$. 
If $\Ltwoproj\sol  \not= 0$, then we have for any $\varphi \in \Honeperp$
\begin{eqnarray}
\label{est-Ritzprojsoldelta-new}
\inf_{\varphi_H \in X_{\deltaH} \cap \Honeperp} \| \varphi - \varphi_H \|_{\HonekappaSpace} &\lesssim& \| \varphi - \Ltwoproj \varphi \|_{\HonekappaSpace} + \tfrac{ \| \Ltwoproj \sol \|_{\HonekappaSpace} }{\| \sol \|_{L^2} - \| \sol - \Ltwoproj\sol \|_{L^2} }  \| \varphi - \Ltwoproj \varphi \|_{L^2}.
\end{eqnarray}
In particular, if $X_{\deltaH}$ is such that the $L^2$-projection is $H^1_{\kappa}$-stable, $\inf\limits_{w_H \in X_H} \| u - \pi_H u \|_{L^2} \lesssim \kappa H \| u \|_{H^1_{\kappa}}$, and $H \lesssim \kappa^{-1}$,
then we have
\begin{equation} \label{est-Ritzprojsoldelta-2-new}
 \inf_{\varphi_H \in X_{\deltaH} \cap \Honeperp} \| \varphi - \varphi_H \|_{\HonekappaSpace} \lesssim \inf_{\varphi_{\deltaH} \in X_{\deltaH} } \| \varphi - \varphi_{\deltaH} \|_{\HonekappaSpace}  .
\end{equation}
\end{lemma}
\begin{proof}
Let $\textup{P}_H^{\perp} : \Honeperp \rightarrow X_{\deltaH} \cap \Honeperp$ denote the orthogonal projection with respect to the $H^1_{\kappa}$-inner product, i.e. $(\varphi,\psi)_{H^1_{\kappa}} := (\varphi,\psi)_{L^2} + \tfrac{1}{\kappa^2}(\nabla \varphi, \nabla \psi)_{L^2}$. We have
\begin{eqnarray*}
\| \varphi- \textup{P}_H^{\perp}(\varphi) \|_{H^1_{\kappa}} &=& \inf_{\varphi_H \in X_{\deltaH} \cap \Honeperp}  \| \varphi-  \varphi_H \|_{H^1_{\kappa}}.
\end{eqnarray*}
In order to get a quasi-best approximation on the full space $X_{\deltaH}$ with estimates \eqref{est-Ritzprojsoldelta-new} and \eqref{est-Ritzprojsoldelta-2-new}, we exploit that $\varphi,\sol \in \Honeperp$ and proceed analogously as in \cite[Lemma 5.11]{DoeH24} to get 
\begin{eqnarray*}
\| \varphi - \textup{P}_H^{\perp} \varphi \|_{\HonekappaSpace} &\le& \| \varphi - \Ltwoproj \varphi + \tfrac{(\Ltwoproj \varphi - \varphi, \ci \sol )_{L^2} }{ ( \Ltwoproj(\ci \sol) , \ci \sol )_{L^2} } \Ltwoproj (\ci \sol) \|_{\HonekappaSpace}.
\end{eqnarray*}
Estimate \eqref{est-Ritzprojsoldelta-new} follows straightforwardly with $( \Ltwoproj(\ci \sol) , \ci \sol )_{L^2}=  \| \sol\|_{L^2}^2 + ( \Ltwoproj(\ci \sol) -  \ci \sol, \ci \sol )_{L^2}$ and by noting that $\Ltwoproj\sol \not =0$ implies $ \| \sol - \Ltwoproj\sol \|_{L^2} < \| \sol \|_{L^2}$. For $\Ltwoproj\sol  =0$, we trivially have the estimate $\| \varphi - \textup{P}_H^{\perp} \varphi \|_{\HonekappaSpace} \le \| \varphi - \Ltwoproj \varphi\|_{\HonekappaSpace}$. Next, note that the second estimate in Lemma \ref{lem:sol_bounds_ex_1} together with $\| \MagPot \|_{L^{\infty}} \lesssim \| \MagPot \|_{H^{2}} \lesssim 1$ from Corollary \ref{cor:sol_bounds_ex_A_H2} imply $\norm{\HonekappaSpace}{\sol} \lesssim \norm{L^2}{\sol}$. Hence, if $\Ltwoproj$ is $H^1_{\kappa}$-stable, i.e., $\| \Ltwoproj \varphi \|_{\HonekappaSpace} \lesssim \| \varphi \|_{\HonekappaSpace}$, then we get 
\begin{eqnarray*}
 \tfrac{ \| \Ltwoproj \sol \|_{\HonekappaSpace} }{\| \sol \|_{L^2} - \| \sol - \Ltwoproj\sol \|_{L^2} }
 \lesssim  \tfrac{ \| \sol \|_{\HonekappaSpace} }{\| \sol \|_{L^2} - \| \sol - \Ltwoproj\sol \|_{L^2} }
 \lesssim  ( 1- \tfrac{ \| \sol - \Ltwoproj\sol \|_{L^2} }{\| \sol \|_{L^2}} )^{-1}.
\end{eqnarray*}
Further, we conclude $\| \sol - \Ltwoproj\sol \|_{L^2} \lesssim \kappa H \Honekappa{\sol} \lesssim  \kappa H \| \sol \|_{L^2} $, and thus by the resolution condition $\kappa H \lesssim 1$, the desired estimate \eqref{est-Ritzprojsoldelta-2-new} follows.
\end{proof}

Next, we define a simultaneous projection for the order parameter and the vector potential based on the inner product $\langle E^{\prime\prime}(u,\bfA) \cdot ,\cdot \rangle$ on $\Honeperp \times \HonenSpace(\Omega)$.
\begin{definition}[$E^{\prime\prime}(u,\bfA)$-Ritz-projection]
\label{definition:Eprimeprime-Ritz}
Let $(\sol,\MagPot) \in  H^1(\Omega) \times \HonenSpace(\Omega) $ be a minimizer of \eqref{minimization-problem} and suppose that Assumption \ref{ass:local_uniq} holds. We define the Ritz-projection
$$
\textup{\bf R}_{H,h} :=  (\textup{R}_{H},\textup{R}_{h})  \,\,:\,\,  \Honeperp \times \HonenSpace(\Omega) \,\, \rightarrow \,\, (X_{\deltaH}\cap \Honeperp) \times \mathbf{X}_{\deltah}
$$
for $(\varphi , \mathbf{B}) \in \Honeperp \times \HonenSpace(\Omega) $ by
\begin{eqnarray*}
\langle E^{\prime\prime}(u,\bfA) \, \textup{\bf R}_{H,h} (\varphi , \mathbf{B}), 
(\psi_H , \mathbf{C}_h ) \rangle = \langle E^{\prime\prime}(u,\bfA) \, (\varphi , \mathbf{B}), 
(\psi_H , \mathbf{C}_h ) \rangle
\end{eqnarray*}
for all $(\psi_H , \mathbf{C}_h ) \in (X_{\deltaH}\cap \Honeperp) \times \mathbf{X}_{\deltah}$. The projection $\textup{\bf R}_{H,h}$ is well-defined by Proposition \ref{prop:coecivity_Epp}.
\end{definition}
To work with $\textup{\bf R}_{H,h}$, we need to show that $ \textup{\bf R}_{H,h} (\varphi , \mathbf{B})$ is a quasi-best-approximation in $H^1(\Omega) \times \HonenSpace(\Omega)$. We have the following result.
\begin{lemma}[Approximation properties of $\textup{\bf R}_{H,h}$]
\label{lemma-ritz-proj-est}
Suppose we are in the setting of Definition \ref{definition:Eprimeprime-Ritz} and let the assumptions of Lemma~\ref{lemma:ritz-projections:abstract-new} hold. 
Then, 
 we have for arbitrary $(\varphi , \mathbf{B}) \in \Honeperp \times \HonenSpace(\Omega) $ 
\begin{eqnarray*}
\lefteqn{ \Honecombi{ (\varphi , \mathbf{B}) - \textup{\bf R}_{H,h} (\varphi , \mathbf{B}) } }\\
&\lesssim&  
\inf_{(\varphi_H, \mathbf{B}_h) \in X_{\deltaH} \times \mathbf{X}_{\deltah}}
\Honecombi{ (\varphi -\varphi_H, \mathbf{B}-\mathbf{B}_h) } 
+ 
\inf_{(z_H, \mathbf{Z}_h) \in X_{\deltaH} \times \mathbf{X}_{\deltah}}
\Honecombi{ (z -z_H, \mathbf{Z}-\mathbf{Z}_h) }, 
\end{eqnarray*}
where $(z,\mathbf{Z}) \in \Honeperp \times \HonenSpace(\Omega)$ solves
\begin{eqnarray*}
\langle E'' (u,\bfA) ( z,\mathbf{Z} ) , (\psi, \mathbf{C} ) \rangle 
&= ( (\varphi , \mathbf{B}) \hspace{-2pt}-\hspace{-2pt} \textup{\bf R}_{H,h} (\varphi , \mathbf{B}) , (\psi, \mathbf{C} ))_{L^2 \times \mathbf{L}^2}
\end{eqnarray*}
for all $(\psi, \mathbf{C} ) \in \Honeperp \times \HonenSpace(\Omega)$.
\end{lemma}

\begin{proof}
To shorten the notation, we write  $(e_{\varphi},\mathbf{e}_{ \mathbf{B}}) := (\varphi , \mathbf{B}) - \textup{\bf R}_{H,h} (\varphi , \mathbf{B})$.
In the proof of Proposition \ref{prop:coecivity_Epp}, we verified the G{\aa}rding inequality
\begin{eqnarray*}
	\dualp{\energystab''(\sol,\MagPot) (e_{\varphi},\mathbf{e}_{ \mathbf{B}}) }{  (e_{\varphi},\mathbf{e}_{ \mathbf{B}}) }
	&\geq& C_1
	\Honecombi{(e_{\varphi},\mathbf{e}_{ \mathbf{B}})}^2
	-
	C_2
	\norm{L^2 \times \mathbf{L}^2}{(e_{\varphi},\mathbf{e}_{ \mathbf{B}})}^2
\end{eqnarray*}
for some $\kappa$-independent constants $C_1,C_2 \ge 0$. Following the \quotes{Schatz argument} \cite{Sch74}, we let $(z,\mathbf{Z}) \in \Honeperp \times \HonenSpace(\Omega)$ denote the unique solution to
\begin{eqnarray*}
\langle E'' (u,\bfA) ( z,\mathbf{Z} ) , (\psi, \mathbf{C} ) \rangle 
&= ( ( e_{\varphi} , \mathbf{e}_{\mathbf{B}} ) , (\psi, \mathbf{C} ))_{L^2 \times \mathbf{L}^2}, 
\quad \text{ for all } (\psi, \mathbf{C} ) \in \Honeperp \times \HonenSpace(\Omega).
\end{eqnarray*}
Then, we have by linear combination
\begin{eqnarray*}
\Honecombi{( e_{\varphi} , \mathbf{e}_{\mathbf{B}}  )}^2 
&\lesssim&  \dualp{\energystab''(\sol,\MagPot) (( e_{\varphi} , \mathbf{e}_{\mathbf{B}} ) + ( z,\mathbf{Z} ) ) }{( e_{\varphi} , \mathbf{e}_{\mathbf{B}} )}.
\end{eqnarray*}
Exploiting the symmetry of $\langle E'' (u,\bfA) \, \cdot ,\cdot\rangle$ and the Galerkin-orthogonality for $ \textup{\bf R}_{H,h}$, we further obtain for arbitrary $(\varphi_H , \mathbf{B}_h), \,(z_{\deltaH}, \mathbf{Z}_{\deltah}) \in (X_{\deltaH} \cap \Honeperp) \times \mathbf{X}_{\deltah}$ that
\begin{eqnarray*}
\lefteqn{ \Honecombi{( e_{\varphi} , \mathbf{e}_{\mathbf{B}}  )}^2 
\,\,\,\lesssim\,\,\, \dualp{\energystab''(\sol,\MagPot) (( e_{\varphi} , \mathbf{e}_{\mathbf{B}} ) + ( z,\mathbf{Z} ) ) }{( e_{\varphi} , \mathbf{e}_{\mathbf{B}} )}  } \\
&=&
\dualp{\energystab''(\sol,\MagPot) ( e_{\varphi} , \mathbf{e}_{\mathbf{B}} ) }{  (\varphi - \varphi_H , \mathbf{B} -\mathbf{B}_h)  }
+
 \dualp{\energystab''(\sol,\MagPot) ( e_{\varphi} , \mathbf{e}_{\mathbf{B}} ) }{ ( z-z_H,\mathbf{Z}-\mathbf{Z}_h ) }
.
\end{eqnarray*}
The continuity of $\energystab''(\sol,\MagPot)$ in Proposition \ref{prop:coecivity_Epp} together with 
estimate \eqref{est-Ritzprojsoldelta-2-new} in Lemma \ref{lemma:ritz-projections:abstract-new} finish the proof. 
\end{proof}
With the approximation properties of the Ritz-projection $\textup{\bf R}_{H,h} =  (\textup{R}_{H},\textup{R}_{h})$ at hand, we split the error as 
\begin{eqnarray}
\label{abstract-error-splitting}\lefteqn{ \| (\sol - \sol_{\deltaH}, \MagPot - \MagPot_{\deltah}) \|_{\HonekappaSpace \times \mathbf{H}^1} } \\
\nonumber &\le& 
\| (\sol - \textup{R}_{H} \sol, \MagPot - \textup{R}_{h} \MagPot ) \|_{\HonekappaSpace \times \mathbf{H}^1} + 
\| (\sol_{\deltaH} - \textup{R}_{H} \sol, \MagPot_{\deltah} - \textup{R}_{h} \MagPot ) \|_{\HonekappaSpace \times \mathbf{H}^1},
\end{eqnarray}
where it remains to provide an abstract estimate for the defect, i.e., the second term.
To study the defect, we exploit the coercivity of $\energystab''(\sol,\MagPot)$ and consider the term 
\begin{eqnarray}
\label{def-eps-2}
\sigma(\psi_{\deltaH},\mathbf{C}_{\deltah}) &:=& \dualp{\energystab''(\sol,\MagPot) (\sol_{\deltaH} - \textup{R}_{H} \sol, \MagPot_{\deltah} - \textup{R}_{h} \MagPot )  }{(\psi_{\deltaH},\mathbf{C}_{\deltah})} 
\end{eqnarray}
for $(\psi_{\deltaH},\mathbf{C}_{\deltah}) \in (X_{\deltaH}\cap \Honeperp) \times \mathbf{X}_{\deltah}$. By Galerkin orthogonality, we can write 
\begin{eqnarray*}
\sigma(\psi_{\deltaH},\mathbf{C}_{\deltah}) &=& 
\dualp{\energystab''(\sol,\MagPot) (\sol - \sol_{\deltaH} , \MagPot - \MagPot_{\deltah}) }{(\psi_{\deltaH},\mathbf{C}_{\deltah})}  
\end{eqnarray*}
which yields the following estimate.
\begin{lemma} \label{lem:epsilon_2_bound}
Let $(\sol,\MagPot) \in  H^1(\Omega) \times \HonenSpace(\Omega) $ be a minimizer of \eqref{minimization-problem} and $\sigma(\psi_{\deltaH},\mathbf{C}_{\deltah})$ be defined as in \eqref{def-eps-2}. It holds
\begin{eqnarray}
\label{est:epsilon_2_bound}
	| \sigma(\psi_h,\mathbf{C}_h) |
	&\lesssim&
	\left( 
	\|  \sol - \sol_{\deltaH} \|_{L^6}^2  +  \|  \sol - \sol_{\deltaH} \|_{\HonekappaSpace}^2 + \| \MagPot - \MagPot_{\deltah} \|_{H^1}^2 
	\right) \, \| (\psi_{\deltaH},\mathbf{C}_{\deltah}) \|_{H^1_{\kappa}\times \mathbf{H}^1}.
	\end{eqnarray}
\end{lemma}
The proof is technical and given in Appendix~\ref{sec:app_computations}.

As a direct conclusion from Lemma \ref{lemma-ritz-proj-est} and Lemma \ref{lem:epsilon_2_bound}, we have the following theorem.
\begin{theorem}\label{thm:Honecombi_bound}
Let $(\sol,\MagPot) \in  H^1(\Omega) \times \HonenSpace(\Omega) $ be a minimizer of \eqref{minimization-problem} that is locally quasi-unique in the sense of Definition \ref{definition-local-quasi-uniquness} and let $(\sol_{\deltaH},\MagPot_{\deltah}) \in X_{\deltaH} \times \mathbf{X}_{\deltah}$ be a (discrete) minimizer of \eqref{eq:energy_functional_approx}.
Under the assumptions on $X_{\deltaH} $ in 
Lemma~\ref{lemma:ritz-projections:abstract-new}, 
it holds
\begin{eqnarray}
\label{est:abstract-disc-estimate-full}
\nonumber\lefteqn{ \| (\sol - \sol_{\deltaH}, \MagPot - \MagPot_{\deltah}) \|_{\HonekappaSpace \times \mathbf{H}^1} } \\
\nonumber&\lesssim& 
\inf_{(\varphi_H, \mathbf{B}_h) \in X_{\deltaH} \times \mathbf{X}_{\deltah}}
\Honecombi{ (u -\varphi_H, \mathbf{A}-\mathbf{B}_h) } 
+ 
\inf_{(z_H, \mathbf{Z}_h) \in X_{\deltaH} \times \mathbf{X}_{\deltah}}
\Honecombi{ (z -z_H, \mathbf{Z}-\mathbf{Z}_h) } \\
&\enspace& + \,\, \Csol \left(  \|  \sol - \sol_{\deltaH} \|_{L^6}^2  \,\,+\,\,  \|  \sol - \sol_{\deltaH} \|_{\HonekappaSpace}^2 \,\,+\,\, \| \MagPot - \MagPot_{\deltah} \|_{H^1}^2\right)
\end{eqnarray}
where $(z,\mathbf{Z}) \in \Honeperp \times \HonenSpace(\Omega)$ solves
\begin{eqnarray}
\label{def-z-proj-error}
\langle E'' (u,\bfA) ( z,\mathbf{Z} ) , (\psi, \mathbf{C} ) \rangle 
&= ( (u , \mathbf{A}) \hspace{-2pt}-\hspace{-2pt} \textup{\bf R}_{H,h} (u , \mathbf{A}) , (\psi, \mathbf{C} )_{L^2 \times \mathbf{L}^2}
\end{eqnarray}
for all $(\psi, \mathbf{C} ) \in \Honeperp \times \HonenSpace(\Omega)$.
\end{theorem}
\begin{proof} 
We consider the error splitting \eqref{abstract-error-splitting}, where the first term is estimated by Lemma \ref{lemma-ritz-proj-est}. For the second term, we have with the coercivity of $\energystab''(\sol,\MagPot)$ and the definition of $\sigma$ in \eqref{def-eps-2}
\begin{eqnarray*}
\lefteqn{  \Csolinv \,\, \| (\sol_{\deltaH} - \textup{R}_{H} \sol, \MagPot_{\deltah} - \textup{R}_{h} \MagPot ) \|_{\HonekappaSpace \times \mathbf{H}^1}^2 } \\
&\lesssim&  \dualp{\energystab''(\sol,\MagPot) (\sol_{\deltaH} - \textup{R}_{H} \sol, \MagPot_{\deltah} - \textup{R}_{h} \MagPot ) }{ (\sol_{\deltaH} - \textup{R}_{H} \sol, \MagPot_{\deltah} - \textup{R}_{h} \MagPot ) } \\
&=& \sigma(\sol_{\deltaH} - \textup{R}_{H} \sol, \MagPot_{\deltah} - \textup{R}_{h} \MagPot ).
\end{eqnarray*}
Lemma \ref{lem:epsilon_2_bound} finishes the proof.
\end{proof}
Even though Theorem~\ref{thm:Honecombi_bound} looks promising on its own, it only gets meaningful when used in combination with Proposition \ref{prop:abstract_convergence}. The reason is that $(\sol , \MagPot )$ and $( \sol_{\deltaH}, \MagPot_{\deltah})$ might not be unique (even aside from gauge transformations). Hence, the higher order remainder term $\|  \sol - \sol_{\deltaH} \|_{L^6}^2  \,\,+\,\,  \|  \sol - \sol_{\deltaH} \|_{\HonekappaSpace}^2 \,\,+\,\, \| \MagPot - \MagPot_{\deltah} \|_{H^1}^2$ is not necessarily small. However, Proposition \ref{prop:abstract_convergence} guarantees that for each discrete minimizer $( \sol_{\deltaH}, \MagPot_{\deltah})$ there is a corresponding exact minimizer  $(\sol , \MagPot )$, such that $ \| (\sol - \sol_{\deltaH}, \MagPot - \MagPot_{\deltah}) \|_{\HonekappaSpace \times \mathbf{H}^1}$ becomes arbitrarily small for $(\deltaH(\delta),\deltah(\delta)) \rightarrow 0$. Hence, we can absorb the higher order term on the right-hand side of \eqref{est:abstract-disc-estimate-full} into the left-hand side. With this, we obtain the following conclusion. 
\begin{conclusion}%
\label{concl:Honecombi_bound}
Let Assumption \ref{ass:local_uniq} hold and let $(\sol_{\deltaH},\MagPot_{\deltah}) \in X_{\deltaH} \times \mathbf{X}_{\deltah}$ be a discrete minimizer of \eqref{eq:energy_functional_approx}, where $X_{\deltaH}$ has the properties as in
 Lemma~\ref{lemma:ritz-projections:abstract-new} 
.

Then, for all sufficiently small $(\deltaH(\delta),\deltah(\delta))$, there exists a minimizer $(\sol,\MagPot) \in  H^1(\Omega) \times \HonenSpace(\Omega)$ of \eqref{minimization-problem} such that $\sol_{\deltaH} \in \Honeperp$ and
\begin{eqnarray*}
\lefteqn{ \| (\sol - \sol_{\deltaH}, \MagPot - \MagPot_{\deltah}) \|_{\HonekappaSpace \times \mathbf{H}^1} } \\
&\lesssim& 
\inf_{(\varphi_H, \mathbf{B}_h) \in X_{\deltaH} \times \mathbf{X}_{\deltah}}
\Honecombi{ (u -\varphi_H, \mathbf{A}-\mathbf{B}_h) } 
+ 
\inf_{(z_H, \mathbf{Z}_h) \in X_{\deltaH} \times \mathbf{X}_{\deltah}}
\Honecombi{ (z -z_H, \mathbf{Z}-\mathbf{Z}_h) }
\end{eqnarray*}
with constants independent of $\kappa$ and $(\deltaH,\deltah)$ and where $(z,\mathbf{Z}) \in \Honeperp \times \HonenSpace(\Omega)$ denotes the solution to \eqref{def-z-proj-error}.
\end{conclusion}
We finish the section on abstract error estimates with an estimate for the error in energy. 
\begin{theorem} \label{thm:energy_bound}
Let $(\sol,\MagPot) \in  H^1(\Omega) \times \HonenSpace(\Omega) $ be an arbitrary minimizer of \eqref{minimization-problem} and $(\sol_{\deltaH},\MagPot_{\deltah}) \in X_{\deltaH} \times \mathbf{X}_{\deltah}$ an arbitrary discrete minimizer of \eqref{eq:energy_functional_approx}. 
Then, the error in the energy is bounded by
	\begin{eqnarray*}
		\lefteqn{ \energystab(\sol_{\deltaH},\MagPot_{\deltah}) - \energystab(\sol,\MagPot) }\\
		&\lesssim&
		\inf_{ (\varphi_{\deltaH},\mathbf{B}_{\deltah}) \in X_{\deltaH} \times \mathbf{X}_{\deltah} } \left(
		\Honecombi{ (\sol - \varphi_{\deltaH},\MagPot - \mathbf{B}_h )}^2
                 + \| \sol- \varphi_{\deltaH}\|_{L^6} \| \sol- \varphi_{\deltaH} \|_{L^2}^2 +  \| \sol- \varphi_{\deltaH} \|_{L^4}^4 \right)
	\end{eqnarray*}
	with constants independent of $\kappa$ and $(\deltaH,\deltah)$. Note here that $\energystab(\sol,\MagPot) \leq\energystab(\sol_{\deltaH},\MagPot_{\deltah})$.\\[0.2em]
	The result essentially compares the exact minimal energy level with the minimal energy level in the discrete space $X_{\deltaH} \times \mathbf{X}_{\deltah}$. Hence, the estimate requires that $(\sol_{\deltaH},\MagPot_{\deltah})$ converges to $(\sol,\MagPot)$. 
\end{theorem}
\begin{proof}
The proof follows similar arguments as in \cite[Lemma~5.9]{DoeH24}, but in contrast to the reference, we do not exploit the previously established error estimate for $\Honecombi{ (\sol - \sol_{\deltaH},\MagPot - \MagPot_{\deltah})}$. Let $(\varphi_{\deltaH},\mathbf{B}_{\deltah}) \in X_{\deltaH} \times \mathbf{X}_{\deltah}$ be arbitrary.  Then by the fact that  $(\sol_{\deltaH},\MagPot_{\deltah}) \in X_{\deltaH} \times \mathbf{X}_{\deltah}$  is a discrete minimizer, we have $\energystab(\sol_{\deltaH},\MagPot_{\deltah})\le \energystab(\varphi_{\deltaH},\mathbf{B}_{\deltah})$ and therefore it is sufficient to estimate $ \energystab(\varphi_{\deltaH},\mathbf{B}_{\deltah}) - \energystab(\sol,\MagPot) $ to prove the result. With the notation 
\begin{equation}
\mathcal{N}(\sol,\MagPot)  \coloneqq 
\tfrac{1}{2} \int_\Omega 
\tfrac{1}{2} \bigl( 1- |\sol|^2 \bigr)^2
+
|\curl \MagPot - \mathbf{H}|^2 - |\curl \MagPot|^2
\dint{x},
\end{equation}
we can write the energy as
	\begin{equation}
	\energystab(\sol,\MagPot) = 
	\tfrac{1}{2} a_{\bfA}( \sol , \sol) 
	+ \tfrac{1}{2} \abil{\MagPot}{\MagPot}
	+\mathcal{N}(\sol,\MagPot) .
	\end{equation}
	With this, we expand the energy error as
\begin{eqnarray*}
\lefteqn{ \energystab(\varphi_{\deltaH},\mathbf{B}_{\deltah}) - \energystab(\sol,\MagPot) }\\
&=& 
\tfrac{1}{2} a_{\bfA_h} ( \varphi_{\deltaH} , \varphi_{\deltaH} )
+ \tfrac{1}{2} \abil{\mathbf{B}_{\deltah}}{\mathbf{B}_{\deltah}}
+\mathcal{N}(\varphi_{\deltaH},\mathbf{B}_{\deltah}) 
-
\tfrac{1}{2} a_{\bfA} ( \sol , \sol) 
- \tfrac{1}{2} \abil{\MagPot}{\MagPot}
-\mathcal{N}(\sol,\MagPot) 
\\
&=& \underbrace{ \tfrac{1}{2} a_{\bfA} ( \varphi_{\deltaH} , \varphi_{\deltaH} )
+ \tfrac{1}{2} \abil{\mathbf{B}_{\deltah}}{\mathbf{B}_{\deltah}}
- \tfrac{1}{2} a_{\bfA} ( \sol , \sol) 
- \tfrac{1}{2} \abil{\MagPot}{\MagPot} }_{=:\,I_1}
\\
&\enspace&+\underbrace{\,\,\tfrac{1}{2} a_{\bfA_h} ( \varphi_{\deltaH} , \varphi_{\deltaH} )
- \tfrac{1}{2}  a_{\bfA} ( \varphi_{\deltaH} , \varphi_{\deltaH} )  }_{=:I_2}
 \,\,+\,\,\underbrace{\mathcal{N}(\varphi_{\deltaH},\mathbf{B}_{\deltah}) 
- \mathcal{N}(\sol,\MagPot)}_{=:I_3}.
\end{eqnarray*}
We now treat these terms separately.\\[0.4em]
(a) With Lemma~\ref{lem:Frechet_der_1} we write
\begin{eqnarray*}
	I_1
	&=&
	\tfrac{1}{2} a_{\bfA} ( \sol- \varphi_{\deltaH} , \sol- \varphi_{\deltaH} )
	+
	 \tfrac{1}{2} \abil{\MagPot - \mathbf{B}_{\deltah}}{\MagPot - \mathbf{B}_{\deltah}} 
	-
	a_{\bfA} ( \sol , \sol- \varphi_{\deltaH} )
	-
	\abil{\MagPot}{\MagPot - \mathbf{B}_{\deltah}} 
	\\
	&\overset{E^{\prime}(\sol,\MagPot)=0}{=}&
	\tfrac{1}{2} a_{\bfA} ( \sol- \varphi_{\deltaH} , \sol- \varphi_{\deltaH} )
	+
	\tfrac{1}{2} \abil{\MagPot - \mathbf{B}_{\deltah}}{\MagPot - \mathbf{B}_{\deltah}} 
	\,\, + \,\, \Real  \int_\Omega (|\sol|^2 - 1) \sol (\sol- \varphi_{\deltaH})^* \dint{x}
\\
&\enspace&\quad+
\Real  \int_\Omega |\sol|^2 \MagPot \cdot (\MagPot - \mathbf{B}_{\deltah})  +
\tfrac{\ci}{\kappa}   \bigl(  \sol^* \nabla \sol \cdot (\MagPot - \mathbf{B}_{\deltah})  \bigr) 
-
\mathbf{H} \cdot \curl (\MagPot - \mathbf{B}_{\deltah})
\dint{x} .
\end{eqnarray*}
(b) Concerning $I_2$ we compute 
\begin{eqnarray*}
\lefteqn{ I_2 \,\, = \,\, \tfrac{1}{2} a_{\bfA_h} ( \varphi_{\deltaH} , \varphi_{\deltaH})
- \tfrac{1}{2} a_{\bfA}( \varphi_{\deltaH} , \varphi_{\deltaH}) 
}
\\
&=&\tfrac{1}{2} \int_\Omega 
|\tfrac{\ci}{\kappa} \nabla \varphi_{\deltaH} +   \mathbf{B}_{\deltah} \varphi_{\deltaH} |^2 
-
|\tfrac{\ci}{\kappa} \nabla \varphi_{\deltaH} +  \MagPot \varphi_{\deltaH} |^2 
\dint{x}
\\
&=&
\Real \int_\Omega  \tfrac{\ci}{\kappa} \nabla \varphi_{\deltaH}  \cdot  (\mathbf{B}_{\deltah} - \MagPot) \varphi_{\deltaH}^{\ast} 
+  \tfrac{1}{2}  |\mathbf{B}_{\deltah}|^2\,|\varphi_{\deltaH} |^2 -   \tfrac{1}{2}  |\MagPot|^2\,|\varphi_{\deltaH} |^2
\dint{x} .
\end{eqnarray*}

(c) We now turn to $I_3$. In the proof of \cite[Lemma~5.9]{DoeH24}  it was shown that for $\errh = \sol - \varphi_{\deltaH}$
\begin{eqnarray*}
 \bigl( 1- |\varphi_{\deltaH}|^2 \bigr)^2 -   \bigl( 1- |\sol|^2 \bigr)^2 =  4(1 - |\sol|^2) \Real(\sol(\sol - \varphi_{\deltaH} )^{\ast}) \,\,+\,\, \mbox{h.o.t}\,(\errh) ,
\end{eqnarray*}
with $\,\,\mbox{h.o.t}\,(\errh) \coloneqq 2(|\sol|^2-1)|\errh|^2 +(|\errh|^2- \Real(u( \errh )^{\ast}))^2\,\,$ being a higher order term. 
 With this we obtain
\begin{eqnarray*}
\lefteqn{ I_3 \,\,=\,\,  \tfrac{1}{2} \int_\Omega 
\tfrac{1}{2} \bigl( 1- |\varphi_{\deltaH}|^2 \bigr)^2 - \tfrac{1}{2} \bigl( 1- |\sol|^2 \bigr)^2 
+  2 \, \mathbf{H} \cdot \curl (\MagPot - \mathbf{B}_{\deltah} )
\dint{x} } \\
&=&  \int_\Omega  (1 - |\sol|^2) \Real(\sol(\sol - \varphi_{\deltaH} )^{\ast}) + \mathbf{H} \cdot \curl (\MagPot - \mathbf{B}_{\deltah} ) \dint{x} 
+ \int_{\Omega} \mbox{h.o.t}\,(\errh) \dint{x} .
\end{eqnarray*}
We are ready to sum up $I_1$, $I_2$ and $I_3$. By noting that
\begin{eqnarray*}
\lefteqn{\tfrac{1}{2} |\mathbf{B}_{\deltah}|^2 |\varphi_{\deltaH}|^2
- \tfrac{1}{2} |\MagPot|^2 |\varphi_{\deltaH}|^2 + |\MagPot|^2 |\sol|^2 - ( \mathbf{B}_{\deltah} \cdot \MagPot ) |\sol|^2}\\
&=& \tfrac{1}{2} (|\mathbf{B}_{\deltah}|^2-|\MagPot|^2)(|\varphi_{\deltaH}|^2-|\sol|^2) +  \tfrac{1}{2} |\mathbf{B}_{\deltah} - \MagPot|^2 |\sol|^2
\end{eqnarray*}
we obtain
\begin{eqnarray*}
\lefteqn{ \energystab(\varphi_{\deltaH},\mathbf{B}_{\deltah}) - \energystab(\sol,\MagPot) 
\,\,=\,\,
\tfrac{1}{2} a_{\bfA} ( \sol- \varphi_{\deltaH}, \sol- \varphi_{\deltaH} )
	+
	\tfrac{1}{2} \abil{\MagPot - \mathbf{B}_{\deltah}}{\MagPot - \mathbf{B}_{\deltah}}  }\\
&\enspace&\quad+	
 \tfrac{1}{2} \int_\Omega 
 (|\mathbf{B}_{\deltah}|^2-|\MagPot|^2)(|\varphi_{\deltaH}|^2-|\sol|^2) + |\mathbf{B}_{\deltah} - \MagPot|^2 |\sol|^2
\dint{x}
\\
&\enspace&\quad+
\Real  \int_\Omega 
\tfrac{\ci}{\kappa}   \bigl(  (\sol^* -\varphi_{\deltaH}^*) \nabla \sol  
  + \varphi_{\deltaH}^* ( \nabla \sol  -  \nabla \varphi_{\deltaH})  \bigr)  \cdot (\MagPot - \mathbf{B}_{\deltah})  \dint{x} + \int_{\Omega} \mbox{h.o.t}\,(\errh)  \dint{x} \\
  &\lesssim& \| \sol- \varphi_{\deltaH} \|_{\HonekappaSpace}^2 + \| \mathbf{B}_{\deltah} - \MagPot\|_{H^1}^2
  + 
    \| \MagPot - \mathbf{B}_{\deltah} \|_{L^6}  \| |\MagPot| + |\mathbf{B}_{\deltah}| \|_{L^6} 
   \|  \sol -\varphi_{\deltaH} \|_{L^2}  \| | \sol | + |\varphi_{\deltaH} | \|_{L^6} 
  \\
 &\enspace& 
 + \| \MagPot - \mathbf{B}_{\deltah} \|_{L^4} \| \tfrac{1}{\kappa} \nabla \sol \|_{L^4} \| \sol -\varphi_{\deltaH} \|_{L^2} + \| \tfrac{1}{\kappa} \nabla (\sol - \varphi_{\deltaH}) \|_{L^2} \| \varphi_{\deltaH}\|_{L^4} \| \MagPot - \mathbf{B}_{\deltah} \|_{L^4}  \\
 &\enspace&  + \| \sol - \varphi_{\deltaH} \|_{L^2}^2 + \| \sol - \varphi_{\deltaH} \|_{L^4}^4 \\
 &\lesssim&  \| \sol- \varphi_{\deltaH} \|_{\HonekappaSpace}^2 + \| \mathbf{B}_{\deltah} - \MagPot\|_{H^1}^2 + \| \varphi_{\deltaH}\|_{L^6}^2 \| \sol- \varphi_{\deltaH} \|_{L^2}^2 +  \| \sol- \varphi_{\deltaH} \|_{L^4}^4 \\
&\lesssim& 
  \| \sol- \varphi_{\deltaH} \|_{\HonekappaSpace}^2 + \| \mathbf{B}_{\deltah} - \MagPot\|_{H^1}^2 + \left( \| \sol - \varphi_{\deltaH} \|_{L^6} + \| \sol \|_{L^6}  \right)^2 \| \sol- \varphi_{\deltaH} \|_{L^2}^2 +  \| \sol- \varphi_{\deltaH} \|_{L^4}^4
 \\
&\lesssim& 
  \| \sol- \varphi_{\deltaH} \|_{\HonekappaSpace}^2 + \| \mathbf{B}_{\deltah} - \MagPot\|_{H^1}^2 + \| \sol - \varphi_{\deltaH} \|_{L^6} \| \sol- \varphi_{\deltaH} \|_{L^2}^2 +  \| \sol- \varphi_{\deltaH} \|_{L^4}^4.
\end{eqnarray*}
\end{proof}

\section{Error analysis in the LOD space}
\label{sec:error_analysis}
We are prepared now for the error analysis in the LOD space and for proving the main results stated in Section \ref{sec:space_main}. For that, we shall apply the results from Section \ref{sec:abstract_error_analysis} with the choice $(X_{\deltaH},\mathbf{X}_{\deltah})=(\VShLOD,\VSAhzero{k})$. In the first step, we need an auxiliary result, which is an inverse inequality in $\VShLOD$. The proof is given in Appendix~\ref{sec:app_computations}.
\begin{lemma}[Inverse inequality in $\VShLOD$]
\label{lem:inverse-inequality-LOD}
Assume $\deltaH \lesssim \kappa^{-1}$, then it holds
\begin{eqnarray*}
\| \nabla \varphi_{H}^{\LOD} \|_{L^2} \lesssim \tfrac{1}{\deltaH} \| \varphi_{H}^{\LOD} \|_{L^2} \qquad  \mbox{for all } \varphi_{H}^{\LOD} \in \VShLOD.
\end{eqnarray*}
\end{lemma}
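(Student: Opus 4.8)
The plan is to prove the inverse inequality for the LOD space $\VShLOD = (1-\mathcal{C})\VSh$ by exploiting the structure of its elements together with the standard inverse inequality that holds on the underlying $\mathcal{P}_1$-space $\VSh$. Every $\varphi_H^{\LOD} \in \VShLOD$ can be written uniquely as $\varphi_H^{\LOD} = (1-\mathcal{C})\varphi_H$ for some $\varphi_H \in \VSh$, so the first step is to relate the $L^2$- and gradient-norms of $\varphi_H^{\LOD}$ to those of the coarse part $\varphi_H$. The key observation is that the corrector $\mathcal{C}\varphi_H \in W = \ker \LtwoprojFEM$, so $\LtwoprojFEM \varphi_H^{\LOD} = \LtwoprojFEM \varphi_H = \varphi_H$; this means the coarse component $\varphi_H$ is exactly the $L^2$-projection of $\varphi_H^{\LOD}$ onto $\VSh$, and therefore $\|\varphi_H\|_{L^2} \le \|\varphi_H^{\LOD}\|_{L^2}$.

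Let me recall the verifying details. I would first write $\nabla\varphi_H^{\LOD} = \nabla\varphi_H - \nabla\mathcal{C}\varphi_H$ and bound the two contributions. For the correction part, I use the $\HonekappaSpace$-stability of the corrector $\mathcal{C}$ on $\VSh$, namely $\|\mathcal{C}\varphi_H\|_{\HonekappaSpace} \lesssim \|\varphi_H\|_{\HonekappaSpace}$, which follows from the coercivity of $\abilmagLOD{\cdot}{\cdot}$ on $W$ in Lemma~\ref{lemma:coercivity-abil-on-W} (valid under $\deltaH \lesssim \kappa^{-1}$) combined with the $L^\infty$-bound on $\MagPot$; a comparable stability estimate is used in Subsection~\ref{subsec:role-of-A-for-VLOD}. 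From the definition of the $\HonekappaSpace$-norm this gives $\kappa^{-1}\|\nabla\mathcal{C}\varphi_H\|_{L^2} \lesssim \|\varphi_H\|_{\HonekappaSpace} \le \kappa^{-1}\|\nabla\varphi_H\|_{L^2} + \|\varphi_H\|_{L^2}$. For the coarse part $\nabla\varphi_H$, I apply the classical inverse inequality on the $\mathcal{P}_1$-space $\VSh$ over the quasi-uniform mesh $\mathcal{T}_H$, giving $\|\nabla\varphi_H\|_{L^2} \lesssim \deltaH^{-1}\|\varphi_H\|_{L^2}$. Assembling these and using $\deltaH \lesssim \kappa^{-1}$ (so that $\kappa \lesssim \deltaH^{-1}$) lets me control all the terms by $\deltaH^{-1}\|\varphi_H\|_{L^2}$.

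The final step is to pass from $\|\varphi_H\|_{L^2}$ back to $\|\varphi_H^{\LOD}\|_{L^2}$, which is precisely where the identity $\varphi_H = \LtwoprojFEM\varphi_H^{\LOD}$ pays off: since the $L^2$-projection is a contraction, $\|\varphi_H\|_{L^2} = \|\LtwoprojFEM\varphi_H^{\LOD}\|_{L^2} \le \|\varphi_H^{\LOD}\|_{L^2}$. Chaining everything yields
\begin{eqnarray*}
\|\nabla\varphi_H^{\LOD}\|_{L^2} \lesssim \tfrac{1}{\deltaH}\|\varphi_H\|_{L^2} \le \tfrac{1}{\deltaH}\|\varphi_H^{\LOD}\|_{L^2},
\end{eqnarray*}
as claimed.

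\textbf{The main obstacle} I anticipate is making the $\HonekappaSpace$-stability of the corrector quantitatively clean while carrying the $\kappa$-weights correctly: the $\HonekappaSpace$-norm weights the gradient by $\kappa^{-1}$, so one must be careful that the resolution condition $\deltaH \lesssim \kappa^{-1}$ is genuinely used to absorb the $\|\varphi_H\|_{L^2}$-term arising from the corrector stability into the $\deltaH^{-1}$-scaled term, rather than producing a spurious factor of $\kappa$. Everything else is a routine combination of the corrector stability (already available from Lemma~\ref{lemma:coercivity-abil-on-W}) and the standard $\mathcal{P}_1$ inverse inequality on a quasi-uniform mesh.
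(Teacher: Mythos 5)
Your proof is correct, but it is organized differently from the paper's own argument, so a brief comparison is worthwhile. You decompose $\nabla \varphi_H^{\LOD} = \nabla \varphi_H - \nabla \mathcal{C}\varphi_H$ and control the corrector part through the $\HonekappaSpace$-stability $\| \mathcal{C}\varphi_H \|_{\HonekappaSpace} \lesssim \| \varphi_H \|_{\HonekappaSpace}$, which rests on the coercivity of $\abilmagLOD{\cdot}{\cdot}$ on the detail space $W$ (Lemma~\ref{lemma:coercivity-abil-on-W}); this is exactly where your argument consumes the resolution condition $\deltaH \lesssim \kappa^{-1}$ (and the $\kappa$-independent $L^\infty$-bound on $\MagPot$ from Corollary~\ref{cor:sol_bounds_ex_A_H2}), and the same stability estimate indeed appears in Subsection~\ref{subsec:role-of-A-for-VLOD}. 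You then finish with the standard $\mathcal{P}_1$ inverse inequality, $\kappa \lesssim \deltaH^{-1}$, and the identity $\varphi_H = \LtwoprojFEM \varphi_H^{\LOD}$ combined with the $L^2$-contraction of the projection. The paper instead never invokes the $W$-coercivity: it works with the zeroth-order-shifted form $\abilmag{\cdot}{\cdot}$, whose coercivity on all of $H^1(\Omega)$ is unconditional, uses the corrector orthogonality $\abilmagLOD{(1-\mathcal{C})\varphi_H}{\mathcal{C}\varphi_H}=0$ together with two Young-inequality absorption steps, replaces your contraction argument by the equivalent identity $\| \varphi_H \|_{L^2}^2 = (\varphi_H,(1-\mathcal{C})\varphi_H)_{L^2}$, and brings in the $L^2$-approximation property $\| \mathcal{C}\varphi_H \|_{L^2} \lesssim \deltaH \| \nabla(1-\mathcal{C})\varphi_H \|_{L^2}$, with the mesh condition $\deltaH \lesssim \kappa^{-1}$ entering only in the final absorption. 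Both routes exploit the same two structural facts ($L^2$-orthogonality of $\mathcal{C}\varphi_H$ to $\VSh$ and the coarse-space inverse inequality); yours is more modular and shorter because it reuses the corrector stability as a black box, while the paper's self-contained absorption argument avoids any appeal to Lemma~\ref{lemma:coercivity-abil-on-W} and thus isolates the mesh condition in a single step. Your proof is a valid alternative.
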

An important assumption in the abstract error analysis was the $H^1_{\kappa}$-stability of the $L^2$-projection onto the discrete space which is now set to be the LOD space. Using the above inverse inequality, the following result can be established which verifies that the assumptions in 
Lemma~\ref{lemma:ritz-projections:abstract-new} 
are indeed fulfilled.
\begin{lemma}[$H^1_{\kappa}$-stability of the $L^2$-projection on $\VShLOD$]
\label{lemma-properties-LOD-space}
Assume that $\deltaH \lesssim \kappa^{-1}$ and let $\LtwoprojLOD : H^1(\Omega) \rightarrow \VShLOD$ denote the $L^2$-projection. It holds
\begin{eqnarray*}
\| \LtwoprojLOD \varphi \|_{H^1_{\kappa}} &\lesssim& \|  \varphi \|_{H^1_{\kappa}}  \qquad \mbox{for all } \varphi \in H^1(\Omega).
\end{eqnarray*}
Furthermore, it holds
\begin{eqnarray*}
\| u - \LtwoprojLOD u \|_{L^2} &\lesssim& \kappa H \, \| u \|_{H^1_{\kappa}}
\end{eqnarray*}

\end{lemma}

\begin{proof}
Consider $(1 - \mathcal{C})\LtwoprojFEM\varphi \in \VShLOD$, then the $H^1_{\kappa}$-stability of $\mathcal{C}$ and $\LtwoprojFEM$ (on quasi-uniform meshes) imply that 
\begin{eqnarray}
\label{H1kappa-stab-corr-L2proj}
\| (1 - \mathcal{C})\LtwoprojFEM \varphi\|_{H^1_{\kappa}} \lesssim  \| \varphi \|_{H^1_{\kappa}}. 
\end{eqnarray}
Furthermore, as $(1 - \mathcal{C})\LtwoprojFEM \varphi - \varphi \in W$, we also have
\begin{eqnarray}
\label{est-LOD-func-L2-to-H1kap}
\hspace{30pt}\| (1 - \mathcal{C})\LtwoprojFEM \varphi - \varphi \|_{L^2} 
\,=\,  \| (1-\LtwoprojFEM) \left( (1 - \mathcal{C})\LtwoprojFEM \varphi - \varphi \right) \|_{L^2}
\,\lesssim\, \kappa \, H \|  \varphi  \|_{H^1_{\kappa}}.
\end{eqnarray}
We obtain with the inverse inequality from Lemma \ref{lem:inverse-inequality-LOD}
\begin{eqnarray*}
\| \LtwoprojLOD \varphi \|_{\HonekappaSpace} &\le&  \| \varphi - (1 - \mathcal{C})\LtwoprojFEM \varphi \|_{\HonekappaSpace} + \| (1 - \mathcal{C})\LtwoprojFEM \varphi - \LtwoprojLOD \varphi \|_{\HonekappaSpace} + \| \varphi \|_{\HonekappaSpace} \\
&\lesssim& \| \varphi \|_{\HonekappaSpace}  + \tfrac{1}{\kappa \deltaH } \| (1 - \mathcal{C})\LtwoprojFEM \varphi - \LtwoprojLOD \varphi \|_{L^2}  \\
&\lesssim& \| v \|_{\HonekappaSpace}  + \tfrac{1}{\kappa \deltaH}  \| (1 - \mathcal{C})\LtwoprojFEM \varphi - \varphi \|_{L^2} + \tfrac{1}{\kappa \deltaH}  \| \LtwoprojLOD \varphi - \varphi \|_{L^2} \\
&\le& \| \varphi \|_{\HonekappaSpace}  + \tfrac{2}{\kappa \deltaH}  \| (1 - \mathcal{C})\LtwoprojFEM \varphi - \varphi \|_{L^2}  \,\, \overset{\eqref{est-LOD-func-L2-to-H1kap}}{\lesssim} \,\,  \| \varphi \|_{\HonekappaSpace},
\end{eqnarray*}
The second estimate readily follows with \eqref{est-LOD-func-L2-to-H1kap} for $\varphi = u$.
\end{proof}
Next, we quantify the best-approximation error in the LOD space.
\begin{lemma}
\label{lemma:bestapprox-LOD}
$(a)$ Let $(\sol,\MagPot) \in  H^1(\Omega) \times \HonenSpace(\Omega)$ denote an arbitrary minimizer of \eqref{minimization-problem}. If $H\lesssim \kappa^{-1}$, then it holds
\begin{eqnarray}
\label{bestapprox-LOD-est-1}
\lefteqn{ \inf_{\varphi_{H}^{\LOD} \in \VShLOD } \| u  - \varphi_{H}^{\LOD}\|_{\HonekappaSpace} }\\
\nonumber&\lesssim&
(\kappa H)^3 \, + \, \kappa H \, \inf_{\varphi_H \in \VSh} \| (\tfrac{\ci}{\kappa}  \nabla u + (\bfAapp\hspace{-1pt}+\hspace{-1pt}\bfA)u ) \cdot (\bfAapp-\bfA) - \varphi_H \|_{L^2}.
\end{eqnarray}
$(b)$ If $\bfAapp$ fulfills $\| \bfAapp \|_{L^\infty} + \| \bfAapp \|_{W^{1,3}} \lesssim 1$, then the estimate can be further bounded as 
\begin{eqnarray*}
\inf_{\varphi_{H}^{\LOD} \in \VShLOD } \| u  - \varphi_{H}^{\LOD}\|_{\HonekappaSpace} 
&\lesssim& \,(\kappa H)^2\left( (\kappa H) +  \|\bfAapp-\bfA  \|_{H^1}  \right) .
\end{eqnarray*}
$(c)$ Finally, if $\bfAapp$ admits the same regularity $\kappa$-dependent stability bounds as $\bfA$, i.e., $\| \bfAapp \|_{H^2} \lesssim 1$ and $\| \bfAapp \|_{H^3} \lesssim \kappa$, then optimal order convergence is obtained with
\begin{eqnarray}
\label{bestapprox-LOD-est-2}
\inf_{\varphi_{H}^{\LOD} \in \VShLOD } \| u  - \varphi_{H}^{\LOD}\|_{\HonekappaSpace} &\lesssim& (\kappa H)^3 .
\end{eqnarray}
\end{lemma}
Before proving the result, we note that estimate \eqref{bestapprox-LOD-est-1} only exploits $H^2$-regularity of $u$, whereas the refined estimate exploits both the $H^3$-regularity of $u$ and $\bfA$, unless $\bfAapp=\bfA$. Recalling that $H^3$-regularity can only be guaranteed on box-shaped domains, we see that \eqref{bestapprox-LOD-est-2} might not hold on general domains, unless the distance between $\bfAapp$ and $\bfA$ is sufficiently small. Hence, to keep the second error contribution in \eqref{bestapprox-LOD-est-1} small, we practically want to select $\bfAapp$ such that $\| \bfA - \bfAapp \|_{L^2}$ is small.
\begin{proof}[Proof of Lemma \ref{lemma:bestapprox-LOD}]
(a) We consider the function $\hat{u}_H^\LOD := (1 - \mathcal{C})\LtwoprojFEM(u) \in \VShLOD$ for which we have  $\hat{u}_H^\LOD-u \in W$. Since the error is an element of the detail space, we can apply Lemma \ref{lemma:coercivity-abil-on-W} and obtain
\begin{eqnarray*}
\lefteqn{ \tfrac{1}{2} \| u - \hat{u}_H^\LOD  \|_{\HonekappaSpace}^2
\,\,\,\le\,\,\,  a_{\bfAapp}( u - \hat{u}_H^\LOD  , u - \hat{u}_H^\LOD ) 
\,\,\, \overset{\eqref{eqn:def-ideal-correctors}}{=} \,\,\, a_{\bfAapp}( u  , u - \hat{u}_H^\LOD ) } \\
&=& a_{\bfA} ( u  , u - \hat{u}_H^\LOD ) \, + \, (a_{\bfAapp}-a_{\bfA}) ( u  , u - \hat{u}_H^\LOD ) \\
&=& ((1-|u|^2) u , u - \hat{u}_H^\LOD )_{L^2} \,+\, 
(  \tfrac{\ci}{\kappa}  \nabla u,   (\bfAapp-\bfA) \, (u - \hat{u}_H^\LOD)  )_{L^2}\\
&\enspace&\, + \, ( (\bfAapp-\bfA) u  ,  \tfrac{\ci}{\kappa}  \nabla (u - \hat{u}_H^\LOD) )_{L^2} +  ( (|\bfAapp|^2-|\bfA|^2) u  , u - \hat{u}_H^\LOD  )_{L^2} \\
&=& ((1-|u|^2 + |\bfAapp|^2-|\bfA|^2) u , u - \hat{u}_H^\LOD )_{L^2} \,+\, 
 (  \tfrac{2\ci}{\kappa}  \nabla u \cdot  (\bfAapp-\bfA) ,  (u - \hat{u}_H^\LOD)  )_{L^2},
\end{eqnarray*}
where we used in the last step that \,\,$\div \bfAapp  = \div \bfA = 0$. By defining for brevity 
$$
f_{\ast} := (\tfrac{2\ci}{\kappa}  \nabla u + u(\bfAapp + \bfA) ) \cdot (\bfAapp-\bfA),
$$
we conclude with the properties of the $L^2$-projection $\LtwoprojFEM$ that
\begin{eqnarray*}
\lefteqn{ \tfrac{1}{2} \| u - \hat{u}_H^\LOD  \|_{\HonekappaSpace}^2 } \\
&\le& ( \, (1-\LtwoprojFEM) \hspace{-2pt}\left( (1-|u|^2) u \right)  +  (1-\LtwoprojFEM)f_{\ast}  \,,\, (1-\LtwoprojFEM)( u - \hat{u}_H^\LOD) )_{L^2}  \\
&\overset{\eqref{L2-proj-est}}{\lesssim}& \,\kappa\, H \, \left( \, \| (1-\LtwoprojFEM) \hspace{-2pt}\left( (1-|u|^2) u \right) \hspace{-1pt}\|_{L^2} + \| (1-\LtwoprojFEM)f_{\ast}\|_{L^2} \, \right)  \| u - \hat{u}_H^\LOD  \|_{\HonekappaSpace}.
\end{eqnarray*}
Exploiting the $H^2$-regularity of $u$ and the corresponding $\kappa$-dependent stability bounds yields $(1-|u|^2) u \in H^2(\Omega)$ with $| (1-|u|^2) u |_{H^2(\Omega)} \lesssim \kappa^2$. Consequently, we have
\begin{eqnarray*}
\| u - \hat{u}_H^\LOD  \|_{\HonekappaSpace} 
&\lesssim& \,\kappa H \left( (\kappa H)^2 + \| (1-\LtwoprojFEM)f_{\ast}\|_{L^2} \right) .
\end{eqnarray*}
This yields the first estimate \eqref{bestapprox-LOD-est-1}.

(b) \& (c)
For the refined estimates
we use $ \| (1-\LtwoprojFEM)f_{\ast}\|_{L^2} \lesssim  H \, | f_{\ast} |_{H^1}$ and $ \| (1-\LtwoprojFEM)f_{\ast}\|_{L^2} \lesssim H^2 | f_{\ast} |_{H^2}$ respectively, where it remains to bound $f_{\ast}=\tfrac{\ci}{\kappa}  \nabla u \cdot (\bfAapp-\bfA) + u(\bfAapp + \bfA) \cdot (\bfAapp-\bfA)$ in the $H^1$- and $H^2$-semi-norms.
For brevity, we use the notation 
$$
|D^{(m)}v|^2 := \underset{|\boldsymbol{\alpha}|=m}{\sum_{\boldsymbol{\alpha} \in \mathbb{N}_0^d }} |\partial^{\boldsymbol{\alpha}} v|^2
$$
to denote the pointwise norm of all $m$'th partial derivatives of some given function $v \in H^m(\Omega)$. 
With this, we obtain for $\ell=1,2$ that
\begin{eqnarray*}
 | \tfrac{\ci}{\kappa}  \nabla u  \cdot (\bfAapp-\bfA) |_{H^\ell}
\,\,\,\lesssim\,\,\, \tfrac{1}{\kappa} \sum_{i=0}^\ell \| |D^{(1+i)} u| \, |D^{(\ell-i)} (\bfAapp-\bfA)|\, \|_{L^2}
\end{eqnarray*}
and
\begin{eqnarray*}
 | u(\bfAapp + \bfA) \cdot (\bfAapp-\bfA) |_{H^\ell}
\,\,\,\lesssim\,\,\, \sum_{i=0}^\ell \sum_{j=0}^i \| |D^{(\ell-i)} u| \, |D^{(i-j)} (\bfAapp+\bfA)| \, |D^{(j)} (\bfAapp-\bfA)|\, \|_{L^2}.
\end{eqnarray*}
For $\ell=1$ we use $\| \bfAapp \|_{L^\infty} + \| \bfAapp \|_{W^{1,3}} \lesssim 1$ to get
\begin{eqnarray*}
\| \nabla f_{\ast}\|_{L^2} 
&\lesssim& 
\kappainv \| u  \|_{W^{2,4}}  \| (\bfAapp-\bfA) \|_{L^4} 
+
\kappainv \| \nabla u \|_{L^\infty}  \| \nabla   (\bfAapp-\bfA) \|_{L^2} 
\\
&\enspace&\quad
+
\| \nabla u   \|_{L^4} 
 \| \bfAapp + \bfA  \|_{L^4} 
  \| \bfAapp-\bfA   \|_{L^4} 
+
\|  u   \|_{L^\infty} 
 \| \nabla (\bfAapp + \bfA)  \|_{L^3} 
  \| \bfAapp-\bfA  \|_{L^6} 
\\
&\enspace&\quad
  +
\|  u   \|_{L^\infty} 
 \| (\bfAapp + \bfA)  \|_{L^\infty} 
  \|\nabla (\bfAapp-\bfA)   \|_{L^2} 
\\
&\lesssim& 
\kappa   \|\bfAapp-\bfA  \|_{H^1} 
\end{eqnarray*}
and thus $\| u - \hat{u}_H^\LOD  \|_{\HonekappaSpace}  \,\,\,\lesssim \,\,\,(\kappa H)^2\left( (\kappa H) +  \|\bfAapp-\bfA  \|_{H^1}  \right)$.

For $\ell=2$ we use the estimates $\| \bfA \|_{H^2} \lesssim 1$ and $\| \bfA \|_{H^3} \lesssim \kappa$ to see that
\begin{eqnarray*}
\lefteqn{ | \tfrac{\ci}{\kappa}  \nabla u  \cdot (\bfAapp-\bfA) |_{H^2}
\,\,\,\lesssim\,\,\, \tfrac{1}{\kappa} \sum_{i=0}^2 \| |D^{(1+i)} u| \, |D^{(2-i)} (\bfAapp-\bfA)|\, \|_{L^2} }\\
&\lesssim&  \tfrac{1}{\kappa} \left( \|  u\|_{W^{1,4}}  \| \bfAapp-\bfA \|_{W^{2,4}} +  \|  u\|_{W^{2,4}}  \| \bfAapp-\bfA \|_{W^{1,4}} +  \|  u\|_{H^{3}}  \| \bfAapp-\bfA \|_{L^{\infty}}   \right) \\
&\lesssim&  \| \bfAapp-\bfA \|_{W^{2,4}} +  \kappa \| \bfAapp-\bfA \|_{W^{1,4}} + \kappa^2 \| \bfAapp-\bfA \|_{L^{\infty}}  \,\,\,\lesssim\,\,\, \kappa^2.
\end{eqnarray*}
In a similar fashion, we also have
\begin{eqnarray*}
 | u(\bfAapp + \bfA) \cdot (\bfAapp-\bfA) |_{H^2}
\,\,\,\lesssim\,\,\, \sum_{i=0}^2\sum_{j=0}^i \| |D^{(2-i)} u| \, |D^{(i-j)} (\bfAapp+\bfA)| \, |D^{(j)} (\bfAapp-\bfA)|\, \|_{L^2}  \,\,\,\lesssim\,\,\, \kappa^2.
\end{eqnarray*}
We conclude $|f_{\ast} |_{H^2}\lesssim \kappa^2$. This proves \eqref{bestapprox-LOD-est-2}.
\end{proof}
With the previous lemma, we can directly quantify the error in energy.
\begin{conclusion}
\label{conclusion:energy-error}
Assume that $\bfAapp$ admit the same regularity and $\kappa$-dependent stability bounds as $\bfA$ and that $\kappa H \lesssim 1$. 
Let $(\sol,\MagPot) \in  H^1(\Omega) \times \HonenSpace(\Omega) $ be an arbitrary minimizer of \eqref{minimization-problem} and $(\sol_{H}^{\LOD},\mathbf{A}_{h,k}^{\mbox{\tiny FEM}})  \in \VShLOD \times \VSAhzero{k}$ a discrete minimizer fulfilling \eqref{eq:disc_approx_LOD} for $k=1,2$. Then, it holds
\begin{eqnarray*}
 \energystab(\sol_{H}^{\LOD},\mathbf{A}_{h,k}^{\mbox{\tiny FEM}}) - \energystab(\sol,\MagPot) 
		&\lesssim& (\kappa \deltaH)^6  + \kappa^{2k-2} \deltah^{2k} 
	\end{eqnarray*}
with constants independent of $\kappa$ and $(\deltaH,\deltah)$.
\end{conclusion}

\begin{proof}
We employ Theorem \ref{thm:energy_bound} and use Lemma~\ref{lemma:bestapprox-LOD} as well as \eqref{eq:approx_FEM_Lag} to bound the first term. Thus, it remains to bound the remaining terms in $L^p$.
Since the $L^2$-projection $\LtwoprojFEM$ is $L^p$-stable on quasi-uniform meshes for any $p\in [1,\infty]$ (cf. \cite{DouglasDupontWahlbin74,DST21}), i.e.
\begin{eqnarray*}
\|  \LtwoprojFEM \varphi \|_{L^p} \,\lesssim\, \|  \varphi \|_{L^p} \qquad \mbox{for all } \varphi \in L^p(\Omega),
\end{eqnarray*}
we also have the best-approximation property in $L^p$. Together with the standard approximation results for finite elements (cf. \cite[Sec. 4.4]{BrennerScott}), this yields
\begin{eqnarray}
\label{proj-error-Lp}
\| \varphi - \LtwoprojFEM \varphi \|_{L^p} \, \lesssim (\kappa H)^{\ell} \, \| \varphi \|_{H^{\ell}_{\kappa}} \qquad 
\mbox{for all } \varphi \in L^p(\Omega) \cap H^{\ell}(\Omega),
\end{eqnarray}
where $\ell=1,2$. For the trial function $(1 - \mathcal{C})\LtwoprojFEM u \in \VShLOD$ this implies for any $p\in [2,6]$
\begin{eqnarray*}
\lefteqn{ \| u - (1 - \mathcal{C})\LtwoprojFEM u  \|_{L^p}
\,\,\,\le\,\,\, \| u - \LtwoprojFEM u \|_{L^p} + \|  (1-\LtwoprojFEM) \mathcal{C}\LtwoprojFEM u  \|_{L^p} }\\
 &\overset{\eqref{proj-error-Lp}}{\lesssim}&  (\kappa H)^2 \| u\|_{H^2_{\kappa}} +   \kappa H \| \mathcal{C}\LtwoprojFEM u  \|_{H^1_{\kappa}} \\
 &\le&  (\kappa H)^2 \| u\|_{H^2_{\kappa}} +   \kappa H \| u - (1 - \mathcal{C})\LtwoprojFEM u  \|_{H^1_{\kappa}} 
 +   \kappa H \| (1-\LtwoprojFEM) u   \|_{H^1_{\kappa}} \\
 &\overset{\eqref{bestapprox-LOD-est-2}}{\lesssim}&  (\kappa H)^2 +   (\kappa H)^4 \,\,\, \lesssim (\kappa H)^2,
 \end{eqnarray*}
where the penultimate step also exploited the $H^1_{\kappa}$-stability of $\LtwoprojFEM$. Using these estimates for $p=2$, $p=4$, and $p=6$ gives the claim.
\end{proof}

We now turn to the error estimates for the discrete minimizers. The estimates in Lemma \ref{lemma:bestapprox-LOD} are sufficient to control the first term in Conclusion \ref{concl:Honecombi_bound}. However, for an $H^1$-error estimate we also need to study the approximation properties of the LOD space with respect to the solution $(z,\mathbf{Z}) \in \Honeperp \times \HonenSpace(\Omega)$ to the auxiliary problem \eqref{def-z-proj-error}. This will be the main task in the proof to establish the following main result.

\begin{proposition}
\label{proposition-H1-est}
Let Assumption \ref{ass:local_uniq} hold and let  $\bfAapp$ admit the same regularity and $\kappa$-dependent stability bounds as $\bfA$.
If $(\sol,\MagPot) \in  H^1(\Omega) \times \HonenSpace(\Omega) $ is a minimizer of \eqref{minimization-problem} and if the mesh size $(H,h)$ is sufficiently small, in particular such that
\begin{eqnarray}
\label{resolution-condition-1}
((\kappa H)^2 + h) \,\,
\kappa^{\varepsilon} \, \Csol \,\,\, \lesssim \,\,\, 1
\end{eqnarray}
then it holds
\begin{eqnarray*}
\Honecombi{ (u , \bfA ) - \textup{\bf R}_{H,h} (u , \bfA) } 
&\lesssim&
\inf_{(\varphi_H^{\LOD}, \mathbf{B}_h) \in \VShLOD \times \VSAhzero{k}}
\Honecombi{ (u -\varphi_H^{\LOD}, \bfA -\mathbf{B}_h) }
\end{eqnarray*}
for the $E^{\prime\prime}(u,\bfA)$-Ritz-projection $\textup{\bf R}_{H,h}$ in Definition \ref{definition:Eprimeprime-Ritz}.

Conversely, if $(\sol_{H}^{\LOD},\mathbf{A}_{h,k}^{\mbox{\tiny FEM}})  \in \VShLOD \times \VSAhzero{k}$ is a discrete minimizer fulfilling \eqref{eq:disc_approx_LOD} for $k=1,2$, then, for all sufficiently small mesh sizes $(\deltaH,\deltah)$ consistent with \eqref{resolution-condition-1}, there exists a minimizer $(\sol,\MagPot) \in  H^1(\Omega) \times \HonenSpace(\Omega)$ of \eqref{minimization-problem} such that $\sol_{H}^{\LOD} \in \Honeperp$ fulfills 
\begin{eqnarray*}
 \| (\sol - \sol_{H}^{\LOD} , \MagPot - \mathbf{A}_{h,k}^{\mbox{\tiny FEM}}) \|_{\HonekappaSpace \times \mathbf{H}^1} 
&\lesssim& 
\kappa^3 \deltaH^3  + \kappa^{k-1} \deltah^k.
\end{eqnarray*}
\end{proposition}

\begin{proof}
By the definition of $(z,\mathbf{Z}) \in \Honeperp \times \HonenSpace(\Omega)$, Lemma \ref{lem:inf_sup_cond_exact_bounds} directly yields
\begin{eqnarray}
\label{H1est-z}
		\Honecombi{(z, \mathbf{Z} )}\,\,\lesssim\,\, \Csol \, \norm{L^2 \times {\mathbf{L}}^2}{(u - \textup{R}_{H}u  , \bfA - \textup{R}_{h} \bfA ) }.
\end{eqnarray}
Furthermore, we recall from Proposition \ref{prop:H2_reg_E_primeprime} that
\begin{eqnarray}
\label{H2est-z}
\kappa^{\varepsilon} \norm{\HtwokappaSpace}{z}
+ \norm{H^2}{\mathbf{Z}} &\lesssim&
		\kappa^{\varepsilon} \, \Csol \,
		\norm{L^2 \times {\mathbf{L}}^2}{(u - \textup{R}_{H}u  , \bfA - \textup{R}_{h} \bfA ) }.
\end{eqnarray}
Next, we test in the equation for $(z,\mathbf{Z})$, i.e. in \eqref{def-z-proj-error}, with a test function $(\psi, \mathbf{0} ) \in \Honeperp \times \HonenSpace(\Omega)$. Using furthermore the representation of $\energystab''(\sol,\MagPot)$ in Lemma \ref{lem:energy2prim_perturbation_v2} we obtain that  $(z,\mathbf{Z})$ fulfills
\begin{eqnarray*}
\abilmag{z}{\psi} \,\,+\,\,
 (  \bigl( |\sol|^2\hspace{-2pt}-\hspace{-2pt} 1 \bigr)  z +  2 \Real( \sol z^*) \sol  , \psi )_{L^2}
\,\,+\,\, 
		g \bigl( (z,\mathbf{Z}) , (\psi,\mathbf{0}) \bigr)
&=& ( u \hspace{-2pt}-\hspace{-2pt} \textup{R}_{H} u ,  \psi  )_{L^2}
\end{eqnarray*}
where $g$ is given by
\begin{eqnarray*}
 g \bigl( (z,\mathbf{Z}) , (\psi,\mathbf{0}) \bigr)
&=&
\Real \int_\Omega 
2 \, \sol (\MagPot \cdot \mathbf{Z} ) \, \psi^* 
+
\tfrac{\ci}{\kappa}   \bigl(  
\sol^* \nabla \psi
+
 \psi^* \nabla \sol \bigr) 
\cdot \mathbf{Z}  
\dint{x} \\
&\overset{\mathbf{Z} \cdot \mathbf{n}\vert_{\partial \Omega} = 0}{=}& \Real \int_\Omega 
2 \, \sol (\MagPot \cdot \mathbf{Z} ) \, \psi^* 
+
2 \tfrac{\ci}{\kappa} 
 ( \nabla \sol 
\cdot \mathbf{Z}  ) \psi^*
+ \tfrac{\ci}{\kappa} \sol \, \div \mathbf{Z} \, \psi^*
\dint{x}.
\end{eqnarray*}
By defining
\begin{eqnarray*}
g_{\ast} &:=& - \bigl( |\sol|^2\hspace{-2pt}-\hspace{-2pt} 1 \bigr)  z -  2 \Real( \sol z^*) \sol 
- 2 \, \sol (\MagPot \cdot \mathbf{Z} ) \, 
- 2 \tfrac{\ci}{\kappa} ( \nabla \sol  \cdot \mathbf{Z}  ) 
- \tfrac{\ci}{\kappa} \sol \, \div \mathbf{Z} 
\,\, + \,\,  ( u \hspace{-2pt}-\hspace{-2pt} \textup{R}_{H} u),
\end{eqnarray*}
we see that $z \in \Honeperp$ fulfills $\abilmag{z}{\psi} = (g_{\ast},\psi)_{L^2}$ for all $\psi \in \Honeperp$. By the stability estimates for $u$ and $\bfA$, the source term fulfills 
\begin{eqnarray*}
\| g_{\ast} \|_{L^2} \,\,\,\lesssim\,\,\, \Honecombi{(z, \mathbf{Z} )} + \|  u \hspace{-2pt}-\hspace{-2pt} \textup{R}_{H} u \|_{L^2} \,\,\,\lesssim\,\,\, \Csol \, \norm{L^2 \times {\mathbf{L}}^2}{(u - \textup{R}_{H}u  , \bfA - \textup{R}_{h} \bfA ) }.
\end{eqnarray*}
Furthermore, we have with the stability and regularity bounds for $u$ and $\bfA$ that
\begin{eqnarray*}
\lefteqn{ \| \nabla g_{\ast} \|_{L^2} }\\
&\lesssim&  \| \nabla (u - \textup{R}_{H}u) \|_{L^2} +  \| \nabla u \|_{L^{\infty}} \| u \|_{L^{\infty}} \|z \|_{L^2} + (1 + \| u\|_{L^{\infty}}^2) \| \nabla z \|_{L^2} \\
&\enspace& + \| \nabla u \|_{L^{4}} \| \bfA \|_{L^{\infty}} \| \mathbf{Z} \|_{L^4}
+ \| u \|_{L^{\infty}} \| \nabla \bfA \|_{L^{4}} \| \mathbf{Z} \|_{L^4}
+  \| u \|_{L^{\infty}} \| \bfA \|_{L^{\infty}} \| \nabla \mathbf{Z} \|_{L^2} \\
&\enspace& \tfrac{1}{\kappa} \| \sol \|_{W^{2,4}}  \| \mathbf{Z} \|_{L^{4}} +  \tfrac{1}{\kappa} \| \nabla \sol \|_{L^{\infty}}  \| \nabla \mathbf{Z} \|_{L^{2}}
+ \tfrac{1}{\kappa} \| \sol \|_{L^{\infty}} \,  \| \mathbf{Z} \|_{\mathbf{H}^2} +  \tfrac{1}{\kappa} \| \nabla \sol \|_{L^{\infty}} \,  \| \div \mathbf{Z} \|_{L^2} \\
&\lesssim&  \| \nabla (u - \textup{R}_{H}u) \|_{L^2}  + 
\kappa^{1+\varepsilon} \|z \|_{L^2} + \kappa \| z \|_{H^1_{\kappa}} + \kappa \| \mathbf{Z} \|_{\mathbf{H}^1} 
+ \underbrace{ 
\kappa^{\varepsilon}}_{\le \kappa} \|  \mathbf{Z} \|_{\mathbf{H}^{1}}
+ \tfrac{1}{\kappa}  \| \mathbf{Z} \|_{\mathbf{H}^2}.
\end{eqnarray*}
Hence with \eqref{H1est-z} and \eqref{H2est-z}
\begin{eqnarray}
\label{gradgstar-est}
\tfrac{1}{\kappa} \| \nabla g_{\ast} \|_{L^2} %
&\lesssim& \tfrac{1}{\kappa}  \| \nabla (u - \textup{R}_{H}u) \|_{L^2} +  
\kappa^{\varepsilon} \, \|z \|_{L^2} + \| z \|_{H^1_{\kappa}} + \| \mathbf{Z} \|_{\mathbf{H}^1} 
+ \tfrac{1}{\kappa^2}  \| \mathbf{Z} \|_{\mathbf{H}^2} \\
\nonumber&\lesssim& \| u - \textup{R}_{H}u \|_{H^1_{\kappa}} + 
\kappa^{\varepsilon} \, \Csol \, \norm{L^2 \times {\mathbf{L}}^2}{(u - \textup{R}_{H}u  , \bfA - \textup{R}_{h} \bfA ) }.
\end{eqnarray}
Before we can proceed, we have to take care of the fact that the test functions for defining $z$ only involve $\Honeperp$. For that reason, we write an arbitrary $\psi \in H^1(\Omega)$ uniquely as $\psi = \psi^{\perp} + \tfrac{(\ci u, \psi)_{L^2}}{(\ci u,\ci u)_{L^2}} \ci u$ for $\psi^{\perp} \in \Honeperp$ and hence
\begin{eqnarray}
\label{new-def-z} \hspace{40pt}
\abilmag{z}{\psi} = (g_{\ast},\psi^{\perp})_{L^2} +  \tfrac{\abilmag{z}{\ci u}}{(\ci u,\ci u)_{L^2}} (\ci u,\psi)_{L^2} 
= (g_{\ast},\psi)_{L^2}+ \underbrace{\left(
 \tfrac{\abilmag{z}{\ci u} - (g_{\ast}, \ci u )_{L^2}}{\| u\|^2_{L^2}}  \right)}_{=:\zeta} (\ci u,\psi)_{L^2}.
\end{eqnarray}
With $ \| u \|_{H^1_{\kappa}} \lesssim \| u\|_{L^2}$ we have
\begin{eqnarray*}
|\zeta| = \left|  \tfrac{\abilmag{z}{\ci u} - (g_{\ast}, \ci u )_{L^2}}{\| u\|^2_{L^2}} \right| 
\lesssim  \tfrac{\| z \|_{H^1_{\kappa}} + \| g_{\ast} \|_{L^2} }{ \| u\|_{L^2} }
\lesssim \| u\|_{L^2}^{-1} \, \Csol \, \norm{L^2 \times {\mathbf{L}}^2}{(u - \textup{R}_{H}u  , \bfA - \textup{R}_{h} \bfA ) }
\end{eqnarray*}
and hence again by $ \| u \|_{H^1_{\kappa}} \lesssim \| u\|_{L^2}$
\begin{eqnarray}
\label{zeta-nablaiu-est}
\tfrac{1}{\kappa}\| \zeta \, \ci \nabla u\|  
&\lesssim& \Csol \, \norm{L^2 \times {\mathbf{L}}^2}{(u - \textup{R}_{H}u  , \bfA - \textup{R}_{h} \bfA ) } .
\end{eqnarray}
With this, we now proceed similar as in the proof of Lemma \ref{lemma:bestapprox-LOD} and consider the function 
$$
z_H^\LOD := (1 - \mathcal{C})\LtwoprojFEM(z) \in \VShLOD
$$
which fulfills again $z_H^\LOD-z \in W$. Hence, we obtain
\begin{eqnarray*}
\lefteqn{ \tfrac{1}{2} \| z - z_H^\LOD  \|_{\HonekappaSpace}^2
\,\,\,\le\,\,\,  a_{\bfAapp}( z - z_H^\LOD  , z - z_H^\LOD ) 
\,\,\, \overset{\eqref{eqn:def-ideal-correctors}}{=} \,\,\, a_{\bfAapp}( z  ,  z - z_H^\LOD ) } \\
&=& a_{\bfA} ( z  , z - z_H^\LOD ) \, + \, (a_{\bfAapp}-a_{\bfA}) ( z  , z - z_H^\LOD ) \\
&\overset{\eqref{new-def-z}}{=}& ( g_{\ast} + \zeta \, \ci u,  z - z_H^\LOD )_{L^2} \,+\, 
(  \tfrac{\ci}{\kappa}  \nabla z,   (\bfAapp-\bfA) \,  (z - z_H^\LOD)  )_{L^2}\\
&\enspace&\, + \, ( (\bfAapp-\bfA) z  ,  \tfrac{\ci}{\kappa}  \nabla (z - z_H^\LOD) )_{L^2} +  ( (|\bfAapp|^2-|\bfA|^2) z  , z - z_H^\LOD )_{L^2} \\
&=& (g_{\ast} + \zeta \, \ci u + \underbrace{ ( |\bfAapp|^2-|\bfA|^2) z  +  
 \tfrac{2\ci}{\kappa}  \nabla z \cdot  (\bfAapp-\bfA)}_{ =: f_{\ast}}  \,,\,  z - z_H^\LOD )_{L^2} \\
 &=& ( \,(1 - \LtwoprojFEM)(g_{\ast} + \zeta \, \ci u + f_{\ast})\, ,\, (1 - \LtwoprojFEM) (z - z_H^\LOD) \,)_{L^2} \\
 &\overset{\eqref{L2-proj-est}}{\lesssim}&  (\kappa\,H)^2\,  \tfrac{1}{\kappa}\| \nabla g_{\ast}  + \zeta \, \ci \nabla u + \nabla f_{\ast} \|_{L^2} \, \| z - z_H^\LOD  \|_{\HonekappaSpace} 
\end{eqnarray*}
Hence, dividing by $\| z - z_H^\LOD  \|_{\HonekappaSpace} $ and using \eqref{gradgstar-est} and \eqref{zeta-nablaiu-est} we have
\begin{eqnarray*}
\lefteqn{ \| z - z_H^\LOD  \|_{\HonekappaSpace} } \\
&\lesssim&  (\kappa\,H)^2 \left( \| u - \textup{R}_{H}u \|_{H^1_{\kappa}}  +
\kappa^{\varepsilon} \, \Csol \, \norm{L^2 \times {\mathbf{L}}^2}{(u - \textup{R}_{H}u  , \bfA - \textup{R}_{h} \bfA ) }  + \tfrac{1}{\kappa}\| \nabla f_{\ast} \|_{L^2} \right). 
\end{eqnarray*}
It remains to bound $\| \nabla f_{\ast} \|_{L^2}$ where we obtain with the bounds for $\bfA$ and $\bfAapp$ that
\begin{eqnarray*}
\lefteqn{ \| \nabla f_{\ast} \|_{L^2} } \\
&\lesssim& \| \bfAapp + \bfA \|_{L^{\infty}} \| \bfAapp - \bfA \|_{L^{\infty}} \| \nabla z \|_{L^2}
+  \| \bfAapp + \bfA \|_{W^{1,4}} \| \bfAapp - \bfA \|_{L^{\infty}} \| z \|_{L^4} \\
&\enspace&\,\,+  \| \bfAapp + \bfA \|_{L^{\infty}} \| \bfAapp - \bfA \|_{W^{1,4}} \| z \|_{L^4} \\
&\enspace&\,\,+ \tfrac{1}{\kappa} | z |_{H^2} \| \bfAapp-\bfA \|_{L^{\infty}} 
+ \tfrac{1}{\kappa} \| \nabla z \|_{L^4} \| \bfAapp-\bfA \|_{W^{1,4}} \\
&\overset{\eqref{H1est-z},\eqref{H2est-z}}{\lesssim}& 
\kappa \, \Csol \, \norm{L^2 \times {\mathbf{L}}^2}{(u - \textup{R}_{H}u  , \bfA - \textup{R}_{h} \bfA ) }.
\end{eqnarray*}
In conclusion we have
\begin{eqnarray*}
 \| z - z_H^\LOD  \|_{\HonekappaSpace}  &\lesssim&  (\kappa\,H)^2 \left( \| u - \textup{R}_{H}u \|_{H^1_{\kappa}}  + 
 \kappa^{\varepsilon} \, \Csol \, \norm{L^2 \times {\mathbf{L}}^2}{(u - \textup{R}_{H}u  , \bfA - \textup{R}_{h} \bfA ) } \right). 
\end{eqnarray*}
On the other hand, we obtain with the approximation properties of $\VSAhzero{k}$ straightforwardly 
\begin{eqnarray*}
\inf_{\mathbf{Z}_h \in \VSAhzero{k}} \| \mathbf{Z} -  \mathbf{Z}_h \|_{\mathbf{H}^1} 
&\lesssim& \, h\,  \| \mathbf{Z} \|_{\mathbf{H}^2} \,\,\, \overset{\eqref{H2est-z}}{\lesssim} \,\,\,
\,h\, 
 \kappa^{\varepsilon} \, \Csol \,
		\norm{L^2 \times {\mathbf{L}}^2}{(u - \textup{R}_{H}u  , \bfA - \textup{R}_{h} \bfA ) },
\end{eqnarray*}
which leads to the estimate
\begin{eqnarray} \label{eq:est_zZ_best}
\lefteqn{ \inf_{(z_H , \mathbf{Z}_h) \in \VShLOD \times \VSAhzero{k}} \Honecombi{ (z - z_H , \mathbf{Z} -  \mathbf{Z}_h ) }  }\\
&\lesssim& 
(\kappa\,H)^2  \| u - \textup{R}_{H}u \|_{H^1_{\kappa}}  \,+\,
( (\kappa\,H)^2 + h) 
\, \kappa^{\varepsilon} \, \Csol \, \norm{L^2 \times {\mathbf{L}}^2}{ (u , \mathbf{A}) \hspace{-2pt}-\hspace{-2pt} \textup{\bf R}_{H,h} (u , \mathbf{A})  } . \notag
\end{eqnarray}
Note that for this estimate only the condition $\kappa H\lesssim 1$ is required. Recalling that Lemma \ref{lemma-properties-LOD-space} verifies the assumptions of 
Lemma~\ref{lemma:ritz-projections:abstract-new} and \ref{lemma-ritz-proj-est}, 
we conclude that 
\begin{eqnarray*}
\lefteqn{ \Honecombi{ (u , \bfA ) - \textup{\bf R}_{H,h} (u , \bfA) } 
\,\,\,\lesssim\,\,\,  
\inf_{(\varphi_H^{\LOD}, \mathbf{B}_h) \in \VShLOD \times \VSAhzero{k}}
\Honecombi{ (u -\varphi_H^{\LOD}, \bfA -\mathbf{B}_h) } }\\
&\enspace&\quad + 
((\kappa H)^2 + h) \, 
\kappa^{\varepsilon} \, \Csol \,  \Honecombi{ (u , \bfA ) - \textup{\bf R}_{H,h} (u , \bfA) }.\hspace{100pt}
\end{eqnarray*}
Hence, for $((\kappa H)^2 + h) \, 
\kappa^{\varepsilon} \, \Csol \lesssim 1$ (sufficiently small), the second term can be absorbed into the left-hand side, which gives the first assertion.
In particular, under the above resolution condition we have 
\begin{eqnarray*}
\lefteqn{ \inf_{(z_H^{\LOD}, \mathbf{Z}_h) \in \VShLOD \times \VSAhzero{k} }
\Honecombi{ (z -z_H, \mathbf{Z}-\mathbf{Z}_h) }
\,\,\,\lesssim\,\,\,  \Honecombi{ (u , \bfA ) - \textup{\bf R}_{H,h} (u , \bfA) } } \\
&\lesssim&
\inf_{(\varphi_H^{\LOD}, \mathbf{B}_h) \in \VShLOD \times \VSAhzero{k}}
\Honecombi{ (u -\varphi_H^{\LOD}, \bfA -\mathbf{B}_h) }. \hspace{120pt}
\end{eqnarray*}
The proof is finished by combining the above estimate with Conclusion \ref{concl:Honecombi_bound}, the estimates from Lemma \ref{lemma:bestapprox-LOD} and the standard approximation properties \eqref{eq:approx_FEM_Lag}of $\VSAhzero{k}$, i.e., 
\begin{eqnarray*}
\inf_{ \mathbf{B}_h \in \VSAhzero{1}}
\| \bfA -\mathbf{B}_h \|_{\mathbf{H}^1} &\lesssim& h \| \bfA \|_{\mathbf{H}^2} \,\,\, \lesssim \,\,\, h \quad \mbox{and} \\
\inf_{ \mathbf{B}_h \in \VSAhzero{2}}
\| \bfA -\mathbf{B}_h \|_{\mathbf{H}^1} &\lesssim& h^2 \| \bfA \|_{\mathbf{H}^3} \,\,\, \lesssim \,\,\, \kappa h^2.
\end{eqnarray*} 
\end{proof}

It remains to turn to the $L^2$-error estimate.
For that, we first bound the Ritz-projection error in the $L^2$-norm by the corresponding error in the $H^1$-norm.
\begin{lemma}
\label{lemma:L2-est-Ritz-proj}
Let Assumption \ref{ass:local_uniq} hold and let  $\bfAapp$ admit the same regularity and $\kappa$-dependent stability bounds as $\bfA$. 
If $\kappa H\lesssim 1$ and if $(\sol,\MagPot) \in  H^1(\Omega) \times \HonenSpace(\Omega) $ is a minimizer of \eqref{minimization-problem} then it holds
\begin{eqnarray*}
\lefteqn{ \|  (u , \mathbf{A}) \hspace{-2pt}-\hspace{-2pt} \textup{\bf R}_{H,h} (u , \mathbf{A})  \|_{L^2 \times \mathbf{L}^2} }\\
&\lesssim& \left( \kappa\,H  \,+\, ( (\kappa\,H)^2 + h) 
\, \kappa^{\varepsilon} \, \Csol \right) \Honecombi{ (u , \bfA ) - \textup{\bf R}_{H,h} (u , \bfA) }.
\end{eqnarray*}
\end{lemma}

\begin{proof}
We consider again the auxiliary problem \eqref{def-z-proj-error}. The corresponding solution $(z,\mathbf{Z}) \in \Honeperp \times \HonenSpace(\Omega)$ allows to express the $L^2$-error as 
\begin{eqnarray*}
\| (u , \mathbf{A}) \hspace{-2pt}-\hspace{-2pt} \textup{\bf R}_{H,h} (u , \mathbf{A}) \|_{L^2 \times \mathbf{L}^2}^2
&=& 
\langle E'' (u,\bfA) ( z,\mathbf{Z} ) , (u , \mathbf{A}) \hspace{-2pt}-\hspace{-2pt} \textup{\bf R}_{H,h} (u , \mathbf{A}) \rangle  \\
&=& 
\langle E'' (u,\bfA) ( z - z_H,\mathbf{Z} - \mathbf{Z}_H ) , (u , \mathbf{A}) \hspace{-2pt}-\hspace{-2pt} \textup{\bf R}_{H,h} (u , \mathbf{A}) \rangle,
\end{eqnarray*}
for arbitrary $(z_H, \mathbf{Z}_H ) \in (\VShLOD \cap \Honeperp) \times \VSAhzero{k}$. With the continuity of $ E'' (u,\bfA)$ and Lemma \ref{lemma:ritz-projections:abstract-new} we therefore have
\begin{eqnarray*}
\lefteqn{ \| (u , \mathbf{A}) \hspace{-2pt}-\hspace{-2pt} \textup{\bf R}_{H,h} (u , \mathbf{A}) \|_{L^2 \times \mathbf{L}^2}^2 }\\
&\lesssim& \Honecombi{ (u , \bfA ) - \textup{\bf R}_{H,h} (u , \bfA) } \inf_{(z_H , \mathbf{Z}_h) \in \VShLOD \times \VSAhzero{k}} \Honecombi{ (z - z_H , \mathbf{Z} -  \mathbf{Z}_h ) } .
\end{eqnarray*}
The latter term was already estimated in \eqref{eq:est_zZ_best},
such that combining this with the previous estimate
 and applying Young's inequality afterwards yields for any $\delta>0$
\begin{eqnarray*}
\lefteqn{ \|  (u , \mathbf{A}) \hspace{-2pt}-\hspace{-2pt} \textup{\bf R}_{H,h} (u , \mathbf{A})  \|_{L^2 \times \mathbf{L}^2}^2 }\\
&\lesssim& (\kappa\,H)^2  \Honecombi{ (u , \bfA ) - \textup{\bf R}_{H,h} (u , \bfA) }^2  \\
&\enspace&\enspace+  ( (\kappa\,H)^2 + h) 
\, \kappa^{\varepsilon} \, \Csol \Honecombi{ (u , \bfA ) - \textup{\bf R}_{H,h} (u , \bfA) } \, \norm{L^2 \times {\mathbf{L}}^2}{(u , \mathbf{A}) \hspace{-2pt}-\hspace{-2pt} \textup{\bf R}_{H,h} (u , \mathbf{A})   } \\
&\lesssim& (\kappa\,H)^2  \Honecombi{ (u , \bfA ) - \textup{\bf R}_{H,h} (u , \bfA) }^2  \\
&\enspace&\enspace+  \tfrac{1}{\delta} \left( ( (\kappa\,H)^2 + h) 
\, \kappa^{\varepsilon} \, \Csol \Honecombi{ (u , \bfA ) - \textup{\bf R}_{H,h} (u , \bfA) } \right)^2 + \delta \,\norm{L^2 \times {\mathbf{L}}^2}{(u , \mathbf{A}) \hspace{-2pt}-\hspace{-2pt} \textup{\bf R}_{H,h} (u , \mathbf{A}) }^2.
\end{eqnarray*}
For sufficiently small $\delta$ we can absorbe the $ \norm{L^2 \times {\mathbf{L}}^2}{(u , \mathbf{A}) \hspace{-2pt}-\hspace{-2pt} \textup{\bf R}_{H,h} (u , \mathbf{A}) }$-contribution into the left-hand side to obtain
\begin{eqnarray*}
\lefteqn{ \|  (u , \mathbf{A}) \hspace{-2pt}-\hspace{-2pt} \textup{\bf R}_{H,h} (u , \mathbf{A})  \|_{L^2 \times \mathbf{L}^2} }\\
&\lesssim& \left( \kappa\,H  \,+\, ( (\kappa\,H)^2 + h) 
\, \kappa^{\varepsilon} \, \Csol \right) \Honecombi{ (u , \bfA ) - \textup{\bf R}_{H,h} (u , \bfA) },
\end{eqnarray*}
which gives the claim.
\end{proof}
With this, we are ready to finalize the $L^2$-error estimate

\begin{proposition}[$L^2$-error estimate]
\label{prop:L2-error-estimates}
In the setting of Proposition \ref{proposition-H1-est}, the $L^2$-error between a discrete minimizer $(\sol_{H}^{\LOD},\mathbf{A}_{h,k}^{\mbox{\tiny FEM}})  \in \VShLOD \times \VSAhzero{k}$ and its corresponding exact minimizer $(\sol,\MagPot) \in  H^1(\Omega) \times \HonenSpace(\Omega)$ (in the same phase, i.e. $\sol_{H}^{\LOD} \in \Honeperp$) can be bounded by 
\begin{eqnarray*}
 \lefteqn{ \| (\sol - \sol_{H}^{\LOD} , \MagPot - \mathbf{A}_{h,k}^{\mbox{\tiny FEM}}) \|_{L^2 \times \mathbf{L}^2} \,\,\, \lesssim \,\,\, (\kappa\,H)^4 + \kappa^k H \deltah^k  }\\
&\enspace&
\quad+ 
\, \kappa^{\varepsilon} \, \Csol\, \left(\kappa^3 \deltaH^3  + \kappa^{k-1} \deltah^k\right) ((\kappa\,H)^2 + h) + \Csol \left( \kappa^4 H^3  + \kappa \, h^k \right)^2 
\end{eqnarray*}
where $k=1,2$.
\end{proposition}

\begin{proof}
We split the error as
\begin{eqnarray*}
 \lefteqn{ \| (\sol - \sol_{H}^{\LOD} , \MagPot - \mathbf{A}_{h,k}^{\mbox{\tiny FEM}}) \|_{L^2 \times \mathbf{L}^2} }\\
&\le& 
 \| (\sol - \textup{R}_{H} u  , \MagPot -  \textup{R}_{h} \bfA )\|_{L^2 \times \mathbf{L}^2}
 +  \| (\textup{R}_{H} u - \sol_{H}^{\LOD} , \textup{R}_{h} \bfA - \mathbf{A}_{h,k}^{\mbox{\tiny FEM}}) \|_{L^2 \times \mathbf{L}^2} \\
&\le& 
 \| (\sol - \textup{R}_{H} u  , \MagPot -  \textup{R}_{h} \bfA )\|_{L^2 \times \mathbf{L}^2}
 +  \| (\textup{R}_{H} u - \sol_{H}^{\LOD} , \textup{R}_{h} \bfA - \mathbf{A}_{h,k}^{\mbox{\tiny FEM}}) \|_{H^1_{\kappa} \times \mathbf{H}^1} ,
\end{eqnarray*}
where the first term is controlled by Lemma \ref{lemma:L2-est-Ritz-proj} in combination with Proposition \ref{proposition-H1-est} as
\begin{eqnarray*}
 \lefteqn{  \| (\sol - \textup{R}_{H} u  , \MagPot -  \textup{R}_{h} \bfA )\|_{L^2 \times \mathbf{L}^2}  }\\
&\lesssim& (\kappa\,H)^4 + \kappa^k H \deltah^k
+  
\, \kappa^{\varepsilon} \, \Csol\, \left(\kappa^3 \deltaH^3  + \kappa^{k-1} \deltah^k\right) ((\kappa\,H)^2 + h).
\end{eqnarray*}
For the second term, we use the coercivity of $E^{\prime\prime}(u,\bfA)$ to estimate
\begin{eqnarray*}
 \lefteqn{ \Csolinv \| (\textup{R}_{H} u - \sol_{H}^{\LOD} , \textup{R}_{h} \bfA - \mathbf{A}_{h,k}^{\mbox{\tiny FEM}}) \|_{H^1_{\kappa} \times \mathbf{H}^1}^2 }\\
&\le& \dualp{\energystab''(\sol,\MagPot) (\textup{R}_{H} u - \sol_{H}^{\LOD} , \textup{R}_{h} \bfA - \mathbf{A}_{h,k}^{\mbox{\tiny FEM}}) }{ (\textup{R}_{H} u - \sol_{H}^{\LOD} , \textup{R}_{h} \bfA - \mathbf{A}_{h,k}^{\mbox{\tiny FEM}}) } \\
  &\overset{\eqref{def-eps-2}}{=}& \sigma(\textup{R}_{H} u - \sol_{H}^{\LOD} , \textup{R}_{h} \bfA - \mathbf{A}_{h,k}^{\mbox{\tiny FEM}}) \\
    &\overset{\eqref{est:epsilon_2_bound}}{\lesssim}&
    	\left( 
	\|  \sol - \sol_{H}^{\LOD} \|_{L^6}^2  +  \|  \sol - \sol_{H}^{\LOD} \|_{\HonekappaSpace}^2 + \| \MagPot - \mathbf{A}_{h,k}^{\mbox{\tiny FEM}} \|_{H^1}^2 
	\right) \, \| (\textup{R}_{H} u - \sol_{H}^{\LOD} , \textup{R}_{h} \bfA - \mathbf{A}_{h,k}^{\mbox{\tiny FEM}}) \|_{H^1_{\kappa}\times \mathbf{H}^1}.
\end{eqnarray*}
By combining the previous estimates we obtain
\begin{eqnarray*}
 \lefteqn{ \| (\sol - \sol_{H}^{\LOD} , \MagPot - \mathbf{A}_{h,k}^{\mbox{\tiny FEM}}) \|_{L^2 \times \mathbf{L}^2} }\\
&\lesssim& 
(\kappa\,H)^4 + \kappa^k H \deltah^k
+ 
\, \kappa^{\varepsilon} \, \Csol\, \left(\kappa^3 \deltaH^3  + \kappa^{k-1} \deltah^k\right) ((\kappa\,H)^2 + h) \\
&\enspace&+ \Csol \, \left( 
	\|  \sol - \sol_{H}^{\LOD} \|_{L^6}^2  +  \|  \sol - \sol_{H}^{\LOD} \|_{\HonekappaSpace}^2 + \| \MagPot - \mathbf{A}_{h,k}^{\mbox{\tiny FEM}} \|_{H^1}^2 
	\right).
\end{eqnarray*}
The last term can be further estimated by using $\|  \sol - \sol_{H}^{\LOD} \|_{L^6} \lesssim \kappa \|  \sol - \sol_{H}^{\LOD} \|_{H^1_{\kappa}}$ and the previously established estimate for $ \| (\sol - \sol_{H}^{\LOD} , \MagPot - \mathbf{A}_{h,k}^{\mbox{\tiny FEM}}) \|_{\HonekappaSpace \times \mathbf{H}^1} $ in Proposition \ref{proposition-H1-est} to obtain
\begin{eqnarray*}
 \lefteqn{ \| (\sol - \sol_{H}^{\LOD} , \MagPot - \mathbf{A}_{h,k}^{\mbox{\tiny FEM}}) \|_{L^2 \times \mathbf{L}^2} \,\,\, \lesssim \,\,\, (\kappa\,H)^4 + \kappa^k H \deltah^k  }\\
&\enspace&
\quad+ 
\, \kappa^{\varepsilon} \, \Csol\, \left(\kappa^3 \deltaH^3  + \kappa^{k-1} \deltah^k\right) ((\kappa\,H)^2 + h) + \Csol \left( \kappa^4 H^3  + \kappa \, h^k \right)^2 
\end{eqnarray*}
\end{proof}

\section{Numerical experiments}
\label{sec:num_exp}

In this section we verify our theoretical results from Theorem \ref{thrm:main-results} in numerical experiments and investigate the optimality of the convergence w.r.t. the mesh size $H$ for the order parameter $u$ and the mesh size $h$ for the vector potential $\MagPot$ as well as the scaling of the convergence w.r.t. the GL parameter $\kappa$. The implementation for our experiments is available as a MATLAB Code on \url{https://github.com/cdoeding/fullGLmodelLOD}.

For the experiments we choose a LOD approximation for the order parameter $u$ and quadratic FE ($k = 2$) for the vector potential $\mathbf{A}$. The latter choice has two reasons: First, considering either $k = 1$ or $k = 2$ is sufficient to demonstrate the main results numerically, since the difference in the analysis is based on standard properties of Lagrange finite elements and there is no reason to expect a different behavior in the case $k = 1$,  except for the lower order w.r.t. $h$ and $\kappa$. Second, due to the higher convergence in the case of quadratic FE, it is easier to extract the expected third order convergence in the LOD space by taking sufficiently small $h$ in the experiments. For brevity, we will not compare the LOD approximation in the order parameter with the FE approximation in this section, since this has already been done for a simplified Ginzburg–Landau model in \cite{BDH24}. In \cite{BDH24}, the advantages of the LOD approach compared to classical FE were demonstrated, and no deviations are expected for the full Ginzburg–Landau model.

Another simplification we make is a reduction to two dimensions to keep the complexity and runtimes reasonable in our experiments. We emphasize that the main results and the analysis in this work are not restricted to the three-dimensional case, but can be modified to the problem in two dimensions. To derive the corresponding Ginzburg--Landau model in $2d$, one intuitively considers a $3d$ external magnetic field $\MagF$ which is perpendicular to the $x_1$-$x_2$ plane, i.e., $\MagF = (0,0,\MagF_3)^T$ for some scalar function $\MagF_3$. 
Then the order parameter and the vector potential should not vary in the $x_3$-direction as far as we stay away from the $x_3$-boundary of the superconductor, i.e., $\partial_{x_3} u = 0$ and $\partial_{x_3} \mathbf{A} = \mathbf{0}$. This implies that the third component of the vector potential has to vanish and one derives the Ginzburg--Landau free energy in two dimensions 
\begin{eqnarray*}
	E_{\mathrm{GL},2d}(\sol,\MagPot) 
	&\coloneqq&
	\frac{1}{2} \int_\Omega 
	|\kappainvci \nabla \sol +   \MagPot \sol |^2 
	+ 
	\frac{1}{2} \bigl( 1- |\sol|^2 \bigr)^2
	+
	|\curl_{2d} \MagPot - \MagF_3 |^2
	\dx
\end{eqnarray*}
for the reduced $2d$-vector potential $\mathbf{A}: \Omega \rightarrow \mathbb{R}^2$ and the reduced order parameter $u: \Omega \rightarrow \C$ where now $\Omega \subset \mathbb{R}^2$ is a rectangle. Here $\curl_{2d}$ denotes the conventional $2d$-$\curl$-operator mapping vector fields to scalar functions. Setting up the previous analysis in two dimensions and introducing the stabilized energy
\begin{eqnarray*}
	E_{2d}(\sol,\MagPot)  := E_{\mathrm{GL},2d}(\sol,\MagPot) + \frac{1}{2} \int_\Omega |\div \MagPot|^2 \dx
\end{eqnarray*}
one can derive the corresponding results of Theorem \ref{thrm:main-results} in $2d$ using fully analogue arguments. For the sake of readability and brevity we omit this case in the analysis of this work and drop the $2d$ subindices in the following and write $\MagF = \MagF_3$.
\subsection{Gradient descent method}
\label{subsec:gradient_descent}

Although our theoretical results focus on approximability results for minimizers in discrete spaces, verifying these results numerically requires computing the minimizers themselves. One approach is to compute these minimizers using an implicit Euler discretization of the $L^2$-gradient flow associated with the GL energy functional \eqref{eq:energy_functional_stab}, as it was done, for instance, in \cite{BDH24}. However, due to the small coercivity constant $\Csol^{-1}$ from Proposition \ref{prop:coecivity_Epp}, particularly for large $\kappa$, the $L^2$-gradient flow typically converges only slowly to the desired minimizer. Therefore, we employ a more sophisticated approach here: a discretization of a suitable Sobolev-gradient flow allowing for energy-adaptive step sizes. Sobolev gradients $\nabla_X E$ represent $E^{\prime}$ with respect to an $X$-metric induced by an inner product $(\cdot,\cdot)_X$. By choosing the inner product in a problem-specific way, the corresponding gradient flow can be significantly accelerated; see Neuberger \cite{Neu97} for an introduction.

We denote by $X$ the tensor space $H^1(\Omega) \times \HonenSpace(\Omega)$. The Sobolev gradient of the Ginzburg-Landau energy $E$ at any point $(u,\mathbf{A}) \in X$, not necessarily a minimizer, is defined as the solution $\nabla_X E(u,\mathbf{A}) \in X$ of
\begin{eqnarray*}
	\big( \nabla_X E(u,\mathbf{A}), (v,\mathbf{B}) \big)_X = \partial_u E(u,\mathbf{A})v + \partial_{\mathbf{A}} E(u,\mathbf{A}) \mathbf{B} \quad \forall (v,\mathbf{B}) \in X.
\end{eqnarray*} 
Here we choose the inner product $(\cdot,\cdot)_{X}$ as
\begin{eqnarray*}
	\big((v,\mathbf{B}),(w,\mathbf{C}) \big)_X := \tilde{a}_{u,\mathbf{A}}(v,w) + \tilde{b}_{u,\mathbf{A}}(\mathbf{B},\mathbf{C}) 
\end{eqnarray*}
with
\begin{align}
	\tilde{a}_{u,\mathbf{A}}(v,w) & := a_{\mathbf{A}}(v,w) + \big((\beta + |u|^2 + |\mathbf{A}|^2) v,w \big) ,\\
	\tilde{b}_{u,\mathbf{A}}(\mathbf{B},\mathbf{C})  & := b(\mathbf{B}, \mathbf{C}) + \big\langle (\beta + |u|^2) \mathbf{B}, \mathbf{C} \big\rangle,
\end{align}
and a stabilization parameter $\beta > 0$. It is straightforward to verify that for every $\beta > 0$ the bilinear form $(\cdot,\cdot)_X$ defines an inner product on $X$. This choice enhances coercivity and ensures stability of the resulting gradient flow even when $|u|$ or $|\mathbf{A}|$ exhibit large variations. Introducing the elliptic operators
\begin{align}
	\tilde{\mathcal{A}}_{u,\mathbf{A}}: H^1(\Omega) \rightarrow (H^1(\Omega))^*, \quad \langle \tilde{\mathcal{A}}_{u,\mathbf{A}} v, w \rangle = \tilde{a}_{u,\mathbf{A}}(v,w)  \quad \forall \, v,w \in H^1(\Omega),
\end{align} 
and
\begin{align}
	\tilde{\mathcal{B}}_{u,\mathbf{A}}: \HonenSpace(\Omega) \rightarrow (\HonenSpace(\Omega))^*, \quad \langle \tilde{\mathcal{B}}_{u,\mathbf{A}} \mathbf{B}, \mathbf{C} \rangle = \tilde{b}_{u,\mathbf{A}}(\mathbf{B},\mathbf{C})  \quad \forall \, \mathbf{B},\mathbf{C} \in \HonenSpace(\Omega),
\end{align} 
the Sobolev gradient can be expressed as $\nabla_X E(u,\mathbf{A}) = \big( \nabla_{X,u} E(u,\mathbf{A}), \nabla_{X,\mathbf{A}} E(u,\mathbf{A})\big) \in X$, where
\begin{align}
	\nabla_{X,u} E(u,\mathbf{A}) & = u - \tilde{\mathcal{A}}_{u,\mathbf{A}}^{-1} \big( (1 + \beta + |\mathbf{A}|^2 ) u \big), \\
	\nabla_{X,\mathbf{A}} E(u,\mathbf{A}) & = \mathbf{A} - \tilde{\mathcal{B}}_{u,\mathbf{A}}^{-1} \Big( \beta \mathbf{A} - \kappainv  \Real ( \ci  u^* \nabla u ) + \curl^* \MagF \Big),
\end{align}
with $\MagF = \curl \mathbf{A}$ denoting the magnetic field. The operator $\curl^*: \mathbf{H}^1(\Omega) \to \mathbf{H}^1(\Omega)^*$ is defined by
\begin{eqnarray*}
\langle \curl^* \MagF, \mathbf{B} \rangle = \int_{\Omega} \MagF \cdot \curl \mathbf{B} , \mathrm{d}x \quad \forall \mathbf{B} \in \mathbf{H}^1(\Omega).
\end{eqnarray*}
Here, $\tilde{\mathcal{A}}_{u,\mathbf{A}}^{-1}$ and $\tilde{\mathcal{B}}_{u,\mathbf{A}}^{-1}$ denote the inverse operators in the corresponding Sobolev spaces, defined via the Riesz representation. This formulation provides a problem-adapted metric structure that typically leads to much faster convergence of the gradient flow compared to the standard $L^2$-based formulation.

At the discrete level, we select $\bfAapp \in \HonenSpaceDiv(\Omega) \cap \mathbf{L}^{\infty}(\Omega)$ and construct the LOD space $\VShLOD$ based on this $\bfAapp$. As mentioned in Section \ref{sec:space_main}, a natural choice is to select $\bfAapp \in \VSAhzero{k}$ as the solution to $\curl \bfAapp = \MagF$ with $\div \bfAapp = 0$ which can be easily computed. Unless otherwise stated, we use this choice for $\bfAapp$. The desired iteration for computing a minimizer by the $\nabla_{X}E$-gradient descent now seeks $(u^{n}_H, \mathbf{A}^{n}_h) \in \VShLOD \times \VSAhzero{k}$, $n \in \mathbb{N}$,  satisfying
\begin{align}
	u^{n+1}_H & = u_H^n - \tau_n \nabla_{X,u^n_H} E(u^{n}_H,\mathbf{A}^n_h), \\
	\mathbf{A}^{n+1}_h & = \mathbf{A}^{n}_h - \tau_n \nabla_{X,\mathbf{A}^n_h} E(u^{n}_H,\mathbf{A}^n_h)
\end{align}
for some step size $\tau_n > 0$ and suitable initial values $(u^0_H, \mathbf{A}^0_h) \in \VShLOD \times \VSAhzero{k}$. The step size $\tau_n > 0$ in each iterations is adaptively chosen such that $E(u^{n+1}_H, \mathbf{A}^{n+1}_h)$ gets minimal. Since $E(u^{n+1}_H, \mathbf{A}^{n+1}_h)$ is a fourth order polynomial in $\tau_n$ this is achieved numerically e.g. by line search in $\tau_n$.

Clearly, every (local) minimizer is a stationary point of the iterative scheme, and vice versa: every stationary point satisfies the first-order condition for a local minimum. For an optimal $\tau_n$, the iteration is expected to reduce the energy, and we may find a local minimizer of the GL energy in $\VShLOD \times \VSAhzero{k}$ as a limit of the iteration. A rigorous convergence analysis of the scheme is much more involved and is left for future research. In practice, the optimal $\tau_n$ can become very small during the iteration process, which may cause the iteration to converge slowly due to small updates per iteration. To avoid very small steps, we set a lower bound of $0.1$ for $\tau_n$ in our experiments. This comes at the cost of losing the energy-diminishing property in every iteration step, but results in a more robust, quicker-converging overall computation. The stabilization parameter in the definition of the Sobolev gradient is set to $\beta = 0.1$ in our experiments. Finally, we terminate the iteration when the difference in energy of two iterates, $|\energy(\sol^{n+1}, \MagPot^{n+1}) - \energy(\sol^n, \MagPot^n) |$, approaches a given tolerance, $\varepsilon_{\mathrm{tol}} > 0$. 

\subsection{Model problem and discrete minimizers}
\label{subsec:model}

The model of our numerical experiments considers the GL energy on the two dimensional unit square $\Omega = (0,1)^2 \subset \mathbb{R}^2$, with the external magnetic field
\begin{align}
	\MagF(x) = 10 \sin(\pi x_1) \sin(\pi x_2), \quad x = (x_1,x_2) \in \Omega,
\end{align}
and the particular values for the GL parameter $\kappa = 5, 10, 15, 20, 25, 30, 50, 100$. The range of GL parameter is chosen such that it covers the magnitudes of common type-II superconductors such as niobium ($\kappa \sim 1 $), magnesium diboride ($\kappa \sim 25$), or yttrium barium copper oxide ($\kappa \sim 100$), see \cite{Finnetal01,McCSe65,StZw97}. The discrete minimizers are computed with a LOD approximation for the order parameter $u$ and quadratic FE for the vector potential $\mathbf{A}$ with the iterative solver described in Section \ref{subsec:gradient_descent}. The mesh sizes are set to $H = 2^{-7}$ and $h = 2^{-7}$ and we compute the minimizers up to a tolerance of $\varepsilon_{\mathrm{tol}} = 10^{-12}$. The LOD space is constructed in all our experiments, unless otherwise stated, based on the vector potential $\bfAapp \in \mathbf{V}^2_{h,0}$ which is a $\mathcal{P}_2$-approximation of $\curl \bfAapp = \MagF$ with $\div \bfAapp = 0$ and fixed fine mesh size $h = 2^{-7}$ through all experiments . \\
For the practical realization of the LOD spaces we need to compute an associated basis of $\VShLOD$ which requires to solve the corrector problems \eqref{eqn:def-ideal-correctors} for suitable function $\varphi \in \VSh$. This is done using a standard $\mathcal{P}_1$-FEM discretization on a fine mesh with mesh size $h_{\mathrm{fine}} = 2^{-9}$. It is known, cf. \cite{MaP21,BDH24}, that a basis can be chosen such that each basis function decays exponentially. Therefore, the corrector problems can be restricted to local patches by allowing small localization errors and to obtain a feasibly computable and locally supported basis of $\VShLOD$. The patches are defined through an oversampling parameter $\ell$ describing the number of layers around the element of $\mathcal{T}_H$ associated with the patch. We refer to \cite{BDH24} for more details on the computation and for estimates of the occurring localization error in the context of the GL energy minimization problem. Unless otherwise stated, we choose $\ell = 10$ oversampling layers for each coarse element in our experiments. \\
To achieve the fine resolution of $H = 2^{-7}$ and $h = 2^{-7}$ at feasible computational cost we use a multilevel type approach that first computes a discrete minimizer on a coarse mesh which is then used as an initial value for the minimizing iteration on a finer mesh. This process is repeated across four levels so that the mesh parameters $(H,h,\ell)$ follow the sequence
\begin{align}
	 (2^{-3}, 2^{-4}, 3) \quad \rightarrow \quad (2^{-5}, 2^{-4}, 5) \quad \rightarrow \quad (2^{-7}, 2^{-4}, 10) \quad \rightarrow \quad (2^{-7}, 2^{-7}, 10) 
\end{align}
and with the constant fine mesh size $h_{\mathrm{fine}} = 2^{-9}$ for the LOD realization. For the first iteration we use a constant $0.8 + 0.6 \ci$ initial value in $\sol$ and $\bfAapp$ as an initial value in $\MagPot$. Finally, for the first three levels we choose $\varepsilon_{\mathrm{tol}} = 10^{-10}$ and the final iteration is then computed up to $\varepsilon_{\mathrm{tol}} = 10^{-12}$. \\
The order parameters $\sol$ of the computed discrete minimizers are shown in Figure \ref{fig:reference-minimizers} and Figure \ref{fig:reference-minimizers_2}. In Figure \ref{fig:reference-minimizers-A} the corresponding vector potential $\MagPot$ and its $\curl$ for $\kappa = 5$ is shown. This value is representative of all other values of $\kappa = 10, 15, 20, 25, 30, 50, 100$, since no visual differences are notable. In contrast, we plot the difference $\curl \MagPot - \MagF$ in Figure \ref{fig:reference-minimizers-curlA-H} for all values of $\kappa$ where small differences can be detected due to the vortex structure of the order parameter. Finally, the energy values for the original GL energy $\energy$ and the stabilized energy $E$ are given in Table \ref{tab:energy-reference-minimizers}. 

\begin{figure}[h!] 
\centering
\begin{minipage}[t]{0.24\textwidth}
\centering
\includegraphics[scale=0.19]{Re_kappa5.eps}
\end{minipage}
\begin{minipage}[t]{0.24\textwidth}
\centering
\includegraphics[scale=0.19]{Re_kappa10.eps}
\end{minipage}
\begin{minipage}[t]{0.24\textwidth}
\centering
\includegraphics[scale=0.19]{Re_kappa15.eps}
\end{minipage}
\begin{minipage}[t]{0.24\textwidth}
\centering
\includegraphics[scale=0.19]{Re_kappa20.eps}
\end{minipage} \\
\begin{minipage}[t]{0.24\textwidth}
\centering
\includegraphics[scale=0.19]{Im_kappa5.eps}
\end{minipage}
\begin{minipage}[t]{0.24\textwidth}
\centering
\includegraphics[scale=0.19]{Im_kappa10.eps}
\end{minipage}
\begin{minipage}[t]{0.24\textwidth}
\centering
\includegraphics[scale=0.19]{Im_kappa15.eps}
\end{minipage}
\begin{minipage}[t]{0.24\textwidth}
\centering
\includegraphics[scale=0.19]{Im_kappa20.eps}
\end{minipage} \\
\begin{minipage}[t]{0.24\textwidth}
\centering
\includegraphics[scale=0.19]{abs_kappa5.eps}
\end{minipage}
\begin{minipage}[t]{0.24\textwidth}
\centering
\includegraphics[scale=0.19]{abs_kappa10.eps}
\end{minipage}
\begin{minipage}[t]{0.24\textwidth}
\centering
\includegraphics[scale=0.19]{abs_kappa15.eps}
\end{minipage}
\begin{minipage}[t]{0.24\textwidth}
\centering
\includegraphics[scale=0.19]{abs_kappa20.eps}
\end{minipage}
\caption{Real part $\Real u$ (top row), imaginary part $\Imag u$ (middle row), and density $|u|^2$ (bottom row) of the order parameter component $\sol$ of the GL energy minimizer $(\sol,\MagPot)$ for $\kappa = 5, 10, 15, 20$ (left to right).}
\label{fig:reference-minimizers}
\end{figure}

\begin{figure}[h!] 
\centering
\begin{minipage}[t]{0.24\textwidth}
\centering
\includegraphics[scale=0.19]{Re_kappa25.eps}
\end{minipage}
\begin{minipage}[t]{0.24\textwidth}
\centering
\includegraphics[scale=0.19]{Re_kappa30.eps}
\end{minipage}
\begin{minipage}[t]{0.24\textwidth}
\centering
\includegraphics[scale=0.19]{Re_kappa50.eps}
\end{minipage}
\begin{minipage}[t]{0.24\textwidth}
\centering
\includegraphics[scale=0.19]{Re_kappa100.eps}
\end{minipage} \\
\begin{minipage}[t]{0.24\textwidth}
\centering
\includegraphics[scale=0.19]{Im_kappa25.eps}
\end{minipage}
\begin{minipage}[t]{0.24\textwidth}
\centering
\includegraphics[scale=0.19]{Im_kappa30.eps}
\end{minipage}
\begin{minipage}[t]{0.24\textwidth}
\centering
\includegraphics[scale=0.19]{Im_kappa50.eps}
\end{minipage}
\begin{minipage}[t]{0.24\textwidth}
\centering
\includegraphics[scale=0.19]{Im_kappa100.eps}
\end{minipage} \\
\begin{minipage}[t]{0.24\textwidth}
\centering
\includegraphics[scale=0.19]{abs_kappa25.eps}
\end{minipage}
\begin{minipage}[t]{0.24\textwidth}
\centering
\includegraphics[scale=0.19]{abs_kappa30.eps}
\end{minipage}
\begin{minipage}[t]{0.24\textwidth}
\centering
\includegraphics[scale=0.19]{abs_kappa50.eps}
\end{minipage}
\begin{minipage}[t]{0.24\textwidth}
\centering
\includegraphics[scale=0.19]{abs_kappa100.eps}
\end{minipage}
\caption{Real part $\Real u$ (top row), imaginary part $\Imag u$ (middle row), and density $|u|^2$ (bottom row) of the order parameter component $\sol$ of the GL energy minimizer $(\sol,\MagPot)$ for $\kappa = 25, 30, 50, 100$ (left to right).}
\label{fig:reference-minimizers_2}
\end{figure}

\begin{figure}[h!] 
\centering
\begin{minipage}[t]{0.26\textwidth}
\centering
\includegraphics[scale=0.2]{A_kappa5.eps}
\end{minipage}
\begin{minipage}[t]{0.26\textwidth}
\centering
\includegraphics[scale=0.2]{curlA_kappa5.eps}
\end{minipage}
\caption{The vector potential, $\MagPot$ (left, plotted on a coarse mesh), and $\curl \MagPot$ (middle), of the GL energy minimizer $(\sol,\MagPot)$ for $\kappa = 5$, representative of all values of $\kappa = 5, 10, 15, 20, 25, 30, 50, 100$.}
\label{fig:reference-minimizers-A}
\end{figure}

\begin{figure}[h!] 
\centering
\begin{minipage}[t]{0.24\textwidth}
\centering
\includegraphics[scale=0.19]{curlA-H_kappa5.eps}
\end{minipage}
\begin{minipage}[t]{0.24\textwidth}
\centering
\includegraphics[scale=0.19]{curlA-H_kappa10.eps}
\end{minipage}
\begin{minipage}[t]{0.24\textwidth}
\centering
\includegraphics[scale=0.19]{curlA-H_kappa15.eps}
\end{minipage}
\begin{minipage}[t]{0.24\textwidth}
\centering
\includegraphics[scale=0.19]{curlA-H_kappa20.eps}
\end{minipage} \\
\begin{minipage}[t]{0.24\textwidth}
\centering
\includegraphics[scale=0.19]{curlA-H_kappa25.eps}
\end{minipage}
\begin{minipage}[t]{0.24\textwidth}
\centering
\includegraphics[scale=0.19]{curlA-H_kappa30.eps}
\end{minipage}
\begin{minipage}[t]{0.24\textwidth}
\centering
\includegraphics[scale=0.19]{curlA-H_kappa50.eps}
\end{minipage}
\begin{minipage}[t]{0.24\textwidth}
\centering
\includegraphics[scale=0.19]{curlA-H_kappa100.eps}
\end{minipage}
\caption{Difference $\curl \MagPot - \MagF$ of the GL energy minimizer $(\sol,\MagPot)$ for $\kappa = 5, 10, 15, 20, 25, 30, 50, 100$ (left to right and top to bottom).}
\label{fig:reference-minimizers-curlA-H}
\end{figure}

\renewcommand*{\arraystretch}{1.2}
\begin{table}[h]
	\centering
	\begin{tabular}[h]{ |c|c|c|c|c|c|c|c|c|c| }
	\hline
		$\kappa$ 
		& 5 
		& 10 
		& 15 
		& 20 
		& 25 
		& 30
		& 50
		& 100 \\
		\hline
		$E(\sol,\MagPot)$ 
		& $0.192633$ 
		& $0.164499$ 
		& $0.164999$ 
		& $0.145422$ 
		& $0.142620$ 
		& $0.143023$ 
		& $0.133575$ 
		& $0.122268$ \\ 
		\hline
		$\energy(\sol,\MagPot)$ 
		& $0.192615$ 
		& $0.164492$ 
		& $0.164950$ 
		& $0.145396$ 
		& $0.142595$ 
		& $0.142997$ 
		& $0.133560$ 
		& $0.122261$ \\ 
		\hline 
	\end{tabular}
	\caption{Approximate energies $E(\sol,\MagPot)$ and $\energy(\sol,\MagPot)$ of the minimizers for $\kappa = 5, 10, 15, 20, 25, 30, 50, 100$.}
	\label{tab:energy-reference-minimizers}
\end{table}

In the density $|\sol|^2$ of the order parameter $\sol$ (bottom row of Figure \ref{fig:reference-minimizers} and Figure \ref{fig:reference-minimizers_2}) we see the expected vortex patterns, known as the Abrikosov lattice, that occur in type-II superconductors penetrated by external magnetic fields. Interestingly, the vortices in the density arise from the interplay between the real part (top row) and the imaginary part (middle row) of $\sol$. Both components exhibit a more intricate structure due to oscillations, particularly in regions where $|\sol| \approx 1$. In particular, the physical relevant vortices in the density are well resolved by the LOD approximation space, consistent with the observations of \cite{BDH24} for the reduced GL model with given vector potentials $\MagPot$. The number of vortices increases and their diameter decreases while the GL parameter $\kappa$ increases. Looking at the vector potential $\MagPot$ (Figure \ref{fig:reference-minimizers-A}), we clearly see that no special vortex-like structure appears in the vector potential. The vortices are only marginally detectable once we consider the difference $\curl \MagPot - \MagF$ (Figure \ref{fig:reference-minimizers-curlA-H}). This justifies the choice of a standard FE discretization for the vector potential $\MagPot$. The physically relevant observable is $\curl \MagPot$, which describes the magnetic field inside the superconductor. As expected, it is aligned with the external magnetic field $\MagF$. In general, we observed during the computation that the alignment of $\MagPot$ stabilizes after a few iterations, while the challenging variable is the order parameter as we have seen that it takes much more iterations (up to $\mathcal{O}(10^4)$ iterations) until the correct vortex-pattern appears and the Sobolev gradient flow converges. \\

\begin{figure}[h!] 
\centering
\begin{minipage}{0.49\textwidth}
\centering
\includegraphics[scale=0.35]{conv_u_H1.eps}
\end{minipage}
\begin{minipage}{0.49\textwidth}
\centering
\includegraphics[scale=0.35]{conv_u_L2.eps}
\end{minipage}
\caption{Error of the order parameter $u$ for the mesh sizes $H = 2^{-\{ 2,3,4,5,6 \}}$, $h = 2^{-6}$ and LOD parameters $h_{\mathrm{fine}} = 2^{-9}$ and $\ell = 10$. Left: $\kappa$-scaled $H^1_\kappa$-error $\kappa^{-3}\| u - u^{\LOD}_H\|_{H^1_\kappa}$. Right:  $\kappa$-scaled $L^2$-error $\kappa^{-4}\| u - u^{\LOD}_H\|_{L^2}$.}
\label{fig:conv_u}
\end{figure}
\begin{figure}[h!] 
\centering
\begin{minipage}{0.49\textwidth}
\centering
\includegraphics[scale=0.35]{conv_kappa_u_H1.eps}
\end{minipage}
\begin{minipage}{0.49\textwidth}
\centering
\includegraphics[scale=0.35]{conv_kappa_u_L2.eps}
\end{minipage}
\caption{Error of the order parameter $u$ relative to $\kappa$ for different mesh sizes $H = 2^{-\{ 2,3,4,5,6 \}}$, $h = 2^{-6}$ and LOD parameters $h_{\mathrm{fine}} = 2^{-9}$ and $\ell = 10$.\\ Left: $H^{-3}$-scaled $H^1_\kappa$-error $H^{-3}\| u - u^{\LOD}_H\|_{H^1_\kappa}$. Right:  $H^{-4}$-scaled $L^2$-error $H^{-4}\| u - u^{\LOD}_H\|_{L^2}$.}
\label{fig:conv_kappa_u}
\end{figure}

\subsection{Convergence rates}
Next we investigate the error of the approximations of the minimizers and its dependence on the mesh sizes $H,h$ and on the GL parameter $\kappa$ as stated in Theorem \ref{thrm:main-results}. For the first experiment, we choose a fixed fine mesh size $h = 2^{-6}$ for the vector potential $\MagPot$, different mesh sizes $H = 2^{-\{ 2, 3, 4, 5, 6\}}$ for the LOD approximation in the order parameter $u$, and compute the errors $\| u - u_{H}^\LOD \|_{H^1_{\kappa}}$ and $\| u - u_{H}^\LOD \|_{L^2}$ to the reference solution for the different values $\kappa = 5,10,20,25,30,50,100$. The minimizers $(\sol,\MagPot)$ computed in Section \ref{subsec:model} serve as reference solutions and all other parameters are chosen as in Section \ref{subsec:model}. For the given configuration, the minimizer is obtained using the iterative method described in Section \ref{subsec:gradient_descent}, starting from an initial value given by a projection of the reference solution onto the chosen approximation space. \\
Due to the fine mesh size in the vector potential we can expect that the overall error $(\sol - \sol_{H}^{\LOD} , \MagPot - \mathbf{A}_{h,2}^{\mbox{\tiny FEM}})$ is dominated by the error in the order parameter. The results are depicted in Figure \ref{fig:conv_u}. We observe after a pre-asymptotic phase an order three decay of the $H^1_\kappa$-error w.r.t. the mesh size and an order four decay of the $L^2$-error w.r.t. the mesh size. The pre-asymptotic phase is explained by the resolution condition $\deltaH \lesssim \kappa^{-1-\varepsilon/2} \Csol^{-1/2} $ which is needed for a quasi-best-approximation behavior or at least $\deltaH \lesssim \kappa^{-1-\varepsilon/5} \Csol^{-1/5}$ to compensate the additional higher order error terms as we discussed in Section \ref{sec:space_main} after Theorem \ref{thrm:main-results}. For $H = 2^{-5}$ both errors start to stagnate which is most-likely due to the fine scale discretization of order $\mathcal{O}(h_{\mathrm{fine}})$ that we used to solve the local corrector problems for the LOD space. This behavior was also observed in \cite{BDH24} and we refer to the reference for a more detailed discussion. For larger values of $\kappa$, namely $\kappa = 50$ and $\kappa = 100$, the pre-asymptotic regime is so large that the error directly turns into the stagnation phase without showing the predicted third-order or fourth-order convergence, respectively. However, this is in line with our theoretical results. Let us now turn to the $\kappa$-dependence of the error. We point out that in Figure \ref{fig:conv_u} the $H^1_\kappa$-error is scaled with a $\kappa^{-3}$ pre-factor and the $L^2$-error with a $\kappa^{-4}$ pre-factor respectively. We observe that the error curves are almost on top of each other as $\kappa$ varies and emphasize that a different $\kappa$-scaling of the error leads to a significant difference between the error curves. To examine the dependence on $\kappa$ in more detail, we plot the $H^1_\kappa$-error (resp. the $L^2$-error) against $\kappa$ for different coarse mesh sizes $H$ in Figure \ref{fig:conv_kappa_u}. Figure \ref{fig:conv_kappa_u} (right) shows the $H^1_\kappa$-error for small values of $\kappa$, clearly demonstrating the predicted optimal order of $\mathcal{O}(\kappa^3 H^3)$. Furthermore, due to the $H^{-3}$-scaling of the error, we can see that the curves lie almost on top of each other, which again shows the predicted optimal order of the $H^1_\kappa$-error, i.e. $\mathcal{O}(\kappa^3 H^3)$. However, for larger values of $\kappa$, the error stagnates at a plateau, most likely due to fine-scale discretization or localization errors caused by the practical realization of the LOD. Indeed, neglecting the $H^{-3}$-scaling of the error shows that all plateaus are at the same level, strongly indicating additional discretization errors in the LOD. Figure \ref{fig:conv_kappa_u} shows the $L^2$-error as a function of $\kappa$, and the situation is similar. In the first regime of small values of $\kappa$, we observe the expected $\kappa^4$-scaling of the error, followed by a stagnation regime, which is again most likely due to fine-scale discretization or localization error. However, considering the $H^{-4}$-scaling of the error, we can see that the curves are not clearly on top of each other. Indeed, a $H^{-3}$-scaling seems to fit as well, which might indicate pollution of the error by the term $\mathcal{O}(\kappa^3 H^3)$ in the $L^2$-error estimates of Theorem \ref{thrm:main-results}. However, the effect is too small to draw a clear conclusion. Apart from that, we can conclude that the numerical findings align with the decay of the $H^1_\kappa$-error (resp. $L^2$-error) in the order parameter $u$ as proved in Theorem \ref{thrm:main-results}, i.e. the order of decay is given by the terms $\mathcal{O}(\kappa^3 H^3)$ (resp. $\mathcal{O}(\kappa^4 H^4)$). \\

\begin{figure}[h!] 
\centering
\begin{minipage}{0.49\textwidth}
\includegraphics[scale=0.35]{conv_A_H1.eps}
\end{minipage}
\begin{minipage}{0.49\textwidth}
\includegraphics[scale=0.35]{conv_A_L2.eps}
\end{minipage}
\caption{Error of the vector potential $\MagPot$ for the mesh sizes $h = 2^{-\{ 2,3,4,5, 6\}}$, $H = 2^{-5}$ and LOD parameters $h_{\mathrm{fine}} = 2^{-9}$ and $\ell = 10$. Left: $\mathbf{H}^1$-error $\| \MagPot - \mathbf{A}_{h,2}^{\mbox{\tiny FEM}}\|_{\mathbf{H}^1}$. Right:  $\mathbf{L}^2$-error $\kappa^{-1} \| \MagPot - \mathbf{A}_{h,2}^{\mbox{\tiny FEM}} \|_{\mathbf{L}^2}$.}
\label{fig:conv_A_opt}
\end{figure}

In the next experiment we extract the convergence in the vector potential $\MagPot$. We fix a small mesh size $H = 2^{-5}$ for the order parameter, vary the mesh size $h = 2^{-\{ 2,3,4,5,6 \}}$ for the vector potential, and compute the errors $\| \MagPot - \mathbf{A}_{h,2}^{\mbox{\tiny FEM}}\|_{\mathbf{H}^1}$ and $\| \MagPot - \mathbf{A}_{h,2}^{\mbox{\tiny FEM}}\|_{\mathbf{L}^2}$. Again all other parameters are set as before and we can now expect that the overall error is dominated by the error in the vector potential $\MagPot$. The results are shown in Figure \ref{fig:conv_A_opt} where we can see that the $\mathbf{H}^1$-error decays in a clear asymptotic phase with a rate of two w.r.t. $h$ after a pre-asymptotic phase which increases in $\kappa$. This pre-asymptotic phase is now explained by the resolution condition $\deltah \lesssim \kappa^{-\varepsilon} \Csol^{-1}$ which is needed for a quasi-best-approximation or at least $\deltah \lesssim \kappa^{-(1 + \varepsilon)/3} \Csol^{-1/3}$ to compensate the additional higher order error terms. This coincides with our theoretical findings from Theorem \ref{thrm:main-results}. In view of the $\kappa$-dependence the situation is more unclear. First, Figure \ref{fig:conv_A_opt} left shows that for the small values of $\kappa$ and in the asymptotic phase the convergence curves lie almost exactly on top of each other, although the error is not scaled with one order of $\kappa$ as it would have been expected from Theorem \ref{thrm:main-results}. This indicates that our error estimates of order $\mathcal{O}(\kappa h^2)$ might be suboptimal w.r.t. $\kappa$ as an implication of a possibly suboptimal estimate of $\| \MagPot \|_{\mathbf{H}^3}$ in the analysis; see Remark \ref{rem:A_in_H3} for a more detailed discussion. We also observe a clear third-order convergence rate with respect to the mesh size for the $L^2$-error in the asymptotic phase, i.e., for small values of $h$ after a pre-asymptotic phase, which is again explained by the resolution condition. The dependence of the $L^2$-error on $\kappa$ is again much more complicated since we observe the $\mathcal{O}(\kappa^\varepsilon \Csol h^3)$ error term from Theorem \ref{thrm:main-results}, for which the dependence on $\kappa$ enters through the coercivity constant $\Csol$. We found that the error scales best with $\kappa^{-1}$ in a range of integer powers, but we cannot draw a precise conclusion about the dependence on $\kappa$ since it is unknown for the coercivity constant $\Csol$. We note that plotting the error over $\kappa$ does not provide any additional information that would allow us to draw a clear conclusion. At this point, we can conclude that the numerical findings align with our theoretical findings, though they may indicate that our estimates in the $\kappa$-scaling are suboptimal with regard to the vector potential.

\begin{figure}[h!] 
\centering
\begin{minipage}{0.49\textwidth}
\includegraphics[scale=0.35]{conv_u_energy.eps}
\end{minipage}
\begin{minipage}{0.49\textwidth}
\includegraphics[scale=0.35]{conv_A_energy.eps}
\end{minipage}
\caption{Error of the energy $E$ with LOD parameters $h_{\mathrm{fine}} = 2^{-9}$ and $\ell = 10$. Left: $\kappa^{-6}|E(\sol, \MagPot) - E(u^{\LOD}_H, \mathbf{A}_{h,2}^{\mbox{\tiny FEM}})|$ for the mesh sizes $H = 2^{-\{ 2,3,4,5,6 \}}$, $h = 2^{-6}$. Right:  $\kappa^{-2} |E(\sol, \MagPot) - E(u^{\LOD}_H, \mathbf{A}_{h,2}^{\mbox{\tiny FEM}})|$ for the mesh sizes $h = 2^{-\{ 2,3,4,5, 6\}}$, $H = 2^{-5}$.}
\label{fig:conv_energy}
\end{figure}

In both experiments we additionally compute the errors in the GL energy which are shown in Figure \ref{fig:conv_energy}. The error in energy takes the convergence in both components into account and we see in both error plots a convergence in up to three phases: a pre-asymptotic phase, an asymptotic phase, and in the order parameter a stagnation phase. We first discuss the convergence in the order parameter (Figure \ref{fig:conv_energy} left). The pre-asymptotic regime is explained by the very weak resolution condition $\kappa H \lesssim 1$ as stated in Theorem \ref{thrm:main-results}. Indeed, this regime seems to be smaller compared to the $H^1_\kappa$-error, as seen most prominently for the $\kappa = 50$ curve. The pre-asymptotic regime is followed by an asymptotic convergent regime where we observe the predicted sixth order of convergence. However, due to the high order of convergence and the small magnitudes close to machine precision, the observations are less clear than for the $H^1_\kappa$ and $L^2$ errors. The same holds for the sixth-order scaling of the energy error w.r.t. $\kappa$, to which we approximate as closely as possible. The stagnation regime is again caused most likely by the fine-scale discretization error. For the error in the vector potential (Figure \ref{fig:conv_energy}, right), we observe the expected asymptotic fourth-order convergence. Moreover, the scaling with respect to $\kappa$ is approximately of order four, as anticipated. However, the convergence curves are not closely aligned, suggesting potential suboptimality with respect to $\kappa$, consistent with the behavior seen in the $H^1$ and $L^2$ errors. In addition, a pronounced pre-asymptotic regime is present -- particularly for $\kappa = 100$ and $\kappa = 50$ -- which is unexpected and appears, at first sight, to contradict our main results, since no resolution condition with respect to $h$ should be required. A closer examination of the computations revealed that, on coarser meshes (i.e., within the pre-asymptotic regime), the Sobolev gradient descent method converges to a different minimizer, as evidenced by a substantially different energy level. Unfortunately, we were unable to recover the corresponding reference minimizer on the coarser meshes through the Sobolev gradient descent iterations as the descent approach is not robust w.r.t. initial values. This numerical artifact accounts for the observed pre-asymptotic behavior. Except for this behavior -- and the aforementioned indication of suboptimality in the $\kappa$–scaling -- the numerical experiments are overall in good agreement with our theoretical results.

\subsection{Choice of $\bfAapp$} 
As a final experiment, we investigate numerically how the choice of $\bfAapp$ and therefore the choice of the LOD space affects our results, in particular the convergence in the order parameter $\sol$ in view of Lemma \ref{lemma:bestapprox-LOD} and our main results. We compare our standard choice -- where $\bfAapp$ is a $\mathcal{P}_2$ approximation of $\curl \bfAapp = \MagF$ and $\div \bfAapp = 0$ -- against alternatives. For simplicity, we pick the values $\kappa = 10$ and $\kappa = 20$ as representatives, since we investigated the influence of $\kappa$ numerically in the previous section, and retain all other model parameters. The following vector potentials $\bfAapp$ are considered:

\begin{description}
\item[Standard choice] $\mathcal{P}_2$ approximation of $\curl \bfAapp = \MagF$ with $\div \bfAapp = 0$ on a fine mesh size of $h = 2^{-7}$. This is easily pre-computed at low computational costs and expected to be a rough but reasonable approximation of $\MagPot$. \\
\item[Trivial choice] $\bfAapp = 0$. This allows for an efficient computation of the LOD space due to spatial symmetry at the cost of a poor approximation of $\MagPot$. \\
\item[Optimal choice] $\bfAapp = \MagPot$, where $\MagPot$ denotes the reference minimizer introduced in Section \ref{subsec:model}. This is the best available approximation of $\MagPot$ in our experiments but of course in general unknown. \\
\end{description}

Since the choice of the LOD space should not have a significant effect on the approximation of the vector potential $\MagPot$, we are only interested in the error w.r.t. the order parameter $\sol$. As in the previous section, we fix $h = 2^{-6}$ and vary the mesh size for the LOD space as $H = 2^{- \{ 2, 3, 4, 5, 6\} }$ with a fine scale resolution of $h_{\mathrm{fine}} = 2^{- 9}$ and an oversampling parameter of $\ell = 10$. We compute the error in $H^1_\kappa$ for each choice of $\bfAapp$. The results are shown in Figure \ref{fig:conv_Astar}.

\begin{figure}[h!] 
\centering
\begin{minipage}{0.49\textwidth}
\includegraphics[scale=0.35]{conv_u_Astar.eps}
\end{minipage}
\caption{Error of the order parameter $u$ for the different choices of $\bfAapp$, $\kappa = 10$ (solid lines) and $\kappa = 20$ (dashed lines) for the mesh sizes $H = 2^{-\{ 2,3,4,5,6 \}}$, $h = 2^{-6}$.}
\label{fig:conv_Astar}
\end{figure}

All three choices of $\bfAapp$ lead to optimal third-order convergence of the order parameter with respect to $H$ after the previously shown short pre-asymptotic regime. Every choice is therefore suited to some extent to approximate energy minimizers in the associated LOD spaces, which aligns with our main result. However, a direct comparison of the three choices reveals that the ``trivial choice'' ($\bfAapp = 0$) results in the largest error, making the ``standard choice'' preferable. The difference is almost one order of magnitude, and solving for $\curl \bfAapp = \MagF$ comes at a very low computational cost. In some cases, one might choose the ``trivial choice'' to compute the LOD space faster using spatial symmetries when $\bfAapp = 0$. However, in view of the computational cost of the Sobolev gradient descent, this advantage seems to be negligible. Surprisingly, our ``standard choice'' ($\curl \bfAapp = \MagF$) performs as well as the ``optimal choice'' ($\bfAapp = \MagPot$). This shows that the error committed by approximating $\bfA$ by $\bfAapp$ enters only weakly into the overall approximation error of GL energy minimizers. This justifies our standard choice in previous experiments, as it has a low computational cost and leads to good approximations of GL energy minimizers. Summarizing the numerical results confirm our main results.

\section*{Acknowledgment}

The authors would like to thank Wolfgang Reichel and Roland Schnaubelt for valuable discussions on the elliptic regularity.

\section*{Statements and Declarations}

The authors declare that they have no financial or non-financial competing interests directly or indirectly related to the work.


\begin{bibdiv}
\begin{biblist}

\bib{Abr04}{article}{
      author={Abrikosov, A.~A.},
       title={Nobel lecture: Type-{II} superconductors and the vortex lattice},
        date={2004},
        ISSN={{0034-6861}},
     journal={Rev. Mod. Phys.},
      volume={76},
      number={3,1},
       pages={975\ndash 979},
        note={\url{https://doi.org/10.1103/RevModPhys.76.975}},
}

\bib{Aftalion99}{article}{
      author={Aftalion, A.},
       title={On the minimizers of the {G}inzburg-{L}andau energy for high
  kappa: the axially symmetric case},
        date={1999},
        ISSN={0294-1449},
     journal={Ann. Inst. H. Poincar\'{e} Anal. Non Lin\'{e}aire},
      volume={16},
      number={6},
       pages={747\ndash 772},
         url={https://doi.org/10.1016/S0294-1449(00)88186-X},
      review={\MR{1720515}},
}

\bib{AHP21acta}{article}{
      author={Altmann, R.},
      author={Henning, P.},
      author={Peterseim, D.},
       title={Numerical homogenization beyond scale separation},
        date={2021},
        ISSN={0962-4929},
     journal={Acta Numer.},
      volume={30},
       pages={1\ndash 86},
         url={https://doi.org/10.1017/S0962492921000015},
      review={\MR{4298217}},
}

\bib{BanY14}{article}{
      author={Bank, R.~E.},
      author={Yserentant, H.},
       title={On the {$H^1$}-stability of the {$L_2$}-projection onto finite
  element spaces},
        date={2014},
        ISSN={0029-599X},
     journal={Numer. Math.},
      volume={126},
      number={2},
       pages={361\ndash 381},
         url={https://doi.org/10.1007/s00211-013-0562-4},
      review={\MR{3150226}},
}

\bib{Bartels2005}{article}{
      author={Bartels, S.},
       title={A posteriori error analysis for time-dependent
  {G}inzburg-{L}andau type equations},
        date={2005},
        ISSN={0029-599X},
     journal={Numer. Math.},
      volume={99},
      number={4},
       pages={557\ndash 583},
         url={https://doi.org/10.1007/s00211-004-0560-7},
      review={\MR{2121069}},
}

\bib{Bar05}{article}{
      author={Bartels, S.},
       title={Robust a priori error analysis for the approximation of
  degree-one {G}inzburg-{L}andau vortices},
        date={2005},
        ISSN={0764-583X},
     journal={M2AN Math. Model. Numer. Anal.},
      volume={39},
      number={5},
       pages={863\ndash 882},
         url={https://doi.org/10.1051/m2an:2005038},
      review={\MR{2178565}},
}

\bib{BarMO11}{article}{
      author={Bartels, S.},
      author={M{\"u}ller, R.},
      author={Ortner, C.},
       title={Robust a priori and a posteriori error analysis for the
  approximation of {A}llen-{C}ahn and {G}inzburg-{L}andau equations past
  topological changes},
        date={2011},
        ISSN={0036-1429},
     journal={SIAM J. Numer. Anal.},
      volume={49},
      number={1},
       pages={110\ndash 134},
         url={https://doi.org/10.1137/090751530},
      review={\MR{2764423}},
}

\bib{BDH24}{article}{
      author={Blum, M.},
      author={D\"oding, C.},
      author={Henning, P.},
       title={Vortex-capturing multiscale spaces for the {G}inzburg-{L}andau
  equation},
        date={2025},
        ISSN={1540-3459,1540-3467},
     journal={Multiscale Model. Simul.},
      volume={23},
      number={1},
       pages={339\ndash 373},
         url={https://doi.org/10.1137/24M1664538},
      review={\MR{4857950}},
}

\bib{BrennerScott}{book}{
      author={Brenner, Susanne~C.},
      author={Scott, L.~Ridgway},
       title={The mathematical theory of finite element methods},
     edition={Third},
      series={Texts in Applied Mathematics},
   publisher={Springer, New York},
        date={2008},
      volume={15},
        ISBN={978-0-387-75933-3},
         url={https://doi.org/10.1007/978-0-387-75934-0},
      review={\MR{2373954}},
}

\bib{BrezisMironescu18}{article}{
      author={Brezis, H.},
      author={Mironescu, P.},
       title={Gagliardo-{N}irenberg inequalities and non-inequalities: the full
  story},
        date={2018},
        ISSN={0294-1449,1873-1430},
     journal={Ann. Inst. H. Poincar\'e{} C Anal. Non Lin\'eaire},
      volume={35},
      number={5},
       pages={1355\ndash 1376},
         url={https://doi.org/10.1016/j.anihpc.2017.11.007},
      review={\MR{3813967}},
}

\bib{Casas-trroeltsch-2015}{article}{
      author={Casas, E.},
      author={Tr\"{o}ltzsch, F.},
       title={Second order optimality conditions and their role in {PDE}
  control},
        date={2015},
        ISSN={0012-0456,1869-7135},
     journal={Jahresber. Dtsch. Math.-Ver.},
      volume={117},
      number={1},
       pages={3\ndash 44},
         url={https://doi.org/10.1365/s13291-014-0109-3},
      review={\MR{3311948}},
}

\bib{CheGZ10}{article}{
      author={Chen, H.},
      author={Gong, X.},
      author={Zhou, A.},
       title={Numerical approximations of a nonlinear eigenvalue problem and
  applications to a density functional model},
        date={2010},
        ISSN={0170-4214},
     journal={Math. Methods Appl. Sci.},
      volume={33},
      number={14},
       pages={1723\ndash 1742},
         url={https://doi.org/10.1002/mma.1292},
      review={\MR{2723492}},
}

\bib{ChenWuEllipticTheoryLp}{book}{
      author={Chen, Y.},
      author={Wu, L.},
       title={Second order elliptic equations and elliptic systems},
      series={Translations of Mathematical Monographs},
   publisher={American Mathematical Society, Providence, RI},
        date={1998},
      volume={174},
        ISBN={0-8218-0970-9},
         url={https://doi.org/10.1090/mmono/174},
        note={Translated from the 1991 Chinese original by Bei Hu},
      review={\MR{1616087}},
}

\bib{Chen97}{article}{
      author={Chen, Z.},
       title={Mixed finite element methods for a dynamical {G}inzburg-{L}andau
  model in superconductivity},
        date={1997},
        ISSN={0029-599X},
     journal={Numer. Math.},
      volume={76},
      number={3},
       pages={323\ndash 353},
         url={https://doi.org/10.1007/s002110050266},
      review={\MR{1452512}},
}

\bib{CheD01}{article}{
      author={Chen, Z.},
      author={Dai, S.},
       title={Adaptive {G}alerkin methods with error control for a dynamical
  {G}inzburg-{L}andau model in superconductivity},
        date={2001},
        ISSN={0036-1429},
     journal={SIAM J. Numer. Anal.},
      volume={38},
      number={6},
       pages={1961\ndash 1985},
         url={https://doi.org/10.1137/S0036142998349102},
      review={\MR{1856238}},
}

\bib{CosD00}{article}{
      author={Costabel, M.},
      author={Dauge, M.},
       title={Singularities of electromagnetic fields in polyhedral domains},
        date={2000},
        ISSN={0003-9527,1432-0673},
     journal={Arch. Ration. Mech. Anal.},
      volume={151},
      number={3},
       pages={221\ndash 276},
         url={https://doi.org/10.1007/s002050050197},
      review={\MR{1753704}},
}

\bib{DST21}{article}{
      author={Diening, Lars},
      author={Storn, Johannes},
      author={Tscherpel, Tabea},
       title={On the {S}obolev and {$L^p$}-stability of the
  {$L^2$}-projection},
        date={2021},
        ISSN={0036-1429,1095-7170},
     journal={SIAM J. Numer. Anal.},
      volume={59},
      number={5},
       pages={2571\ndash 2607},
         url={https://doi.org/10.1137/20M1358013},
      review={\MR{4320894}},
}

\bib{DHW24}{article}{
      author={D\"oding, C.},
      author={Henning, P.},
      author={W\"arneg{\aa}rd, J.},
       title={A two level approach for simulating {B}ose--{E}instein
  condensates by localized orthogonal decomposition},
        date={2024+},
     journal={M2AN Math. Model. Numer. Anal.},
      eprint={https://doi.org/10.1051/m2an/2024040},
         url={https://doi.org/10.1051/m2an/2024040},
}

\bib{DoHaMa23}{article}{
      author={Dong, Z.},
      author={Hauck, M.},
      author={Maier, R.},
       title={An improved high-order method for elliptic multiscale problems},
        date={2023},
        ISSN={0036-1429,1095-7170},
     journal={SIAM J. Numer. Anal.},
      volume={61},
      number={4},
       pages={1918\ndash 1937},
         url={https://doi.org/10.1137/22M153392X},
      review={\MR{4620425}},
}

\bib{DoeH24}{article}{
      author={D\"{o}rich, B.},
      author={Henning, P.},
       title={Error bounds for discrete minimizers of the {G}inzburg–{L}andau
  energy in the high-\(\boldsymbol{\kappa }\) regime},
        date={2024},
     journal={SIAM J. Numer. Anal.},
      volume={62},
      number={3},
       pages={1313\ndash 1343},
      eprint={https://doi.org/10.1137/23M1560938},
         url={https://doi.org/10.1137/23M1560938},
}

\bib{DouglasDupontWahlbin74}{article}{
      author={Douglas, Jim, Jr.},
      author={Dupont, Todd},
      author={Wahlbin, Lars},
       title={The stability in {$L\sp{q}$} of the {$L\sp{2}$}-projection into
  finite element function spaces},
        date={1974/75},
        ISSN={0029-599X,0945-3245},
     journal={Numer. Math.},
      volume={23},
       pages={193\ndash 197},
         url={https://doi.org/10.1007/BF01400302},
      review={\MR{383789}},
}

\bib{Du94b}{article}{
      author={Du, Q.},
       title={Finite element methods for the time-dependent {G}inzburg-{L}andau
  model of superconductivity},
        date={1994},
        ISSN={0898-1221},
     journal={Comput. Math. Appl.},
      volume={27},
      number={12},
       pages={119\ndash 133},
         url={https://doi.org/10.1016/0898-1221(94)90091-4},
      review={\MR{1284135}},
}

\bib{Du94}{article}{
      author={Du, Q.},
       title={Global existence and uniqueness of solutions of the
  time-dependent {G}inzburg-{L}andau model for superconductivity},
        date={1994},
        ISSN={0003-6811},
     journal={Appl. Anal.},
      volume={53},
      number={1-2},
       pages={1\ndash 17},
         url={https://doi.org/10.1080/00036819408840240},
      review={\MR{1379180}},
}

\bib{Du97}{article}{
      author={Du, Q.},
       title={Discrete gauge invariant approximations of a time dependent
  {G}inzburg-{L}andau model of superconductivity},
        date={1998},
        ISSN={0025-5718},
     journal={Math. Comp.},
      volume={67},
      number={223},
       pages={965\ndash 986},
         url={https://doi.org/10.1090/S0025-5718-98-00954-5},
      review={\MR{1464143}},
}

\bib{Du2003}{incollection}{
      author={Du, Q.},
       title={Diverse vortex dynamics in superfluids},
        date={2003},
   booktitle={Current trends in scientific computing ({X}i'an, 2002)},
      series={Contemp. Math.},
      volume={329},
   publisher={Amer. Math. Soc., Providence, RI},
       pages={105\ndash 117},
         url={https://doi.org/10.1090/conm/329/05847},
      review={\MR{2022637}},
}

\bib{DuGray96}{article}{
      author={Du, Q.},
      author={Gray, P.},
       title={High-kappa limits of the time-dependent {G}inzburg-{L}andau
  model},
        date={1996},
        ISSN={0036-1399},
     journal={SIAM J. Appl. Math.},
      volume={56},
      number={4},
       pages={1060\ndash 1093},
         url={https://doi.org/10.1137/S0036139995280506},
      review={\MR{1398408}},
}

\bib{DuGP92}{article}{
      author={Du, Q.},
      author={Gunzburger, M.~D.},
      author={Peterson, J.~S.},
       title={Analysis and approximation of the {G}inzburg-{L}andau model of
  superconductivity},
        date={1992},
        ISSN={0036-1445},
     journal={SIAM Rev.},
      volume={34},
      number={1},
       pages={54\ndash 81},
         url={https://doi.org/10.1137/1034003},
      review={\MR{1156289}},
}

\bib{DuGP93}{article}{
      author={Du, Q.},
      author={Gunzburger, M.~D.},
      author={Peterson, J.~S.},
       title={Modeling and analysis of a periodic {G}inzburg-{L}andau model for
  type-{${\rm II}$} superconductors},
        date={1993},
        ISSN={0036-1399},
     journal={SIAM J. Appl. Math.},
      volume={53},
      number={3},
       pages={689\ndash 717},
         url={https://doi.org/10.1137/0153035},
      review={\MR{1218380}},
}

\bib{QuJu05}{article}{
      author={Du, Q.},
      author={Ju, L.},
       title={Approximations of a {G}inzburg-{L}andau model for superconducting
  hollow spheres based on spherical centroidal {V}oronoi tessellations},
        date={2005},
        ISSN={0025-5718},
     journal={Math. Comp.},
      volume={74},
      number={251},
       pages={1257\ndash 1280},
         url={https://doi.org/10.1090/S0025-5718-04-01719-3},
      review={\MR{2137002}},
}

\bib{DuNicolaidesWu98}{article}{
      author={Du, Q.},
      author={Nicolaides, R.~A.},
      author={Wu, X.},
       title={Analysis and convergence of a covolume approximation of the
  {G}inzburg-{L}andau model of superconductivity},
        date={1998},
        ISSN={0036-1429},
     journal={SIAM J. Numer. Anal.},
      volume={35},
      number={3},
       pages={1049\ndash 1072},
         url={https://doi.org/10.1137/S0036142996302852},
      review={\MR{1619855}},
}

\bib{DuanZhang22}{article}{
      author={Duan, H.},
      author={Zhang, Q.},
       title={Residual-based a posteriori error estimates for the
  time-dependent {G}inzburg-{L}andau equations of superconductivity},
        date={2022},
        ISSN={0885-7474},
     journal={J. Sci. Comput.},
      volume={93},
      number={3},
       pages={Paper No. 79, 47},
         url={https://doi.org/10.1007/s10915-022-02041-0},
      review={\MR{4507136}},
}

\bib{EHMP19}{article}{
      author={Engwer, C.},
      author={Henning, P.},
      author={M{\aa}lqvist, A.},
      author={Peterseim, D.},
       title={Efficient implementation of the localized orthogonal
  decomposition method},
        date={2019},
        ISSN={0045-7825},
     journal={Comput. Methods Appl. Mech. Engrg.},
      volume={350},
       pages={123\ndash 153},
         url={https://doi.org/10.1016/j.cma.2019.02.040},
      review={\MR{3926249}},
}

\bib{Evans10}{book}{
      author={Evans, L.~C.},
       title={Partial differential equations},
     edition={Second},
      series={Graduate Studies in Mathematics},
   publisher={American Mathematical Society, Providence, RI},
        date={2010},
      volume={19},
        ISBN={978-0-8218-4974-3},
         url={https://doi.org/10.1090/gsm/019},
      review={\MR{2597943}},
}

\bib{Finnetal01}{article}{
      author={Finnemore, D.~K.},
      author={Ostenson, J.~E.},
      author={Bud'ko, S.~L.},
      author={Lapertot, G.},
      author={Canfield, P.~C.},
       title={Thermodynamic and transport properties of superconducting
  ${Mg}^{10}{B}_{2}$},
        date={2001},
     journal={Phys. Rev. Lett.},
      volume={86},
       pages={2420\ndash 2422},
      eprint={https://doi.org/10.1103/PhysRevLett.86.2420},
         url={https://link.aps.org/doi/10.1103/PhysRevLett.86.2420},
}

\bib{ForH10}{book}{
      author={Fournais, S.},
      author={Helffer, B.},
       title={Spectral methods in surface superconductivity},
      series={Progress in Nonlinear Differential Equations and their
  Applications},
   publisher={Birkh\"auser Boston, Inc., Boston, MA},
        date={2010},
      volume={77},
        ISBN={978-0-8176-4796-4},
      review={\MR{2662319}},
}

\bib{GHV17}{article}{
      author={Gallistl, D.},
      author={Henning, P.},
      author={Verf\"{u}rth, B.},
       title={Numerical homogenization of {${\bf{H}}(\rm curl)$}-problems},
        date={2018},
        ISSN={0036-1429},
     journal={SIAM J. Numer. Anal.},
      volume={56},
      number={3},
       pages={1570\ndash 1596},
         url={https://doi.org/10.1137/17M1133932},
      review={\MR{3810505}},
}

\bib{GJX19}{article}{
      author={Gao, H.},
      author={Ju, L.},
      author={Xie, W.},
       title={A stabilized semi-implicit {E}uler gauge-invariant method for the
  time-dependent {G}inzburg-{L}andau equations},
        date={2019},
        ISSN={0885-7474},
     journal={J. Sci. Comput.},
      volume={80},
      number={2},
       pages={1083\ndash 1115},
         url={https://doi.org/10.1007/s10915-019-00968-5},
      review={\MR{3977199}},
}

\bib{GaoS18}{article}{
      author={Gao, H.},
      author={Sun, W.},
       title={Analysis of linearized {G}alerkin-mixed {FEM}s for the
  time-dependent {G}inzburg-{L}andau equations of superconductivity},
        date={2018},
        ISSN={1019-7168},
     journal={Adv. Comput. Math.},
      volume={44},
      number={3},
       pages={923\ndash 949},
         url={https://doi.org/10.1007/s10444-017-9568-2},
      review={\MR{3814665}},
}

\bib{GiT01}{book}{
      author={Gilbarg, D.},
      author={Trudinger, N.~S.},
       title={Elliptic partial differential equations of second order},
      series={Classics in Mathematics},
   publisher={Springer-Verlag, Berlin},
        date={2001},
        ISBN={3-540-41160-7},
        note={Reprint of the 1998 edition},
      review={\MR{1814364}},
}

\bib{GirR86}{book}{
      author={Girault, V.},
      author={Raviart, P.-A.},
       title={Finite element methods for {N}avier-{S}tokes equations},
      series={Springer Series in Computational Mathematics},
   publisher={Springer-Verlag, Berlin},
        date={1986},
      volume={5},
        ISBN={3-540-15796-4},
         url={https://doi.org/10.1007/978-3-642-61623-5},
        note={Theory and algorithms},
      review={\MR{851383}},
}

\bib{Grisvard85}{book}{
      author={Grisvard, P.},
       title={Elliptic problems in nonsmooth domains},
      series={Monographs and Studies in Mathematics},
   publisher={Pitman (Advanced Publishing Program), Boston, MA},
        date={1985},
      volume={24},
        ISBN={0-273-08647-2},
      review={\MR{775683}},
}

\bib{HaP23}{article}{
      author={Hauck, M.},
      author={Peterseim, D.},
       title={Super-localization of elliptic multiscale problems},
        date={2023},
        ISSN={0025-5718},
     journal={Math. Comp.},
      volume={92},
      number={341},
       pages={981\ndash 1003},
         url={https://doi.org/10.1090/mcom/3798},
      review={\MR{4550317}},
}

\bib{HeP13}{article}{
      author={Henning, P.},
      author={Peterseim, D.},
       title={Oversampling for the {M}ultiscale {F}inite {E}lement {M}ethod},
        date={2013},
        ISSN={1540-3459},
     journal={SIAM Multiscale Model. Simul.},
      volume={11},
      number={4},
       pages={1149\ndash 1175},
         url={http://dx.doi.org/10.1137/120900332},
      review={\MR{3123820}},
}

\bib{HeW22}{article}{
      author={Henning, P.},
      author={W{\"a}rneg{\aa}rd, J.},
       title={Superconvergence of time invariants for the {G}ross-{P}itaevskii
  equation},
        date={2022},
        ISSN={0025-5718},
     journal={Math. Comp.},
      volume={91},
      number={334},
       pages={509\ndash 555},
         url={https://doi.org/10.1090/mcom/3693},
      review={\MR{4379968}},
}

\bib{HocJS15}{article}{
      author={Hochbruck, M.},
      author={Jahnke, T.},
      author={Schnaubelt, R.},
       title={Convergence of an {ADI} splitting for {M}axwell's equations},
        date={2015},
        ISSN={0029-599X,0945-3245},
     journal={Numer. Math.},
      volume={129},
      number={3},
       pages={535\ndash 561},
         url={https://doi.org/10.1007/s00211-014-0642-0},
      review={\MR{3311460}},
}

\bib{LaLjMa24}{article}{
      author={Lang, A.},
      author={Ljung, P.},
      author={M{\aa}lqvist, A.},
       title={Localized orthogonal decomposition for a multiscale parabolic
  stochastic partial differential equation},
        date={2024},
        ISSN={1540-3459,1540-3467},
     journal={Multiscale Model. Simul.},
      volume={22},
      number={1},
       pages={204\ndash 229},
         url={https://doi.org/10.1137/23M1569216},
      review={\MR{4695712}},
}

\bib{Li17}{article}{
      author={Li, B.},
       title={Convergence of a decoupled mixed {FEM} for the dynamic
  {G}inzburg-{L}andau equations in nonsmooth domains with incompatible initial
  data},
        date={2017},
        ISSN={0008-0624},
     journal={Calcolo},
      volume={54},
      number={4},
       pages={1441\ndash 1480},
         url={https://doi.org/10.1007/s10092-017-0237-0},
      review={\MR{3735822}},
}

\bib{LiZ15}{article}{
      author={Li, B.},
      author={Zhang, Z.},
       title={A new approach for numerical simulation of the time-dependent
  {G}inzburg-{L}andau equations},
        date={2015},
        ISSN={0021-9991},
     journal={J. Comput. Phys.},
      volume={303},
       pages={238\ndash 250},
         url={http://dx.doi.org/10.1016/j.jcp.2015.09.049},
      review={\MR{3422711}},
}

\bib{LiZ17}{article}{
      author={Li, B.},
      author={Zhang, Z.},
       title={Mathematical and numerical analysis of the time-dependent
  {G}inzburg-{L}andau equations in nonconvex polygons based on {H}odge
  decomposition},
        date={2017},
        ISSN={0025-5718},
     journal={Math. Comp.},
      volume={86},
      number={306},
       pages={1579\ndash 1608},
         url={http://dx.doi.org/10.1090/mcom/3177},
      review={\MR{3626529}},
}

\bib{LjMaMa22}{article}{
      author={Ljung, P.},
      author={Maier, R.},
      author={M{\aa}lqvist, A.},
       title={A space-time multiscale method for parabolic problems},
        date={2022},
        ISSN={1540-3459,1540-3467},
     journal={Multiscale Model. Simul.},
      volume={20},
      number={2},
       pages={714\ndash 740},
         url={https://doi.org/10.1137/21M1446605},
      review={\MR{4490296}},
}

\bib{MaQ23}{article}{
      author={Ma, L.},
      author={Qiao, Z.},
       title={An energy stable and maximum bound principle preserving scheme
  for the dynamic {G}inzburg-{L}andau equations under the temporal gauge},
        date={2023},
        ISSN={0036-1429,1095-7170},
     journal={SIAM J. Numer. Anal.},
      volume={61},
      number={6},
       pages={2695\ndash 2717},
         url={https://doi.org/10.1137/22M1539812},
      review={\MR{4667738}},
}

\bib{MaVer22}{article}{
      author={Maier, B.},
      author={Verf\"urth, B.},
       title={Numerical upscaling for wave equations with time-dependent
  multiscale coefficients},
        date={2022},
        ISSN={1540-3459,1540-3467},
     journal={Multiscale Model. Simul.},
      volume={20},
      number={4},
       pages={1169\ndash 1190},
         url={https://doi.org/10.1137/21M1438244},
      review={\MR{4500085}},
}

\bib{Mai21}{article}{
      author={Maier, R.},
       title={A high-order approach to elliptic multiscale problems with
  general unstructured coefficients},
        date={2021},
        ISSN={0036-1429},
     journal={SIAM J. Numer. Anal.},
      volume={59},
      number={2},
       pages={1067\ndash 1089},
         url={https://doi.org/10.1137/20M1364321},
      review={\MR{4246089}},
}

\bib{MaPer18}{article}{
      author={M{\aa}lqvist, A.},
      author={Persson, A.},
       title={Multiscale techniques for parabolic equations},
        date={2018},
        ISSN={0029-599X,0945-3245},
     journal={Numer. Math.},
      volume={138},
      number={1},
       pages={191\ndash 217},
         url={https://doi.org/10.1007/s00211-017-0905-7},
      review={\MR{3745014}},
}

\bib{MaP14}{article}{
      author={M{\aa}lqvist, A.},
      author={Peterseim, D.},
       title={Localization of elliptic multiscale problems},
        date={2014},
        ISSN={0025-5718},
     journal={Math. Comp.},
      volume={83},
      number={290},
       pages={2583\ndash 2603},
         url={http://dx.doi.org/10.1090/S0025-5718-2014-02868-8},
      review={\MR{3246801}},
}

\bib{MaP21}{book}{
      author={M{\aa}lqvist, A.},
      author={Peterseim, D.},
       title={Numerical homogenization by localized orthogonal decomposition},
      series={SIAM Spotlights},
   publisher={Society for Industrial and Applied Mathematics (SIAM),
  Philadelphia, PA},
        date={[2021] \copyright 2021},
      volume={5},
        ISBN={978-1-611976-44-1},
      review={\MR{4191211}},
}

\bib{McCSe65}{article}{
      author={McConville, T.},
      author={Serin, B.},
       title={{G}inzburg-{L}andau parameters of type-{II} superconductors},
        date={1965},
     journal={Phys. Rev.},
      volume={140},
       pages={A1169\ndash A1177},
      eprint={https://link.aps.org/doi/10.1103/PhysRev.140.A1169},
         url={https://link.aps.org/doi/10.1103/PhysRev.140.A1169},
}

\bib{Neu97}{book}{
      author={Neuberger, J.~W.},
       title={Sobolev gradients and differential equations},
      series={Lecture Notes in Mathematics},
   publisher={Springer-Verlag, Berlin},
        date={1997},
      volume={1670},
        ISBN={3-540-63537-8},
         url={https://doi.org/10.1007/BFb0092831},
      review={\MR{1624197}},
}

\bib{Pet17}{article}{
      author={Peterseim, D.},
       title={Eliminating the pollution effect in {H}elmholtz problems by local
  subscale correction},
        date={2017},
        ISSN={0025-5718},
     journal={Math. Comp.},
      volume={86},
      number={305},
       pages={1005\ndash 1036},
         url={http://dx.doi.org/10.1090/mcom/3156},
      review={\MR{3614010}},
}

\bib{SaS07}{book}{
      author={Sandier, E.},
      author={Serfaty, S.},
       title={Vortices in the magnetic {G}inzburg-{L}andau model},
      series={Progress in Nonlinear Differential Equations and their
  Applications},
   publisher={Birkh\"{a}user Boston, Inc., Boston, MA},
        date={2007},
      volume={70},
        ISBN={978-0-8176-4316-4; 0-8176-4316-8},
      review={\MR{2279839}},
}

\bib{SaS12}{article}{
      author={Sandier, E.},
      author={Serfaty, S.},
       title={From the {G}inzburg-{L}andau model to vortex lattice problems},
        date={2012},
        ISSN={0010-3616},
     journal={Comm. Math. Phys.},
      volume={313},
      number={3},
       pages={635\ndash 743},
         url={https://doi.org/10.1007/s00220-012-1508-x},
      review={\MR{2945619}},
}

\bib{Sch74}{article}{
      author={Schatz, Alfred~H.},
       title={An observation concerning {R}itz-{G}alerkin methods with
  indefinite bilinear forms},
        date={1974},
        ISSN={0025-5718},
     journal={Math. Comp.},
      volume={28},
       pages={959\ndash 962},
      review={\MR{0373326 (51 \#9526)}},
}

\bib{Ser99}{article}{
      author={Serfaty, S.},
       title={Stable configurations in superconductivity: uniqueness,
  multiplicity, and vortex-nucleation},
        date={1999},
        ISSN={0003-9527},
     journal={Arch. Ration. Mech. Anal.},
      volume={149},
      number={4},
       pages={329\ndash 365},
         url={https://doi.org/10.1007/s002050050177},
      review={\MR{1731999}},
}

\bib{SeS10}{incollection}{
      author={Serfaty, S.},
      author={Sandier, E.},
       title={Vortex patterns in {G}inzburg-{L}andau minimizers},
        date={2010},
   booktitle={X{VI}th {I}nternational {C}ongress on {M}athematical {P}hysics},
   publisher={World Sci. Publ., Hackensack, NJ},
       pages={246\ndash 264},
         url={https://doi.org/10.1142/9789814304634_0014},
      review={\MR{2730781}},
}

\bib{StZw97}{article}{
      author={Stintzing, S.},
      author={Zwerger, W.},
       title={{G}inzburg-{L}andau theory of superconductors with short
  coherence length},
        date={1997},
     journal={Phys. Rev. B},
      volume={56},
       pages={9004\ndash 9014},
      eprint={https://link.aps.org/doi/10.1103/PhysRevB.56.9004},
         url={https://link.aps.org/doi/10.1103/PhysRevB.56.9004},
}

\bib{Struwe08}{book}{
      author={Struwe, M.},
       title={Variational methods},
     edition={Fourth},
      series={Ergebnisse der Mathematik und ihrer Grenzgebiete. 3. Folge. A
  Series of Modern Surveys in Mathematics [Results in Mathematics and Related
  Areas. 3rd Series. A Series of Modern Surveys in Mathematics]},
   publisher={Springer-Verlag, Berlin},
        date={2008},
      volume={34},
        ISBN={978-3-540-74012-4},
        note={Applications to nonlinear partial differential equations and
  Hamiltonian systems},
      review={\MR{2431434}},
}

\bib{WuZh22}{article}{
      author={Wu, Z.},
      author={Zhang, Z.},
       title={Convergence analysis of the localized orthogonal decomposition
  method for the semiclassical {S}chr\"{o}dinger equations with multiscale
  potentials},
        date={2022},
        ISSN={0885-7474},
     journal={J. Sci. Comput.},
      volume={93},
      number={3},
       pages={Paper No. 73, 30},
         url={https://doi.org/10.1007/s10915-022-02038-9},
      review={\MR{4505120}},
}

\end{biblist}
\end{bibdiv}

\appendix

\section{Proofs of the higher regularity results}
\label{sec:H2_reg_A}

In this section, we collect the proofs of Theorem~\ref{thm:reg_MagPot_H2_H3_gen} and Lemma~\ref{lemma-H3-regularity-u}. As we could not find any suitable reference which covers our cases, we present the proofs here in the appendix, even though these results might be known to many experts. 

For the sake of notation, we restrict ourselves to the unit cube $\Omega = \intervalop{0,1}^3$, but the case of general cuboids is easily derived by a linear transformation. For $\Omegaextk{s} = \intervalop{-s,s}^3$, $s \in \mathbb{R}^+$, the idea is to use reflections to extend the functions from $\Omega$ to the extended domain $\Omegaextk{1}$, and then periodically to any 
$\Omegaextk{2k+1}$ for $k \in \N$ while preserving its regularity.

The main intuition for this procedure comes from the eigenbasis of the Laplacian on a cube. For example, for homogeneous Dirichlet boundary conditions the basis on $\Omega$ consists of functions 
\begin{equation}
	\sin(\pi k x_1) \sin(\pi \ell x_2) \sin(\pi m x_3), \quad k,\ell,m \geq 1,
\end{equation}
and their natural extension is given by first performing an odd reflection on each face and then obtain a periodic function on $\Omegaextk{1}$. For Neumann boundary conditions, the same idea applies with the basis
\begin{equation}
	\cos(\pi k x_1) \cos(\pi \ell x_2) \cos(\pi m x_3), \quad k,\ell,m \geq 0,
\end{equation}
and hence even reflections on each face. For mixed problems the correct combination of sine and cosine enable us to extend this also to the mixed case.

\subsection{Neumann boundary conditions}

Let us consider the Neumann problem in Lemma~\ref{lemma-H3-regularity-u} given by
\begin{equation}
	- \Delta u = g \mbox{ in } \Omega\qquad \mbox{and} \qquad 
\nabla u \cdot \nu \vert_{\partial \Omega} = 0,
\end{equation}
for $g \in L^2(\Omega)$. For a function $f\in C(\bar{\Omega})$, we define the Neumann extension $	M_N \colon 
f \to 	
\fext$ with
\begin{align}
\fext(x_1,x_2,x_3) 
	=
	\begin{cases}
		f(x_1,x_2,x_3), \, &x_1 \in \intervalop{0,1}, x_2 \in \intervalop{0,1} ,x_3 \in \intervalop{0,1},
		\\
		f(x_1,x_2,-x_3), \quad &x_1 \in \intervalop{0,1}, x_2 \in \intervalop{0,1} ,x_3 \in \intervalop{-1,0},
		\\
		f(x_1,-x_2,-x_3), \quad &x_1 \in \intervalop{0,1}, x_2 \in \intervalop{-1,0} ,x_3 \in \intervalop{-1,0},
		\\
		f(x_1,-x_2,x_3), \quad &x_1 \in \intervalop{0,1}, x_2 \in \intervalop{-1,0} ,x_3 \in \intervalop{0,1},
		\\
		f(-x_1,x_2,x_3), \quad &x_1 \in \intervalop{-1,0}, x_2 \in \intervalop{0,1} ,x_3 \in \intervalop{0,1},
		\\
		f(-x_1,x_2,-x_3), \quad &x_1 \in \intervalop{-1,0}, x_2 \in \intervalop{0,1} ,x_3 \in \intervalop{-1,0},
		\\
		f(-x_1,-x_2,-x_3), \quad &x_1 \in \intervalop{-1,0}, x_2 \in \intervalop{-1,0} ,x_3 \in \intervalop{-1,0},
		\\
		f(-x_1,-x_2,x_3), \quad &x_1 \in \intervalop{-1,0}, x_2 \in \intervalop{-1,0} ,x_3 \in \intervalop{0,1}.
	\end{cases}
\end{align}
Without changing the notation, we extend the operator $M_N$ also to $L^2$-functions.

\begin{lemma} \label{lem:MN_H1}
	Let $f \in H^1(\Omega)$. Then the extension $M_N f$ satisfies:
	
	(a) $M_N f \in H^1(\Omegaextk{1})$ with 
	$\norm{H^1(\Omegaextk{1})}{M_N f}
	 \leq 
	 2^3 \norm{H^1(\Omega)}{M_N f}$. 
	
	(b) The periodic extension of $M_N f$ satisfies $M_N f \in H^1(\Omegaextk{2k+1}) $ for all $k\geq 1$ with
	\begin{equation}
	\norm{H^1(\Omegaextk{2k+1})}{M_N f}
\leq 
(2k+2)^3 \norm{H^1(\Omega)}{M_N f}.
	\end{equation}
\end{lemma}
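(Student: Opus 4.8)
The statement concerns the Neumann extension operator $M_N$, which reflects a function across the three coordinate faces of the cube to produce a function on the doubled cube $\Omegaextk{1}$, and then studies its periodic continuation to $\Omegaextk{2k+1}$.

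\textbf{Approach.} The plan is to verify that the reflected function has no jump in its traces across the interior reflection planes, which is exactly what guarantees that $M_N f$ lies in $H^1$ of the enlarged domain rather than merely being piecewise $H^1$. First I would establish part (a). On each of the eight octants of $\Omegaextk{1}$ the function $M_N f$ is, by definition, a copy of $f$ composed with a reflection (a sign change in one or more coordinates), so on each octant $M_N f$ is $H^1$ with the same local norm as $f$ on $\Omega$; summing the eight contributions gives $\norm{H^1(\Omegaextk{1})}{M_N f}^2 = 8 \norm{H^1(\Omega)}{f}^2$, which yields the stated bound with the factor $2^3$. The genuine content is to check that these eight pieces glue together to a global $H^1$-function. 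For this I would use the standard characterization: a piecewise-$H^1$ function on a partition by hyperplanes belongs to $H^1$ of the whole domain if and only if the traces from adjacent pieces agree on the shared interfaces. Across, say, the plane $x_3 = 0$, the two adjacent definitions are $f(x_1,x_2,x_3)$ and $f(x_1,x_2,-x_3)$, whose traces at $x_3 = 0$ coincide trivially; the same holds across $x_1 = 0$ and $x_2 = 0$ by the symmetric construction. Hence the traces match on every interior face and $M_N f \in H^1(\Omegaextk{1})$. I would note that the even reflection is precisely the one that preserves $H^1$-regularity (as opposed to $H^2$, which would additionally require a vanishing normal derivative), which is why no boundary condition on $f$ is needed for this step.

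\textbf{Part (b).} Here $M_N f$ on $\Omegaextk{1}$ is by construction $2$-periodic in each coordinate after the reflection (even reflection makes the function symmetric about each integer plane, hence periodic with period $2$), so its periodic extension to the larger cube $\Omegaextk{2k+1}$ is well-defined. The key point to record is that periodic extension across the planes $x_i \in 2\mathbb{Z}$ again introduces no trace jump: on $\Omegaextk{1}$ the function already satisfies matching values on opposite faces $x_i = 1$ and $x_i = -1$ because these are identified under the $2$-periodicity built in by the even reflection. Therefore the periodic continuation stays in $H^1(\Omegaextk{2k+1})$. For the norm I would simply count the number of translated copies of $\Omegaextk{1}$ needed to tile $\Omegaextk{2k+1}$: the edge length grows from $2$ to $2(2k+1)$, so $\Omegaextk{2k+1}$ contains $(2k+1)^3$ copies of $\Omegaextk{1}$, giving $\norm{H^1(\Omegaextk{2k+1})}{M_N f}^2 = (2k+1)^3 \norm{H^1(\Omegaextk{1})}{M_N f}^2$ and hence $\norm{H^1(\Omegaextk{2k+1})}{M_N f} \leq (2k+1)^{3/2} \cdot 2^{3/2} \norm{H^1(\Omega)}{f}$. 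The bound $(2k+2)^3$ stated in the lemma is a crude but convenient overestimate of $((2k+1) \cdot 2)^{3/2} = (4k+2)^{3/2}$, and since $(4k+2)^{3/2} \leq (2k+2)^3$ for $k \geq 1$, the claimed inequality follows directly.

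\textbf{Main obstacle.} The only genuinely delicate point is the trace-matching across reflection planes; everything else is bookkeeping of copies and norms. I expect the cleanest way to justify the gluing is to invoke the interface characterization of $H^1$ (equivalently, that the distributional gradient of the glued function has no singular part on the interfaces precisely when the traces agree), rather than manipulating difference quotients by hand. One should be mildly careful that the reflections described in the piecewise definition are consistent across \emph{all three} families of planes simultaneously, i.e.\ that the eight octant-definitions are pairwise compatible on every shared two-dimensional face and not merely across one family; but since each reflection acts on a single coordinate and commutes with the others, the compatibility holds uniformly and no orientation conflicts arise.
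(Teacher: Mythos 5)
Your proposal is correct and follows essentially the same route as the paper: on each octant $M_N f$ is an $H^1$-copy of $f$, the even reflection makes the traces match across the interior planes (and, after identifying periodic extension with iterated even reflection, across the outer faces as well), and the norm bound is pure cube counting. Your explicit invocation of the interface characterization of $H^1$ and the sharper constant $(4k+2)^{3/2}\le(2k+2)^3$ only make the paper's brief argument more precise.
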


\begin{proof}
(a)	Since $M_N f$ is in $H^1$ on each subdomain, it remains to check that the trace is continuous in the $L^2$ sense on the faces. However, the even reflection ensures this continuity. Since the $H^1$-norm on each subdomain is equal to the $H^1$-norm on $\Omega$, the estimate in (a) follows by counting cubes. 
	
(b)	For the periodic case, it is sufficient to note that the periodic extension from $\Omegaextk{1}$ is equivalent to iteratively performing even reflections on the outer faces. In particular, this implies continuity at all outer faces of $\Omegaextk{1}$, by repeating the calculation of the interior faces.
\end{proof}

We now turn to the case of preserving $H^2$-regularity.

\begin{lemma} \label{lem:MN_H2} 
	Let $f \in  H^2(\Omega)$ with $\nabla f \cdot \nu \vert_{\partial \Omega} =0$. Then the extension $M_N f$ satisfies:
	
	(a) $M_N f \in H^2(\Omegaextk{1})$ with 
	$\norm{H^2(\Omegaextk{1})}{M_N f}
	\leq 
	2^3 \norm{H^2(\Omega)}{M_N f}$. 
	
	(b) The periodic extension of $M_N f$ satisfies $M_N f \in H^2(\Omegaextk{2k+1}) $ for all $k\geq 1$ with
	\begin{equation}
		\norm{H^2(\Omegaextk{2k+1})}{M_N f}
		\leq 
		(2k+2)^3 \norm{H^2(\Omega)}{M_N f}.
		\end{equation}
\end{lemma}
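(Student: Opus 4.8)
The plan is to reduce the $H^2$-gluing across the reflection faces to the $H^1$-gluing of Lemma~\ref{lem:MN_H1}, applied to the first-order derivatives. Recall that a piecewise $H^2$-function on $\Omegaextk{1}$, split by the interior coordinate planes $\{x_i=0\}$, belongs to $H^2(\Omegaextk{1})$ precisely when it lies in $H^1(\Omegaextk{1})$ and each of its first partial derivatives lies in $H^1(\Omegaextk{1})$; the latter is again governed by the matching of $L^2$-traces on the faces, exactly as in the proof of Lemma~\ref{lem:MN_H1}. Since $f \in H^2(\Omega) \subset H^1(\Omega)$, part (a) of Lemma~\ref{lem:MN_H1} already gives $M_N f \in H^1(\Omegaextk{1})$, so it remains to show $\partial_j(M_N f) \in H^1(\Omegaextk{1})$ for $j=1,2,3$.

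The key observation is that differentiating the even reflection turns it into a reflection with one sign change: a direct computation on each octant shows that $\partial_j(M_N f)$ is the extension of $\partial_j f$ which is even in the variables $x_i$ with $i\neq j$ and odd in $x_j$. Across an interior face $\{x_i=0\}$ with $i\neq j$ the reflection of $\partial_j f$ is even, so its traces from the two sides coincide automatically and the $H^1$-gluing applies. Across the face $\{x_j=0\}$ the reflection is odd, and its one-sided traces are $\pm(\partial_j f)|_{x_j=0}$; these match (so that no singular part appears in the weak gradient) if and only if $(\partial_j f)|_{x_j=0}=0$, which is precisely the Neumann condition $\nabla f \cdot \nu = 0$ on the face $\{x_j=0\}$ of $\partial \Omega$. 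Hence each $\partial_j(M_N f)$ lies in $H^1(\Omegaextk{1})$, which yields $M_N f \in H^2(\Omegaextk{1})$; the norm estimate in (a) follows by counting the eight octants exactly as in Lemma~\ref{lem:MN_H1}(a).

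For (b) I would argue as in Lemma~\ref{lem:MN_H1}(b), noting that the $2$-periodic extension of the (in each variable even) function $M_N f$ coincides with iterated even reflection across all faces $\{x_i = m\}$, $m\in\mathbb{Z}$. The trace-matching of $M_N f$ and of its first derivatives across the faces at even integers is the interior argument just described (using the Neumann condition at $\{x_j=0\}$), while across the faces at odd integers it follows from the Neumann condition on the remaining faces $\{x_j=1\}$ of $\partial\Omega$, on which $(\partial_j f)|_{x_j=1}=0$. Applying the $H^1$-gluing lemma iteratively to $M_N f$ and to each $\partial_j(M_N f)$ gives $M_N f \in H^2(\Omegaextk{2k+1})$, and the constant $(2k+2)^3$ is obtained by the same cube-counting as in Lemma~\ref{lem:MN_H1}(b).

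The main obstacle is making the reflection/trace bookkeeping rigorous: one must verify carefully that $\partial_j(M_N f)$ is indeed the odd-in-$x_j$ extension of $\partial_j f$, and that the vanishing Neumann trace is exactly the condition needed to rule out a distributional jump in the weak derivative across $\{x_j=0\}$ (and, for (b), across $\{x_j=1\}$). Everything else is a routine adaptation of the $H^1$-gluing argument already carried out for Lemma~\ref{lem:MN_H1}.
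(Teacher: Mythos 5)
Your argument is correct, and it takes a genuinely (though modestly) different route from the paper's. The paper also reduces everything to trace matching across the reflection faces, but it only verifies continuity of the \emph{normal} traces of $\nabla(M_N f)$ (which in fact vanish, by the Neumann condition), concludes $\nabla (M_N f) \in H(\div,\Omegaextk{1})$ and hence $\Delta (M_N f)\in L^2(\Omegaextk{1})$, and then invokes elliptic regularity to upgrade to $H^2$. You instead check the traces of \emph{all} three first partial derivatives across \emph{all} interior faces, using the even/odd dichotomy: $\partial_j(M_N f)$ is even in $x_i$ for $i\neq j$ (traces match automatically) and odd in $x_j$ (traces match precisely because $\partial_j f$ vanishes on $\{x_j=0\}$, resp.\ on $\{x_j=1\}$ for the periodic part (b)). This gives $\partial_j(M_N f)\in H^1(\Omegaextk{1})$ for each $j$ and hence $M_N f\in H^2(\Omegaextk{1})$ directly by the $H^1$-gluing of Lemma~\ref{lem:MN_H1}, with the norm bound from counting octants. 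What each approach buys: the paper's route requires checking fewer trace conditions (only the normal ones), but it appeals to elliptic regularity, which as interior regularity strictly yields $H^2_{\mathrm{loc}}$ and needs the piecewise $H^2$ structure or the periodic extension to recover the bound up to $\partial\Omegaextk{1}$; your route is more elementary and self-contained, yielding the global $H^2$-bound on $\Omegaextk{1}$ (and on $\Omegaextk{2k+1}$) without any PDE machinery. Both proofs share the same computational core --- differentiating the reflection and exploiting the Neumann condition --- so they are close in substance.
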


\begin{proof}
	We note that it is sufficient to show that $\Delta \fext \in L^2(\Omegaextk{1})$ and elliptic regularity gives us the claim. We further note, that the periodic extension is handled as in Lemma~\ref{lem:MN_H1}.	
	
	Since we already know that $M_N f \in H^2$ on each subdomain and $M_N f \in H^1(\Omegaextk{1})$, in order to show that $\nabla \fext \in H(\div,\Omegaextk{1})$ holds, we have to prove that all normal traces of $\nabla \fext$ are continuous. 

	Computing the gradients on each subdomain, we obtain for the diagonal matrix
	$I_{a,b,c} = \diag(a,b,c)$ the expressions
	\begin{align}
		\nabla \fext(x_1,x_2,x_3) 
		=
		\begin{cases}
		I_{1,1,1}	\nabla	f \big|_{(x_1,x_2,x_3)}	, 
		\quad 
		&x_1 \in \intervalop{0,1}, x_2 \in \intervalop{0,1} ,x_3 \in \intervalop{0,1},
			\\
		I_{1,1,-1}	\nabla	f \big|_{(x_1,x_2,-x_3)},
		 \quad
		 &x_1 \in \intervalop{0,1}, x_2 \in \intervalop{0,1} ,x_3 \in \intervalop{-1,0},
			\\
		I_{1,-1,-1} \nabla	f \big|_{(x_1,-x_2,-x_3)}, 
		\quad &x_1 \in \intervalop{0,1}, x_2 \in \intervalop{-1,0} ,x_3 \in \intervalop{-1,0},
			\\
		I_{1,-1,1} \nabla	f \big|_{(x_1,-x_2,x_3)},
		\quad &x_1 \in \intervalop{0,1}, x_2 \in \intervalop{-1,0} ,x_3 \in \intervalop{0,1},
			\\
		I_{-1,1,1} \nabla	f \big|_{(-x_1,x_2,x_3)}, 
		\quad &x_1 \in \intervalop{-1,0}, x_2 \in \intervalop{0,1} ,x_3 \in \intervalop{0,1},
			\\
		I_{-1,1,-1} \nabla	f \big|_{(-x_1,x_2,-x_3)},
		 \quad &x_1 \in \intervalop{-1,0}, x_2 \in \intervalop{0,1} ,x_3 \in \intervalop{-1,0},
			\\
		I_{-1,-1,-1} \nabla	f \big|_{(-x_1,-x_2,-x_3)},
		 \quad &x_1 \in \intervalop{-1,0}, x_2 \in \intervalop{-1,0} ,x_3 \in \intervalop{-1,0},
			\\
		I_{-1,-1,1} \nabla	f \big|_{(-x_1,-x_2,x_3)}, 
		\quad &x_1 \in \intervalop{-1,0}, x_2 \in \intervalop{-1,0} ,x_3 \in \intervalop{0,1}.
		\end{cases}
	\end{align}
We only check the face $\{x_1 = 0, x_2 \in \intervalop{0,1}, x_3 \in \intervalop{0,1} \}$ with normal vector $\nu = e_1$ to obtain formally for all $x_2 \in \intervalop{0,1}, x_3 \in \intervalop{0,1}$
	\begin{align}
	\lim\limits_{x_1 \to 0^+} \partial_\nu \fext(x_1,x_2,x_3) 
	=
	\lim\limits_{x_1 \to 0^+} \partial_1 f(x_1,x_2,x_3) 
	=
	\partial_1 f(0,x_2,x_3) = 0 
\end{align}
as well as
\begin{align}
	\lim\limits_{x_1 \to 0^-} \partial_\nu \fext(x_1,x_2,x_3) 
	=
	\lim\limits_{x_1 \to 0^+} - \partial_1 f(-x_1,x_2,x_3) 
	=
	- \partial_1 f(0,x_2,x_3) = 0 .
\end{align}
For the other faces the very same computations can be performed. Thus, all normal traces of $\nabla \fext$ vanish on the inner faces, are thus in particular continuous, and we have shown $\nabla \fext \in H(\div,\Omegaextk{1})$.
\end{proof}

With this, we are in the position to prove Lemma~\ref{lemma-H3-regularity-u}.

\begin{proof}[Proof of Lemma~\ref{lemma-H3-regularity-u}]
First, we observe 
with Lemmas~\ref{lem:MN_H1} and \ref{lem:MN_H2} 
that for $\uext = M_N \sol$
\begin{equation}
-	\Delta \uext\big|_{(x_1,x_2, x_3)} 
=
- \Delta \sol \big|_{(\pm x_1,\pm x_2,\pm x_3)}
	=
	f  \big|_{(\pm x_1,\pm x_2,\pm x_3)}
	=
	\fext(x_1,x_2,x_3) ,
\end{equation}
with signs chosen accordingly to the definition of $M_N$. In particular,
$\uext$ solves the Neumann problem in Lemma~\ref{lemma-H3-regularity-u} also on $\Omegaextk{3}$ with right-hand side $\fext \in H^1(\Omegaextk{3})$. By interior regularity for elliptic problems (cf. \cite[Theorem~6.3.2]{Evans10}) we conclude $u = \uext\vert_{\Omega} \in H^3(\Omega)$ with
	\begin{eqnarray*}
		\| u \|_{H^3(\Omega)} &\lesssim& 
		\| \uext \|_{L^2(\Omegaextk{3})} + \| \fext \|_{H^1(\Omegaextk{3})} \,\,\,\lesssim \,\,\, \| u \|_{L^2(\Omega)} + \| f \|_{H^1(\Omega)}.
	\end{eqnarray*}
	Next, we turn towards the $W^{2,p}$-regularity of $u$, where we exploit again the extensions $ \uext$ and $\fext$ together with a Calder\'on--Zygmund estimate. 
	For that, let $B_{\ext}$ be a ball with radius $r=2$,
	 which is compactly contained in the extended domain $\Omegaextk{3}$. In particular, we have $\Omega \subset \subset B_{\ext} \subset \subset \Omegaextk{3}$ and a regular boundary $\partial B_{\ext} \in C^{1,1}$. We want to smoothly truncate $\uext$ to $B_{\ext}$ with a cut-off function $\eta \in C^{\infty}_0(B_{\ext})$ with $0\le \eta \le 1$. Furthermore, $\eta$ should not only be constant $1$ on $\Omega$, but also on a slightly enlarged box, that is, $\eta \equiv 1$ on
 		 $\Omegaext_{1+\delta}$
		  for a sufficiently small $\delta$ such that we still have $\Omegaext_{1+\delta} \subset \subset B_{\ext}$. Finally, assume that $\eta$ is selected such that $\| \nabla \eta \|_{L^{\infty}} \le C$ for some constant $C$ that only depends on $B_{\ext}$ and $\Omegaextk{3}$. We consider the function $\eta \, \uext \in H^1_0(B_{\ext})$ which solves
		$$
		- \Delta (\eta \, \uext) = - \eta \, \Delta \uext - 2 \nabla \uext \cdot \nabla \eta - \uext \, \Delta \eta
		= \eta \, \fext  - 2 \nabla \uext \cdot \nabla \eta - \uext \, \Delta \eta =: \tilde{f}_{\ext}.
		$$
		Since $\uext \in H^3(B_{\ext})$ (by interior regularity from the first part of the proof), Sobolev embeddings guarantee that we also have $\nabla \uext \in L^{\infty}(B_{\ext})$. Together with $\fext \in L^p(B_{\ext})$ (which directly follows from $f \in L^p(\Omega)$), we conclude that  $\eta \, \uext \in H^1_0(B_{\ext})$ is the unique solution to a Poisson problem on a smooth domain $B_{\ext}$, with homogeneous Dirichlet boundary condition and a right-hand side $\tilde{f}_{\ext} \in L^p$. By $L^p$-regularity theory for elliptic problems, cf. \cite[Chapt. 3, Thm. 6.3 \& Thm. 6.4]{ChenWuEllipticTheoryLp}, we conclude that this unique solution fulfills $\eta \, \uext \in W^{2,p}(B_{\ext})$. By construction of the cut-off function, we have $\uext \vert_{\Omegaextk{1+\delta}} = (\eta \, \uext)\vert_{\Omegaextk{1+\delta}} \in W^{2,p}(\Omegaextk{1+\delta})$. Furthermore, we still have $- \Delta \uext =  \fext$ in $\Omegaextk{1+\delta} \subset \Omegaextk{3}$. Using the Calder\'on--Zygmund estimate \cite[Theorem 9.11]{GiT01} with $\Omega \subset \subset \Omegaextk{1+\delta}$, we conclude that there exist constants (depending on $p$, $\Omega$ and $\delta$), such that
		\begin{align}
			\| u \|_{W^{2,p}(\Omega)}
				 =
				  \| \uext \|_{W^{2,p}(\Omega)} 
				  \lesssim  \bigl(
				  \|  \uext \|_{L^p(\Omegaextk{1+\delta})} 
				   +
				    \|  \fext \|_{L^p(\Omegaextk{1+\delta})} 
				    \bigr)
    	\lesssim
    		\bigl(
    	\|  u \|_{L^p(\Omega)} +	\|  f \|_{L^p(\Omega)} 
    	\bigr),
		\end{align}
	which proves the claim.
	\end{proof}

\subsection{Mixed boundary conditions}

We now turn to the regularity results of the vector potential $\MagPot$. As mentioned above, the $H^2$-regularity of a solution $\MagPotgen \in \HonenSpaceDiv(\Omega)$ of \eqref{eq:curlcurl_prob} follows from \cite[Lemma~3.7]{HocJS15}. In addition, the reference shows that the first component $\MagPotgen_1$ satisfies
\begin{equation} \label{eq:mixed_bc}
\begin{aligned}
	- \Delta \MagPotgen_1 &= f  \text{ in } \Omega  ,
	\\
	\MagPotgen_1 &= 0 \text{ on } 
	\Gamma_1 = \{ x \in \Omega \mid x_1 = 0 \text{ or } x_1 = 1\}, 
	\\
	\partial_2 \MagPotgen_1 &= 0 \text{ on }
	 \Gamma_2 = \{x \in \Omega \mid x_2 = 0 \text{ or } x_2 = 1\},
	\\
	\partial_3 \MagPotgen_1 &= 0 \text{ on }
	 \Gamma_3 = \{ x \in \Omega \mid x_3 = 0 \text{ or } x_3 = 1\},
\end{aligned}
\end{equation}
and similarly the other two components by interchanging the roles of the faces. We thus only study  the case of $\MagPotgen_1$. We follow the ideas of the Neumann case but now with the eigenbasis of the from
\begin{equation}
	\sin( \pi k x) \cos( \pi \ell y) \cos(\pi m z)
\end{equation}
in mind. This means odd reflections in $x_1$-direction and even reflections on $x_2$- and $x_3$-direction.
We therefore introduce the spaces
\begin{align}
	H_{0,1}^1(\Omega) &\coloneqq \{  \varphi \in H^1(\Omega) \mid \varphi  = 0 \text{ on } \Gamma_1  \},
	\\
	H_{0,1}^2(\Omega) &\coloneqq \{  \varphi \in H^2(\Omega) \mid \varphi  = 0 \text{ on } \Gamma_1 , \,
	\partial_\nu \varphi  = 0 \text{ on } \Gamma_2 \cup \Gamma_3  \}.
\end{align}
If $ \Omega$ is replaced by  a larger cube $\Omegaextk{s}$, we denote by $\Gamma_i \coloneqq \{ x \in \partial \Omegaextk{s} \mid x_i = -s \text{ or } x_i =s  \}$.
For a function $f\in C(\bar{\Omega})$, we define the mixed extension $M_{D,N} \colon  f \to\fext$ with
\begin{align}
\fext(x_1,x_2,x_3) 
	=
	\begin{cases}
		f(x_1,x_2,x_3), \quad &x_1 \in \intervalop{0,1}, x_2 \in \intervalop{0,1} ,x_3 \in \intervalop{0,1},
		\\
		f(x_1,x_2,-x_3), \quad &x_1 \in \intervalop{0,1}, x_2 \in \intervalop{0,1} ,x_3 \in \intervalop{-1,0},
		\\
		f(x_1,-x_2,-x_3), \quad &x_1 \in \intervalop{0,1}, x_2 \in \intervalop{-1,0} ,x_3 \in \intervalop{-1,0},
		\\
		f(x_1,-x_2,x_3), \quad &x_1 \in \intervalop{0,1}, x_2 \in \intervalop{-1,0} ,x_3 \in \intervalop{0,1},
		\\
	-	f(-x_1,x_2,x_3), \quad &x_1 \in \intervalop{-1,0}, x_2 \in \intervalop{0,1} ,x_3 \in \intervalop{0,1},
		\\
	-	f(-x_1,x_2,-x_3), \quad &x_1 \in \intervalop{-1,0}, x_2 \in \intervalop{0,1} ,x_3 \in \intervalop{-1,0},
		\\
		-f(-x_1,-x_2,-x_3), \quad &x_1 \in \intervalop{-1,0}, x_2 \in \intervalop{-1,0} ,x_3 \in \intervalop{-1,0},
		\\
		- f(-x_1,-x_2,x_3), \quad &x_1 \in \intervalop{-1,0}, x_2 \in \intervalop{-1,0} ,x_3 \in \intervalop{0,1}.
	\end{cases}
\end{align}
As in the Neumann case, this extension preserves the regularity of the inserted functions.

\begin{lemma} \label{lem:MDN_H1}
	Let $f \in H_{0,1}^1(\Omega)$ and $g \in H_{0,1}^2(\Omega)$.
	
	(a) $M_{D,N} f \in H_{0,1}^1 (\Omegaextk{1})$ with 
	$\norm{H^1(\Omegaextk{1})}{M_N f}
	\leq 
	2^3 \norm{H^1(\Omega)}{M_N f}$. 
	
	(b) The periodic extension of $M_{D,N} f$ satisfies $M_{D,N} f \in H_{0,1}^1(\Omegaextk{2k+1}) $ for all $k\geq 1$ with
	\begin{equation}
		\norm{H^1(\Omegaextk{2k+1})}{M_{D,N} f}
		\leq 
		(2k+2)^3 \norm{H^1(\Omega)}{M_{D,N} f}.
	\end{equation}

	(c) $M_{D,N} g \in H_{0,1}^2 (\Omegaextk{1})$ with 
	$\norm{H^2(\Omegaextk{1})}{M_{D,N} g}
	\leq 
	2^3 \norm{H^2(\Omega)}{M_{D,N} g}$. 
	
	(d) The periodic extension of $M_{D,N} f$ satisfies $M_N g \in H_{0,1}^2(\Omegaextk{2k+1}) $ for all $k\geq 1$ with
	\begin{equation}
		\norm{H^2(\Omegaextk{2k+1})}{M_{D,N} g}
		\leq 
		(2k+2)^3 \norm{H^2(\Omega)}{M_{D,N} g}.
	\end{equation}
\end{lemma}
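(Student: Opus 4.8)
The plan is to follow the strategy of Lemmas~\ref{lem:MN_H1} and \ref{lem:MN_H2} for the pure Neumann case, the only structural change being that the $x_1$-direction now carries an \emph{odd} (antisymmetric) reflection — encoded by the minus signs in the last four branches of $M_{D,N}$ — whereas the $x_2$- and $x_3$-directions retain the \emph{even} reflection. This is dictated by the target eigenbasis $\sin(\pi k x_1)\cos(\pi \ell x_2)\cos(\pi m x_3)$, since the sine factor is antisymmetric about $x_1\in\{0,1\}$ while the cosine factors are symmetric about $x_2,x_3\in\{0,1\}$. Throughout, the key observation is that the global minus in the odd-reflected octants shifts the parity of each partial derivative by one compared to the Neumann computation.

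For part (a), I would verify that $M_{D,N}f$ has $L^2$-continuous traces across the seven interior faces of $\Omegaextk{1}$. On the faces separating octants related by an even reflection (in $x_2$ or $x_3$) the argument is identical to Lemma~\ref{lem:MN_H1}. On the faces lying in $\{x_1=0\}$ the reflection is odd, so the two one-sided traces are $f(0,\cdot,\cdot)$ and $-f(0,\cdot,\cdot)$; both vanish because $f=0$ on $\Gamma_1$, and the trace is continuous. The same identity shows $M_{D,N}f=0$ on the outer faces $\{x_1=\pm 1\}$, so $M_{D,N}f\in H^1_{0,1}(\Omegaextk{1})$, and counting the eight octants yields the stated norm bound.

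For part (c), following Lemma~\ref{lem:MN_H2}, it suffices to establish $\nabla(M_{D,N}g)\in H(\div,\Omegaextk{1})$, after which interior elliptic regularity upgrades the piecewise-$H^2$ function (already in $H^1(\Omegaextk{1})$ by part (a)) to $H^2(\Omegaextk{1})$. Differentiating octant-wise and using the chain rule, the extra global sign in the $x_1$-reflected octants cancels the $-1$ that the Neumann calculation produced in the first slot: the component $\partial_1(M_{D,N}g)$ is therefore \emph{even} in $x_1$, while $\partial_2,\partial_3$ remain \emph{odd} in the respective even-reflected variables. Consequently, across a face normal to $e_1$ the normal derivative $\partial_1 g$ has matching two-sided traces automatically, whereas across a face normal to $e_2$ or $e_3$ the two traces are $\pm\partial_\nu g$ and agree precisely because $\partial_\nu g=0$ on $\Gamma_2\cup\Gamma_3$ for $g\in H^2_{0,1}(\Omega)$. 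Hence all normal traces of $\nabla(M_{D,N}g)$ match, $\Delta(M_{D,N}g)\in L^2(\Omegaextk{1})$, and the claim follows.

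For the periodic extensions in (b) and (d), I would argue as in the Neumann case that the period-$2$ continuation is realized by iterated reflections on the outer faces of $\Omegaextk{1}$ — odd across the $x_1$-faces at the odd integers and even across the $x_2,x_3$-faces at the integers — so that the interface computations of (a) and (c) apply verbatim at every newly created face, and the boundary conditions on $\Gamma_1$ (vanishing) and $\Gamma_2\cup\Gamma_3$ (Neumann) of $\Omegaextk{2k+1}$ are inherited. The one delicate point, which I expect to be the main obstacle, is to confirm that odd reflection across $\{x_1=1\}$ coincides with the $2$-periodic continuation; for the model mode this amounts to $\sin(\pi k(2-x_1))=-\sin(\pi k x_1)$, so the odd reflection returns $\sin(\pi k x_1)$ and matches periodicity, and the analogous identity at every odd integer secures compatibility. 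Granting this, the norm estimates follow by counting the $(2k+2)^3$ translated copies of $\Omega$, exactly as in Lemmas~\ref{lem:MN_H1} and \ref{lem:MN_H2}.
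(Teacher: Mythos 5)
Your proposal is correct and matches the paper's argument essentially step for step: the $H^1$ claims follow from trace continuity (with the odd $x_1$-reflection harmless because $f$ vanishes on $\Gamma_1$), the $H^2$ claims follow by checking the normal traces of the gradient — automatic matching across $\{x_1=0\}$ since the odd reflection makes $\partial_1$ even, and vanishing traces across the $e_2$-, $e_3$-faces by the Neumann conditions — and the periodic extension is handled as iterated reflections on the outer faces with the cube-counting norm bound. The extra details you supply (explicit vanishing of the $x_1=0$ traces, the $\sin(\pi k(2-x_1))=-\sin(\pi k x_1)$ compatibility check) are consistent with, and slightly more explicit than, the paper's own proof.
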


\begin{proof}
	The claims on $H^1$ are easily verified, as we preserve continuity at all faces. Further, the computations for all faces where $x_1$ is either positive or negative are fully analogous to Lemma~\ref{lem:MN_H2} as all the normal  traces vanish. Hence, we check the conditions at $x_1 = 0$, and let for example $x_2,x_3 < 0$. Then, 
	\begin{align}
	\nabla \fext (x_1,x_2,x_3)	 &= 	I_{1,-1,-1}	\nabla	f \big|_{(x_1,-x_2,-x_3)}, \quad & &x_1 > 0 
	\\
	\nabla \fext (x_1,x_2,x_3)	 &=	- 	I_{-1,-1,-1} \nabla	f \big|_{(-x_1,-x_2,-x_3)} , \quad  & &x_1 < 0 ,
	\end{align} 
	and we obtain
		\begin{align}
		\lim\limits_{x_1 \to 0^+} \partial_\nu \fext(x_1,x_2,x_3) 
		=
		\lim\limits_{x_1 \to 0^+} \partial_1 f(x_1,-x_2,-x_3) 
		=
		\partial_1 f(0,-x_2,-x_3) 
	\end{align}
	as well as
	\begin{align}
		\lim\limits_{x_1 \to 0^-} \partial_\nu \fext(x_1,x_2,x_3) 
=
 \lim\limits_{x_1 \to 0^+} \partial_1 f(-x_1,-x_2,-x_3) 
=
\partial_1 f(0,-x_2,-x_3) ,
	\end{align}
and we also obtain here the continuity of the normal traces of the gradient. 
\end{proof}

We can then turn to the proof of the second part of Theorem~\ref{thm:reg_MagPot_H2_H3_gen}.

\begin{proof}[Proof of Theorem~\ref{thm:reg_MagPot_H2_H3_gen} (b)]
Let us recall that by part (a) $\MagPotgen \in \mathbf{H}^2(\Omega)$ solves 
\begin{equation}
	\Delta \MagPotgen = \mathbf{G},
	\qquad
	\MagPotgen \cdot \nu |_{\partial \Omega} = 
	\curl \MagPotgen \times  \nu |_{\partial \Omega} = 0,
\end{equation}
with $\mathbf{G} = \mathbf{F} + \curl \MagF$.
The regularity of $\MagF$ and $\mathbf{F}$
and the conditions
\begin{equation}
\curl \MagF \cdot  \nu |_{\partial \Omega} = \mathbf{F} \cdot  \nu |_{\partial \Omega} = 0
\quad
\text{imply }
\quad  \mathbf{G} \in \mathbf{H}^1(\Omega) 
\,\, \mbox{and} \,\,
\mathbf{G}
\cdot \nu |_{\partial \Omega} = 0.
\end{equation}
Now taking the first component $\MagPotgen_1$, wee see that $f$ in \eqref{eq:mixed_bc} is given by the first component of $\mathbf{G}$,
and thus satisfies $f \in H^1_{0,1}(\Omega)$.  
Lemma~\ref{lem:MDN_H1} further ensures that $\fext \in H^1_{0,1}(\Omegaext_{3})$ holds.
For $\MagPotext = M_{D,N} \MagPotgen$ we argue as in the proof of Lemma~\ref{lemma-H3-regularity-u} and observe that choosing the correct case in the definition of $M_{D,N}$
	\begin{equation}
		-	\Delta \MagPotext \big|_{(x_1,x_2, x_3)} 
		=
		(-1)^m \Delta \MagPotgen \big|_{(\pm x_1,\pm x_2,\pm x_3)}
		=
		(-1)^{m} f  \big|_{(\pm x_1,\pm x_2,\pm x_3)}
		=
		\fext(x_1,x_2,x_3)
	\end{equation}
with $m=0$ for $x_1 > 0$ and $m=1$ for $x_1 < 0$. Hence, $\MagPotext \in  H^2_{0,1}(\Omegaextk{3})$ solves the mixed problem \eqref{eq:mixed_bc}
with right-hand side
$\fext \in H^1_{0,1}(\Omegaextk{3})$.
Again, interior regularity for elliptic problems (cf. 
\cite[Theorem~6.3.2]{Evans10}) gives  
$\MagPotgen = \MagPotext\vert_{\Omega} \in H^3(\Omega)$ with
\begin{align}
	\| \MagPotgen \|_{H^3(\Omega)} 
	\lesssim
	 \| \MagPotext \|_{L^2(\Omegaextk{3})}
	  +
	  \| \fext \|_{H^1(\Omegaextk{3})}
	   \lesssim 
	    \| \MagPotgen \|_{L^2(\Omega)} + \| f \|_{H^1(\Omega)},
\end{align}
which yields the claim.
\end{proof}

\section{Proofs of additional auxiliary results}
\label{sec:app_computations}

In this section, we present the proofs of Lemma~\ref{lem:epsilon_2_bound} and Lemma~\ref{lem:inverse-inequality-LOD}.
\begin{proof}[Proof of Lemma~\ref{lem:epsilon_2_bound}]
	Using $\duenergy(\sol,\MagPot) \varphi = 0 $
	and
	$\dAenergy(\sol,\MagPot) \mathbf{B} = 0$, we obtain the identity  
	\begin{eqnarray}
		\nonumber \lefteqn{ \dualp{\energystab''(\sol,\MagPot) (\sol ,\MagPot) }{(\psi_{\deltaH}, \mathbf{C}_{\deltah})}  }\\
		\nonumber &=&
		\Real  \int_\Omega \bigl( \kappainvci  \nabla \sol +  \MagPot \sol \bigr)  \cdot \bigl( \kappainvci  \nabla \psi_{\deltaH} +   \MagPot \psi_{\deltaH} \bigr)^*  
		+
		\bigl( |\sol|^2 -1 \bigr)  \sol \psi_{\deltaH}^* +2 |\sol|^2 \sol \psi_{\deltaH}^* 
		\dint{x} \\
		\nonumber &\enspace& +\,\,
		\int_\Omega  2   |\sol|^2  \MagPot \cdot \mathbf{C}_{\deltah}  + \kappainv  \Real \bigl( \ci  \sol^* \nabla \sol + \ci \sol^* \nabla \sol \bigr) \cdot \mathbf{C}_{\deltah} \dint{x}
		\\
		\nonumber &\enspace&+\,\,
		\int_\Omega 
		2     \Real (\sol \psi_{\deltaH}^* ) |\MagPot|^2
		+
		\kappainv  \Real \bigl( \ci  
		\sol^* \nabla \psi_{\deltaH}
		+
		\ci \psi_{\deltaH}^* \nabla \sol \bigr) 
		\cdot \MagPot  
		\dint{x}
		\\
		\nonumber &\enspace&+\,\,
		\int_\Omega 
		|\sol|^2 \mathbf{C}_{\deltah} \cdot \MagPot  
		+
		\curl \mathbf{C}_{\deltah} \cdot \curl \MagPot 
		+
		\div \mathbf{C}_{\deltah} \cdot \div \MagPot 
		\dint{x} 
		\\
		\nonumber &=&
		\Real  \int_\Omega 2 |\sol|^2 \sol \psi_{\deltaH}^* 
		\dint{x} \,\,+\,\,
		\int_\Omega 
		2   |\sol|^2  \MagPot \cdot \mathbf{C}_{\deltah}  
		+
		\kappainv  \Real \bigl( \ci  
		\sol^* \nabla \sol
		\bigr) 
		\cdot \mathbf{C}_{\deltah}  
		\dint{x}
		\\
		\label{appendix-eq-secE} &\enspace&+\,\,
		\int_\Omega 
		2     \Real (\sol \psi_{\deltaH}^* ) |\MagPot|^2   
		+
		\kappainv  \Real \bigl( \ci  
		\sol^* \nabla \psi_{\deltaH}
		+
		\ci \psi_{\deltaH}^* \nabla \sol \bigr) 
		\cdot \MagPot  
		+
		\mathbf{H} \cdot \curl \mathbf{C}_{\deltah}
		\dint{x}  .
	\end{eqnarray}
	Analogously, the same identity holds for $\dualp{\energystab''(\sol_{\deltaH},\MagPot_{\deltah}) (\sol_{\deltaH} ,\MagPot_{\deltah}) }{(\psi_{\deltaH}, \mathbf{C}_{\deltah})}$ if we replace $(\sol,\MagPot)$ by $(\sol_{\deltaH} , \MagPot_{\deltah})$ at all occurrences. Next, we use the decomposition
	\begin{eqnarray*}
		\varepsilon
		&=&
		\underbrace{\energystab''(\sol,\MagPot) (\sol ,\MagPot) - \energystab''(\sol_{\deltaH},\MagPot_{\deltah}) (\sol_{\deltaH} ,\MagPot_{\deltah})}_{=: \varepsilon_{1}}
		\,\,+\,\,
		\underbrace{\energystab''(\sol_{\deltaH},\MagPot_{\deltah}) (\sol_{\deltaH} ,\MagPot_{\deltah}) - \energystab''(\sol,\MagPot) (\sol_{\deltaH} , \MagPot_{\deltah})}_{=:  \varepsilon_{2}}
	\end{eqnarray*}
	to sort the terms and treat them together. For the first term we obtain with \eqref{appendix-eq-secE} that 	
	\begin{eqnarray*}
		\lefteqn{ \varepsilon_{1}(\psi_{\deltaH},\mathbf{C}_{\deltah}) \,\,\,=\,\,\,
			\underbrace{\Real  \int_\Omega 2 (|\sol|^2 \sol - |\sol_{\deltaH}|^2 \sol_{\deltaH}) \psi_{\deltaH}^* \dint{x}}_{=: \alpha_{1} }  \,\,+\,\,
			\underbrace{\int_\Omega 2 (  |\sol|^2  \MagPot -  |\sol_{\deltaH}|^2  \MagPot_{\deltah}) \cdot \mathbf{C}_{\deltah}   \dint{x}}_{=: \alpha_{2} } } \\
		&\enspace&+
		\underbrace{\int_\Omega \kappainv  \Real \bigl( \ci \sol^* \nabla \sol - \ci  \sol_{\deltaH}^* \nabla \sol_{\deltaH} \bigr) \cdot \mathbf{C}_{\deltah} \dint{x}}_{=: \alpha_{3} } \,\, + \,\,
		\underbrace{\int_\Omega 2 \Real (\sol \psi_{\deltaH}^* ) |\MagPot|^2 - 2 \Real (\sol_{\deltaH} \psi_{\deltaH}^* ) |\MagPot_{\deltah}|^2 \dint{x}}_{=: \alpha_{4} }
		\\
		&\enspace&+
		\underbrace{\int_\Omega 
			\kappainv  \Real \bigl( \ci  
			\sol^* \nabla \psi_{\deltaH} 
			+
			\ci \psi_{\deltaH}^* \nabla \sol \bigr) 
			\cdot \MagPot   
			-
			\kappainv  \Real \bigl( \ci  
			\sol_{\deltaH}^* \nabla \psi_{\deltaH}
			+
			\ci \psi_{\deltaH}^* \nabla \sol_{\deltaH} \bigr) 
			\cdot \MagPot_{\deltah}  
			\dint{x} }_{=: \alpha_{5} }.
	\end{eqnarray*}
	Using
	\begin{eqnarray}
		\nonumber\lefteqn{ \Real  \int_\Omega \bigl( \kappainvci  \nabla \sol_{\deltaH} +   \MagPot_{\deltah} \sol_{\deltaH} \bigr)  \cdot \bigl( \kappainvci  \nabla \psi_{\deltaH} +   \MagPot_{\deltah} \psi_{\deltaH} \bigr)^*  -  \bigl( \kappainvci  \nabla \sol_{\deltaH} +   \MagPot \sol_{\deltaH} \bigr)  \cdot \bigl( \kappainvci  \nabla \psi_{\deltaH} +   \MagPot \psi_{\deltaH} \bigr)^* \dint{x} }\\
		\label{term-a-identity} &=& \Real  \int_\Omega \ \tfrac{\ci}{\kappa} \nabla \sol_{\deltaH} \cdot (\MagPot_{\deltah} - \MagPot) \psi_{\deltaH}^* - \tfrac{\ci}{\kappa}  \sol_{\deltaH} \, (\MagPot_{\deltah} - \MagPot) \nabla \psi_{\deltaH}^* + (|\MagPot_{\deltah}|^2-|\MagPot|^2) \sol_{\deltaH} \psi_{\deltaH}^* \dint{x},
	\end{eqnarray}
	the second term satisfies
	\begin{eqnarray*}
		\lefteqn{ \varepsilon_{2}(\psi_{\deltaH},\mathbf{C}_{\deltah})
			\,\,=\,\,
			\dualp{\energystab''(\sol_{\deltaH},\MagPot_{\deltah}) (\sol_{\deltaH} ,\MagPot_{\deltah}) }{(\psi_{\deltaH},\mathbf{C}_{\deltah})} 
			-
			\dualp{\energystab''(\sol,\MagPot) (\sol_{\deltaH} , \MagPot_{\deltah}) }{(\psi_{\deltaH},\mathbf{C}_{\deltah})} }
		\\
		&=&
		\Real  \int_\Omega \bigl( \kappainvci  \nabla \sol_{\deltaH} +   \MagPot_{\deltah} \sol_{\deltaH} \bigr)  \cdot \bigl( \kappainvci  \nabla \psi_{\deltaH} +   \MagPot_{\deltah} \psi_{\deltaH} \bigr)^*  
		+
		\bigl( |\sol_{\deltaH}|^2 -1 \bigr)  \sol_{\deltaH} \psi_{\deltaH}^* + \sol_{\deltaH}^2 \sol_{\deltaH}^* \psi_{\deltaH}^* + |\sol_{\deltaH}|^2 \sol_{\deltaH} \psi_{\deltaH}^* 
		\dint{x} 
		\\
		&\enspace& +\,\,
		\int_\Omega 2 \Real (\sol_{\deltaH} \sol_{\deltaH}^* ) \MagPot_{\deltah} \cdot \mathbf{C}_{\deltah}  
		+ \kappainv  \Real \bigl( \ci \sol_{\deltaH}^* \nabla \sol_{\deltaH}
		+ \ci \sol_{\deltaH}^* \nabla \sol_{\deltaH} \bigr) \cdot \mathbf{C}_{\deltah} \dint{x}
		\\
		&\enspace& +\,\,
		\int_\Omega 2 \Real (\sol_{\deltaH} \psi_{\deltaH}^* ) \MagPot_{\deltah} \cdot \MagPot_{\deltah}  
		+ \kappainv  \Real \bigl( \ci \sol_{\deltaH}^* \nabla \psi_{\deltaH}
		+ \ci \psi_{\deltaH}^* \nabla \sol_{\deltaH} \bigr) \cdot \MagPot_{\deltah} \dint{x}
		\\
		&\enspace& +\,\, \int_\Omega |\sol_{\deltaH}|^2 \mathbf{C}_{\deltah} \cdot \MagPot_{\deltah}  
		+ \curl \mathbf{C}_{\deltah} \cdot \curl \MagPot_{\deltah} 
		+ \div \mathbf{C}_{\deltah} \cdot \div \MagPot_{\deltah} \dint{x} 
		\\
		&\enspace& -\,\,
		\Real  \int_\Omega \bigl( \kappainvci  \nabla \sol_{\deltaH} +   \MagPot \sol_{\deltaH} \bigr)  \cdot \bigl( \kappainvci  \nabla \psi_{\deltaH} +   \MagPot \psi_{\deltaH} \bigr)^*  
		+
		\bigl( |\sol|^2 -1 \bigr)  \sol_{\deltaH} \psi_{\deltaH}^* + \sol^2 \sol_{\deltaH}^* \psi_{\deltaH}^* + |\sol|^2 \sol_{\deltaH} \psi_{\deltaH}^* 
		\dint{x} 
		\\
		&\enspace& -\,\,
		\int_\Omega 
		2     \Real (\sol \sol_{\deltaH}^* ) \MagPot \cdot \mathbf{C}_{\deltah}  
		+
		\kappainv  \Real \bigl( \ci  
		\sol^* \nabla \sol_{\deltaH}
		+
		\ci \sol_{\deltaH}^* \nabla \sol \bigr) 
		\cdot \mathbf{C}_{\deltah}  
		\dint{x}
		\\
		&\enspace& -\,\,
		\int_\Omega 
		2     \Real (\sol \psi_{\deltaH}^* ) \MagPot \cdot \MagPot_{\deltah}  
		+
		\kappainv  \Real \bigl( \ci  
		\sol^* \nabla \psi_{\deltaH}
		+
		\ci \psi_{\deltaH}^* \nabla \sol \bigr) 
		\cdot \MagPot_{\deltah}  
		\dint{x}
		\\
		&\enspace& -\,\,
		\int_\Omega 
		|\sol|^2 \mathbf{C}_{\deltah} \cdot \MagPot_{\deltah}  
		+ \curl \mathbf{C}_{\deltah} \cdot \curl \MagPot_{\deltah} 
		+ \div \mathbf{C}_{\deltah} \cdot \div \MagPot_{\deltah} \dint{x} \\
		&\overset{\eqref{term-a-identity}}{=}&
		\underbrace{ \,\, \Real  \int_\Omega 2 \bigl( |\sol_{\deltaH}|^2 -|\sol|^2 \bigr)  \sol_{\deltaH} \psi_{\deltaH}^* + (\sol_{\deltaH}^2 -  \sol^2) \sol_{\deltaH}^* \psi_{\deltaH}^* \dint{x}  }_{=: \beta_{1} }
		\\
		&\enspace& +\,\,
		\underbrace{\int_\Omega 2 \Real (\sol_{\deltaH} \sol_{\deltaH}^* ) \MagPot_{\deltah} \cdot \mathbf{C}_{\deltah}  
			+  (|\sol_{\deltaH}|^2 - |\sol|^2 ) \MagPot_{\deltah} \cdot \mathbf{C}_{\deltah}
			- 2 \Real (\sol \sol_{\deltaH}^* ) \MagPot \cdot \mathbf{C}_{\deltah}  \dint{x} }_{=: \beta_{2} } \\
		&\enspace& +\,\,		
		\underbrace{\int_\Omega \kappainv  \Real \bigl( \ci (\sol_{\deltaH} -\sol)^* \nabla \sol_{\deltaH}
			+ \ci \sol_{\deltaH}^* (\nabla \sol_{\deltaH} - \nabla \sol) \bigr) \cdot \mathbf{C}_{\deltah} \dint{x}  }_{=: \beta_{3} }
		\\
		&\enspace& +\,\, \underbrace{\int_\Omega (3|\MagPot_{\deltah}|^2 - |\MagPot|^2  ) \Real( \sol_{\deltaH} \psi_{\deltaH}^* ) -
			2 \MagPot \cdot \MagPot_{\deltah}  \Real( \sol \psi_{\deltaH}^*)   \dint{x}}_{=: \beta_{4} }
		\\
		&\enspace& +\,\,
		\underbrace{ \Real \left( \frac{\ci}{\kappa} \int_\Omega  \, (\MagPot_{\deltah} \hspace{-2pt} - \hspace{-2pt} \MagPot) \cdot \left( \nabla \sol_{\deltaH}  \psi_{\deltaH}^* \hspace{-1pt}-\hspace{-1pt} \sol_{\deltaH}  \nabla \psi_{\deltaH}^* \right)  +   \bigl( (\sol_{\deltaH} \hspace{-1pt}-\hspace{-1pt} \sol)^* \nabla \psi_{\deltaH}
			+ \psi_{\deltaH}^* \nabla (\sol_{\deltaH} \hspace{-1pt}-\hspace{-1pt} \sol)  \bigr) \cdot \MagPot_{\deltah} \dint{x} \right)}_{=: \beta_{5} }.
	\end{eqnarray*}
	We investigate the various terms. For brevity, let us define $\errh \coloneqq \sol - \sol_{\deltaH}$ and $\Errh \coloneqq \MagPot -  \MagPot_{\deltah}$.
	\begin{itemize} [leftmargin=1.2em]
		\item[$\bullet$] $\alpha_{1}+\beta_{1}$: First, we note that with $\sol_{\deltaH} = \sol -  \errh$ we obtain 
		\begin{eqnarray*}
			2 \bigl( |\sol|^2 \sol - |\sol_{\deltaH}|^2 \sol_{\deltaH}  \bigr)
			&=& 2 \bigl( |\sol|^2 \sol  
			-
			( |\sol|^2  - \sol^* \errh - \errh^* \sol + |\errh|^2 ) (\sol - \errh)
			\bigr)
			\\
			&=& 4 |\sol|^2 \errh + 2 \sol^2 \errh^*  - |\errh|^2 + \sol |\errh|^2 - \sol^{\ast} \errh^2 + \errh |\errh|^2
		\end{eqnarray*}
		as well as
		\begin{eqnarray*}
			\lefteqn{ 2 \bigl( |\sol_{\deltaH}|^2 -  |\sol|^2 \bigr)  \sol_{\deltaH}  + ( \sol_{\deltaH}^2 - \sol^2) \sol_{\deltaH}^* }
			\\
			&=& 	2 \bigl( |\sol|^2 - \sol^* \errh - \errh^* \sol + |\errh|^2 -  |\sol|^2 \bigr)  (\sol - \errh)  + ( \sol^2 - 2 \sol \errh + \errh^2 - \sol^2) (\sol - \errh)^* 
			\\
			&=& - 4 |\sol|^2 \errh -2 \sol^2 \errh^* + 3 \sol^* \errh^2 + 6 |\errh|^2 \sol - 3 \errh |\errh|^2.
		\end{eqnarray*}
		Consequently,
		\begin{eqnarray*}
			|\alpha_{1}+\beta_{1}| &=& | \Real  \int_\Omega \left( 
			2 \bigl( |\sol|^2 \sol - |\sol_{\deltaH}|^2 \sol_{\deltaH}  \bigr) + 2 \bigl( |\sol_{\deltaH}|^2 -  |\sol|^2 \bigr)  \sol_{\deltaH}  + ( \sol_{\deltaH}^2 - \sol^2) \sol_{\deltaH}^*\right)  \psi_{\deltaH}^* \dint{x} |  \\ 
			&=&  | \Real  \int_\Omega ((7\sol- 1) |\errh|^2  + 2 \sol^* \errh^2 - 2 \errh |\errh|^2)  \, \psi_{\deltaH}^* \dint{x} | \\
			&\le & 10\,  \| \errh \|_{L^4}^2 \| \psi_{\deltaH}\|_{L^2} +  2\,  \| \errh \|_{L^6}^3 \| \psi_{\deltaH}\|_{L^2}.
		\end{eqnarray*}		
		\item[$\bullet$] $\alpha_{2}+\beta_{2}$: First, we note that
		\begin{eqnarray*}
			\lefteqn{ 2 (  |\sol|^2  \MagPot -  |\sol_{\deltaH}|^2  \MagPot_{\deltah}) \,+\, 2 \Real (\sol_{\deltaH} \sol_{\deltaH}^* ) \MagPot_{\deltah} + (|\sol_{\deltaH}|^2 - |\sol|^2 ) \MagPot_{\deltah}  - 2 \Real (\sol \sol_{\deltaH}^* ) \MagPot }\\
			&=& 2   |\sol|^2  \MagPot + (|\sol_{\deltaH}|^2 - |\sol|^2 ) \MagPot_{\deltah}  - 2 \Real (\sol \sol_{\deltaH}^* ) \MagPot 
			\,\,=\,\,  |\sol - \sol_{\deltaH}|^2 \MagPot   +  (|\sol_{\deltaH}|^2 - |\sol|^2 ) (\MagPot_{\deltah} - \MagPot).
		\end{eqnarray*}
		With this, we obtain again with $\errh = \sol - \sol_{\deltaH}$ and $\Errh = \MagPot -  \MagPot_{\deltah}$
		\begin{eqnarray*}
			\lefteqn{ |\alpha_{2}+\beta_{2}| 
				\,\,=\,\, | \int_\Omega |\errh|^2 \MagPot \cdot \mathbf{C}_{\deltah}  +  (|\sol_{\deltaH}|^2 - |\sol|^2 ) \Errh \cdot \mathbf{C}_{\deltah} \dint{x} | }\\
			&\le& \| \errh \|_{L^4}^2  \| \MagPot \|_{L^4} \| \mathbf{C}_{\deltah} \|_{L^4} + \| \errh \|_{L^4} \| \Errh \|_{L^4}  \| \mathbf{C}_{\deltah} \|_{L^4} \,\| |\sol_{\deltaH}|\hspace{-2pt}+\hspace{-2pt} |\sol| \|_{L^4} \\
			&\lesssim& \left(\| \errh \|_{L^4}^2 + \| \Errh \|_{L^4}^2 \right) \| \mathbf{C}_{\deltah} \|_{H^1}.
		\end{eqnarray*}	
		\item[$\bullet$] $\alpha_{3}+\beta_{3}$: We use
		\begin{eqnarray*}
			\sol^* \nabla \sol - \sol_{\deltaH}^* \nabla \sol_{\deltaH} 
			+ (\sol_{\deltaH} -\sol)^* \nabla \sol_{\deltaH} + \sol_{\deltaH}^* (\nabla \sol_{\deltaH} - \nabla \sol) 
			&=& (\sol - \sol_{\deltaH})^{\ast} (\nabla u - \nabla \sol_{\deltaH})
		\end{eqnarray*}
		to obtain
		\begin{eqnarray*}
			\lefteqn{ |\alpha_{3}+\beta_{3}| } \\
			&=& | \kappainv \int_\Omega  \Real \bigl( \ci \sol^* \nabla \sol - \ci  \sol_{\deltaH}^* \nabla \sol_{\deltaH} \bigr) \cdot \mathbf{C}_{\deltah}  - 
			\Real \bigl( \ci (\sol_{\deltaH} -\sol)^* \nabla \sol_{\deltaH}
			+ \ci \sol_{\deltaH}^* (\nabla \sol_{\deltaH} - \nabla \sol) \bigr) \cdot \mathbf{C}_{\deltah} \dint{x} | \\
			&=& | \kappainv \Real \int_\Omega \ci \, (\sol - \sol_{\deltaH})^{\ast} (\nabla u - \nabla \sol_{\deltaH}) \cdot \mathbf{C}_{\deltah} \dint{x} | \\
			&\le& \| \tfrac{1}{\kappa} \nabla \errh \|_{L^2}  \| \errh \|_{L^4} \| \mathbf{C}_{\deltah} \|_{L^4} \,\, \lesssim \,\, ( \| \errh \|_{\HonekappaSpace}^2 + \|  \errh \|_{L^4}^2 ) \,  \| \mathbf{C}_{\deltah} \|_{H^1}.
		\end{eqnarray*}	
		\item[$\bullet$] $\alpha_{4}+\beta_{4}$:  Noting that
		\begin{eqnarray*}
			\lefteqn{ 2 \Real (\sol \psi_{\deltaH}^* ) |\MagPot|^2 - 2 \Real (\sol_{\deltaH} \psi_{\deltaH}^* ) |\MagPot_{\deltah}|^2 
				+ (3|\MagPot_{\deltah}|^2 - |\MagPot|^2  ) \Real( \sol_{\deltaH} \psi_{\deltaH}^* ) - 2 \MagPot \cdot \MagPot_{\deltah}  \Real( \sol \psi_{\deltaH}^*) }\\
			&=&  \Real ( \sol  \psi_{\deltaH}^* ) |\MagPot - \MagPot_{\deltah}|^2
			+ \Real ((\sol - \sol_{\deltaH}) \psi_{\deltaH}^* ) \,( \MagPot - \MagPot_{\deltah}) \cdot ( \MagPot + \MagPot_{\deltah}),  \hspace{90pt}
		\end{eqnarray*}
		we obtain with the bounds from Lemma \ref{lem:sol_bounds_approx} 
		\begin{eqnarray*}
			|\alpha_{4}+\beta_{4}| 
			&=& 	| \int_\Omega  \Real ( \sol  \psi_{\deltaH}^* ) |\MagPot - \MagPot_{\deltah}|^2
			+ \Real ((\sol - \sol_{\deltaH}) \psi_{\deltaH}^* ) \,( \MagPot - \MagPot_{\deltah}) \cdot ( \MagPot + \MagPot_{\deltah}) \dint{x} | \\
			&\le& \| \MagPot - \MagPot_{\deltah} \|_{L^4}^2  \| \psi_{\deltaH} \|_{L^2}  + \| \sol - \sol_{\deltaH} \|_{L^6}  \| \psi_{\deltaH} \|_{L^2}   \| \MagPot - \MagPot_{\deltah} \|_{L^6}  \| \MagPot + \MagPot_{\deltah} \|_{L^6} \\
			&\lesssim&\left(  \| \errh \|_{L^6}^2  + \| \Errh \|_{L^4}^2 + \| \Errh \|_{L^6}^2 \right) \| \psi_{\deltaH} \|_{L^2}.
		\end{eqnarray*}	
		\item[$\bullet$] $\alpha_{5}+\beta_{5}$: It holds
		\begin{eqnarray*}
			\lefteqn{ \bigl( \sol^* \nabla \psi_{\deltaH} + \psi_{\deltaH}^* \nabla \sol \bigr) \cdot \MagPot 
				-  \bigl( \sol_{\deltaH}^* \nabla \psi_{\deltaH} + \psi_{\deltaH}^* \nabla \sol_{\deltaH} \bigr) \cdot \MagPot_{\deltah} }\\
			&\enspace& +\,\, (\MagPot_{\deltah} \hspace{-2pt} - \hspace{-2pt} \MagPot) \cdot \left( \nabla \sol_{\deltaH}  \psi_{\deltaH}^* \hspace{-1pt}-\hspace{-1pt} \sol_{\deltaH}  \nabla \psi_{\deltaH}^* \right)  +   \bigl( (\sol_{\deltaH} \hspace{-1pt}-\hspace{-1pt} \sol)^* \nabla \psi_{\deltaH}
			+ \psi_{\deltaH}^* \nabla (\sol_{\deltaH} \hspace{-1pt}-\hspace{-1pt} \sol)  \bigr) \cdot \MagPot_{\deltah} \\
			&=& ( \sol^* \nabla \psi_{\deltaH} + \sol_{\deltaH}  \nabla \psi_{\deltaH}^*) \cdot (\MagPot - \MagPot_{\deltah}) 
			\,\, +\,\,  \psi_{\deltaH}^*  (\nabla \sol - \nabla \sol_{\deltaH}) \cdot (\MagPot \hspace{-2pt} - \hspace{-2pt} \MagPot_{\deltah}).
		\end{eqnarray*}
		Together with $\Real \,( \ci \,\sol^* \nabla \psi_{\deltaH} ) = \Real \,( \sol (\ci \, \nabla \psi_{\deltaH})^{*} )= -  \Real \,( \ci \, \sol \, \nabla \psi_{\deltaH}^{*} )$, we hence obtain
		\begin{eqnarray*}
			\lefteqn{ |\alpha_{5}+\beta_{5}| } \\
			&=& \frac{1}{\kappa} | \int_\Omega \Real \bigl( \ci \, ( \sol^* \nabla \psi_{\deltaH} + \sol_{\deltaH}  \nabla \psi_{\deltaH}^*) \cdot (\MagPot - \MagPot_{\deltah}) 
			\,\, +\,\,  \ci \, \psi_{\deltaH}^*  (\nabla \sol - \nabla \sol_{\deltaH}) \cdot (\MagPot \hspace{-2pt} - \hspace{-2pt} \MagPot_{\deltah}) \bigr) \dint{x} | \\
			&=& \frac{1}{\kappa} | \int_\Omega \Real \bigl( \ci \, (\sol_{\deltaH} - \sol) \nabla \psi_{\deltaH}^* \cdot (\MagPot - \MagPot_{\deltah}) 
			\,\, +\,\,  \ci \, \psi_{\deltaH}^*  (\nabla \sol - \nabla \sol_{\deltaH}) \cdot (\MagPot \hspace{-2pt} - \hspace{-2pt} \MagPot_{\deltah}) \bigr) \dint{x} | \\
			&=& \frac{1}{\kappa} | \int_\Omega \Real \bigl( \, \ci \, 2 \, (\sol_{\deltaH} - \sol) \nabla \psi_{\deltaH}^* \cdot (\MagPot - \MagPot_{\deltah}) 
			\,\, +\,\,  \ci \, \psi_{\deltaH}^*  ( \sol_{\deltaH} - \sol ) \cdot \div (\MagPot \hspace{-2pt} - \hspace{-2pt} \MagPot_{\deltah}) \bigr) \dint{x} | \\
			&\le& 2 \| \tfrac{1}{\kappa} \nabla \psi_{\deltaH}\|_{L^2}  \|  \sol - \sol_{\deltaH}\|_{L^4}  \| \MagPot - \MagPot_{\deltah} \|_{L^4} + \| \tfrac{1}{\kappa} \psi_{\deltaH}\|_{L^4}  \|  \sol - \sol_{\deltaH} \|_{L^4}  \| \div(\MagPot - \MagPot_{\deltah}) \|_{L^2} \\
			&\lesssim& \left(  \|  \sol - \sol_{\deltaH}\|_{L^4}^2 + \| \MagPot - \MagPot_{\deltah} \|_{H^1}^2 \right)  \| \psi_{\deltaH}\|_{\HonekappaSpace}.
		\end{eqnarray*}
	\end{itemize}
	It remains to sum up the previous estimates. We obtain
	\begin{eqnarray*}
		\lefteqn{  |\sigma(\psi_{\deltaH},\mathbf{C}_{\deltah})| 
			\,\, \le \,\, |\alpha_{1}+\beta_{1}| + |\alpha_{2}+\beta_{2}|  + |\alpha_{3}+\beta_{3}|  + |\alpha_{4}+\beta_{4}|  + |\alpha_{5}+\beta_{5}|  } \\
		&\lesssim& (   \|  \sol - \sol_{\deltaH} \|_{L^6}^2  +  \|  \sol - \sol_{\deltaH} \|_{\HonekappaSpace}^2 + \| \MagPot - \MagPot_{\deltah} \|_{H^1}^2 )\,(  \| \psi_{\deltaH}\|_{\HonekappaSpace} + \| \mathbf{C}_{\deltah} \|_{H^1}),
	\end{eqnarray*}
and thus the assertion.
\end{proof}
%

\begin{proof}[Proof of Lemma~\ref{lem:inverse-inequality-LOD}]
	The proof follows  \cite[Lem.~10.8]{HeW22} with some modifications to account for the missing coercivity of $\abilmagLOD{\cdot}{\cdot}$ on $H^1(\Omega)$ and the influence of $\kappa$ on the estimates. Let $\varphi_{H}^{\LOD} \in \VShLOD$ be arbitrary, then we can write it as $\varphi_{H}^{\LOD} = (1 - \mathcal{C}) \varphi_{H}$ for some $\varphi_{H} \in \VSh$. By definition of the corrector $\mathcal{C}$ we have 
	for the $L^2$-projection $\LtwoprojFEM : H^1(\Omega) \rightarrow \VSh$ that $\LtwoprojFEM(\mathcal{C} \varphi_{H})=0$ hence with the approximation properties of $\LtwoprojFEM$ we conclude
	\begin{eqnarray}
		\label{est:CphiH-in-L2-est}
		\| \mathcal{C} \varphi_{H} \|_{L^2} = \| (1-\mathcal{C}) \varphi_{H} - \LtwoprojFEM((1-\mathcal{C}) \varphi_{H}) \|_{L^2} \lesssim 
		\deltaH \| \nabla (1-\mathcal{C}) \varphi_{H} \|_{L^2}.
	\end{eqnarray}
	Next, we obtain from $\abilmagLOD{(1 - \mathcal{C}) \varphi_{H}}{\mathcal{C} \varphi_{H}} = 0$ that
	\begin{eqnarray*}
		\tfrac{1}{\kappa^2} \| \nabla (1 - \mathcal{C}) \varphi_{H} \|_{L^2}^2 &\lesssim&  \abilmag{(1 - \mathcal{C}) \varphi_{H}}{(1 - \mathcal{C}) \varphi_{H}} \\
		&=& \abilmag{(1 - \mathcal{C}) \varphi_{H}}{ \varphi_{H}} +   \int_\Omega	( |\MagPot|^2 + 1) |(1 - \mathcal{C}) \varphi_{H} |^2 \dint{x} \\
		&\lesssim& \delta \| (1 - \mathcal{C}) \varphi_{H} \|_{\HonekappaSpace}^2 + \tfrac{1}{\delta}  \|  \varphi_{H} \|_{\HonekappaSpace}^2 + \| (1 - \mathcal{C}) \varphi_{H} \|_{L^2}^2,
	\end{eqnarray*}
	for any $\delta>0$ using Young's inequality. Hence, for sufficiently small $\delta$ (independent of $\deltaH$ or $\kappa$), we conclude $\| (1 - \mathcal{C}) \varphi_{H} \|_{\HonekappaSpace}^2 \lesssim \|  \varphi_{H} \|_{\HonekappaSpace}^2 + \| (1 - \mathcal{C}) \varphi_{H} \|_{L^2}^2$ which we can further estimate with the standard inverse inequality in Lagrange FE spaces as
	\begin{eqnarray*}
		\tfrac{1}{\kappa^2} \| \nabla (1 - \mathcal{C}) \varphi_{H} \|_{L^2}^2
		&\lesssim& \|  \varphi_{H} \|_{L^2}^2 + \tfrac{1}{\kappa^2 \deltaH^2} \|  \varphi_{H} \|_{L^2}^2 + \| (1 - \mathcal{C}) \varphi_{H} \|_{L^2}^2 \\
		&= & (1+\tfrac{1}{\kappa^2 \deltaH^2} ) (\varphi_{H} , (1 - \mathcal{C}) \varphi_{H} )_{L^2} +  \| (1 - \mathcal{C}) \varphi_{H} \|_{L^2}^2 \\
		&\lesssim&  (1+\tfrac{1}{\kappa^2 \deltaH^2} ) \| (1 - \mathcal{C}) \varphi_{H} \|_{L^2}^2  +  (1+\tfrac{1}{\kappa^2 \deltaH^2} )  (\mathcal{C} \varphi_{H} , (1 - \mathcal{C}) \varphi_{H} )_{L^2} \\
		&\lesssim&  (1+\tfrac{1}{\kappa^2 \deltaH^2} + \tfrac{1}{\delta}) \| (1 - \mathcal{C}) \varphi_{H} \|_{L^2}^2  +  \delta \, (1+\tfrac{1}{\kappa^2 \deltaH^2} ) \| \mathcal{C} \varphi_{H} \|_{L^2}^2 \\
		&\overset{\eqref{est:CphiH-in-L2-est}}{\lesssim}&  (1+\tfrac{1}{\kappa^2 \deltaH^2} + \tfrac{1}{\delta}) \| (1 - \mathcal{C}) \varphi_{H} \|_{L^2}^2  +  \delta \, (\deltaH^2+\tfrac{1}{\kappa^2} )  \| \nabla (1-\mathcal{C}) \varphi_{H} \|_{L^2}.
	\end{eqnarray*}
	Using $\deltaH \lesssim \kappa^{-1}$, we can absorb the right term for sufficiently small $\delta$ into the left-hand side and conclude
	\begin{eqnarray*}
		\tfrac{1}{\kappa^2} \| \nabla (1 - \mathcal{C}) \varphi_{H} \|_{L^2}^2
		&\lesssim&  %
		\tfrac{1}{\kappa^2 \deltaH^2} \| (1 - \mathcal{C}) \varphi_{H} \|_{L^2}^2.
		\qedhere
	\end{eqnarray*}
\end{proof}

\end{document}